\tikzset{>=latex}
\newcommand{\C}{\mathbb{C}}
\newcommand{\R}{\mathbb{R}}
\newcommand{\Z}{\mathbb{Z}}
\newcommand{\Q}{\mathbb{Q}}
\newcommand{\N}{\mathbb{N}}
\newcommand{\J}{\mathcal{J}}
\newcommand{\F}{\mathcal{F}}
\renewcommand{\sl}{\text{sl}}
\newcommand{\lk}{\text{link}}
\newcommand{\rr}{\mathbb{R}}
\newcommand{\zz}{\mathbb{Z}}
\newcommand{\Sp}{\mathrm{Sp}}
\newcommand{\calc}{\mathcal{C}}
\newcommand{\cb}{\mathcal{B}}
\newcommand{\al}{\alpha}
\newcommand{\tht}{\theta}
\newcommand{\zt}{\zeta}
\newcommand{\vep}{\varepsilon}
\newcommand{\ey}{\frac{1}{2}}
\newcommand{\pr}{p_r}
\newcounter{newcounter}[section]
\numberwithin{equation}{section}
\numberwithin{newcounter}{section}
\numberwithin{figure}{section}
\numberwithin{footnote}{section}
\newtheorem{thm}[newcounter]{Theorem}
\newtheorem{defi}[newcounter]{Definition}
\newtheorem{prop}[newcounter]{Proposition}
\newtheorem{lem}[newcounter]{Lemma}
\newtheorem{cor}[newcounter]{Corollary}
\newtheorem{rem}[newcounter]{Remark}
\newtheorem{claim}[newcounter]{Claim}
\newcommand{\beg}{\begin}
\newcommand{\bea}{\beg{eqnarray}}
\newcommand{\eea}{\end{eqnarray}}
\newcommand{\authorfootnotes}{\renewcommand\thefootnote{\@fnsymbol\c@footnote}}%
\title[The Spatial Isosceles Three-Body Problem]
{A symplectic dynamics approach to the spatial isosceles three-body problem}
\begin{document}
%\maketitle

%\author{Xijun Hu, \,  Lei Liu, \,  Yuwei Ou, \, Pedro A. S. Salom\~ao  and  Guowei Yu}

%\author{Xijun Hu}
%\address{School of Mathematics, Shandong University, China}
\email{xjhu@sdu.edu.cn}

%\author{Lei Liu}
%\address{Beijing International Center for Mathematical Research, Peking University, China}
\email{liulei30@email.sdu.edu.cn/liulei@bicmr.pku.edu.cn}

%\author{Yuwei Ou}
%\address{School of Mathematics, Shandong University, China}
\email{ywou@sdu.edu.cn}

%\author{Pedro A. S. Salom\~ao}
%\address{New York University - Shanghai, Shanghai,  China}
\email{pas383@nyu.edu}

%\author{Guowei Yu}
%\address{Chern Institute of Mathematics and LPMC, Nankai University, China}
\email{yugw@nankai.edu.cn}

\maketitle

\begin{center}
%  \let\AmStitle\large
%  {\bf A SYMPLECTIC DYNAMICS APPROACH TO THE SPATIAL ISOSCELES THREE-BODY PROBLEM}\par \bigskip

  \normalsize
  \authorfootnotes
  Xijun Hu\textsuperscript{1}, Lei Liu\textsuperscript{1,2},
  Yuwei Ou\textsuperscript{1}, Pedro A. S. Salom\~ao\textsuperscript{3} and Guowei Yu\textsuperscript{4} \par \bigskip

  \textsuperscript{1}School of Mathematics, Shandong University  \par
  \textsuperscript{2}Beijing International Center for Mathematical Research, Peking University
  \par
  \textsuperscript{3}New York University - Shanghai,  \par
  \textsuperscript{4}Chern Institute of Mathematics and LPMC, Nankai University

\end{center}

%\date{}

\begin{abstract}
 We study the spatial isosceles three-body problem from the perspective of Symplectic Dynamics. For certain choices of mass ratio, angular momentum, and energy, the dynamics on the energy surface is equivalent to a Reeb flow on the tight three-sphere. We find a Hopf link formed by the Euler orbit and a symmetric brake orbit, which  spans an open book decomposition whose pages are annulus-like global surfaces of section. In the case of large mass ratios, the Hopf link is non-resonant, forcing the existence of infinitely many periodic orbits. The rotation number of the Euler orbit plays a fundamental role in the existence of periodic orbits and their symmetries. We explore such symmetries in the Hill region and show that  the Euler orbit is negative hyperbolic for an open set of parameters while it can never be positive hyperbolic. Finally, we address convexity and determine for each parameter whether the energy surface is strictly convex, convex, or non-convex. Dynamical consequences of this fact are then discussed.
\end{abstract}

\tableofcontents

%\bigskip

%\noindent{\bf AMS Subject Classification:}  70F16, 70F10, 37J45, 53D12

%\bigskip

%\noindent{\bf Key Words:} celestial mechanics, three body problem, relative periodic solution, mean index, rotation number, open book

\section{Introduction and main results} \label{sec: intro}

The spatial three-body problem is the study of the motion of three point masses in $\rr^3$ subjected to Newton's universal gravitational law. There exists an invariant subsystem in which two equal masses are symmetric to a fixed axis where the third body moves.  As the three bodies always form an isosceles triangle, this subsystem is called the spatial isosceles three-body problem.

The spatial isosceles three-body problem plays an important role in understanding the overall dynamics of the $n$-body problem. It  was used by Xia \cite{Xia92} in the five-body problem to find non-collision trajectories that escape to infinity in finite time, solving the Painlev\'e conjecture.

The Hamiltonian of the spatial isosceles three-body problem in  reduced form is given by
\begin{equation}\label{ham0}
H = \frac{p_r^2+p_z^2}{2} +\frac{\varpi^2}{2r^2} - \frac{1}{r} -\frac{4\alpha^{-1}}{(r^2+(1+2\alpha)z^2)^{1/2}},
\end{equation}
where $r\in \R^+, z\in \R$ are suitable coordinates on the plane determined by the three bodies, and $p_r,p_z\in \R$ are the corresponding momenta, see section \ref{reduce to two degree system}. The parameters $\varpi, \alpha >0$ are referred to as the angular momentum with respect to the axis of symmetry, and the mass ratio between the first two bodies and the third body.

There are two special related problems that have been mostly studied:
\begin{itemize}
    \item The restricted isosceles three-body problem, or  Sitnikov problem, where the third body has negligible mass and its motion along the axis of symmetry is ruled by a bounded Kepler motion of  the first two bodies.  Up to a suitable rescaling in $z$ the equations of motion can be recovered by taking $\alpha
    \to +\infty$.% One obtains an independent Hamiltonian on  $(p_r,r)$, and a time-dependent second order equation on $z$.

    \item The planar isosceles three-body problem, where the angular momentum $\varpi$ is assumed to be zero. Its Hamiltonian can be recovered  directly from \eqref{ham0}.

\end{itemize}

The dynamics of the Sitnikov problem is very rich. As predicted by Chazy \cite{Chazy} and proved by Sitnikov \cite{Sit60}, it admits  oscillatory motions, i.e., unbounded trajectories of the massless body   that oscillate infinitely many times. The complexity of the Sitnikov problem was further discussed by  Alekseev  \cite{Alk69} and Moser  \cite{Ms73}. They used surfaces of section and symbolic dynamics to encode a rich variety of trajectories including oscillatory motions, periodic orbits, homoclinic and heteroclinic orbits, etc. Some other approaches concerning the existence and stability of periodic orbits in the Sitnikov problem are found in \cite{Zhang19,Zhang21, GNR18,LO2008,Ortega2016}.

In the planar isosceles three-body problem, collisions occur for a typical trajectory. Double collisions are usually regularized via Levi-Civita coordinates. Using McGehee's blow up techniques from \cite{McG73},  Devaney \cite{Dev80} found suitable coordinates to study the dynamics near triple collisions. Such coordinates were then used by Sim\'o and Martinez \cite{SM87}  to find chaotic dynamics near homoclinic and heteroclinic trajectories to triple collisions. We refer to  Moeckel, Montgomery and Venturelli \cite{MMV12} and Chen \cite{Chen13} for the existence of the so called brake orbits in the planar problem, i.e., periodic orbits  whose velocity vanishes twice along the period.

In the general spatial isosceles three-body problem determined by \eqref{ham0}, collisions do not occur. Alekseev \cite{Alk72} and Moeckel \cite{Mk84} generalized some of the previous results showing that the dynamics is still rich  for large values of $\alpha$ and for small values of $\varpi$, respectively.
The existence of periodic orbits using alternative methods were treated in \cite{CorLli04,Shi09}.

A common ingredient in many of the works above is a surface of section, i.e., a surface transverse to the flow whose first return map enables two-dimensional methods in dynamics. The ideal scenario is the presence of a global surface of section, that is an embedded surface bounded by periodic orbits, transverse to the flow in its interior, and so that every trajectory hits it forward and backward in time. The total dynamics is then encoded into an area-preserving surface diffeomorphism given by the first return map.

In this work we study the dynamics of the Hamiltonian \eqref{ham0} assuming
that the parameters $\varpi,\alpha$ are positive, the energy $h$ is negative, and the following conditions are satisfied
\begin{equation}\label{cond1}
\frac{1}{2} < \varpi^2|h| < \frac{(1+4\alpha^{-1})^2}{2}.
\end{equation}
These are precisely the conditions that make the energy surface
$$
\mathfrak{M}:=H^{-1}(h)\subset \R^4
$$
a regular sphere-like hypersurface. As we shall explain later, the dynamics on $\mathfrak{M}$ is actually determined by $\varpi^2h$ and $\alpha$ and we may fix $h=-1$ without loss of generality. The mechanical nature of $H$ implies that $\mathfrak{M}$  has contact-type and thus the Hamiltonian flow on $\mathfrak{M}$ is equivalent to the Reeb flow of a contact form on the tight three-sphere. %In particular, the periodic orbits of the Reeb flow are in one to one correspondence with the periodic or quasi-periodic motions of the original isosceles three-body problem.

Reeb flows on the tight three-sphere are equivalent to Hamiltonian flows on star-shaped hypersurfaces in $\R^4$. They have been extensively considered by many authors. Periodic orbits on such energy surfaces were studied by Rabinowitz \cite{Ra}, and Weinstein \cite{We}, who conjectured that any Reeb flow on a closed energy surface admits a periodic orbit. Hofer \cite{Hofer93} introduced finite energy pseudo-holomorphic curves to prove Weinstein conjecture on the three-sphere and Taubes \cite{Taubes} proved it for general Reeb flows in dimension $3$.

A remarkable result concerning Reeb flows on the tight-three sphere was proved by Hofer, Wysocki, and Zehnder \cite{convex} for dynamically convex contact forms, i.e., those contact forms whose periodic orbits have index  $\geq 3$. It can be summarized as follows.

\begin{thm}[Hofer-Wysocki-Zehnder \cite{convex}]\label{thm:convexHWZ} The Reeb flow of a dynamically convex contact form on the tight three-sphere admits a disk-like global surface of section bounded by an index-$3$ periodic orbit. In particular, the flow has either two or infinitely many periodic orbits.
\end{thm}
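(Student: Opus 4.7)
The plan is to prove this via pseudo-holomorphic curve theory in the symplectization $\R\times S^3$, following the strategy introduced by Hofer. The goal is to produce a distinguished index-$3$ periodic orbit $P$, fill its complement by an explicit foliation of embedded $J$-holomorphic planes whose projections are disk-like global surfaces of section, and then derive the dichotomy from an area-preserving return map.

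First, I would produce the binding orbit. Hofer's proof of the Weinstein conjecture on $S^3$ already gives at least one periodic orbit via finite-energy planes; selecting one of smallest Conley-Zehnder index, dynamical convexity ensures $\mu_{\cz}\geq 3$, and the construction of Hofer's plane (the asymptotic limit of a bubbling-off sequence out of a Bishop family of disks) actually returns an orbit $P$ with $\mu_{\cz}(P)=3$. The Fredholm index of an embedded finite-energy plane asymptotic to $P$ is then $\mu_{\cz}(P)-2=1$, which is exactly the dimension of moduli needed to sweep out $S^3\setminus P$.

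Second, I would construct the filling. Start from the standard tight contact form $\lambda_0$ on $S^3$, whose Reeb flow is the Hopf flow and whose symplectization $(\R\times S^3,d(e^t\lambda_0))$ admits an explicit foliation by $J_0$-holomorphic planes asymptotic to any fixed Hopf fiber. Connect $\lambda_0$ to the given contact form $\lambda$ by a smooth path $\lambda_s$ of contact forms on $S^3$, and study the parametric moduli space $\mathcal{M}_s$ of finite-energy embedded planes asymptotic to the distinguished orbit, modulo $\R$-translation and $\mathrm{Aut}(\C)$-reparametrization. Automatic transversality for embedded planes in dimension four keeps $\mathcal{M}_s$ smooth, and the SFT compactness theorem, combined with dynamical convexity, keeps it compact: any bubbling sphere or broken building would have to be asymptotic to orbits of $\mu_{\cz}\geq 3$, and an index count via the asymptotic formula of Hofer-Wysocki-Zehnder together with Siefring's intersection theory rules out such degenerations. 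The resulting non-empty, compact smooth one-parameter family of planes projects to a foliation of $S^3\setminus P$ by embedded open disks transverse to the Reeb vector field, i.e., an open book decomposition with binding $P$ and disk-like pages.

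Third, I would derive the dichotomy. Each page $D$ is a disk-like global surface of section with boundary $P$, and the first return map $\phi$ on the open disk $\mathrm{int}(D)$ preserves the finite area induced by $d\lambda$. Brouwer's fixed-point theorem (in its area-preserving disk form) yields a fixed point of $\phi$, hence a second periodic Reeb orbit $P'$. Removing $P'$ from $\mathrm{int}(D)$ produces an open annulus on which $\phi$ remains area-preserving and isotopic to the identity; a theorem of Franks then states that any additional periodic point forces infinitely many periodic points. Hence the flow has either exactly two periodic orbits ($P$ and $P'$) or infinitely many.

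The main obstacle is the compactness-and-transversality analysis along the homotopy $\lambda_s$. Dynamical convexity need not be preserved along the path, so one cannot argue orbit-by-orbit; instead the argument must be local to the moduli space $\mathcal{M}_s$ and to the asymptotic orbit $P$, combining strict index control for $P$ itself, Siefring's intersection inequalities to prevent non-embedded limits, and a careful ruling-out of multi-level buildings whose top levels are planes and lower levels are cylinders or spheres. This is the technical core of \cite{convex} and is where the hypothesis of dynamical convexity is essential.
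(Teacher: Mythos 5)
This result is cited by the paper from \cite{convex} rather than proved there, so the comparison below is with Hofer--Wysocki--Zehnder's original argument. Your skeleton (binding orbit from a finite-energy plane, foliation of $S^3\setminus P$ by projected planes, area-preserving return map plus Brouwer and Franks) is the right one, and the dichotomy step is correct as stated. The genuine gap is in your second step: you propose to reach $\lambda$ from the round $\lambda_0$ by a path $\lambda_s$ and drag a moduli space $\mathcal{M}_s$ of planes along the path, but, as you yourself concede, dynamical convexity need not persist along such a path, and this is fatal to the compactness you invoke rather than a technicality to be handled locally. Along the path the binding orbit must be tracked as a family $P_s$ of Reeb orbits of $\lambda_s$, and nothing prevents $P_s$ from bifurcating, becoming degenerate, or dropping in index; at that point breaking into multi-level buildings whose asymptotics have $\mu_{\cz}\le 2$ cannot be excluded, automatic transversality for embedded curves can fail, and $\mathcal{M}_s$ can escape every compact set. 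The remedy you gesture at (arguing ``locally to the moduli space and to $P$'', plus intersection theory) controls how limits degenerate geometrically but supplies no mechanism to forbid low-index or degenerate limiting orbits, which is exactly what dynamical convexity is for.

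Hofer--Wysocki--Zehnder avoid the homotopy entirely and work only with the given $\lambda$: a Bishop family of $\tilde J$-holomorphic disks attached to a small totally real surface inside the filling $\{H\le h\}$ is stretched until bubbling produces a finite-energy plane; the asymptotic and positivity analysis of such planes yields self-linking $-1$, unknottedness, and $\mu_{\cz}(P)\le 3$, while dynamical convexity gives $\mu_{\cz}(P)\ge 3$, so $\mu_{\cz}(P)=3$. The foliation is then obtained by showing that the space of embedded fast finite-energy planes asymptotic to this one fixed orbit $P$ of $\lambda$ is non-empty, open, closed, and one-dimensional after the $\R$-quotient, with projections covering $S^3\setminus P$; it is dynamical convexity of $\lambda$ itself, not of any intermediate form, that rules out breaking in the closedness step. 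Your first step is also a little loose: ``selecting one of smallest Conley--Zehnder index'' is not what happens; the plane construction produces an orbit with $\mu_{\cz}\le 3$ for index reasons, and convexity pins it to $3$. Finally, a small bookkeeping slip: the Fredholm index of a finite-energy plane asymptotic to a $\mu_{\cz}=3$ orbit over a contact three-manifold is $\mu_{\cz}(P)-1=2$, not $\mu_{\cz}(P)-2$; the one-dimensional family you want appears only after dividing by the $\R$-translation.
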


It is also proved in \cite{convex} that every strictly convex hypersurface in $\R^4$ induces a dynamically convex contact form on the tight three-sphere. The theory of pseudo-holomorphic curves, developed by Hofer, Wysocki, and Zehnder \cite{Hofer93,props1,props2,props3}, was used to prove Theorem \ref{thm:convexHWZ}. The methods in Symplectic Dynamics using pseudo-holomorphic curves have brought considerable insights to the study of Reeb flows, especially regarding the existence of periodic orbits and global surfaces of section. We shall discuss later some generalizations of Theorem \ref{thm:convexHWZ} for global surfaces of section with more than one boundary component.

Recall that global surfaces of section were first used by Poincar\'e \cite{Pc12} to prove the existence of infinitely many periodic orbits in the restricted circular planar three-body problem. He stated a general theorem for area-preserving twist maps of the annulus, that was ultimately proved by Birkhoff \cite{Bk13}, and became known as the Poincar\'e-Birkhoff Theorem. It asserts that if a homeomorphism of the closed annulus is homotopic to the identity map, preserves a finite area form, and twists the boundary components in opposite directions, then it must have at least two fixed points and thus infinitely many periodic orbits.

A direct benefit of a disk-like global surface of section is that the associated first return map preserves a finite area form and thus admits a fixed point. The orbit bounding the disk and the orbit corresponding to the fixed point form a Hopf link. Generalizations of the Poincar\'e-Birkhoff Theorem by Franks \cite{Fr90, Fr92} and Le Calvez \cite{Calvez06}, imply the existence of infinitely many periodic orbits provided a third periodic orbit exists. As a result, the flow has either two or infinitely many periodic orbits.

Finding global surfaces of section is in general a difficult task. Conley \cite{Con63}, McGehee \cite{McG69} and Kummer \cite{Kum1979} generalized Poincar\'e's works in the restricted three-body problem,  by finding global surfaces of section bounded by the retrograde and the direct orbits. Their methods, however, rely on perturbative arguments and thus are restricted to special situations. In contrast, the non-perturbative nature of Theorem \ref{thm:convexHWZ} widens the range of potential applications in Celestial Mechanics.

It is worth mentioning that checking the dynamical convexity of a concrete Reeb flow may also be intractable, except for some few special cases for which localizing periodic orbits and estimating their indices are feasible tasks. The alternative is to check whether the energy surface is strictly convex. To do that, one is led to some possibly intricate curvature estimates. In \cite{AFFHO12} and \cite{Sch}, strictly convex energy surfaces were found in the restricted three-body problem and the H\'enon-Heiles potential, respectively.  %In the circular planar restricted three-body problem, dynamical convexity implies Birkhoff's conjecture on the existence of a disk-like global surface of section bounded by the retrograde orbit for energies below the first Lagrange value.
More recent works on convexity of energy surfaces in Celestial Mechanics are found in  \cite{PS2,FKZ16,Kim2,Sa04}.
It is one of our goals to decide for every $\varpi,\alpha>0$ and $h<0$ whether the energy surface $\mathfrak{M}$ is strictly convex, convex or non-convex.

%Notice that strict convexity, or more generally dynamical convexity, is an important notion in Celestial Mechanics.  Strictly convex and dynamically convex hypersurfaces in Celestial Mechanics were found in .

As mentioned above, if the Reeb flow admits a disk-like global surface of section, then there exists a pair of periodic orbits forming a Hopf link. In the dynamically convex case, it is even possible to show that the Hopf link bounds an annulus-like global surface of section. Alternatively, if dynamical convexity cannot be checked, the following linking condition is available: if every other periodic orbit is linked with the Hopf link, then it bounds an annulus-like global surface of section. Using this linking condition, we shall discuss the existence  of  an annulus-like global surface of section on $\mathfrak{M}$.

A non-resonance condition on the rotation numbers of the components of the Hopf link implies the twist condition of the annulus first return map and thus forces infinitely many periodic orbits by the Poincar\'e-Birkhoff Theorem. A version of the Poincar\'e-Birkhoff Theorem for Reeb flows on the tight three-sphere asserts that if the Hopf link satisfies a non-resonance condition, then there exist  infinitely many periodic orbits regardless it bounds a global surface of section. We shall discuss the existence of non-resonant Hopf links for large mass ratios.

As outlined above, the main purpose of this paper is to explore some dynamical aspects of the spatial isosceles three-body problem under the light of Symplectic Dynamics, see \cite{BH}.  In particular, we study the reduced Hamiltonian flow on the sphere-like energy surface $\mathfrak{M}$, which is equivalent to a Reeb flow on the tight three-sphere.  The Euler orbit in $z=0$ is a brake orbit, i.e., its velocity vanishes precisely twice along its period. Its mean index is shown to be greater than $4$. Since it  links with every other periodic orbit, it bounds a disk-like global surface of section. Using Birkhoff's shooting method, we find a $z$-symmetric brake orbit forming a Hopf link with the Euler orbit. This orbit is non-negatively linked with every other periodic orbit and its mean index is at least $2$. Hence the Hopf link bounds an annulus-like global surface of section (Theorem \ref{main1}). For large values of $\alpha$ and a suitable condition on $\varpi$, we check that the Hopf link is non-resonant (Theorem \ref{thm: nonresonant implies infy many}). This is accomplished by a suitable re-scaling of coordinates that leads to an integrable limiting system, where the non-resonance can be directly checked. We discuss how the non-resonance condition implies  infinitely many periodic orbits with different symmetries in the Hill region, such as brake orbits, $z$-symmetric orbits, etc (Theorem \ref{thm infy z brake}).  If the rotation number of the Euler orbit is rational, then there exist infinitely many periodic orbits of certain types in the Hill region (Theorem \ref{thm: rational rotation implies infy many}). Moreover, the Euler orbit is shown to be negative hyperbolic for an open set of parameters, while it can never be positive hyperbolic. The rotation number of the Euler orbit is then proved to be rational for a dense set of parameters (Theorem \ref{thm:Dk}). Finally, the convexity of the energy surface is addressed, and the parameters for which $\mathfrak{M}$ is strictly convex, convex, and non-convex are entirely identified (Theorem \ref{thm_convexity}). Combining it with some known facts from Symplectic Dynamics, we discuss some dynamical consequences of convexity.

%In the next section, we state the main results.

\section{Main results}

The potential of the mechanical Hamiltonian $H=H(p_r,p_z,r,z)$ in \eqref{ham0} is denoted by
$$
V(r,z):=\frac{\varpi^2}{2r^2} - \frac{1}{r} -\frac{4\alpha^{-1}}{(r^2+(1+2\alpha)z^2)^{1/2}}, \quad (r,z) \in \R^+ \times \R.
$$
Here,  $(p_r,p_z,r,z)\in \R^2 \times \R^+ \times \R$ are canonical coordinates with symplectic form $\omega_0 = dp_r \wedge dr + dp_z \wedge dz$. The parameters $\varpi,\alpha >0$ are  referred to as the angular momentum and the mass ratio, respectively, see section \ref{reduce to two degree system} for more details.

The Hamiltonian vector field $X_H$ is determined by
$\omega_0(X_H, \cdot) = -dH,$
and Hamilton's equations become
\begin{equation}\label{ham}
\left\{\begin{aligned}
& \dot r   = p_r,\\
& \dot p_r  = \frac{\varpi^2}{r^3} -\frac{1}{r^2} -\frac{4\alpha^{-1}r}{(r^2+(1+2\alpha)z^2)^{3/2}},\\
& \dot z   = p_z,\\
 & \dot p_z   =  -\frac{4\alpha^{-1}(1+2\alpha)z}{(r^2+(1+2\alpha)z^2)^{3/2}}.
\end{aligned}
\right.
\end{equation}

The Hill region $\mathcal{H}\subset \R^2$ is the image of $\mathfrak{M}$ under the foot point projection $\Pi_{r,z}:(p_r,p_z,r,z)\mapsto (r,z),$ that is
$$
\mathcal{H} := \Pi_{r,z}(\mathfrak{M}) \subset \R^+ \times \R.
$$ Notice that $\mathcal{H} = \{(r,z): V(r,z) \leq h\}.$ Since $\mathfrak{M}$ is a sphere-like regular hypersurface,
 $$\partial \mathcal{H}= \{(r,z): V(r,z)=h\},$$ is a regular simple closed curve, called the zero velocity curve.
It is in one-to-one correspondence with the points $(p_r,p_z,r,z)\in \mathfrak{M}$ satisfying $p_r=p_z=0$. Every  $(r,z) \in \mathcal{H} \setminus \partial \mathcal{H}$ is the projection under $\Pi_{r,z}$ of an embedded circle in $\mathfrak{M}$, called the $S^1$-fiber over $(r,z)$, and  determined by $p_r^2 + p_z^2 = 2(h-V(r,z))>0$.

\begin{figure}[ht]
\centering
\includegraphics[width=0.9\textwidth]{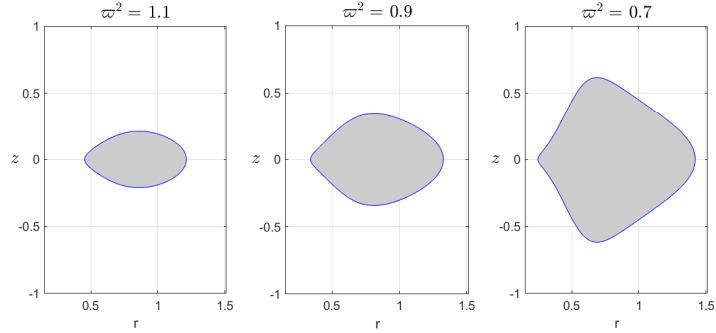}
 \caption{The Hill region for $\alpha=6$.}
 \label{hill regions}
\end{figure}

The Hamiltonian $H$ satisfies
$$
c^2H(c^{-1} p_r,c^{-1} p_z,c^2r,c^2z) = \widetilde H(p_r,p_z,r,z), \quad c>0,
$$
where $\widetilde H$ has the same form as $H$, with parameters $\alpha,\varpi$ replaced with $\alpha, \varpi c^{-1},$ respectively.  Hence the flow of $\widetilde H$ on $\widetilde H^{-1}(c^2h)$ is equivalent to the flow of $H$ on $H^{-1}(h), h<0$. This implies that we can once for all fix the energy
$$
h=-1.
$$

Since $H$ is a mechanical Hamiltonian, the hypersurface $\mathfrak{M}$ has contact type. This means that there exists a $1$-form $\lambda$ on $\mathfrak{M}$ satisfying $\lambda \wedge d\lambda \neq 0$, so that its Reeb vector field $R$, determined by  $d\lambda(R,\cdot) \equiv 0$ and $\lambda(R) =1$, is parallel to $X_H$.
The Reeb flow preserves $\lambda$ and, in particular, preserves the contact structure  $\xi = \ker \lambda\subset TM$. We shall make use of this extra structure to find periodic orbits and global surfaces of section.

A periodic trajectory $\zeta \subset \mathfrak{M}$ is called $z$-symmetric if its projection to the Hill region $\Pi_{r,z}(\zeta)\subset \mathcal{H}$ is symmetric with respect to the reflection $(r,z) \mapsto (r,-z)$.

A trivial knot $\zeta \subset (\mathfrak{M},\xi)$ is called a Hopf fiber if it is transverse to $\xi$ and its self-linking number is $-1$, that is the linking number between $\zeta$ and a perturbation of $\zeta$ by a non-vanishing constant section induced by a global trivialization of $\xi$ is $-1$. %If $\zeta$ bounds a disk $D\subset \mathfrak{M}$ whose interior is transverse to the flow, then $\zeta$ is a Hopf fiber.
A link $L= \zeta \cup \zeta'\subset (\mathfrak{M},\xi)$ is called a Hopf link if it is formed by a pair of simply linked Hopf fibers $\zeta,\zeta'$.

We say that a periodic orbit $\zeta\subset \mathfrak{M}$ is a brake orbit if its projection  $\hat \zeta= \Pi_{r,z}(\zeta)$ to $\mathcal{H}$ intersects the zero velocity curve $\partial \mathcal{H}$. As a special case, the periodic orbit $\zeta_e \subset \mathfrak{M}$  projecting to  $\mathcal{H} \cap \{z=0\}$ is a simple brake orbit, called the Euler orbit. Every brake orbit necessarily intersects $\partial \mathcal{H}$ at precisely two distinct points. We call a brake orbit $\zeta$ simple if $\hat \zeta$ does not self-intersect. Any simple brake orbit is a Hopf fiber. A pair of simple brake orbits $\zeta,\zeta'$ whose projections $\hat \zeta,\hat \zeta'\subset \mathcal{H}$ intersect precisely at a single  point form a Hopf link on $(\mathfrak{M},\xi)$.

We denote by $\hat i(\zeta)$ the mean index of a periodic orbit $\zeta\subset \mathfrak{M}$. The rotation number of $\zeta$ is denoted
$$
\rho(\zeta):=\hat i (\zeta)/2.
$$
For a mechanical system, $\rho(\zeta)$ is always non-negative, and if $\zeta$ is a brake orbit, then $\rho(\zeta)\geq 1$. As we show in Proposition \ref{prop: rho_e}, the rotation number $\rho_e$ of the Euler orbit $\zeta_e$ is greater than $2$.

\begin{defi} \label{defi: GlobalSurface} Let $F\hookrightarrow \mathfrak{M}$ be an embedded compact surface. We say that $F$ is a global surface of section for the Hamiltonian flow  on $\mathfrak{M}$  if
\begin{enumerate}
\item[(i)] $X_H$ is tangent to $\partial F$, the boundary of $F$;
\item[(ii)] $X_H$ is transverse to $F^o,$ the interior of $F$;
\item[(iii)] The trajectory through every point $x \in \mathfrak{M} \setminus \partial F$ intersects $F^o$ infinitely many times forward and backward in time.
\end{enumerate}
\end{defi}
When a global surface of section $F\hookrightarrow \mathfrak{M}$ exists, the dynamics on $\mathfrak{M}$ is encoded by the diffeomorphism $g: F^o \to F^o$ given by the first return map. We shall see a global surface of section as a page of an open book decomposition of $(\mathfrak{M},\xi)$.

\begin{defi}\label{defi: Openbook}
An open book decomposition of $(\mathfrak{M},\xi)$ is a pair $\mathcal{O}=(B,\Phi),$ where  $B\subset (\mathfrak{M},\xi)$ is a transverse link, called the binding of $\mathcal{O}$, and $\Phi: \mathfrak{M}\setminus B\rightarrow S^1$ is a fibration so that each page $\Phi^{-1}(\theta),\ \theta\in S^1,$ is a properly embedded surface in $\mathfrak{M}\setminus B$ whose closure is a  compact embedded surface with boundary $B$. Moreover, there exists a defining contact form $\alpha$ on $(\mathfrak{M},\xi)$ so that its Reeb vector field $R_\alpha$ is transverse to every page $\Phi^{-1}(\theta)$, tangent to the binding $B$, and the orientation on $B$ induced by $R_\alpha$ coincides with the orientation induced by the pages. Here, $\mathfrak{M}$ is oriented by $\alpha \wedge d\alpha >0$, and the pages are co-oriented by $R_\alpha$.
\end{defi}

Our first result concerns the existence of brake-orbits and open book decompositions whose pages are  global surfaces of section.

\begin{thm}\label{main1}Assume that $\varpi,\alpha>0$ and $h<0$ satisfy   \eqref{cond1}, and let $\zeta_e\subset \mathfrak{M}\cap \{z=0\}$ be the Euler orbit.  Then
\begin{itemize}
    \item[(i)] $\zeta_e$ is the  binding of an open book decomposition of $(\mathfrak{M},\xi)$ whose pages are disk-like global surfaces of section.
    \item[(ii)] There exists a simple $z$-symmetric  brake orbit $\zeta_{z}\subset \mathfrak{M}$ so that the Hopf link $\zeta_e \cup \zeta_{z}\subset (\mathfrak{M}, \xi)$  is the binding of an open book decomposition  whose pages are annulus-like global surfaces of section.
\end{itemize}
\end{thm}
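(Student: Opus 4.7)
My plan is to treat the two parts of Theorem \ref{main1} separately, using in each case a general criterion from Symplectic Dynamics that upgrades a transverse link of Hopf fibers in $(\mathfrak{M},\xi)$ to the binding of an open book whose pages are global surfaces of section. For part (i), I first observe from \eqref{ham} that $\{z=p_z=0\}\cap\mathfrak{M}$ is invariant under the flow and projects onto the segment $\mathcal{H}\cap\{z=0\}=[r_-,r_+]\times\{0\}$, on which the reduced one-degree-of-freedom system has a single periodic orbit $\zeta_e$. Since $\hat\zeta_e$ is a non-self-intersecting arc, $\zeta_e$ is a simple brake orbit and thus an unknotted Hopf fiber with self-linking $-1$. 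The flow commutes with the symplectic involution $\tau:(p_r,p_z,r,z)\mapsto (p_r,-p_z,r,-z)$ whose fixed set is exactly $\{z=p_z=0\}$, and the only bounded trajectory contained in this fixed set is $\zeta_e$ itself. Consequently, any other periodic orbit $\zeta'$ visits both $\{z>0\}$ and $\{z<0\}$, and a signed count of its intersections with a Seifert disk for $\zeta_e$ lying in $\{z=0\}$ shows $\mathrm{link}(\zeta_e,\zeta')\geq 1$. Combining this with Proposition \ref{prop: rho_e}, which gives $\rho_e>2$ and hence Conley--Zehnder index at least $3$ for $\zeta_e$, the Hryniewicz criterion---an unknotted periodic orbit of self-linking $-1$, index $\geq 3$, linked with every other periodic orbit, bounds a disk-like global surface of section that extends to an open book---yields (i).

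For part (ii), I construct $\zeta_z$ by a Birkhoff shooting argument along the upper arc $A:=\partial\mathcal{H}\cap\{z\geq 0\}$, running from $(r_+,0)$ to $(r_-,0)$. For each $q\in A$, launch the flow with zero initial velocity and follow it until its first return to $\partial\mathcal{H}$, denoted $F(q)$; by the brake reversibility $(p_r,p_z,r,z)\mapsto(-p_r,-p_z,r,z)$ combined with time reversal, $q$ generates a closed $z$-symmetric brake orbit precisely when $F(q)=\tau(q)$. At the two endpoints of $A$, $F$ coincides with the Euler first return, mapping $(r_+,0)\mapsto(r_-,0)$ and $(r_-,0)\mapsto(r_+,0)$, while $\tau$ is the identity on these endpoints. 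The images $F(A)$ and $\tau(A)$ therefore give continuous arcs on $\partial\mathcal{H}$ joining $(r_+,0)$ and $(r_-,0)$ in opposite directions; once one knows $F(A)$ lies in the lower arc $\partial\mathcal{H}\cap\{z\leq 0\}$ near the two endpoints, a standard intermediate-value argument forces an interior $q\in A$ with $F(q)=\tau(q)$, producing the desired $\zeta_z$. I would verify that $\hat\zeta_z$ is simple and meets $\{z=0\}$ transversely at exactly one point, so $\hat\zeta_e$ and $\hat\zeta_z$ cross once in $\mathcal{H}$ and $\zeta_e\cup\zeta_z$ is a Hopf link.

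To conclude (ii), I check that $\zeta_z$ is a Hopf fiber and that it is non-negatively linked with every periodic orbit distinct from $\zeta_e,\zeta_z$. Simplicity of $\hat\zeta_z$ gives the Hopf-fiber property; for the non-negative linking, the $\tau$-symmetry of $\zeta_z$ lets me pair oriented intersections of any $\zeta'$ with a $\tau$-invariant Seifert disk for $\zeta_z$ so that $\tau$-matched pairs contribute non-negatively. Combined with the brake bound $\rho(\zeta_z)\geq 1$ and $\rho_e>2$, the hypotheses of the annular open book theorem for Reeb flows on the tight three-sphere (Hryniewicz--Salom\~ao) are met: a Hopf link of Hopf fibers on $(\mathfrak{M},\xi)$, with suitable index lower bounds and non-negatively linked with every other periodic orbit, bounds an open book whose pages are annulus-like global surfaces of section. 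The main obstacle in the plan is the shooting step: one must guarantee that $F$ is defined globally on $A$ (every brake trajectory from $\{z>0\}$ returns to $\partial\mathcal{H}$ in finite time), and control the behavior of $F$ near the two endpoints of $A$ precisely enough to localize the $\tau$-symmetric shooting point in the interior; the $\tau$-pairing argument that furnishes non-negative linking of $\zeta_z$ with every other orbit is the next most delicate ingredient, since unlike for $\zeta_e$ it cannot be extracted from invariance of a fixed planar set.
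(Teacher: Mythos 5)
Your high-level strategy for both parts matches the paper's: use the Hryniewicz criterion (Theorem~\ref{thm_L_connected}) for (i) and the Hryniewicz--Salom\~ao--Wysocki criterion (Theorem~\ref{thm_Birkhoff}) for (ii), with the second orbit produced by a Birkhoff-type shooting argument. However, there are two genuine gaps, one in each part.

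In part (i), the deduction ``the only bounded trajectory contained in $\{z=p_z=0\}$ is $\zeta_e$, hence any other periodic orbit visits both $\{z>0\}$ and $\{z<0\}$'' is a non sequitur: not lying in the fixed set of the involution $\tau$ does not force an orbit to cross $z=0$. You can repair this (for periodic orbits specifically, $\int_0^T\ddot z\,dt=0$ together with $\ddot z=-V_z<0$ on $\{z>0\}$ forces the orbit across $\{z=0\}$), but even then you still owe the positivity of every intersection with $F$ and the fact that $X_H$ is transverse to $F^o$. The paper gets all of this at once from the uniform angular estimate $\dot\eta>\eta_{\min}>0$ for $p_z+\sqrt{-1}\,z=\rho e^{\sqrt{-1}\eta}$ (see \eqref{etamin}); that estimate, rather than the $\tau$-symmetry, is the engine of the linking argument and also underlies the open-book structure later.

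In part (ii), the more serious gap is the shooting target. You shoot to the first return to $\partial\mathcal{H}$, i.e.\ the first time $(p_r,p_z)=(0,0)$ again. That map is not defined for generic $q\in A$: the zero-velocity set is codimension two in $\mathfrak{M}$, and a trajectory launched from rest need not come to rest again (think of an oscillator with irrational frequency ratio). You flag this yourself as the ``main obstacle,'' but it is not a technical lacuna that can be filled in situ; the construction has to be changed. The paper instead shoots to the page $\Sigma_0\subset\{z=0\}$ --- a codimension-one global surface of section to which every orbit returns with uniformly bounded hitting time by the $\dot\eta$ estimate and Remark~\ref{rem_openbook} --- and detects the $z$-symmetric brake orbit by the sign change of $p_r$ at the first crossing, read off from the linearized flow along $\zeta_e$ near $(r_{\min},0)$ and $(r_{\max},0)$. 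Finally, your ``$\tau$-pairing'' argument for non-negative linking of $\zeta_z$ with other orbits is not spelled out and does not obviously work; the paper instead uses the null-homologous $2$-chain $\Sigma\cup\bar F\cup\bar F_z$ to compute $\mathrm{int}(P',\Sigma)=\mathrm{int}(P',F)+\mathrm{int}(P',F_z)\geq 1$, which is cleaner and sidesteps the symmetry entirely.
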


Although Theorem \ref{main1} can be proved using standard geometric methods, we shall present some more sophisticated results in Symplectic Dynamics that provide the desired open books as projections of finite energy foliations in the symplectization of $\mathfrak{M}$.

%In Section \ref{sec:GlobalSurfSec}, we  provide a concrete construction of open book decomposition $\Sigma_{s\in\mathbb{R}/\mathbb{Z}}$ for Theorem \ref{main1} (i), i.e. each of its page is a global surfaces of section with common boundary $\zeta_e$.
%In particular, $\Sigma_0\cup\Sigma_{1/2}=\mathfrak{M}\cap\{z=0\}$.

Our second result is about the existence of non-resonant Hopf links.

\begin{defi}Let $L=L_1\cup L_2\subset (\mathfrak{M},\xi = \ker \lambda)$ be a Hopf link formed by Reeb orbits whose linking number is $+1$. Let $\rho_1,\rho_2\in \R$ be the respective rotation numbers of $L_1,L_2$. We say that $L$ is non-resonant if $\rho_1^{-1} + \rho_2^{-1} \neq 1,$ or equivalently,
\begin{equation}\label{non-resonance}
\rho_1 - 1 \neq (\rho_2 - 1)^{-1}.
\end{equation}
\end{defi}

For mass ratio $\alpha \gg 0$, we shall prove the existence of an arbitrary number of non-resonant Hopf links as in Theorem \ref{main1}. The non-resonance condition \eqref{non-resonance} implies the existence of infinitely many periodic orbits with prescribed linking numbers with the components of the Hopf link. In fact, this condition is equivalent to a twist condition of the first return map to a page of the open book bounded by the Hopf link.

The following theorem considers large values of $\alpha$, i.e., the symmetric bodies are much heavier than the body  moving along the symmetry axis. As $\alpha$ gets larger,
 the number of brake orbits forming a non-resonant Hopf link with the Euler orbit is arbitrarily large.

\begin{thm}\label{thm: nonresonant implies infy many}
Let $N\in \N$ and  $C>1$ be large numbers, and let $\epsilon>0$ be small. Then
\begin{itemize}
    \item[(i)] There exists $\alpha_0=\alpha_0(N,C,\epsilon)>0,$ so that if $\alpha> \alpha_0$, and
     \begin{equation}\label{cond2}
    1<2\varpi^2<1+\alpha^{-2},
    \end{equation}
     then the energy surface $\mathfrak{M}$ admits at least $N$ distinct simple $z$-symmetric brake orbits $\zeta_{z_1},\ldots,\zeta_{z_N}$ so that
     $L_i = \zeta_{z_i} \cup \zeta_e$ is a Hopf link for every $i.$
     The rotation number of $\zeta_{z_i}$ satisfies
     $\rho_{z_i} > C,\  \forall i,$
     and the rotation number  of the Euler orbit satisfies
     $|\rho_e - (2\sqrt{2}+1)| < \epsilon.$
     In particular, $L_i$ is  non-resonant for every $i$.
     \item[(ii)] Let $\zeta_{z_i}$ be as in (i).  For every co-prime  integers $p,q>0$  satisfying
$$
 (\rho_{z_i}-1)^{-1}< p/q < \rho_e-1,
$$
there exists a periodic orbit $\zeta_{p,q} \subset \mathfrak{M}$ so that
$$
\lk(\zeta_{p,q},\zeta_e) = p \quad \mbox{ and } \quad \lk(\zeta_{p,q},\zeta_{z_i}) = q.
$$
\end{itemize}
\end{thm}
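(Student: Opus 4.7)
The plan is to combine a large-$\alpha$ rescaling that produces a near-integrable limit with Theorem \ref{main1} and a Poincar\'e--Birkhoff argument on the resulting annulus-like pages. Introduce rescaled vertical coordinates $\tilde z = \sqrt{1+2\alpha}\,z$, $\tilde p_z = p_z/\sqrt{1+2\alpha}$; in the new variables the Hamiltonian splits as a Kepler-type term in $(r,p_r)$ with effective coupling $1+4\alpha^{-1}$ and angular momentum $\varpi$, plus a $\tilde z$-term whose leading part near $\tilde z=0$ gives the harmonic equation $\ddot{\tilde z} = -\tfrac{4\alpha^{-1}(1+2\alpha)}{r^{3}}\tilde z$. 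Under \eqref{cond2} one has $2\varpi^{2}\to 1^{+}$, so the Euler orbit reduces to a small oscillation about the minimum $r_0 = \varpi^{2}/(1+4\alpha^{-1})\to 1/2$ of the effective radial potential, with radial frequency $\omega_r \to 2\sqrt{2}$, while the transverse vertical frequency is $\omega_z\to 8$. Thus the Floquet ratio $\omega_z/\omega_r \to 2\sqrt{2}$; adding the standard $+1$ shift coming from the trivialization of $\xi$ used throughout the paper (cf.\ Proposition \ref{prop: rho_e}) yields $\rho_e \to 2\sqrt{2}+1$ as $\alpha\to\infty$, giving the $\epsilon$-estimate for $\alpha$ large enough.

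For the brake orbits $\zeta_{z_i}$, apply a Birkhoff-type shooting argument on symmetric initial data $\{(p_r,p_z,r,z)\in\mathfrak{M}: p_r=p_z=0,\ z>0\}$, which parametrizes the upper half of $\partial\mathcal{H}$. In the limit system the $(r,p_r)$-motion is almost frozen while the $(\tilde z,\tilde p_z)$-motion is an almost-harmonic oscillator whose frequency grows with $\alpha$, so a shooting trajectory executes many vertical returns before reaching the zero-velocity set again. Stratifying the shooting data by the number $k$ of vertical oscillations before return and using an intermediate-value/winding argument produces, for $\alpha$ large, at least $N$ simple $z$-symmetric brake orbits $\zeta_{z_1},\dots,\zeta_{z_N}$; simplicity is inherited from the near-integrable picture (the projected trajectories cross $\{z=0\}$ transversely at exactly one point), so each $\zeta_{z_i}$ is a Hopf fiber and $\zeta_e\cup\zeta_{z_i}$ is a Hopf link by the argument of Theorem \ref{main1}. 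The rotation number $\rho_{z_i}$ is controlled by $k_i$ and therefore can be made $>C$; since $\rho_e$ stays near $2\sqrt{2}+1$ and $\rho_{z_i}\to\infty$, \eqref{non-resonance} holds for $\alpha$ sufficiently large, completing (i).

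For (ii), apply Theorem \ref{main1}(ii) to each Hopf link $L_i=\zeta_e\cup\zeta_{z_i}$ to get an open book of $(\mathfrak{M},\xi)$ whose pages are annulus-like global surfaces of section, and let $g_i$ denote the area-preserving first return map on an open annulus $A_i\cong (0,1)\times S^1$. The boundary rotation numbers of $g_i$ are obtained from the Reeb rotation numbers of the binding components via the standard open-book correspondence; they equal $\rho_e-1$ along the $\zeta_e$-end and $(\rho_{z_i}-1)^{-1}$ along the $\zeta_{z_i}$-end, and \eqref{non-resonance} is precisely the assertion that these are distinct, so $g_i$ is a (possibly non-monotone) twist map on $A_i$. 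For every coprime pair $(p,q)$ with $(\rho_{z_i}-1)^{-1}<p/q<\rho_e-1$, the Poincar\'e--Birkhoff theorem (or its extensions to area-preserving open-annulus maps by Franks and Le Calvez) yields a $(p,q)$-periodic point of $g_i$; its orbit in $\mathfrak{M}$ closes to a Reeb orbit $\zeta_{p,q}$, and counting intersections of $\zeta_{p,q}$ with the two half-page families on either side of the binding gives $\lk(\zeta_{p,q},\zeta_e)=p$ and $\lk(\zeta_{p,q},\zeta_{z_i})=q$.

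The hardest part is controlling the joint limit $\alpha\to\infty$, $2\varpi^{2}\searrow 1$: the Euler orbit degenerates toward a relative equilibrium while the brake orbits appear through a fast-oscillation regime, and one must keep both the rotation-number estimates and the simplicity/Hopf-fiber property uniform through this degeneration, in particular ruling out self-intersection of the projected brake orbits and ensuring they stay disjoint from $\zeta_e$. A secondary but essential step is the bookkeeping that identifies the annulus boundary rotation numbers of $g_i$ with $\rho_e-1$ and $(\rho_{z_i}-1)^{-1}$ under the trivializations of $\xi$ used throughout, which is exactly what licenses calling \eqref{non-resonance} a twist condition and gives the precise rational range of $(p,q)$ in~(ii).
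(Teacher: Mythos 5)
Your high-level plan matches the paper's: rescale for the large-$\alpha$ limit, obtain a near-integrable system, produce $z$-symmetric brake orbits by Birkhoff shooting, and force $\zeta_{p,q}$ via a Poincar\'e--Birkhoff argument. The rotation-number computation $\rho_e\to 2\sqrt{2}+1$ via the frequency ratio plus the $+1$ shift agrees with Proposition~\ref{prop: rho_e}. For part~(ii) you apply classical Poincar\'e--Birkhoff to the first return map of the annulus-like global surface of section from Theorem~\ref{main1}(ii); the paper instead invokes Theorem~\ref{thm_PB} (Hryniewicz--Momin--Salom\~ao), a Reeb-flow version that avoids any global surface of section. Both are legitimate, and the paper explicitly acknowledges your alternative route.

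However, your description of the mechanism producing $\zeta_{z_i}$ is backwards, and as written it would not generate the required family. You claim the $\tilde z$-motion is ``an almost-harmonic oscillator whose frequency grows with $\alpha$'' and that ``a shooting trajectory executes many vertical returns before reaching the zero-velocity set again.'' Neither is true: the linearized vertical frequency near $z=0$ converges to $8$ (consistent with your own $\omega_z/\omega_r \to 2\sqrt{2}$, inconsistent with ``grows with $\alpha$''), and for a brake orbit the $z$-component is \emph{monotone} on the half-period, so there are no vertical oscillations whatsoever before reaching $z=0$. The actual mechanism, after the rescaling $v=(r-n)/m$, $u=z/m$ of equation~\eqref{ch_cooridnates}, is that the Hill region becomes unbounded in $u$; a brake orbit starts at $u_0>0$ on the zero-velocity curve, its hitting time $T_\infty(u_0)$ to $\{u=0\}$ increases strictly to $+\infty$ as $u_0\to\infty$ (Lemma~\ref{lem_T_infty}), and during this long excursion the \emph{radial} coordinate $v$ oscillates at the fixed frequency $2\sqrt{2}$ through arbitrarily many periods. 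Brake orbits occur precisely when $T_\infty(u_0)$ matches a multiple of the radial half-period, giving $\rho(x_\infty^{(i)})=1+2k_i$ (equation~\eqref{rho_x_infty}) which grows with $k_i$, not because of any fast vertical oscillation. Relatedly, your rescaling $\tilde z=\sqrt{1+2\alpha}\,z$ touches only the vertical variable, but the radial range $[n-m,n+m]$ also collapses as $m\to 0$, so without the simultaneous rescaling of $r$ the putative ``limit system'' is singular; the paper's change of variables is engineered exactly so that $K\to K_\infty$ in $C^\infty_{\rm loc}$ (Lemma~\ref{lem_Kinfty}) and the limit decouples into the $v$-harmonic oscillator and the one-degree-of-freedom nonlinear $u$-oscillator whose hitting time controls the stratification.
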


The proof of Theorem \ref{thm: nonresonant implies infy many} relies on  a suitable re-scaling of the variables $(p_r,p_z,$ $r,z)$ so that the new system of equations admits a limit as $\alpha \to +\infty$. The periodic orbits in Theorem \ref{thm: nonresonant implies infy many}-(i) are then obtained as natural continuations of similar periodic orbits of the limiting system.

Although the Hill region becomes unbounded in the $z$-direction as $\alpha$ goes to $+\infty$, the $z$-symmetric brake orbits obtained in Theorem \ref{thm: nonresonant implies infy many}-(i) are mostly concentrated near $z=0$. The periodic orbits $\zeta_{p,q}$ in Theorem \ref{thm: nonresonant implies infy many}-(ii) follow from the Poincar\'e-Birkhoff Theorem applied to the first return map of the annulus-like global surface of section bounded by the Hopf link. However, to obtain such orbits we shall instead  apply a more general version of the Poincar\'e-Birkhoff Theorem for Reeb flows on the tight three-sphere, which does not make use of any global surface of section.

Next,  we explore the reversibility and the $z$-symmetry of the equations of motion in \eqref{ham} and discuss the existence of new types of periodic orbits. We set
$$ \mathcal{B}_+ := \partial\mathcal{H} \cap \{(r, z): z \ge 0 \}, \quad
\mathcal{B}_- := \partial\mathcal{H} \cap \{(r, z): z \le 0 \}. $$
\begin{defi}
\label{defi: BrakeOrbits} Let $\zt(t)\in \mathfrak{M}$ be a brake orbit, $\hat \zeta(t) :=\Pi_{r,z}(\zeta(t)) \in \mathcal{H}$,  with $\hat \zt(a), \hat \zt(b) \in \partial \mathcal{H}$ and $\hat \zt(t) \notin \partial \mathcal{H}$, for every $t \in (a, b)$.
\begin{enumerate}
\item[(i)] We say that $\zt(t)$ is a type-I brake orbit, if $\hat \zt(a) \in \cb_+$ and $\hat \zt(b) \in \cb_-$, or $\hat \zt(a) \in \cb_-$ and $\hat \zt(b) \in \cb_+$.
\item[(ii)] We say that $\zt(t)$ is a type-II brake orbit, if $\hat \zt(a) \in \cb_+$ and $\hat \zt(b) \in \cb_+$, or $\hat \zt(a) \in \cb_-$ and $\hat \zt(b) \in \cb_-$.
\end{enumerate}
\end{defi}
Notice that, except for the Euler orbit, which is both of type I and II, every brake orbit is either of type I or II.

\begin{figure}[ht]
\centering
\includegraphics[width=0.7\textwidth]{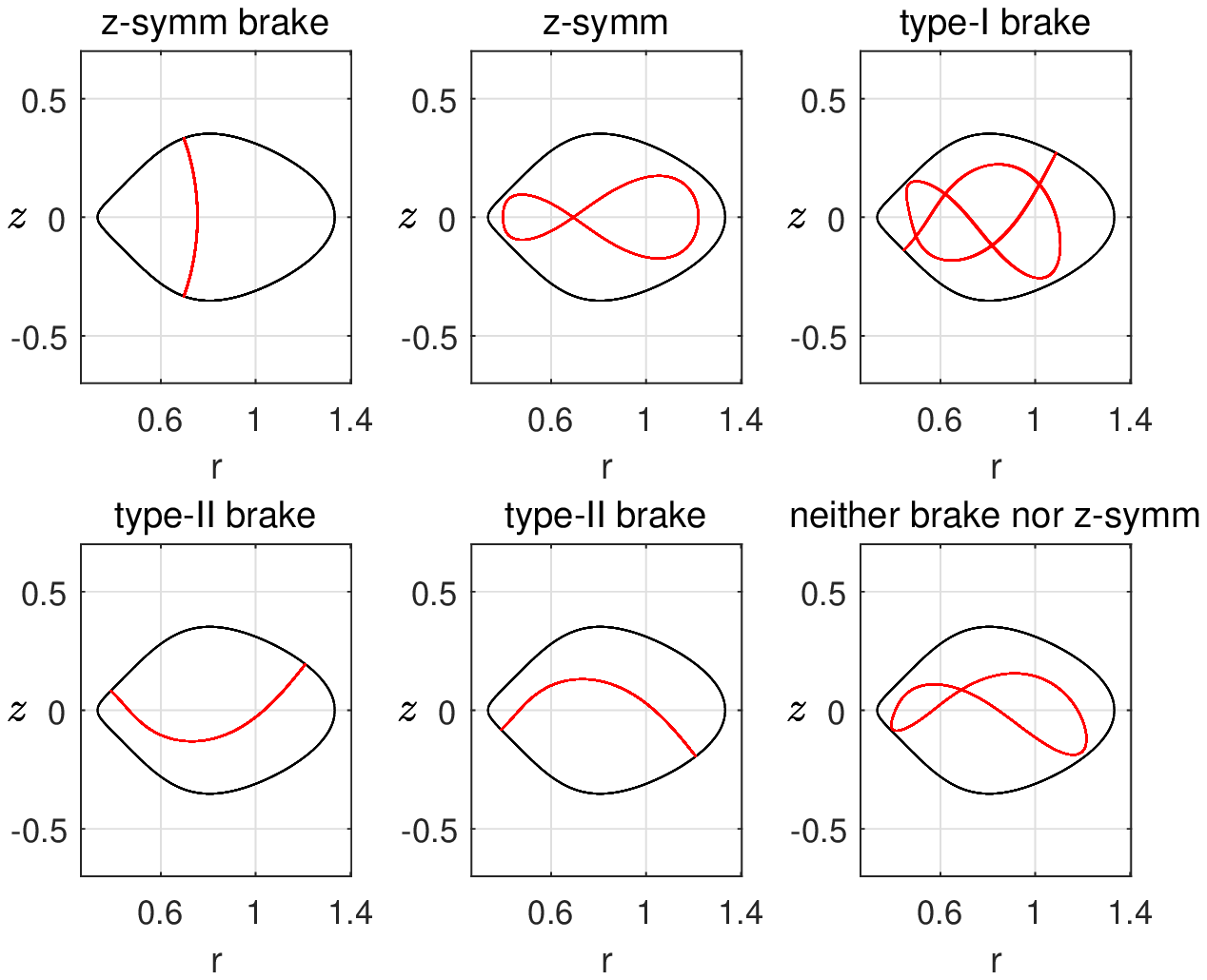}
 \caption{Different types of periodic orbits for $\beta = \mathfrak{e}=0.6$.}
 \label{picture of symmetric orbits}
\end{figure}

\begin{thm}\label{thm infy z brake}
Let $\zeta_z\subset \mathfrak{M} \setminus \zeta_e$ be a  $z$-symmetric simple brake orbit. If the Hopf link $\zt_e \cup \zeta_z$ is non-resonant,   then
\begin{enumerate}
\item[(i)] There exist infinitely many $z$-symmetric brake orbits;
\item[(ii)] There exist infinitely many type-I brake orbits which are not $z$-symmetric;
\item[(iii)] There exist infinitely many  type-II brake orbits;
\item[(iv)] There exist infinitely many $z$-symmetric periodic orbits that are not brake orbits.
\end{enumerate}
\end{thm}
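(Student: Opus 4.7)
The plan is to combine the annulus-like global surface of section from Theorem~\ref{main1}(ii), whose boundary is the non-resonant Hopf link $\zeta_e \cup \zeta_z$, with the two natural involutions of \eqref{ham}, and then apply an equivariant refinement of the Poincar\'e--Birkhoff theorem. Let $\sigma_1(p_r,p_z,r,z)=(-p_r,-p_z,r,z)$ (time-reversal, anti-symplectic) and $\sigma_2(p_r,p_z,r,z)=(p_r,-p_z,r,-z)$ ($z$-reflection, symplectic). Both preserve $H$ and hence restrict to involutions of $\mathfrak{M}$; a periodic orbit meets $\mathrm{Fix}(\sigma_1)=\{p_r=p_z=0\}\cap\mathfrak{M}$ if and only if it is a brake orbit, and it is $\sigma_2$-invariant if and only if it is $z$-symmetric. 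Both components of the Hopf link are invariant under $\sigma_1$ and $\sigma_2$: $\zeta_e\subset\mathrm{Fix}(\sigma_2)$ is a brake orbit, and $\zeta_z$ is by hypothesis $z$-symmetric and a brake orbit.

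First I would arrange that one annulus-like page $F$ of the open book is invariant under both $\sigma_1$ and $\sigma_2$. Since the open book arises from a finite energy foliation anchored at $\zeta_e\cup\zeta_z$, equivariance under the group $\langle\sigma_1,\sigma_2\rangle$ is installed by choosing a symmetric $d\lambda$-compatible almost complex structure on the symplectization. The induced involutions $\tau_j:=\sigma_j|_F$ then satisfy $g\circ\tau_2=\tau_2\circ g$ and $g\circ\tau_1=\tau_1\circ g^{-1}$, where $g:F^o\to F^o$ is the first return map; both $\mathrm{Fix}(\tau_1)$ and $\mathrm{Fix}(\tau_2)$ are embedded arcs joining the two boundary components of $F$. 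The non-resonance hypothesis \eqref{non-resonance} together with the boundary rotation numbers $\rho_e$ and $\rho_z$ yields, as in the proof of Theorem~\ref{thm: nonresonant implies infy many}(ii), a twist condition for $g$ making every rational $p/q$ in a nonempty open interval a realizable rotation number.

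An equivariant Poincar\'e--Birkhoff argument -- run on a lift of $g$ to the universal cover, together with the lifts of $\tau_1,\tau_2$ which become affine reflections -- then produces, for each admissible coprime pair $(p,q)$, symmetric periodic points of four distinct kinds. Points on $\mathrm{Fix}(\tau_1)$ fixed by $g^n$ give brake orbits; points on $\mathrm{Fix}(\tau_2)$ fixed by $g^n$ give $z$-symmetric periodic orbits; and points on $\mathrm{Fix}(\tau_1)\cap\mathrm{Fix}(\tau_2)$ fixed by $g^n$ give $z$-symmetric brake orbits, establishing (i). For the brake orbits in the first class, whether the two brake instants $\hat\zeta(a),\hat\zeta(b)\in\partial\mathcal{H}$ lie on the same half $\mathcal{B}_\pm$ of the zero-velocity curve or on opposite halves sorts them into types~II and~I, yielding (ii) and (iii); requiring the $z$-symmetric periodic orbits to avoid $\partial\mathcal{H}$ gives (iv). Infinitude in each family follows by realizing infinitely many distinct rotation numbers $p/q$ in the open admissible interval.

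I expect the main obstacle to be the equivariant Poincar\'e--Birkhoff step combined with the sign bookkeeping that separates types~I and~II. The equivariant generating function (or, alternatively, an equivariant continuation of the pseudoholomorphic curve argument behind Theorem~\ref{thm: nonresonant implies infy many}(ii)) requires both the symmetric open book from the first step and precise control of the lifts of $\tau_1,\tau_2$, together with transversality compatible with each involution. Once equivariance is in place, the type distinction is combinatorial: $\mathrm{Fix}(\tau_1)$ is cut by $\mathrm{Fix}(\tau_2)$ into two half-arcs projecting to $\mathcal{B}_+$ and $\mathcal{B}_-$ respectively, and the asymptotic distribution of the symmetric periodic points among these half-arcs, governed by their rotation numbers, produces infinitely many brake orbits of each type.
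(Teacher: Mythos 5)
Your overall strategy---combine the symmetries of the flow with the twist coming from non-resonance and produce symmetric periodic points by a Poincar\'e--Birkhoff-type argument---is the same spirit as the paper's, but the concrete setup you propose has a genuine obstruction, and the final classification step does not work as stated.

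The central gap is your assumption that one can arrange a page $F$ of the open book to be invariant under both $\sigma_1(p_r,p_z,r,z)=(-p_r,-p_z,r,z)$ and $\sigma_2(p_r,p_z,r,z)=(p_r,-p_z,r,-z)$, with restricted involutions $\tau_1,\tau_2:F\to F$. In fact no such page exists. In the explicit open book of Remark~\ref{rem_openbook}, the pages $\Sigma_s$ are indexed by the argument of $p_z+\sqrt{-1}z$, and on the $(p_z,z)$-plane $\sigma_1$ acts by $(p_z,z)\mapsto(-p_z,z)$ (so $\Sigma_s\mapsto\Sigma_{1/2-s}$), while $\sigma_2$ acts by $(p_z,z)\mapsto(-p_z,-z)$ (so $\Sigma_s\mapsto\Sigma_{s+1/2}$). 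Only the composite $\sigma_3=\sigma_1\circ\sigma_2$, acting by $(p_z,z)\mapsto(p_z,-z)$, fixes $\Sigma_0$, where it projects under $\mathcal{P}_0$ exactly to $\mathcal{N}:(p_r,r)\mapsto(-p_r,r)$. The equation $\sigma_2(\Sigma_s)=\Sigma_{s+1/2}$ has no fixed $s$, so no choice of almost complex structure can make a single page invariant under the full group $\langle\sigma_1,\sigma_2\rangle$; the action on the $S^1$ of pages has no common fixed point. The paper circumvents this precisely by working with the half-period hitting map $\bar g=\tilde g_{1/2}:\Upsilon\to\Upsilon$, which is reversible for $\mathcal{N}$ (Proposition~\ref{prop reversible g bar}) because $\sigma_1,\sigma_2$ exchange $\Sigma_0$ and $\Sigma_{1/2}$; the full return map is recovered as $\check g=\bar g^2$.

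A second issue: your characterization of $z$-symmetric brake orbits as ``points on $\mathrm{Fix}(\tau_1)\cap\mathrm{Fix}(\tau_2)$ fixed by $g^n$'' cannot produce infinitely many orbits, since that intersection is zero-dimensional. The paper's Proposition~\ref{prop: symmetric orbits and periodic points} shows the correct dictionary is finer: brake orbits correspond to \emph{odd symmetric} periodic orbits of $\bar g$, while $z$-symmetric brake orbits correspond to symmetric periodic orbits of $\check g$ with \emph{odd minimal period}---the distinction lives in the period parity, not in membership of a smaller fixed set. Finally, the type I/II distinction is not addressed by the involutions on $F$ alone: the paper pushes the two zero-velocity arcs $\mathcal{B}_{\pm,i}$ forward to curves $\mathcal{C}_{\pm,i}\subset\Upsilon$ joining $\partial\Upsilon$ to the fixed point $p$, and reads off the orbit type from \emph{which pair} of these curves an iterate $\check g^n(\mathcal{C}_{\cdot,\cdot})$ intersects (Lemma~\ref{lem:PerOrbitSigma}); the non-resonance condition $\mathrm{Rot}(p)\neq\mathrm{Rot}(\partial\Upsilon)$ (Lemma~\ref{lem:Rot}) forces the number of such intersections to grow without bound. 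Your proposal would need this explicit bookkeeping regardless of whether one works on the annulus or the Euler disk; the disk is preferred in the paper precisely because it gives a single interior fixed point $p$ that cleanly cuts $\mathrm{Fix}(\mathcal{N})$ into the arcs $D_1,D_2$ and anchors all four intersection types.
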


The proof of Theorem \ref{thm infy z brake} relies on an intersection argument involving the iterates of certain curves in the disk-like global surface of section bounded by the Euler orbit, and the fact that the first return map satisfies a twist condition.

 Before stating the next result, we consider new parameters $\beta, \mathfrak{e}\in (0,1),$ defined as
$$
\beta := (1+4\alpha^{-1})^{-1} \ \ \ \ \mbox{ and } \ \ \ \ \mathfrak{e} := (1-2\varpi^2 \beta^2)^{1/2}.
$$
They are in one-to-one correspondence with $\alpha$ and $\varpi$.  Conditions \ref{cond1} with $h=-1$ are equivalent to requiring that the pair $(\beta,\mathfrak{e})$ belongs to
$$
\mathcal{D}=\{(\beta,\mathfrak{e}): 0<\beta^2 + \mathfrak{e}^2 <1,  \beta,\mathfrak{e}>0\}.
$$

\begin{thm}\label{thm: rational rotation implies infy many}
Let $\rho_e >0$ be the rotation number of $\zeta_e$. Then
\begin{itemize}
    \item[(i)] If $\rho_e \in \Q$ then  $\mathfrak{M}$ admits infinitely many $z$-symmetric orbits.
    \item[(ii)] If $\rho_e = p/q\in \Q$, where $p>0$ is odd and $q>0$ is even, then $\mathfrak{M}$ admits infinitely many $z$-symmetric brake orbits.
    \end{itemize}
\end{thm}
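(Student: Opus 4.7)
The plan is to combine the disk-like global surface of section $D$ bounded by $\zeta_e$ from Theorem~\ref{main1}(i) with the $z$-reflection $\sigma\colon(p_r,p_z,r,z)\mapsto(p_r,-p_z,r,-z)$, which commutes with the Hamiltonian flow. Since $\sigma$ pointwise fixes $\zeta_e\subset\{z=p_z=0\}$ and acts as $-\mathrm{Id}$ on the normal bundle of $\zeta_e$ in $\mathfrak{M}$, it permutes the pages of the associated open book by $D_\theta\mapsto D_{\theta+\pi}$. Fix $D=D_0$, let $g_{1/2}\colon D_0\to D_\pi$ denote the first-hit map of the flow, and define the \emph{half-return map}
\[
h:=\sigma\circ g_{1/2}\colon D\longrightarrow D.
\]
Since $\sigma$ preserves the contact form and $g_{1/2}$ preserves the transverse area form, $h$ is area-preserving; one checks directly that $h^2=g$ equals the full first return map and that $h|_{\partial D}=\mathrm{id}$.

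The crucial dictionary is that a periodic orbit of the flow distinct from $\zeta_e$ is $z$-symmetric if and only if its intersections with $D$ form a periodic orbit of $h$ of \emph{odd} minimal period. Moreover, since $h^2=g$ has asymptotic boundary rotation $\rho_e^{-1}$ on $\zeta_e$ (in the natural trivialization), the asymptotic boundary rotation of $h$ is $(2\rho_e)^{-1}$. Writing $\rho_e=p/q$ in lowest terms, $h^k$ has integer asymptotic boundary rotation for the smallest $k\geq 1$ equal to $2p/\gcd(2p,q)$; a direct divisibility check shows $\gcd(2p,q)=2$ (so that this $k$ equals the \emph{odd} integer $p$) precisely when $p$ is odd and $q$ is even, while in every other case the smallest such $k$ is even.

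For part (ii), with $p$ odd and $q$ even, the map $h^p$ has asymptotically trivial boundary rotation. Collapsing $\partial D$ to a point produces an area-preserving homeomorphism of $S^2$ with a distinguished fixed point at the pole; Franks' theorem then yields either $h^p=\mathrm{id}$ in $D^\circ$ or infinitely many periodic points of $h^p$ in $D^\circ$. In either case there are infinitely many fixed points of $h^p$ in $D^\circ$, each with $h$-period dividing the odd $p$, hence odd, giving distinct $z$-symmetric periodic orbits of the flow. The brake property then follows from the mechanical reversibility $\tau\colon(p_r,p_z,r,z)\mapsto(-p_r,-p_z,r,z)$: the combined $\mathbb{Z}/2\times\mathbb{Z}/2$-action generated by $\sigma$ and $\tau$ acting on an orbit of odd $h$-period dividing $p$ forces an additional $\tau$-invariance, and hence an intersection with $\mathrm{Fix}(\tau)\cap\mathfrak{M}=\partial\mathcal{H}$, which is the brake condition.

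For part (i), without the parity assumption on $(p,q)$, the smallest $k$ with $h^k|_{\partial D}$ asymptotically trivial is even, so a direct application of the same Franks-type argument furnishes infinitely many periodic orbits of $h$ but of possibly even period only. The main obstacle is therefore to extract infinitely many \emph{odd}-period $h$-periodic orbits from this family; whereas for part (ii) the oddness is automatic from divisibility by the odd $p$, for part (i) one must supplement with an equivariant argument, for example a $\sigma$-equivariant Lefschetz or Smith-theoretic count on $D$, or a refinement using the annulus-like global surface of section from Theorem~\ref{main1}(ii) with both $\zeta_e$ and $\zeta_z$ as reference orbits.
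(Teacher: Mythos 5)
Your approach is genuinely different from the paper's, but it has several gaps that prevent it from closing even for part (ii), and you already flag part (i) as incomplete.

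The paper works on the disk $\Upsilon=\Pi_{p_r,r}(\Sigma_0)$ with the half-return map $\bar g=\tilde g_{1/2}$ and full return map $\check g=\bar g^2$, and the crucial structural fact is the \emph{reversibility} $\bar g\circ\mathcal N\circ\bar g=\mathcal N$ coming from the time-reversal involution $\tau\colon(p_r,p_z,r,z)\mapsto(-p_r,-p_z,r,z)$ (Proposition \ref{prop reversible g bar}). The multiplicity is then produced by Kang's \emph{reversible} refinement of Franks' theorem (Theorem \ref{thm: Kang}), which outputs infinitely many \emph{symmetric} periodic points, respectively with odd period in case (ii). Proposition \ref{prop: symmetric orbits and periodic points} then translates these into $z$-symmetric (resp.\ $z$-symmetric brake) orbits. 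Your construction instead builds a ``$\sigma$-half-return'' map $h=\sigma\circ g_{1/2}$, where $\sigma$ is the $z$-reflection, and applies the plain Franks theorem after collapsing the boundary. The two symmetries $\sigma$ and $\tau$ play very different roles: the paper encodes $\tau$ into the reversibility of $\bar g$ and uses Kang's theorem precisely because Franks alone does not see the symmetry class of the periodic points it produces.

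Concrete problems with your argument.
First, in part (ii) you invoke Franks to get infinitely many \emph{periodic} points of $h^p$, and then slide to ``infinitely many fixed points of $h^p$''. Franks' theorem does not give fixed points; a periodic point of $h^p$ of period $m>1$ has $h$-period dividing $pm$, which need not divide $p$ and need not be odd. The oddness you need is therefore not controlled. This is exactly the gap that Kang's reversible version of Franks closes: it produces infinitely many \emph{symmetric} periodic points, and in its second form infinitely many with \emph{odd} period, which is what the paper uses.
Second, the deduction of the brake property via the $\mathbb{Z}/2\times\mathbb{Z}/2$-action generated by $\sigma$ and $\tau$ is not established: having odd $h$-period implies $\sigma$-invariance of the orbit (and this direction of your dictionary is correct), but there is no reason such an orbit is also $\tau$-invariant, i.e.\ intersects $\mathrm{Fix}(\tau)\cap\mathfrak M$. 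In the paper this step is Proposition \ref{prop: symmetric orbits and periodic points}(iii), whose proof is specifically about symmetric periodic points of $\check g$ with respect to $\mathcal N$, not about $\sigma$-invariance.
Third, your boundary rotation number is off: you write $\rho_e^{-1}$ for the first-return map on $\partial D$, but the correct value is $1/(\rho_e-1)$ (Lemma \ref{lem:Rot}); the ``$-1$'' comes from the winding of $(\partial_r V,p_r)$ along $\zeta_e$. Your later parity analysis should be redone with $q/(p-q)$ rather than $q/p$.
Fourth, for part (i) you explicitly acknowledge there is no argument: plain Franks gives arbitrary periods, and nothing in your setup forces any of them to be odd. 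In the paper, part (i) does not need odd periods at all — Proposition \ref{prop: symmetric orbits and periodic points}(ii) shows that a symmetric periodic orbit of $\check g$ (any period) already gives a $z$-symmetric orbit; rationality of $\rho_e$ provides a periodic point on $\partial\Upsilon$, $\zeta_z$ provides a symmetric fixed point, and Kang's theorem (i) finishes. Your dictionary, which requires odd $h$-period for $z$-symmetry, is actually stronger than needed for (i) and puts an obstacle in your own path.

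Finally, your dictionary itself is only half-established: odd $h$-period does imply that the orbit is a $\sigma$-invariant closed orbit, but the converse that every $z$-symmetric orbit (in the paper's sense of symmetric projection to the Hill region) gives an odd $h$-periodic orbit is false — the remark after Proposition \ref{prop: symmetric orbits and periodic points} gives examples of $z$-symmetric orbits that do not hit $\{z=p_r=0\}$ and are not $\sigma$-invariant as sets. This is not fatal for producing orbits, but it should not be stated as an equivalence.

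In short: the construction $h=\sigma\circ g_{1/2}$ is a reasonable idea, but plain Franks is the wrong tool because it does not respect the reversibility structure; replacing it by Kang's reversible Franks theorem — which is exactly what the paper does, but applied to $\check g$ with the $\mathcal N$-reversibility — is the missing ingredient.
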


The existence of infinitely many periodic orbits as in Theorem \ref{thm: rational rotation implies infy many} can be proved for $(\beta,\mathfrak{e})$ in a dense subset of $\mathcal{D}$  containing a non-empty open set.
To prove it, we investigate the stability of the Euler orbit and find an open subset of parameters where the Euler orbit is negative hyperbolic.

\begin{thm}\label{thm:Dk}The following statements hold:
\begin{itemize}
     \item[(i)] For each  integer $k\geq 1$, the set $$
   \hat{\mathcal{E}}_k:=\{(\beta,\mathfrak{e})\in (0,1)\times [0,1): \rho_e = k \mbox{ and } \zeta_e \mbox{ is hyperbolic}\}$$
    is empty.

    \item[(ii)]
   For each integer $k\geq 2$, the set $$
   \mathcal{D}_{k+1/2}:=\{(\beta,\mathfrak{e})\in (0,1)\times [0,1): \rho_e = k+1/2 \mbox{ and } \zeta_e \mbox{ is hyperbolic}\}$$
    contains a non-empty open subset of $[0,1]\times [0,1)$. Moreover, both $\mathcal{D}_{5/2}$ and $\mathcal{D}_{7/2}$ contain non-empty open subsets of $\mathcal{D}.$

 \item[(iii)] The subset $\mathcal{D}_{\mathcal{R}}:=\{(\beta,\mathfrak{e})\in \mathcal{D}: \rho_e \in \Q\}$  is dense in $\mathcal{D}$ and contains a non-empty open set. Moreover, the subset $\mathcal{D}_{{\rm odd / even}}\subset \mathcal{D}$ of parameters $(\beta,\mathfrak{e})$ for which $\rho_e = p/q\in \Q,$ with $p>0$ odd and $q$ even, is dense in $\mathcal{D}$ and contains a non-empty open set.

    \end{itemize}
\end{thm}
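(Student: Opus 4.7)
Restricted to the invariant plane $\{z=0,p_z=0\}$, the motion of $\zeta_e$ is governed by the Kepler-type Hamiltonian $H_0=\tfrac{1}{2}p_r^2+\tfrac{\varpi^2}{2r^2}-\gamma r^{-1}$ with $\gamma=1+4\alpha^{-1}=\beta^{-1}$; the resulting radial oscillation $r(t)$ has Keplerian eccentricity $\mathfrak{e}$ and period $T_e=\pi/(\sqrt{2}\,\beta)$. The linearization of the flow in the normal $(z,p_z)$-plane is the Hill equation $\ddot z+q(t)z=0$ with $q(t)=(\gamma+7)/r(t)^3>0$; since the two turning points of $\zeta_e$ are brake points, $q$ is even about $t=0$ and about $t=T_e/2$. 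Let $M\in SL(2,\R)$ denote the normal monodromy. Then $\mathrm{tr}\,M>2$ corresponds to positive hyperbolicity with $\rho_e\in\Z$, $\mathrm{tr}\,M<-2$ to negative hyperbolicity with $\rho_e\in\tfrac{1}{2}+\Z$, and $\rho_e$ is locally constant on each connected component of the hyperbolic set of parameters.

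\textbf{Part (i).} Writing $A=\Phi(T_e/2)=\begin{pmatrix}c&s\\c'&s'\end{pmatrix}$, the double reflection symmetry of $q$ yields the factorization $M=RA^{-1}RA$ with $R=\mathrm{diag}(1,-1)$, and a direct computation gives $\mathrm{tr}\,M-2=4sc'$. Thus positive hyperbolicity is equivalent to $sc'>0$, and it suffices to show $sc'\le 0$ throughout $(0,1)\times[0,1)$. I will combine the Pr\"ufer-angle monotonicity $\dot{\phi}=\cos^2\phi+q\sin^2\phi>0$ of the Hill equation with the monotonicity of $r(t)$, and hence of $q(t)$, on the half-period $[0,T_e/2]$ between the two brake endpoints, and then run a homotopy argument connecting the Keplerian Hill equation to the constant-coefficient case (where $sc'=-\sin^2(\omega T_e/2)\le 0$ is immediate) along a parameter path that preserves the sign of $sc'$. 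Since hyperbolicity together with $\rho_e\in\Z$ forces positive hyperbolicity, this gives $\hat{\mathcal{E}}_k=\varnothing$ for every $k\ge 1$.

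\textbf{Part (ii).} In the circular limit $\mathfrak{e}=0$ one has $r\equiv\gamma/2$ and $q\equiv 8\beta^2(1+7\beta)$, hence $\mathrm{tr}\,M=2\cos(2\pi\sqrt{1+7\beta})$; up to a constant integer shift (the self-linking contribution of $\zeta_e$ as a Hopf fiber), the limiting rotation number is an explicit function of $\beta$ that sweeps an open interval containing both $5/2$ and $7/2$ as $\beta$ ranges over $(0,1)$. At each $\beta_0$ where this limit equals a half-integer $k+\tfrac{1}{2}$ we have $\mathrm{tr}\,M|_{\mathfrak{e}=0}=-2$; the first-order variation $(\partial_{\mathfrak{e}}\mathrm{tr}\,M)|_{\mathfrak{e}=0}$ vanishes by parity, and the second-order variation, computed by integrating the inhomogeneous Hill equation driven by $\partial_{\mathfrak{e}}q|_{\mathfrak{e}=0}$, is strictly negative, so $\mathrm{tr}\,M$ drops below $-2$ as $\mathfrak{e}$ becomes positive. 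Openness of $\{\mathrm{tr}\,M<-2\}$ then yields a non-empty open subset of $\mathcal{D}_{k+1/2}\cap\mathcal{D}$ for the explicit values $k=2$ and $k=3$. For general $k\ge 2$ in the larger range $[0,1]\times[0,1)$, one pushes $\mathfrak{e}$ toward the boundary $\mathfrak{e}\to 1^-$, where $r_{\min}\to 0$ drives $\rho_e\to+\infty$; the intermediate value theorem then crosses every $k+\tfrac{1}{2}$, and transversality of the crossing $\mathrm{tr}\,M=-2$ produces the corresponding open set.

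\textbf{Part (iii) and main obstacle.} $\rho_e$ is real-analytic on the open elliptic subset of $\mathcal{D}$ and locally constant on each component of the hyperbolic subset; by (ii) it takes the two distinct values $5/2$ and $7/2$ on $\mathcal{D}$, so analytic continuation rules out local constancy on any open elliptic subset, and hence both $\rho_e^{-1}(\Q)$ and its further subset of parameters with $\rho_e=p/q$, $p$ odd and $q$ even, are dense in $\mathcal{D}$. Combined with the open hyperbolic subsets from (ii) (whose constant rotation numbers $5/2$ and $7/2$ are of odd/even type), this establishes the density and open-subset claims for both $\mathcal{D}_{\mathcal{R}}$ and $\mathcal{D}_{\mathrm{odd/even}}$. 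The decisive step is (i): the inequality $sc'\le 0$ cannot be deduced from the symmetries of $q$ and its positivity alone --- generic symmetric positive Hill equations admit positive hyperbolic monodromies --- so the argument must quantitatively exploit the Keplerian structure of $q(t)$ together with the monotone behavior of $r(t)$ on the half period, and this is where the proof is most delicate.
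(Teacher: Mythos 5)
Your part (i) is set up correctly — the factorization $M = RA^{-1}RA$ for a Hill equation with two reflection symmetries is valid, and the identity $\mathrm{tr}\,M - 2 = 4sc'$ (with $cs'-sc'=1$) is a correct computation — but you never prove the crucial inequality $sc'\le 0$, and you explicitly flag this yourself. The ``homotopy argument connecting the Keplerian Hill equation to the constant-coefficient case along a parameter path that preserves the sign of $sc'$'' is circular: sign preservation along the homotopy is exactly what needs proof, and as you observe, generic symmetric positive Hill equations do cross into $sc'>0$. Prüfer-angle positivity $\dot\phi>0$ and monotonicity of $r$ on the half-period do not by themselves control the sign of $sc'$. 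The paper's proof of (i) takes a genuinely different, and decisive, route: writing the transverse linearization as $\ddot x = -(1+7\beta(1+\mathfrak{e}\cos\theta)^{-1})x$ and expanding a hypothetical $2\pi$-periodic solution in Fourier series, one finds that the recursions for cosine and sine coefficients are identical, so the existence of one nontrivial $2\pi$-periodic solution forces a second independent one; hence at any degenerate parameter with $\rho_e=k\in\Z$ the linearized return map is the \emph{full identity}, not a shear. Combined with strict monotonicity of the rotation in $\beta$ (the paper's Lemma about $\dot\phi$ being increasing in $\beta$), this rules out the elliptic-to-positive-hyperbolic transition. This Fourier/coexistence mechanism (closely related to Ince's equation, cf.\ the paper's remark) is the quantitative input that your sketch is missing.

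Your part (ii) contains a quantitative error. For the perturbed Hill equation with $q \approx (1+7\beta) - 7\beta\,\mathfrak{e}\cos\theta + O(\mathfrak{e}^2)$, the normal frequency at $\rho_e=k+\tfrac12$ is $\omega_0 = k-\tfrac12 = (2k-1)/2$, and the Mathieu-type instability tongue emanating from a half-integer $\omega_0 = n/2$ (with $n=2k-1$ odd) opens at order $\mathfrak{e}^{2k-1}$, not $\mathfrak{e}^2$. For $k=2$ ($\rho_e=5/2$) the tongue opens at cubic order, and for $k=3$ at quintic order; both the first- and second-order variations of $\mathrm{tr}\,M$ in $\mathfrak{e}$ vanish. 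Your intended computation at order $\mathfrak{e}^2$ would therefore give zero and fail to establish that $\mathrm{tr}\,M<-2$. The paper again avoids tracking orders of $\mathfrak{e}$ by using the half-integer Fourier expansion at an \emph{arbitrary} $(\beta_*,\mathfrak{e}_*)$ with $\rho_e=k+\tfrac12$ to show the linearized map there is a shear with eigenvalue $-1$ (not $-I$), and then perturbing in $\beta$ using the same monotonicity lemma.

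Finally, your part (iii) invokes real-analyticity of $\rho_e$ on the elliptic set and rules out local constancy by analytic continuation, but this requires the elliptic subset of $\mathcal{D}$ to be connected (or at least that a single elliptic component abuts both $\mathcal{D}_{5/2}$ and $\mathcal{D}_{7/2}$), which you do not address. The hyperbolic tongues could in principle disconnect the elliptic set. The paper instead argues via an explicit monotonicity of the rotation number (resp.\ rotation interval) in $\beta$, uniformly along iterates, which gives density of rational $\rho_e$ without any connectedness assumption.
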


\begin{figure}[ht]
\centering
\includegraphics[width=0.7\textwidth]{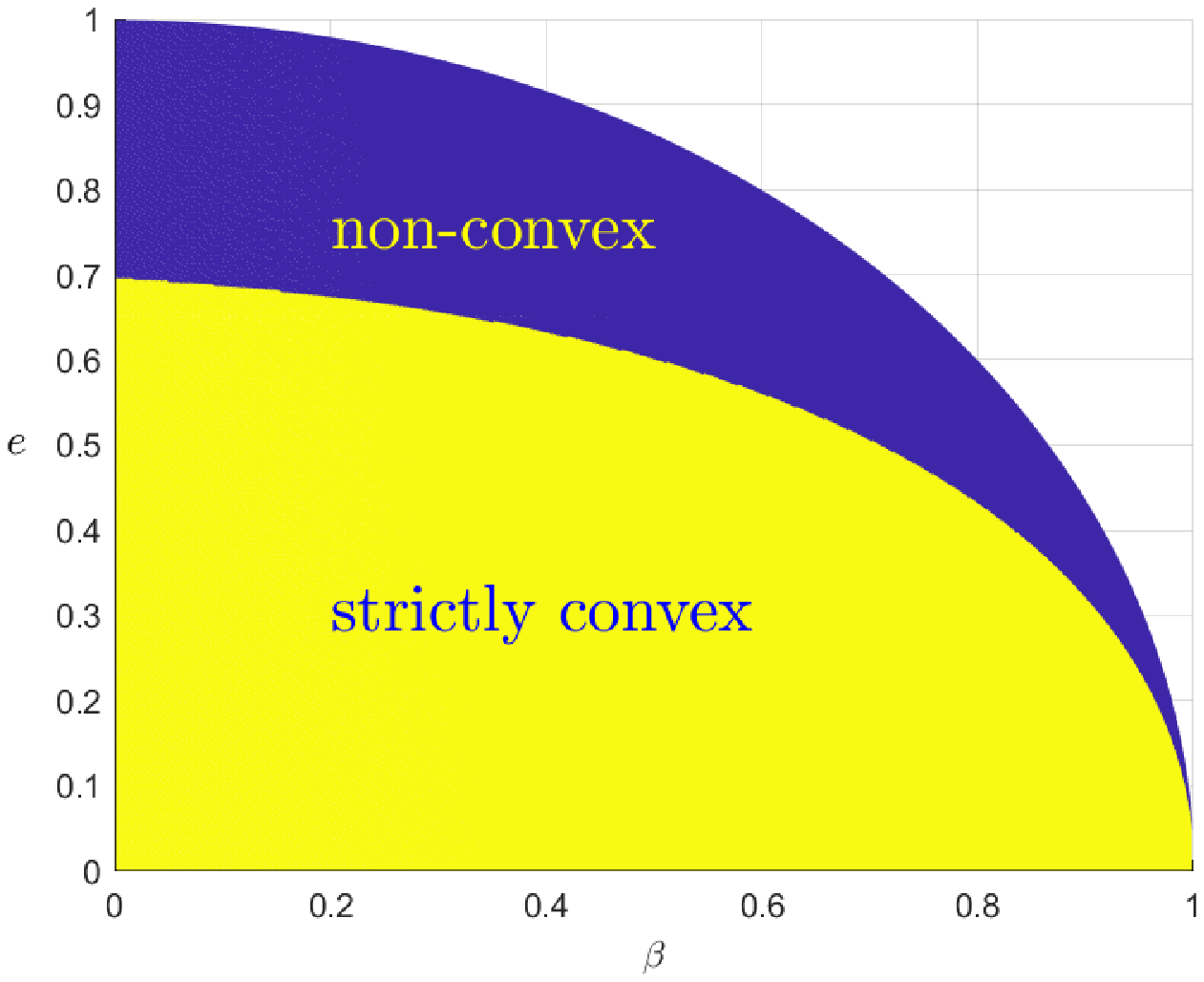}
 \caption{The range of convexity in the $(\beta,\mathfrak{e})$-coordinates.}
 \label{picture of convexity}
\end{figure}

Combining Theorems \ref{thm: rational rotation implies infy many} and \ref{thm:Dk} we obtain the following corollary.

\begin{cor}
    The set of parameters $(\beta,\mathfrak{e})\in \mathcal{D}$ for which $\mathfrak{M}$ carries infinitely many periodic orbits is dense and contains a non-empty open subset.
\end{cor}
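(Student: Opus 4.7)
The plan is to deduce the corollary by direct combination of the two preceding theorems, with no further work required. Theorem \ref{thm: rational rotation implies infy many}(i) guarantees that whenever the rotation number $\rho_e$ of the Euler orbit is rational, the energy surface $\mathfrak{M}$ carries infinitely many $z$-symmetric periodic orbits, and thus infinitely many periodic orbits. Theorem \ref{thm:Dk}(iii) provides the two topological properties of the subset $\mathcal{D}_{\mathcal{R}}\subset\mathcal{D}$ of parameters where $\rho_e\in\Q$ that we need: it is dense in $\mathcal{D}$ and contains a non-empty open subset. The corollary will then follow by showing that the set of parameters for which $\mathfrak{M}$ admits infinitely many periodic orbits contains $\mathcal{D}_{\mathcal{R}}$.

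First I would handle density. Let $\mathcal{P}\subset\mathcal{D}$ denote the subset of parameters at which $\mathfrak{M}$ carries infinitely many periodic orbits. For each $(\beta,\mathfrak{e})\in\mathcal{D}_{\mathcal{R}}$, the defining property $\rho_e\in\Q$ activates Theorem \ref{thm: rational rotation implies infy many}(i), producing infinitely many $z$-symmetric periodic orbits on the corresponding energy surface; hence $(\beta,\mathfrak{e})\in\mathcal{P}$. Therefore $\mathcal{D}_{\mathcal{R}}\subset\mathcal{P}$, and since $\mathcal{D}_{\mathcal{R}}$ is dense in $\mathcal{D}$ by Theorem \ref{thm:Dk}(iii), so is $\mathcal{P}$.

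Next I would produce the open set. By Theorem \ref{thm:Dk}(iii) there exists a non-empty open subset $U\subset\mathcal{D}$ contained in $\mathcal{D}_{\mathcal{R}}$ (in fact one may take $U$ inside $\mathcal{D}_{\mathrm{odd/even}}$, which gives infinitely many $z$-symmetric brake orbits via Theorem \ref{thm: rational rotation implies infy many}(ii), a slightly stronger conclusion). Applying the pointwise argument of the previous paragraph to every $(\beta,\mathfrak{e})\in U$ shows $U\subset\mathcal{P}$, establishing the open subset claim.

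The ``hard part'' is not really in this corollary at all: all the analytical content — the existence of infinitely many orbits from rationality of $\rho_e$, and the density/openness of the rational locus — is packaged into Theorems \ref{thm: rational rotation implies infy many} and \ref{thm:Dk}. The only thing to be careful about is that the hypothesis of Theorem \ref{thm: rational rotation implies infy many}(i) holds pointwise on $\mathcal{D}_{\mathcal{R}}$ by definition, so no continuity or local constancy of $\rho_e$ is required to transfer the conclusion from one parameter value to nearby ones; the rational locus does the work for us.
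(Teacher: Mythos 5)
Your proof is correct and is exactly the argument the paper has in mind: the corollary is stated immediately after Theorems \ref{thm: rational rotation implies infy many} and \ref{thm:Dk} with the remark that it follows by combining them, and your inclusion $\mathcal{D}_{\mathcal{R}}\subset\mathcal{P}$ via Theorem \ref{thm: rational rotation implies infy many}(i), together with the density/open-set properties of $\mathcal{D}_{\mathcal{R}}$ from Theorem \ref{thm:Dk}(iii), is the intended combination.
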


Finally, we study the range of parameters for which the energy surface $\mathfrak{M}$ is strictly convex, convex, and non-convex. For each fixed $\beta \in (0,1)$, we show that there exists a special value
 $\mathfrak{e}_{\rm conv}(\beta)\in (0,1)$ so that $\mathfrak{M}$ is strictly convex if $\mathfrak{e} < \mathfrak{e}_{\rm conv}(\beta)$ and non-convex if $\mathfrak{e} > \mathfrak{e}_{\rm conv}(\beta)$, see Figure \ref{picture of convexity}.

\begin{thm}\label{thm_convexity} There exists a strictly decreasing and concave function $\mathfrak{e}_{\text{conv}}:[0,1] \to [0,1]$,  with
 $
 \mathfrak{e}_{\text{conv}}(0)= (7+\sqrt{17})/16$, $\mathfrak{e}_{\text{conv}}(1) =0$, and $(\beta,\mathfrak{e}_{\text{conv}}(\beta))\in \mathcal{D}, \forall \beta,$
 so that if $(\beta,\mathfrak{e})\in \mathcal{D}$, then
$$
\begin{aligned}
    \mathfrak{e} & < \mathfrak{e}_{\text{conv}}(\beta) \ \ \Leftrightarrow \ \ \mathfrak{M}  \mbox{ is strictly convex.}\\
     \mathfrak{e} & = \mathfrak{e}_{\text{conv}}(\beta) \ \ \Leftrightarrow \ \ \mathfrak{M}  \mbox{ is convex but not strictly convex.}\\
     \mathfrak{e} & > \mathfrak{e}_{\text{conv}}(\beta) \ \ \Leftrightarrow \ \ \mathfrak{M}  \mbox{ is not convex.}
\end{aligned}
$$
\end{thm}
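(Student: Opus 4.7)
The plan is to translate the convexity of $\mathfrak{M}$ into a pointwise matrix condition on the Hill region, reduce it to a scalar inequality along the axis $\{z=0\}$ using the $z$-symmetry of $V$, and extract the threshold curve from a discriminant computation. Since $H$ is mechanical with $h=-1$, the solid region bounded by $\mathfrak{M}\subset\R^4$ is (strictly) convex if and only if the fiber-radius function $g:=\sqrt{-2(V+1)}$ on $\mathcal{H}$ is (strictly) concave, because $\{H\le-1\}=\{(p,q):|p|\le g(q)\}$ and rotational symmetry in $p$ makes convexity of this solid equivalent to concavity of $g$ together with convexity of $\mathcal{H}$. Computing $\mathrm{Hess}(g)$ recasts this as positive semidefiniteness of the $2\times 2$ matrix
$$
A(r,z) := -2(V+1)\,\mathrm{Hess}(V) + (\nabla V)(\nabla V)^T
$$
on $\mathcal{H}^{o}$, with positive definiteness for strict convexity. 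On $\partial\mathcal{H}$, $A$ degenerates to $(\nabla V)(\nabla V)^T$ and the tangential condition $\mathrm{Hess}(V)|_{T\partial\mathcal{H}}\succeq 0$ follows from interior limits.

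Next I exploit the symmetry $V(r,-z)=V(r,z)$. On the axis $z=0$ one has $V_z=V_{rz}=0$, so $A(r,0)$ is diagonal with entries
$$
F(r):=V_r^2-2(V+1)V_{rr}\qquad\text{and}\qquad G(r):=-2(V+1)V_{zz}.
$$
A direct calculation yields $V_{zz}(r,0)=4\alpha^{-1}(1+2\alpha)/r^3>0$ and, since $V+1\le 0$ on $\mathcal{H}$, the entry $G$ is automatically nonnegative. Thus, on the axis, convexity is equivalent to $F(r)\ge 0$ for $r\in[r_-,r_+]$, where $r_\pm=(1\pm\mathfrak{e})/(2\beta)$. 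The key technical obstacle is to prove that this axial condition is the global one: if $F(r)\ge 0$ on $[r_-,r_+]$ then $A(r,z)\succeq 0$ on all of $\mathcal{H}$. I would expand $A(r,z)$ in powers of $z^2$, using that $V$ depends on $z$ only through $(r^2+(1+2\alpha)z^2)^{-1/2}$, and analyze the characteristic polynomial of $A(r,z)$ to show that its smallest eigenvalue, tracked along suitable slices of $\mathcal{H}$ transverse to $\partial\mathcal{H}$, is minimized at $z=0$. This monotonicity, which requires careful bookkeeping of competing off-diagonal contributions $V_{rz}V_rV_z$, is the most delicate step of the reduction.

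Assuming this reduction, I substitute the axial potential $V(r,0)=\varpi^2/(2r^2)-(\beta r)^{-1}$ with $2\varpi^2=(1-\mathfrak{e}^2)/\beta^2$ into $F$; clearing denominators produces a polynomial $P(r;\beta,\mathfrak{e})$ of low degree whose nonnegativity on $[r_-,r_+]$ characterizes convexity. Since $P>0$ at the endpoints $r_\pm$ (the vanishing of $F$ there would only come from a $(V+1)$-factor that does not appear after simplification), the first failure must occur at an interior double root, giving the threshold equation as a discriminant condition $\Delta(\beta,\mathfrak{e})=0$. Specializing at $\beta\to 0$, the polynomial $P$ simplifies and its critical root solves a quadratic in $\mathfrak{e}$ whose distinguished root is $(7+\sqrt{17})/16$, yielding $\mathfrak{e}_{\mathrm{conv}}(0)=(7+\sqrt{17})/16$. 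At $\beta\to 1$, a direct estimate shows that $F(r)\ge 0$ on the axis fails unless $\mathfrak{e}\to 0$, giving $\mathfrak{e}_{\mathrm{conv}}(1)=0$.

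Strict monotonicity of $\mathfrak{e}_{\mathrm{conv}}$ then follows from the implicit function theorem applied to $\Delta(\beta,\mathfrak{e})=0$, combined with the signs $\partial_\beta F<0$ and $\partial_{\mathfrak{e}}F<0$ at the minimizing $r$ (which express that increasing either parameter weakens the convexity of $V$). For concavity I plan to rewrite $P(r;\beta,\mathfrak{e})\ge 0$, after the substitution $u=\beta r$, as a family of inequalities \emph{linear in the scaled parameters $(\beta^2,\mathfrak{e}^2)$} indexed by $u\in[(1-\mathfrak{e})/2,(1+\mathfrak{e})/2]$; intersecting such half-planes yields a convex region in $(\beta^2,\mathfrak{e}^2)$ whose upper boundary, combined with the signs of $\partial_\beta\mathfrak{e}_{\mathrm{conv}}$ already established, projects to a concave graph $\mathfrak{e}_{\mathrm{conv}}(\beta)$. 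Should this exact linearization not materialize, a direct second-derivative computation along the threshold curve using the envelope theorem furnishes an alternative. The resulting function satisfies all the claimed properties.
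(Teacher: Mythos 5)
Your outer scaffolding is fine: the solid region $\{H\le -1\}$ is $\{(p,q):q\in\mathcal H,\ |p|\le g(q)\}$ with $g=\sqrt{-2(V+1)}$, concavity of $g$ is equivalent to $A(r,z):=-2(V+1)\,\mathrm{Hess}\,V+(\nabla V)(\nabla V)^T\succeq 0$, and $\det A=-2(V+1)\cdot\Delta$ with $\Delta$ as in the paper's criterion from \cite{Sa04}, so your matrix condition matches the paper's scalar criterion $\Delta>0$. The problem is the step you yourself flag as ``the most delicate'': the claim that positivity on the axis $\{z=0\}$ implies positivity everywhere. That claim is false, and the paper's computations actually disprove it. After Lemma \ref{lem: the first zero appear inside} the first loss of convexity occurs at a point $(r_1(\hat k_1),\hat k_1)$ in slope coordinates with $\hat k_1\in\bigl(\tfrac{\sqrt{17+\sqrt{17}}}{4},\sqrt{5/2}\bigr)$, hence $\hat k_1>1.14$, i.e.\ strictly off the axis; the proof of that lemma explicitly rules out $\hat k=1$.

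A concrete way to see your reduction cannot work: on $z=0$ one has $V_z=V_{rz}=0$, and after substituting $V(r,0)=\varpi^2/(2r^2)-1/(\beta r)$, $2\varpi^2=(1-\mathfrak{e}^2)/\beta^2$, $u:=2\beta r\in[1-\mathfrak{e},1+\mathfrak{e}]$, one finds
$$
4\beta^4 r^6\bigl(V_r^2-2(V+1)V_{rr}\bigr)\;=\;2u^3-3(2-\mathfrak{e}^2)u^2+6(1-\mathfrak{e}^2)u-2(1-\mathfrak{e}^2)^2\;=:P(u),
$$
which is \emph{independent of $\beta$}. Moreover $P(1\pm\mathfrak{e})=\mathfrak{e}^2(1\pm\mathfrak{e})^2>0$, $P'(u)=6(u-1)(u-(1-\mathfrak{e}^2))$, and the interior local minimum $u=1$ gives $P(1)=\mathfrak{e}^2(1-2\mathfrak{e}^2)$. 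So the axial criterion is exactly $\mathfrak{e}\le 1/\sqrt 2$, a constant threshold. This is inconsistent with the theorem: $\mathfrak{e}_{\mathrm{conv}}$ is strictly decreasing from $(7+\sqrt{17})/16\approx 0.695<1/\sqrt2\approx 0.707$ down to $0$, so for all $\beta$ the true threshold lies strictly below the axial one, and for $\beta$ near $1$ it is near $0$ while your axial criterion still permits $\mathfrak{e}$ up to $1/\sqrt 2$. Consequently neither the boundary value $\mathfrak{e}_{\mathrm{conv}}(0)=(7+\sqrt{17})/16$ nor $\mathfrak{e}_{\mathrm{conv}}(1)=0$ can emerge from the axial polynomial as you suggest. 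The paper instead tracks the zero set of $\Delta$ on the full Hill region in $(r,k)$-coordinates, shows (Lemma \ref{lem: w^2 for the first point on the boundary}) that the first failure cannot be on $\partial\mathcal H$, identifies the critical curve $F_1=0$ on which interior critical points of the implicit roots $w_\pm$ must lie (Lemma \ref{lem: the first zero appear on the red curve}), and then locates the relevant critical slope $\hat k_1>1$ as a root of the explicit polynomial $F_4$ (Lemmas \ref{lem: differential of w^2 along F1=0}, \ref{lem: the first zero appear inside}). That two-variable analysis over the interior of $\mathcal H$ is unavoidable; the argument cannot be confined to $\{z=0\}$, and the monotonicity of the smallest eigenvalue of $A$ in $z$ that your plan hinges on simply does not hold.
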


Combining Theorem \ref{thm_convexity} with the main results from \cite{Hr, HS2,HSW} we obtain the following corollary.

\begin{cor}Let $(\beta,\mathfrak{e})\in \mathcal{D}$ satisfy $\mathfrak{e} < \mathfrak{e}_{\text{conv}}(\beta)$. Then
\begin{itemize}
\item[(i)] Every periodic orbit that is a Hopf fiber   is the binding of an open book decomposition whose pages are disk-like global surfaces of section. In particular, this holds true for any simple brake orbit.
\item[(ii)] Every pair of periodic orbits forming a Hopf link is the binding of an open book decomposition whose pages are annulus-like global surfaces of section. In particular, this holds true for a pair of brake orbits whose projections to the Hill region intersect precisely at a single point.
\end{itemize}
\end{cor}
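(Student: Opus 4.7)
The strategy is to deduce the corollary from Theorem \ref{thm_convexity} combined with the cited results on dynamically convex Reeb flows on the tight three-sphere. First, the assumption $\mathfrak{e} < \mathfrak{e}_{\text{conv}}(\beta)$ places us, by Theorem \ref{thm_convexity}, in the strictly convex regime, so $\mathfrak{M}$ is a strictly convex hypersurface in $\R^4$. By the Hofer--Wysocki--Zehnder principle recalled after Theorem \ref{thm:convexHWZ}, strict convexity of $\mathfrak{M}$ implies that the induced contact form on the tight three-sphere is dynamically convex, i.e., the Conley--Zehnder index of every contractible periodic Reeb orbit is at least $3$. This is the only fact about our specific Hamiltonian that I would feed into the abstract machinery.

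For part (i), I would invoke Hryniewicz's theorem \cite{Hr}: on a dynamically convex tight three-sphere, every periodic Reeb orbit that is a Hopf fiber --- i.e., a transverse unknot whose self-linking number equals $-1$ --- is the binding of an open book decomposition whose pages are disk-like global surfaces of section transverse to the flow. Since the paper has already recorded that any simple brake orbit is a Hopf fiber, the ``in particular'' claim of (i) follows at once.

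For part (ii), the plan is to apply the annulus version for Hopf links from \cite{HS2, HSW}: on a dynamically convex tight three-sphere, every Hopf link formed by periodic Reeb orbits is the binding of an open book decomposition whose pages are annulus-like global surfaces of section. The ``in particular'' statement then follows from the previously noted fact that two simple brake orbits whose projections to the Hill region intersect at exactly one point form a Hopf link on $(\mathfrak{M},\xi)$.

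The only genuinely technical step in this chain is the convexity analysis, which is exactly what Theorem \ref{thm_convexity} provides; the remaining input is pre-packaged in the dynamical results cited in the corollary. The main obstacle in a direct attack --- verifying strict convexity for the given range of parameters --- has thus been isolated and dealt with upstream, so the corollary reduces to a clean application of the dynamical convexity machinery with no further pseudo-holomorphic curve analysis needed here.
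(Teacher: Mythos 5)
Your proposal is correct and follows essentially the same route the paper implicitly takes: the paper states that the corollary is obtained "combining Theorem \ref{thm_convexity} with the main results from \cite{Hr, HS2,HSW}," which is exactly the chain you describe (strict convexity $\Rightarrow$ dynamical convexity $\Rightarrow$ the Hryniewicz and Hryniewicz--Salom\~ao--Wysocki open book results, with the ``in particular'' clauses supplied by the earlier observations that simple brake orbits are Hopf fibers and that two such orbits meeting once in the Hill region form a Hopf link). No gaps.
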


\section{The spatial isosceles three-body problem}\label{reduce to two degree system}

\begin{figure}[ht]
\centering
\includegraphics[width=0.7 \textwidth]{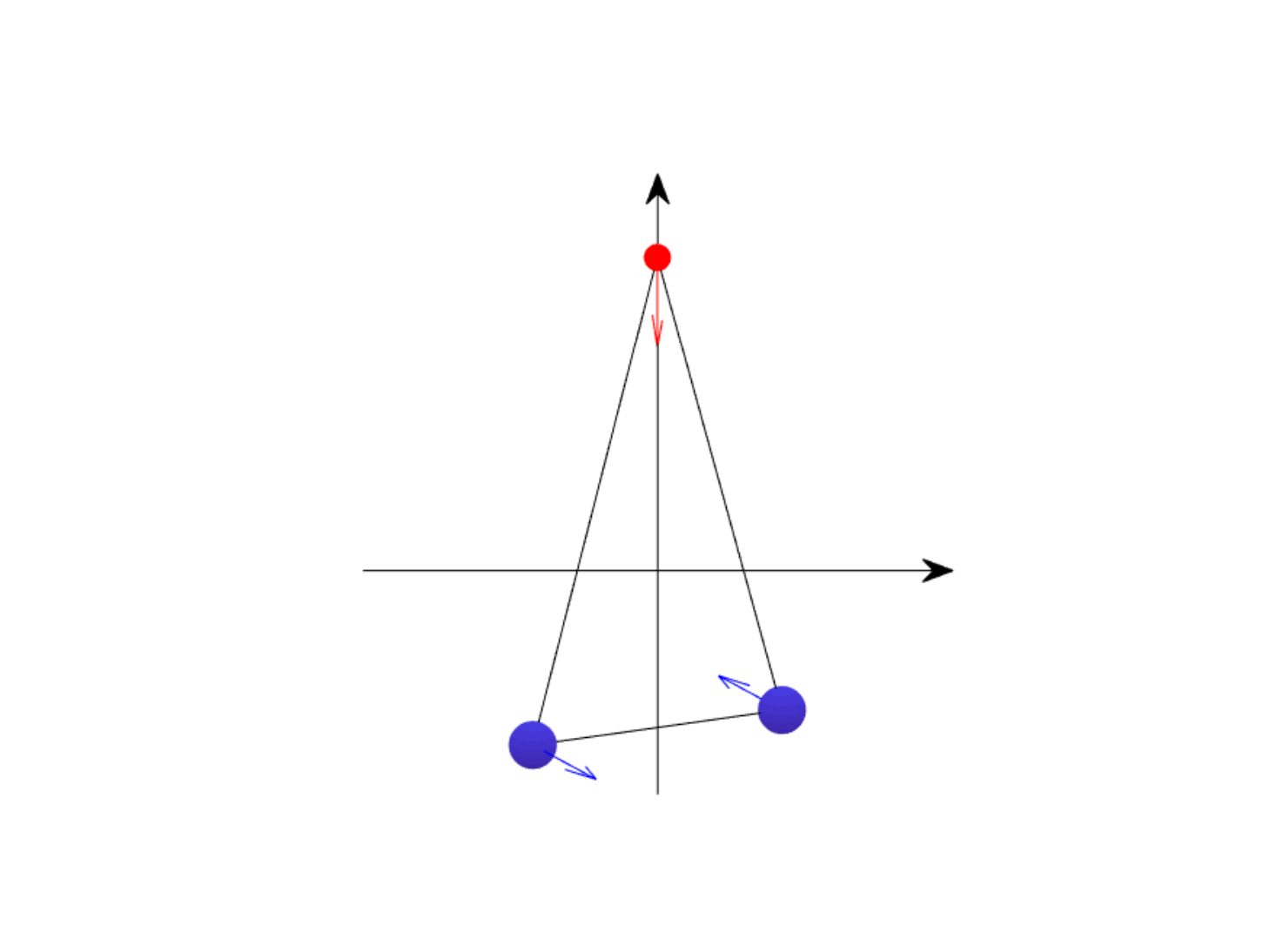}
\caption{The spatial isosceles three-body problem.}
\label{fig:IsoThree}
\end{figure}

Let $m_i>0$ and $q_i\in \rr^3$ be the mass and the position of the $i$-th body, $i=1,2,3$. Assume that $m:=m_1=m_2$, that $q_1 \neq q_2$ are symmetric with respect to the $z$-axis, and that $q_3$ moves along the $z$-axis. Let $q_1=(x, y,z) \in \rr^3$ and $\dot{q}_1=(\dot x, \dot y, \dot z)$. Then $q_2=(-x, -y, z)$ and $\dot q_2=(-\dot x, -\dot y, \dot z)$. If  the center of mass stays fixed at the origin, then $q_3=(0, 0, -2 \al z)$ and $\dot q_3=(0, 0, - 2\al \dot z)$, where $\al = m/m_3$ is the mass ratio.

The motion of the first body entirely determines the dynamics of the system, and the Lagrangian writes as
\begin{equation}
\label{eq_Lagxyz} \begin{aligned}
L(q,\dot q) & = m \big( \dot x^2 + \dot y^2 + (1+2\al) \dot z^2 \big) \\
& + \ey m^2\left(\frac{1}{(x^2+y^2)^{1/2}}+\frac{4\al^{-1}}{(x^2 +y^2 +(1+2\al)^2 z^2)^{1/2}} \right),
\end{aligned}
\end{equation}
where $q=(x,y,z)$.

Introduce the diagonal matrix $M=\text{diag}(2m, 2m, 2m(1+2\al))$, and let
$$  \xi = \left( \begin{array}{c}
\xi_1 \\ \xi_2 \\ \xi_3
\end{array} \right)
:= M^{\ey} \left( \begin{array}{c}
x \\ y \\ z
\end{array} \right) \; \text{ and } \; \dot{\xi} = \left( \begin{array}{c}
\dot \xi_1 \\ \dot \xi_2 \\ \dot \xi_3
\end{array} \right)
= M^{\ey} \left( \begin{array}{c}
\dot x \\ \dot y \\ \dot z
\end{array} \right).
$$
The previous Lagrangian becomes
\begin{equation}
\label{eq_LagXi} L(\xi, \dot \xi) = \ey |\dot \xi|^2 +  \frac{m^{5/2}}{\sqrt{2}}
 \left( \frac{1}{(\xi_1^2 + \xi_2^2)^{1/2}} + \frac{4 \al^{-1}}{(\xi_1^2 +\xi_2^2+ (1+2\al)\xi_3^2)^{1/2}} \right)
\end{equation}

Due to the rotational symmetry with respect to the $z$-axis, it is more convenient to work with cylindrical coordinates $(r, \tht,z) \in \R^+ \times \rr/ 2\pi \zz \times \rr$, defined by
$$ \xi_1= r\cos \tht, \ \ \ \ \ \xi_2 = r  \sin \tht, \ \ \ \ \ \xi_3 =z. $$
This further transfers the Lagrangian to
$$ L(r, \tht, z, \dot{r}, \dot{\tht}, \dot{z}) = \ey ( \dot r^2 +\dot z^2 + r^2 \dot \tht^2 ) + U(r,z), $$
where the $\theta$-independent potential function is
\begin{equation}
\label{eq_Urz} U(r,z) = \frac{m^{5/2}}{\sqrt{2}}  \left( \frac{1}{r} + \frac{4 \al^{-1}}{(r^2 + (1+2\al)z^2)^{1/2}} \right).
\end{equation}
 By the Legendre transform
\begin{equation}
\label{eq_Legendre} \pr = \dot r, \ \ \ \ \  p_z = \dot{z}, \  \ \ \ \ p_{\tht} = r^2 \dot \tht,\end{equation}
we get the corresponding Hamiltonian
\begin{equation}
\label{eq_Ham}  H(\pr, p_z, p_{\tht}, r, z, \tht) = \ey \left(\pr^2 + p_z^2 + p_{\tht}^2r^{-2}  \right)- U(r,z).
\end{equation}

Since  $H$ is independent of $\tht$, the angular momentum $p_\theta$ is constant along the trajectories. As a result, we fix it as a parameter
\begin{equation}
\label{eq: AngMom} \varpi := p_{\tht} = r^2 \dot \tht,
\end{equation}
 and obtain a two-degree-of-freedom mechanical Hamiltonian
\begin{equation}
\label{eq_HamOm} H =  \frac{\pr^2 + p_z^2}{2} + V(r, z), \quad \quad  (\pr, p_z, r, z) \in \rr^2 \times \rr^+ \times \rr,
\end{equation}
with corresponding potential
\begin{equation}
\label{eq_Vrphi} V(r, z) =\frac{\varpi^2}{2r^2 } - \frac{m^{5/2}}{\sqrt{2}}  \left( \frac{1}{r} + \frac{4 \al^{-1}}{(r^2 + (1+2\al)z^2)^{1/2}} \right).
\end{equation}

%\begin{figure}[ht]
%\centering
%\includegraphics[width=0.7\textwidth]{fig_of_V2-eps-converted-to.pdf}
%\caption{ The level sets of $V$ with %$\varpi=\sqrt{5/3}, \alpha=6$.}
%\label{picture of V}
%\end{figure}

Multiplying $V$ by a positive constant does not change the structure of the dynamics on corresponding energy surfaces. Hence we can assume that $m^{5/2}/\sqrt{2}=1$, and the potential becomes
%Write $H_{\varpi,m}$ instead of $H$ and denote by $c=m^{5/2}/\sqrt{2}$ for now. Since
%$$H_{\varpi,m}(p_r,p_z,r,z)=H_{\varpi/c,2^{1/5}}(p_r,p_z,r/c,r/c),$$
%then we have the correspondence $(p_r,p_z,r,z)(t)\mapsto (p_r,p_z,r/c,r/c)(ct)$ between the dynamics of $H_{\varpi,m}$ and $H_{\varpi/c,2^{1/5}}$, i.e. the dynamics are equivalent. Therefore, without loss of regularity, we can assume $m=2^{1/5}$ which mean $c=1$ and
\begin{equation}\label{eq_Vrphi1}
V(r, z) =\frac{\varpi^2}{2r^2 }-\frac{1}{r}-\frac{4 \al^{-1}}{(r^2 + (1+2\al)z^2)^{1/2}}, \quad \quad \forall (r,z)\in \R^+ \times \R,
\end{equation}
where the new parameter $\varpi^2\sqrt{2}/m^{5/2}$ is still denoted by $\varpi^2$, and the energy $h$ is replaced with $hm^{5/2}/\sqrt{2}$.

\begin{prop}[Moeckel { \cite[Proposition 2.1]{Mk84}}]
\label{prop_Mcomp} Let $\varpi,\alpha>0$ and $h<0$, and let $\mathfrak{M}=H^{-1}(h)\subset \R^4$, where $H$ is the Hamiltonian in \eqref{eq_HamOm}, with potential $V$  as in \eqref{eq_Vrphi1}.  The following assertions hold:
\begin{itemize}
\item[(i)] If $1 < 2\varpi^2|h| < (1+4\alpha^{-1})^2$,
%\begin{equation}\label{energy sphere} \frac{1}{2} < %\varpi^2 |h| < \frac{(1+4\alpha^{-1})^2}{2},
%\end{equation}
then $\mathfrak{M}$ is a sphere-like regular hypersurface.
\item[(ii)] If $2\varpi^2|h|>(1+4\alpha^{-1})^2$, then $\mathfrak{M}=\emptyset.$

\item[(iii)] If  $2\varpi^2|h|=(1+4\alpha^{-1})^2$, then $\mathfrak{M}=\{(0,0,\varpi^2(1+4\alpha^{-1})^{-1},0)\}$.

\item[(iv)] If $2\varpi^2|h|\leq 1$, then  $\mathfrak{M}$ is unbounded in $z$ direction.
\end{itemize}
\end{prop}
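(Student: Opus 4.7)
The plan is to identify two distinguished values of the potential $V$ and to read off each case from how $h$ compares with them. Let $V_{\min}$ be the global minimum of $V$ on $\R^+\times \R$ and let $W_{\min}$ be the infimum of the \emph{asymptotic potential} $W(r):=\lim_{|z|\to\infty} V(r,z) = \varpi^2/(2r^2)-1/r$. A direct computation gives
$$
\partial_z V(r,z) = \frac{4\alpha^{-1}(1+2\alpha)z}{(r^2+(1+2\alpha)z^2)^{3/2}},
$$
which vanishes only on $\{z=0\}$, and on that slice $V(r,0)=\varpi^2/(2r^2)-(1+4\alpha^{-1})/r$ has a unique critical point at $r_*:=\varpi^2(1+4\alpha^{-1})^{-1}$. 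Hence $V$ has a unique critical point $(r_*,0)$, which is a non-degenerate global minimum, with $V_{\min} = -(1+4\alpha^{-1})^2/(2\varpi^2)$ and $W_{\min} = -1/(2\varpi^2)$. The four hypotheses of the proposition translate respectively to $V_{\min}<h<W_{\min}$, $h<V_{\min}$, $h=V_{\min}$ and $h\ge W_{\min}$.

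Cases (ii) and (iii) are then immediate: the Hill region $\mathcal H=\{V\le h\}$ is empty when $h<V_{\min}$ and equals the singleton $\{(r_*,0)\}$ when $h=V_{\min}$, and in the latter case the energy equation forces $p_r=p_z=0$, giving the unique point of $\mathfrak M$. For case (iv), note that $V(r,z)$ is strictly increasing in $|z|$ for each fixed $r$, with limit $W(r)$, so $V(r,z)<W(r)$ identically; when $h\ge W_{\min}$ there exists $r_0$ with $W(r_0)\le h$, and then $V(r_0,z)<W(r_0)\le h$ for every $z\in\R$, so $\mathcal H$ contains the vertical line $\{r_0\}\times \R$ and is unbounded in the $z$-direction.

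Case (i), with $V_{\min}<h<W_{\min}$, is the heart of the argument and will be established through three claims: (a) $\mathcal H$ is compact; (b) $\partial\mathcal H=\{V=h\}$ is a smooth embedded closed curve; (c) $\mathcal H$ is diffeomorphic to a closed $2$-disk. For (a), since $W(r)\ge W_{\min}>h$, any $(r,z)\in\mathcal H$ satisfies
$$
\frac{4\alpha^{-1}}{\sqrt{r^2+(1+2\alpha)z^2}}\ge W(r)-h\ge W_{\min}-h>0,
$$
so $\sqrt{r^2+(1+2\alpha)z^2}\le 4\alpha^{-1}/(W_{\min}-h)$; together with $V\to+\infty$ as $r\to 0^+$, this bounds $\mathcal H$, and closedness follows from continuity. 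Claim (b) follows from the preceding critical-point analysis since $h\ne V_{\min}$. Claim (c) follows from Morse theory on the sublevel set of $V$ at level $h$: there is a unique critical point in $\mathcal H$, a non-degenerate minimum, and no other critical value below $h$. Once these are in hand, $\mathfrak M$ fibres over $\mathcal H$ via $(p_r,p_z,r,z)\mapsto(r,z)$ with circle fibers $\{p_r^2+p_z^2=2(h-V)\}$ over the interior collapsing to points over $\partial\mathcal H$; this realises $\mathfrak M$ as a smooth hypersurface diffeomorphic to $S^3$, hence sphere-like. Regularity of $\mathfrak M$ as a level set of $H$ is automatic because $\nabla H=0$ forces $p_r=p_z=0$ and $\nabla V=0$, i.e., only at $(0,0,r_*,0)$, which is excluded by $h>V_{\min}$.

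The main obstacle is the compactness estimate in case (i); it is the precise manifestation of $2\varpi^2|h|>1$ preventing escape to infinity along the symmetry axis. Everything else reduces to a one-variable critical-point analysis of $V$ together with Morse theory on the two-dimensional Hill region.
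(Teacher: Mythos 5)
Your proof is correct and closely parallels the paper's, but the route through case (i) is genuinely different in two useful ways. The paper applies Morse theory directly to the Hamiltonian $H$ on $\R^4$: since $(0,0,r_0,0)$ is the only critical point of $H$ and it is a non-degenerate global minimum, as $h$ passes the critical value the level set $\mathfrak{M}$ becomes a small sphere, and boundedness (deferred to property (f) of $V$) keeps it sphere-like throughout the interval. You instead apply Morse theory to $V$ on the two-dimensional Hill region, concluding $\mathcal{H}\cong D^2$, and then recover $\mathfrak{M}\cong S^3$ from the $S^1$-fibration that degenerates over $\partial\mathcal{H}$. The two arguments are equivalent, but yours is more explicit at the two places where the paper is terse: the compactness of $\mathcal{H}$, which you extract cleanly from $\frac{4\alpha^{-1}}{\sqrt{r^2+(1+2\alpha)z^2}}\ge W_{\min}-h>0$ when $h<W_{\min}$ (together with $V\to+\infty$ as $r\to 0^+$ to keep $r$ bounded away from zero), and the topological identification $\mathfrak{M}\cong S^3$ via the circle fibration. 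The paper's version is shorter because it leans on the stated properties (a)--(f) of $V$; your version is self-contained and makes the role of each inequality in \eqref{cond1} more transparent ($2\varpi^2|h|<(1+4\alpha^{-1})^2$ is $h>V_{\min}$, and $2\varpi^2|h|>1$ is $h<W_{\min}$, which is precisely what forces compactness). Cases (ii)--(iv) are handled by both proofs in essentially the same way.
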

\begin{proof}
If $(p_r,p_z,r,z)\in \mathfrak{M}$ then $V(r,z) \leq h$. Conversely, if $V(r,z)\leq h,$ there exists $(p_r,p_z)\in \R^2$ so that $(p_r,p_z,r,z) \in \mathfrak{M}$.

For fixed $\varpi,\alpha>0$, the function $V$ satisfies the following properties that can be easily checked:
\begin{itemize}
    \item[(a)] For any fixed $r>0$, the function $z\mapsto V(r,z),z\in \R,$ is $z$-symmetric, and strictly increasing on $[0,+\infty).$

    \item[(b)] For any fixed $z\in \R$, the function $r\mapsto V(r,z), r>0,$ has a unique critical point $r_z>0$, which is a global minimum. The function $z\mapsto r_z$ is $z$-symmetric and strictly increasing on $[0,+\infty)$.

    \item[(c)]  $V(r,0) =\varpi^2/(2r^2)-(1+4\alpha^{-1})/r$ is such that $V(r,z) > V(r,0),$ for every $z\neq 0$.  Moreover, $V(r,0)$ attains its minimum value $-(1+4\alpha^{-1})^2/(2\varpi^2)$  precisely at $r_0:=\varpi^2(1+4\alpha^{-1})^{-1}$.

    \item[(d)] $V(r,+\infty):= \varpi^2/(2r^2)-1/r, r>0,$ is such that $V(r,z) < V(r,+\infty), \forall z\in \R,$ and its minimum value $-1/(2\varpi^2)$ is attained at $r_\infty:=\lim_{z\to +\infty} r_z = \varpi^2>r_0$.

    \item[(e)] $\lim_{r \to 0^+} V(r,z)=+\infty$ and $\lim_{r \to +\infty} V(r,z) = 0^-,$ uniformly in $z$.

    \item[(f)] $(r_0,0)$ is a global nondegenerate minimum point of $V$. Indeed, $\partial_{rz}V(r_0,0)=0$ and $\partial_{rr}V(r_0,0) \partial_{zz}V(r_0,0)>0$. In particular, for every $h$ in the interval $(-(1+4\alpha^{-1})^2/(2\varpi^2),-1/(2\varpi^2)),$ the level set $\{V=h\}$ is a bounded smooth $z$-symmetric simple closed curve.
\end{itemize}

From the above properties of $V$, we immediately conclude that
$$
\mathfrak{M} = \emptyset\ \Leftrightarrow\ V(r_0,0) > h \Leftrightarrow -(1+4\alpha^{-1})^2/(2\varpi^2)>h.
$$

If  $2\varpi^2|h| = (1+4\alpha^{-1})^2,$ then $V>h$ except at $r=r_0=\varpi^2(1+4\alpha^{-1})^{-1}$ and $z=0$. In that case, $\mathfrak{M} = \{(0,0,r_0,0)\}$.

If $2\varpi^2|h|\leq 1$, then $V(r_\infty,+\infty)\leq h$. This implies that $V(r_\infty,z)< h$ for every $z\in\R$ and thus $\mathfrak{M}$ is unbounded in $z$.

Finally, assume that $1 < 2\varpi^2|h| < (1+4\alpha^{-1})^2$. Similar to $V$, the only critical point of $H$ is $(0,0,r_0,0)$, which is a nondegenerate global minimum. We have $H(0,0,r_0,0) = -(1+4\alpha^{-1})^2/(2\varpi^2).$ By standard Morse theory, as $h$ increases from below $-(1+4\alpha^{-1})^2/(2\varpi^2)$ to slightly above $-(1+4\alpha^{-1})^2/(2\varpi^2)$, $\mathfrak{M}=H^{-1}(h)$ changes from the empty set to a sphere-like regular hypersurface. From (f), $\mathfrak{M}$ is bounded and thus a sphere-like hypersurface for every $h$ in that interval.
\end{proof}

\section{Reeb flows and open book decompositions}

%\subsection{Contact manifolds and Reeb flows}
Let $\lambda$ be a contact form on a closed connected three-manifold $M$, i.e., $\lambda$ is a $1$-form on $M$ so that  $\lambda \wedge d\lambda$ never vanishes. Denote by $\xi := \ker \lambda\subset TM$ the contact structure, and by $R_\lambda$ the Reeb vector field of $\lambda$,  determined by
$$
d\lambda(R_\lambda,\cdot) \equiv 0 \quad \mbox{ and } \quad \lambda(R_\lambda)\equiv 1.
$$
The flow of $R_\lambda$, denoted $\varphi_t,t\in \mathbb{R},$ is called the Reeb flow of $\lambda$. The pair $(M,\xi = \ker \lambda)$ is a co-oriented contact manifold.

Denote by $\mathcal{P}(\lambda)$ the set of periodic orbits of $\varphi_t$, identifying those which differ by a time shift and have the same period. A periodic orbit (also called a Reeb orbit) is then a pair $P=(x,T)$, where $x:\mathbb{R} \to M$ is a periodic trajectory of $\varphi_t$ and $T>0$ is a period of $x$. Let $x_T:=x(T \cdot).$
We say that $P$ is unknotted if $T$ is the smallest positive period of $x$ and $x(\mathbb{R})\subset M$ is a trivial knot, i.e., there exists an embedding $u:\mathbb{D} \to M$ so that $e^{i2 \pi t} = x_T(t), \forall t\in [0,1].$ Here, $\mathbb{D}=\{z\in \mathbb{C}:|z| \leq 1\}$ is the closed unit disk.   The mapping $u$ is called a capping disk for $P$.

We shall consider the tight $3$-sphere $(S^3,\xi_0)$. Here,
$$
S^3 = \{|z_1|^2+|z_2|^2=1\}\subset \mathbb{C}^2 \equiv \mathbb{R}^4,
$$
where $z_i=x_i+\sqrt{-1}y_i\in \mathbb{C}, i=1,2,$ are complex coordinates, the standard symplectic form is $\omega_0 = dx_1 \wedge dy_1 + dx_2 \wedge dy_2,$
and the contact structure is $$\xi_0:=\ker \lambda_0\subset TS^3,$$ where $\lambda_0$ is the contact form on $S^3$ given by the restriction of the Liouville form
$\frac{1}{2}\sum_{i=1}^2 x_i dy_i- y_i dx_i$ to $TS^3.$ The Reeb vector field of $\lambda_0$ is $R_{\lambda_0}(z) = 2 \sqrt{-1} z=2(-y_1+\sqrt{-1}x_1,-y_2+\sqrt{-1}x_2),  \forall z \in S^3,$ and all of its trajectories are periodic with least period $\pi$.

We consider contact forms on $S^3$ of the form  $\lambda=f\lambda_0$, where $f:S^3 \to \mathbb{R}^+$ is smooth. Notice that $\ker \lambda = \ker \lambda_0$ and $d\lambda|_{\xi_0} = f d\lambda_0|_{\xi_0}.$ The bundle $(\xi_0,d\lambda) \to S^3$ is symplectic and admits a symplectic trivialization $\Phi_\lambda: (\xi_0,d\lambda) \to S^3 \times (\mathbb{C}, dx \wedge dy)$. Let $K\subset (S^3,\xi_0)$ be a trivial knot transverse to $\xi_0$, and let $X$ be a non-vanishing section of $\xi_0$. Use $X$ and an exponential map to push $K$ to a trivial knot $K'$ which does not intersect $K$, and is transverse to a capping disk  $u:\mathbb{D} \to S^3$ for $K$. We define the self-liking number $\mathrm{sl}(K)$ as the algebraic intersection number
$$K' \cdot u \in \mathbb{Z},$$  where $K'$  has the orientation induced by $K$,  $u$ is oriented by $dx \wedge dy$, and $S^3$ is oriented by $\lambda \wedge d\lambda>0$.

\begin{defi}We say that a smooth knot $K \subset (S^3,\xi_0)$ is a Hopf fiber if $K$ is a trivial knot transverse to $\xi$ and its self-linking number is $-1$.  \end{defi}

Let $S^1 = \{z\in \mathbb{C}:|z|=1\}$. The Hopf fiber $L_1 =S^1 \times \{0\} \subset \mathbb{C}^2$ is the binding of an open book decomposition $\mathcal{O}_1$ whose disk-like pages are
$$
\begin{aligned}
u_c:{\mathbb{D}}^o & \to S^3 \setminus L_1,  \ c\in S^1, \\
z & \mapsto (z,c \sqrt{1-|z|^2}),
\end{aligned}
$$
where ${\mathbb{D}}^o = \mathbb{D} \setminus S^1$. To see that $\mathrm{sl}(L_1)=-1$, observe first that the contact structure $\xi_0$ is spanned by the $d\lambda_0$-symplectic basis $\{X_1,X_2\}$ given by
$$
\begin{aligned}
X_1 & = x_2 \partial_{x_1} - y_2 \partial_{y_1} - x_1 \partial_{x_2} + y_1 \partial_{y_2},\\
X_2 & = y_2 \partial_{x_1} + x_2 \partial_{y_1} - y_1 \partial_{x_2} - x_1 \partial_{y_2}.
\end{aligned}
$$
Parametrize $L_1$ as $(x_1,y_1,x_2,y_2) = (\cos t,\sin t,0,0), t\in [0,2\pi]$. The winding number of $X_1|_{L_1}\in \xi_0|_{L_1}$ with respect to $N_0\equiv-\partial_{x_2}\in \xi_0|_{L_1}$, normal to the page $u_0$, coincides with $\mathrm{sl}(L_1)$. We compute
$$
\text{sl}(L_1) = \text{wind}(X_1|_{L_1},N_0) = \text{wind}(t \mapsto (-\cos t,\sin t),t\in [0,2\pi])=-1.
$$

\begin{defi}
We say that a link $L=L_1 \cup L_2 \subset (S^3,\xi_0)$ is a Hopf link if $L_1$ and $L_2$ are simply linked Hopf fibers.
\end{defi}

We will consider Hopf links whose components are Reeb orbits oriented by the flow, and whose linking number is $+1$.  Let $L_1$ be as before, and let $L_2:= \{0\} \times S^1$. The  Hopf link $L:=L_1 \cup L_2$ is the binding of an open book decomposition $\mathcal{O}_2$ whose annulus-like pages are
$$
\begin{aligned}
v_c: {\mathbb{D}}^o\setminus \{0\} & \to S^3 \setminus L, \ c\in S^1,\\
z & \mapsto (z,c\sqrt{1-|z|^2} \bar z/|z|).
\end{aligned}
$$

It is simple to check  that the orientations of $L_1$ and $L_2$  induced by the pages of $\mathcal{O}_1$ and $\mathcal{O}_2$ coincide with the orientations induced by the flow of $\lambda_0$.  Any embedded annulus representing $[v_c]\in H_2(S^3,L)$ is called a Birkhoff annulus bounded by $L$.  This notation is reminiscent of the annulus-like global surfaces of section found by Birkhoff for positively curved geodesic flows on the $2$-sphere.

Let $\lambda=f\lambda_0$ be a contact form on $(S^3,\xi_0)$. Its Reeb flow $\phi_t,\ t\in \mathbb{R},$  preserves $\xi_0$. This follows from Cartan magic formula  $L_{R_\lambda} \lambda = i_{R_\lambda} d\lambda + di_{R_\lambda} \lambda \equiv 0$. Denote by $\Phi:\xi_0 \to S^3 \times \mathbb{C}$ the $d\lambda$-symplectic trivialization of $\xi_0$ induced by  $\{X_1/\sqrt{f},X_2/\sqrt{f}\}$. Let $P=(x,T) \in \mathcal{P}(\lambda)$ be a Reeb orbit.  Then the linearized flow $d\phi_t:\xi_{x(0)} \to \xi_{x(t)}$ determines a path of $2 \times 2$ symplectic matrices $\Psi(t)\in \text{Sp}(2), t\in [0,T],$ starting from the identity. This path determines the Conley-Zehnder index and the rotation number of $P$ (half of the mean index), denoted by $\mathrm{CZ}(P)$ and $\rho(P)=\hat i(P)/2$, respectively. See Appendix \ref{sec: Maslov-type index}.

The following theorems will be useful in the proof of Theorem \ref{main1}.

\begin{thm}[Hryniewicz-Salom\~ao \cite{HS2}]\label{thm_L_connected}
Let $\lambda=f\lambda_0$ be a contact form on $S^3$ and let $P \subset S^3$ be a closed Reeb orbit. Assume that $P$ is a Hopf fiber in $(S^3,\xi_0)$ and that the following conditions hold:
\begin{itemize}
\item[(a)] $\rho(P)>1$.
\item[(b)] Every closed Reeb orbit $P'\subset S^3 \setminus P$ is linked with $P$.
\end{itemize}
Then $P$  is the binding of an open book decomposition whose disk-like pages are global surfaces of section.
\end{thm}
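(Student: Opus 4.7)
The plan is to construct the open book decomposition as the projection to $S^3$ of a finite-energy foliation in the symplectization $(\R \times S^3, d(e^a \lambda))$, following the pseudo-holomorphic curve approach pioneered by Hofer-Wysocki-Zehnder in the dynamically convex case and extended to the linking-type hypothesis (b) by Hryniewicz-Salom\~ao. After a small perturbation making $\lambda$ non-degenerate without destroying hypotheses (a) and (b), I would fix a $d\lambda$-compatible complex structure $J$ on $\xi_0$ and extend it to an $\R$-invariant almost complex structure $\tilde J$ on $\R \times S^3$ adapted to the symplectization. The objective is to produce a smooth $S^1$-family $\{u_\tht\}_{\tht \in S^1}$ of embedded $\tilde J$-holomorphic finite-energy planes, each asymptotic to $P$ at its unique positive puncture, whose projections to $S^3$ are pairwise disjoint, transverse to $R_\lambda$, and together exhaust $S^3 \setminus P$.

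Existence of at least one such plane rests on the combination of the topological and index hypotheses. Since $P$ is a Hopf fiber, $\mathrm{sl}(P) = -1$; since $\rho(P) > 1$, the Conley-Zehnder index of $P$ computed with respect to a capping disk satisfies $\cz(P) \geq 3$. The adjunction inequality for punctured curves then shows that an embedded finite-energy plane asymptotic to $P$ has Fredholm index $\cz(P) - 1 \geq 2$, and Wendl-type automatic transversality holds on the moduli space. One produces an initial plane either by deforming from a model contact form where such a plane is explicit (for instance, on an irrational ellipsoid for which $P$ corresponds to one of the two short orbits), or via a Bishop-type filling argument, maintaining non-emptiness of the relevant moduli spaces through the deformation by appeal to the compactness result below.

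The key and most delicate step is compactness of the resulting moduli space $\mathcal{M}$ modulo $\R$-translation. Given a sequence $u_n \in \mathcal{M}$, SFT compactness yields a limiting pseudo-holomorphic building whose non-trivial lower levels, if any, would be cylinders or planes asymptotic to closed Reeb orbits contained in $S^3 \setminus P$. Hypothesis (b) guarantees that every such orbit $P'$ satisfies $\link(P',P) \neq 0$, and Siefring's intersection theory for punctured holomorphic curves translates this topological linking into a strictly positive algebraic intersection number with the trivial cylinder $\R \times P$. Since the original planes $u_n$ are disjoint from $\R \times P$ and the algebraic intersection number is stable under $C^\infty_{\mathrm{loc}}$-convergence, no such breaking can occur. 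Hence $\mathcal{M}/\R$ is compact, and together with the Fredholm index count it is diffeomorphic to $S^1$. This is the principal obstacle: without dynamical convexity there is no a priori index bound preventing degeneration, so the topological linking hypothesis must be converted into an intersection-theoretic obstruction through Siefring's machinery.

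Finally, the projections $\bar u_\tht := \pi_{S^3} \circ u_\tht$ are pairwise disjoint embeddings into $S^3 \setminus P$ by positivity of intersections applied to distinct elements of $\mathcal{M}$, and each is transverse to $R_\lambda$ by $\tilde J$-compatibility combined with the asymptotic behavior near $P$. The map $\Phi : S^3 \setminus P \to S^1$ sending a point to the unique $\tht$ whose plane contains it is then a smooth fibration, and together with the binding $P$ it realizes the desired open book decomposition of $(S^3,\xi)$ with disk-like pages. Since the Reeb flow is positively transverse to each open page and tangent to the closed Reeb orbit $P$, each closed page is a global surface of section in the sense of Definition \ref{defi: GlobalSurface}, completing the sketch.
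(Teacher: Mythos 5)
The paper does not actually prove this statement: it cites it as a theorem of Hryniewicz--Salom\~ao and only applies it (combined with Proposition \ref{prop: rho_e}) in the proof of Theorem \ref{main1}-(i). So there is no internal proof for you to be matched against; I will therefore assess your sketch against what the cited proof does.

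Your outline correctly identifies the key ingredients: a finite-energy foliation of $\R\times S^3$ by embedded planes asymptotic to $P$, Fredholm index $\cz(P)-1\geq 2$ from $\mathrm{sl}(P)=-1$ and $\rho(P)>1$, and compactness of the moduli space controlled by the linking hypothesis via Siefring-type intersection theory. However, the very first step --- ``after a small perturbation making $\lambda$ non-degenerate without destroying hypotheses (a) and (b)'' --- is a genuine gap and, as stated, false. Condition (b) is a constraint on \emph{every} closed Reeb orbit of $\lambda$; a $C^\infty$-small perturbation creates new periodic orbits of arbitrarily long period which have no reason to link $P$. This is not a technicality one can wave away: the theorem is deliberately stated for a possibly degenerate $\lambda$ precisely because in applications such as the present paper the contact form on $\mathfrak{M}$ cannot be assumed nondegenerate. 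The actual argument either works directly with degenerate data (using uniform asymptotic estimates valid without nondegeneracy), or runs a sequence of nondegenerate approximants $\lambda_k\to\lambda$ in which the linking condition is permitted to fail for $\lambda_k$, and then shows that the planes produced for $\lambda_k$ converge to the desired pages for $\lambda$ using only the facts that $\rho(P)>1$ and (b) hold in the limit. This passage from perturbed to unperturbed data is the heart of the matter and is absent from your sketch.

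A second, smaller gap: in your SFT-compactness step you only consider breaking along orbits $P'\subset S^3\setminus P$. A limiting building may also break along $P$ itself or along iterates $P^m$, and hypothesis (b) says nothing about those. Excluding such degenerations is where the hypothesis $\rho(P)>1$ is used again, through the ``fast plane'' asymptotic analysis which bounds the winding of the approaching ends and forbids negative punctures at $P$ or its covers; your index computation $\cz(P)-1\geq 2$ alone does not rule this out. Once both of these points are supplied, the remainder of your outline --- positivity of intersections giving disjoint embedded pages, transversality to $R_\lambda$, and the fibration $\Phi:S^3\setminus P\to S^1$ --- is sound and matches the cited argument.
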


Recall that the existence of a disk-like global surface of section as in Theorem \ref{thm_L_connected} implies that the flow has either two or infinitely many periodic orbits. Indeed, since the first return map preserves a finite area form, Brouwer's translation theorem implies the existence of at least one fixed point. Then Franks' generalization of the Poincar\'e-Birkhoff Theorem implies that a second periodic point of the first return map forces the existence of infinitely many periodic points.

The next theorem provides conditions for a pair of periodic orbits to bind an annulus-like global surface of section.

\begin{thm}[Hryniewicz-Salom\~ao-Wysocki \cite{HSW}]\label{thm_Birkhoff}
Let $\lambda=f\lambda_0$ be a contact form on $(S^3,\xi_0)$ and let $L=L_1\cup L_2$ be a Hopf link formed by a pair of Reeb orbits whose linking number is $+1$. Assume
that
\begin{itemize}
\item[(a)] $\rho(L_i)>0, i=1,2.$
\item[(b)] Every periodic orbit $P' \subset S^3\setminus L$  has non-zero intersection number with a Birkhoff annulus bounded by $L$.
\end{itemize}
Then $L$ is the binding of an open book decomposition whose pages are global surfaces of section.
\end{thm}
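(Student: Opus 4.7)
My plan is to construct the open book as the image in $S^3$ of a one-parameter family of embedded $\tilde J$-holomorphic cylinders in the symplectization $(\mathbb{R}\times S^3, d(e^s\lambda))$, each asymptotic to $L_1$ at the positive puncture and to $L_2$ at the negative puncture. The strategy parallels the proof of Theorem \ref{thm:convexHWZ} from \cite{convex}, but replaces disk-like planes with cylinders: fix a $d\lambda$-compatible $J$ on $\xi_0$ and its cylindrical lift $\tilde J$, and study the moduli space $\mathcal{M}$ of finite-energy $\tilde J$-holomorphic cylinders with the above asymptotics. I expect $\mathcal{M}$, modulo target $\mathbb{R}$-translation, to be a compact smooth circle whose projections to $S^3\setminus L$ sweep out a foliation transverse to the Reeb flow, thereby producing the pages of the open book with binding $L$.

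For the existence and Fredholm aspects, I would start from a model neighborhood of the Hopf link in which $\tilde J$ is chosen so that the standard Birkhoff annuli of $\mathcal{O}_2$ lift to honest $\tilde J_0$-holomorphic cylinders, and then use a Sard-Smale homotopy between the model $\tilde J_0$ and the desired $\tilde J$ to transport the $S^1$-family. The expected Fredholm dimension is controlled by condition (a): since $\rho(L_i)>0$, the asymptotic operators at both ends are non-degenerate, the Conley-Zehnder indices computed in the trivialization induced by a Seifert surface for $L$ sit in the right range, and Riemann-Roch gives the desired index for an embedded cylinder. Automatic transversality in dimension four for embedded low-index punctured curves then ensures that the continuation preserves a one-parameter family of transversely-cut-out embedded cylinders.

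The principal obstacle, and where condition (b) is crucial, is SFT compactness along the homotopy and within the final moduli space. One must exclude both bubbling and breaking into a non-trivial holomorphic building. Sphere bubbles are ruled out by exactness of $d(e^s\lambda)$, and no plane bubbles can form since $L_1$ and $L_2$ are the prescribed asymptotics. A non-trivial breaking would produce intermediate levels whose interior punctures are asymptotic to Reeb orbits $P'\subset S^3\setminus L$. Here condition (b) intervenes: every such $P'$ must have non-zero algebraic intersection with any Birkhoff annulus bounded by $L$, which via Siefring's intersection theory for punctured holomorphic curves translates into $P'$ being non-trivially linked with $L$ and linked with each cylinder in the family. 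Combining this with positivity of intersections between $\tilde J$-holomorphic cylinders and Reeb trajectories forces the projected intermediate level to meet $L$, contradicting the requirement that broken levels lie in $\mathbb{R}\times(S^3\setminus L)$. Hence no breaking occurs.

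Once compactness is established, the moduli space $\mathcal{M}$ modulo target translation is a compact embedded circle. Positivity of intersections of distinct $\tilde J$-holomorphic cylinders shows that different members of the family have disjoint projections to $S^3$, so the images foliate $S^3\setminus L$ by embedded annuli with boundary $L$. The defining map $\Phi: S^3\setminus L \to S^1$ sending a point to the parameter of its leaf is then a smooth fibration, and the asymptotic formulas together with condition (a) guarantee that the Reeb vector field is transverse to each leaf in the interior and tangent to $L$ with the correct orientation on the boundary. This realizes $(L,\Phi)$ as the desired open book decomposition whose pages are annulus-like global surfaces of section.
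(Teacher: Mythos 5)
This statement is Theorem \ref{thm_Birkhoff}, which the paper \emph{cites} from Hryniewicz--Salom\~ao--Wysocki \cite{HSW} and does not reprove; there is therefore no in-paper proof to compare your proposal against, and any assessment has to be on the merits of your argument alone.

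Your overall strategy---a compact $S^1$-family of embedded punctured finite-energy spheres in the symplectization, automatic transversality in dimension four, SFT compactness, and positivity of intersections to get a foliation---is indeed the right framework, but there is a structural error in your setup that would break the whole construction. You take cylinders asymptotic to $L_1$ at a \emph{positive} puncture and to $L_2$ at a \emph{negative} puncture. For the projected surface to close up to a page of an open book with binding $L$ as in Definition \ref{defi: Openbook}, the orientation that the Reeb flow induces on each binding component must agree with the boundary orientation induced by the page; this forces \emph{both} punctures to be positive. A $(+,-)$ cylinder projects to an annulus whose boundary orientation on the $L_2$-side is opposite to the flow, and even setting aside orientations, such a curve has $d\lambda$-area $T(L_1)-T(L_2)$, which is nonnegative only if $T(L_1)\geq T(L_2)$ and vanishes exactly for trivial cylinders; in the model case $\lambda_0$ (equal periods $\pi$) there are \emph{no} nontrivial $(+,-)$ cylinders between the Hopf fibers at all, so your continuation from the model $\tilde J_0$ cannot even get started. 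You need punctured spheres with two positive punctures asymptotic to $L_1$ and $L_2$, and the Fredholm index and writhe/asymptotic-winding estimates must be redone accordingly (this is also where condition (a), $\rho(L_i)>0$, actually enters: it controls the asymptotic winding so that the projected page is transverse to $R_\lambda$ near the binding). Two further gaps you should repair: (i) your dismissal of plane bubbling ("no plane bubbles since $L_1,L_2$ are the prescribed asymptotics'') is not correct---SFT breaking of a two-positive-puncture sphere can produce planes at lower levels asymptotic to other Reeb orbits, and it is precisely here, not only at "intermediate cylindrical levels,'' that condition (b) and Siefring-type intersection positivity are invoked to derive a contradiction; and (ii) the global-surface-of-section property in Definition \ref{defi: GlobalSurface} (every trajectory meets the interior of a page infinitely often forward and backward) is not a consequence of transversality alone; it again uses condition (b) to preclude trajectories trapped in $S^3\setminus L$ away from the pages, and this step should be made explicit.
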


As mentioned before, the existence of an annulus-like global surface of section as in Theorem \ref{thm_Birkhoff} implies the existence of infinitely many periodic orbits provided a third Reeb orbit exists.

The following theorem gives a criterion for the existence of infinitely many periodic orbits without the use of global surfaces of section. It requires the existence of a Hopf link formed by a pair of Reeb orbits whose rotation numbers satisfy a non-resonance condition.

\begin{thm}[Hryniewicz-Momin-Salom\~ao \cite{HMS15}]\label{thm_PB}
Let the contact form $\lambda = f \lambda_0$, $f>0$, on $(S^3,\xi_0)$  admits a pair of Reeb orbits  $L_i$, $i=1,2$, forming a Hopf link, and let the real numbers $\eta_0,\eta_1$ be defined as
$\eta_i = \rho(L_i) - 1,\ i=1,2,$
where $\rho(L_i)$ is the rotation number of $L_i$. Let $0 \neq (p,q)\in \mathbb{Z}^2$ be a prime pair of integers, i.e., there exists no integer $k>1$ such that $(p/k,q/k) \in \mathbb{Z}^2$.  If
$$
(1,\eta_1)<(p,q)<(\eta_0,1) \quad  \mbox{ or }  \quad (\eta_0,1)<(p,q)<(1,\eta_1),
$$
then there exists $P \in \mathcal{P}(\lambda)$ such that
 $$
 \text{link}(P,L_1) = p \quad \mbox{ and } \quad \text{link}(P,L_2) = q.
 $$
\end{thm}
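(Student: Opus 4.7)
The plan is to prove the theorem entirely inside the symplectization $(\mathbb{R} \times S^3, d(e^a \lambda))$ using pseudo-holomorphic cylinders, without needing a global surface of section bounded by $L_1 \cup L_2$. Fix a $d\lambda$-compatible cylindrical almost complex structure $J$ on $\mathbb{R} \times S^3$, and consider, for each prime pair $(p,q)$ satisfying the twist inequality in the statement, the moduli space $\mathcal{M}_{p,q}$ of finite-energy punctured $J$-holomorphic spheres carrying one positive puncture at an unspecified Reeb orbit $P$, together with two negative punctures, one asymptotic to $L_1$ with transverse winding $p$ and one asymptotic to $L_2$ with transverse winding $q$. The non-resonance inequality $(1,\eta_1) < (p,q) < (\eta_0,1)$ (or its reverse) is to be interpreted as a lattice-refined twist condition, translated through the normal form of the asymptotic operators at $L_1$ and $L_2$ into a prescription on the winding of the asymptotic eigenfunctions at each negative puncture.

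The first substantive step is existence of an element in $\mathcal{M}_{p,q}$. I would start from an integrable reference contact form, say $\lambda_0$ up to a small deformation, for which the Hopf link $L_1 \cup L_2$ binds the model open book $\mathcal{O}_2$ with annulus pages $v_c$; here holomorphic cylinders and three-punctured spheres with the prescribed behavior at $L_1$ and $L_2$ can be written down explicitly in closed form, and the twist inequality translates into a Fredholm-index computation guaranteeing that the model curves are regular. A continuation/SFT cobordism argument along a generic homotopy from the reference form to $\lambda$ then transfers a non-zero algebraic count of curves into $\mathcal{M}_{p,q}$ for the target contact form.

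The decisive step is SFT compactness. Given a sequence in $\mathcal{M}_{p,q}$, let one modulus run off to infinity; the resulting limit is a stable holomorphic building whose pieces are joined by matching Reeb orbits at punctures, while the overall asymptotic data at the ends on $L_1$ and $L_2$ are preserved. The twist condition is precisely the statement that $(p,q)$ cannot be realized as a non-negative integer combination $k(\eta_0,1)+\ell(1,\eta_1)$ in the relevant lattice, which is one incarnation of $\rho_1^{-1}+\rho_2^{-1}\neq 1$; consequently, the limit building cannot consist solely of covers of $L_1$ and $L_2$, and at least one of its ends must be asymptotic to a genuinely new Reeb orbit $P \subset S^3\setminus L$. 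Reading off the winding of the asymptotic eigenfunction at $P$, or equivalently computing intersection numbers with the annular pages $v_c$ of the reference open book $\mathcal{O}_2$, yields $\text{link}(P,L_1)=p$ and $\text{link}(P,L_2)=q$.

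The principal obstacle is transversality together with the bookkeeping required to pin the new puncture down to a single simply covered orbit with the correct linking numbers. Multiply covered cylinders between iterates of $L_1$ and $L_2$ are the usual source of trouble; I would control them either via automatic transversality in the four-dimensional symplectization or, when needed, by abstract perturbations of $J$. Carefully tracking Conley-Zehnder indices of all iterates of $L_1, L_2$ and checking that none of them supports a ghost cap carrying extra moduli is where the primality hypothesis on $(p,q)$ enters and is, in my view, the most delicate portion of the argument.
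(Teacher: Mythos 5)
This statement is Theorem~\ref{thm_PB}, which the paper imports verbatim from Hryniewicz--Momin--Salom\~ao \cite{HMS15} and does not re-prove; there is therefore no ``paper's own proof'' to compare against, only the proof in \cite{HMS15}.

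Your sketch has the right ingredients in outline (pseudo-holomorphic curves in the symplectization, an integrable reference form, SFT compactness, the twist condition as a constraint on limit buildings), but the mechanism you propose is not the one in \cite{HMS15}, and it has gaps. In \cite{HMS15} the object of study is Momin's cylindrical contact homology in the complement of the transverse link $L=L_1\cup L_2$: the generators are Reeb orbits in $S^3\setminus L$ lying in the free homotopy class determined by the pair of linking numbers $(p,q)$, the differential counts holomorphic \emph{cylinders} in the symplectization of $S^3\setminus L$, and the key technical input is an SFT-compactness statement (valid under the hypothesis on $\rho(L_1),\rho(L_2)$) ensuring that such cylinders cannot degenerate towards $\mathbb{R}\times L$. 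The homology is computed explicitly for a model Zoll-type form and shown to be nonzero in the class $(p,q)$ precisely when $(p,q)$ lies in the open cone between $(1,\eta_1)$ and $(\eta_0,1)$; an invariance argument transfers nontriviality to the given $\lambda$, forcing a generator, i.e.\ a Reeb orbit $P$ with the desired linking numbers. No global surface of section is used, which you correctly anticipated.

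Your proposal instead studies moduli of thrice-punctured spheres with a positive end at an unspecified $P$ and negative ends asymptotic to covers of $L_1$ and $L_2$. Two points fail here. First, there is no a priori reason such curves exist, even for the reference form; the existence step via ``continuation from the integrable model'' is the entire content of the theorem and cannot be asserted by fiat --- that is exactly what the homology computation in \cite{HMS15} replaces. Second, prescribing the asymptotic covering multiplicities and/or the winding of the asymptotic eigenfunctions at the negative ends does \emph{not} pin down $\text{link}(P,L_1)=p$ and $\text{link}(P,L_2)=q$ for the positive asymptotic $P$: by positivity of intersections with the holomorphic cylinders $\mathbb{R}\times L_i$ one only gets inequalities, and the equality one needs is not automatic for a single curve. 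Finally, your lattice interpretation of the twist condition (``$(p,q)$ is not a non-negative integer combination of $(\eta_0,1)$ and $(1,\eta_1)$'') is not equivalent to the cone condition $(1,\eta_1)<(p,q)<(\eta_0,1)$ in the statement; the primitivity of $(p,q)$ is needed so that the free homotopy class in $S^3\setminus L$ is primitive and the orbit found is simply covered, not to rule out ghost components.
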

In the statement above, the inequality $a < b$, with $a,b\in \R^2 \setminus \{(x,y):x\leq 0, y\leq 0\}$, means that the arguments of $a$ and $b$ satisfy $-\frac{\pi}{2} < \text{arg}(a)<\text{arg}(b)< \pi$.

\subsection{Pseudo-holomorphic curves and finite energy foliations}

The open book decompositions given in Theorems \ref{thm_L_connected} and \ref{thm_Birkhoff} are obtained as projections of finite energy foliations in the symplectization of the contact manifold. In this section, we briefly introduce these concepts.

Let $\lambda$ be a contact form on a closed three-manifold $M$. The four-manifold $\R \times M$ is naturally equipped with the symplectic form $d(e^a\lambda)$, and  is called the symplectization of $(M,\xi=\ker \lambda)$. Here, $a$ is the $\R$-coordinate.
Since $d\lambda$ restricts to $\xi$ as a symplectic form, there exists a complex structure $J:\xi \to \xi, J^2=-I,$ so that $d\lambda(\cdot, J \cdot)$ is a positive-definite inner product. The space of such $J$'s, denoted $\J(\lambda)$, is non-empty and contractible in the $C^\infty$-topology. For each $J\in \J(\lambda)$,  we consider the almost-complex structure $\tilde J: T(\R \times M) \to T(\R \times M),\tilde J^2 = -I,$ satisfying
$$
\tilde J|_\xi = J \quad \mbox{ and } \quad \tilde J \cdot \partial_a = R_\lambda.
$$

Given a closed connected Riemann surface $(\Sigma,j)$ and a finite set $\Gamma \subset \Sigma$, we call a map $\tilde u=(a,u):\Sigma^o:=\Sigma \setminus \Gamma \to (\R \times M, \tilde J)$ a finite energy $J$-holomorphic curve if it satisfies the Cauchy-Riemann-type equation
$$
d\tilde u \circ j = \tilde J(u) \circ du,
$$
and its Hofer energy satisfies
\begin{equation}\label{finiteenergy}
0< E(\tilde u):= \sup_{\varphi\in \Lambda} \int_{\Sigma^o} \tilde u^*d(\varphi(a) \lambda)<+\infty.
\end{equation}
Here, $\Lambda$ stands for the set of smooth non-decreasing functions $\varphi:\R \to [0,1]$. The points in $\Gamma$ are called punctures. Due to the $\R$-invariance of $\tilde J$, any shift of $\tilde u$ in the $\R$-direction is also $J$-holomorphic.

The simplest finite energy curve is a trivial cylinder: given a closed Reeb orbit $P=(x,T)$, let $x_T:=x(T\cdot)$. Then $\tilde u:(\R \times \R / \Z,i) \to (\R \times M,\tilde J)$, given by $\tilde u(s+it)=(a(s,t),u(s,t)) := (sT, x_T(t))$, is $\widetilde J$-holomorphic and has energy $E(\tilde u) = T$.

In general, \eqref{finiteenergy} implies that $\tilde u$ is well-behaved near  a puncture $z\in \Gamma$: if $a$ is bounded near $z$, then $z$ is removable, that is $\tilde u$ smoothly extends over $\Sigma^o \cup \{z\}$, otherwise either $a\to +\infty$ or $a\to -\infty$.  We call $z$  positive or negative according to each case, respectively. Let us assume all punctures are non-removable. Let $Z_+:=[0,+\infty) \times \R / \Z$ be holomorphic polar coordinates on a punctured disk about $z\in \Gamma$. Write $\tilde u(s,t) = (a(s,t),u(s,t)),\ (s,t) \in Z_+$ near $z$. Then the limit $\lim_{s \to +\infty} \int_{\{s\} \times \R / \Z} u^*\lambda = \pm T$ exists for some $T>0$, and its sign coincides with the sign of $z$. Given any sequence $s_n \to +\infty$, there exists a subsequence, also denoted $s_n$, and a periodic orbit $P=(x,T)$, depending on $s_n$, so that $u(s_n,\cdot) \to x(T \cdot)$ in $C^\infty$ as $n\to \infty$. $P$ is called an asymptotic limit of $\tilde u$ at $z$. If $P$ is nondegenerate, then $P$ is the unique asymptotic limit of $\tilde u$ at $z$ and $u(s,\cdot) \to x(T \cdot)$ in $C^\infty$  as $s \to +\infty$.

A finite energy foliation $\F$ is a foliation of $\R \times M$ so that each leaf $F\in \F$ is the image of a finite energy $J$-holomorphic curve $\tilde u=(a,u): (\Sigma^o,j) \to \R \times M$. Each puncture of $\tilde u$ has a unique asymptotic limit, and the closure of $u(\Sigma^o) \subset M$ is a compact embedded surface. The binding  of $\F$ is the set of asymptotic limits of all leaves and is formed by a finite set of Reeb orbits  $P_1,\ldots, P_l$, called binding orbits. The trivial cylinders over the binding orbits are regular leaves and every other leaf is asymptotic to some of these trivial cylinders at its punctures. $\F$ is invariant under shifts in the $\R$-direction, and its projection to $M$ is a singular foliation $\F_M$ whose singular set is formed by the binding orbits. The regular leaves of $\F_M$ are required to be transverse to the flow. In some special cases, $\F_M$ determines an open book decomposition of $M$ whose pages are global surfaces of section.  The theory of pseudo-holomorphic curves in symplectizations was developed by Hofer, Wysocki, and Zehnder in \cite{props1,props2,props3}. Finite energy foliations for star-shaped hypersurfaces in $\R^4$ were first studied  in \cite{convex,fols}.  %More general finite energy foliations that do not project onto open books were found in \cite{FS,PS1,lemos}. %Pseudo-holomorphic curves with infinite energy, so called feral curves, were first treated in \cite{FH}.

\section{Proof of Theorem \ref{main1}}

\subsection{Proof of Theorem \ref{main1}-(i)}

First, recall that the sphere-like energy surface $\mathfrak{M}\subset \R^4$ has contact type.  Indeed, every regular energy surface of a mechanical system has contact type, see Theorem 4.8 in \cite{HZ94} for a proof. Hence there exists a contact form $\lambda$ on $\mathfrak{M}$ so that its Reeb vector field  is a positive multiple of the Hamiltonian vector field. In addition, $\mathfrak{M}$ admits a strong symplectic filling in the sense that it is the boundary of a compact symplectic manifold $(W,\omega_0)$, with $W = \{H \leq h\} \subset \R^4$ and standard symplectic form $\omega_0=dp_r \wedge dr + dp_z \wedge dz$. Moreover, there exists a Liouville vector field $Y$ ($L_Y \omega_0 = \omega_0$) defined on a neighborhood of $\mathfrak{M}\subset \R^4$ so that $Y$ is outward transverse to $\mathfrak{M}$ and $i_Y \omega_0|_\mathfrak{M} = \lambda$. The strong filling implies that $\xi = \ker \lambda$ is tight  in the sense that it does not admit an embedded disk $\mathcal{D}\hookrightarrow \mathfrak{M}$ so that $\partial \mathcal{D}$ is tangent to $\xi$, and $T_z\mathcal{D} \neq  \xi, \forall z\in \partial \mathcal{D}$. Since, up to a diffeomorphism, there exists only one tight contact structure on  $\mathfrak{M} \simeq S^3$, we conclude that $(\mathfrak{M},\xi=\ker \lambda)$ is contactomorphic to the tight $3$-sphere $(S^3,\xi_0)$, i.e., there exists a diffeomorphism $g: S^3\to \mathfrak{M}$ so that $g_*\xi_0 = \xi$. The contact form $g^*\lambda$, still denoted $\lambda$, has contact structure $\xi_0$ and its Reeb flow is equivalent to the Hamiltonian flow on $\mathfrak{M}$.

We claim that the Euler orbit $\zeta_e\subset \mathfrak{M}$ is unknotted and has self-linking number  $-1$. To prove the claim we first find a co-oriented embedded compact disk $F \hookrightarrow \mathfrak{M}$ bounded by $\zeta_e$  whose interior is positively transverse to the flow with respect to the co-orientation. This disk is explicitly given by
\begin{equation}\label{Eulerplane}
F:= \{(p_r,p_z,r,z) \in \mathfrak{M}: z=0, p_z \geq 0\}.
\end{equation}
The disk $F$ is embedded since the projection $\hat \zeta_e = \Pi_{r,z} ( \zeta_e)\subset \mathcal{H}\cap \{z=0\}$ does not self-intersect. Its boundary  $\partial F  = \{(p_r,p_z,r,z): z=0, p_z=0\}$ coincides with $\zeta_e$. An interior point $w=(p_r,p_z,r,0) \in F \setminus \partial F$ is such that $p_z>0$ and $(r,0) \in \hat \zeta_e \setminus \partial \mathcal{H}$. Moreover,
\begin{equation}\label{basis}
T_wF = \text{span}\{p_z \partial_{p_r} - p_r \partial_{p_z},-\partial_rV(r,0)\partial_{p_z} + p_z\partial_r\}.
\end{equation}
Since the Hamiltonian vector field is $$X_H=-\partial_rV \partial_{p_r} -\partial_zV \partial_{p_z} + p_r \partial_r + p_z \partial_z,$$ we conclude that $F \setminus \partial F$ is transverse to the flow. The $1$-form $\lambda|_{F\setminus \partial F}>0$ naturally induce a co-orientation on $F$.

Consider the $d\lambda$-positive frame $\{X_1,X_2\} \subset T\mathfrak{M}$ given by
\begin{equation}\label{frameX1X2}
\begin{aligned}
X_1 & = \partial_zV \partial_{p_r} - \partial_rV \partial_{p_z} +p_z \partial_r - p_r \partial_z,\\
X_2 & = -p_z \partial_{p_r} + p_r \partial_{p_z} + \partial_zV \partial_r - \partial_rV \partial_z.
\end{aligned}
\end{equation}

Notice that $\text{span}\{X_1,X_2\}$ is transverse to $X_H$ and thus the frame $\{X_1,X_2\}$ induces a trivialization of $\xi$ under the projection $T\mathfrak{M} \to \xi$ along $X_H$. The non-vanishing vector field
$$
\partial_{p_z}|_{\zeta_e} =\frac{-\partial_rV(r,0)X_1 + p_rX_2}{\partial_rV(r,0)^2+p_r^2}
$$
is tangent to $F$ along $\zeta_e$. The self-linking number of $\zeta_e$ coincides with minus the winding number of $\partial_{p_z}|_{\zeta_e}$ with respect to the frame $\{X_1,X_2\}|_{\zeta_e}$ along the period of $\zeta_e$. Since the winding number of $(-\partial_rV(r,0),p_r)|_{\zeta_e}\in \C \setminus 0$  around the origin is $+1$, we conclude that $\sl(\zeta_e) = -1.$

Now let $w(t)=(p_r(t),p_z(t),r(t),z(t))\in \mathfrak{M}\setminus \zeta_e$ be a periodic trajectory. Denote $p_z(t) + \sqrt{-1} z(t) = \rho(t) e^{\sqrt{-1} \eta(t)}, \forall t,$ for some continuous functions $\eta(t)\in \R$ and $\rho(t)>0$. It follows from \eqref{ham} that
$$
\dot \eta  = \cos^2 \eta +g(r,z) \sin^2 \eta,
$$
where $$g(r,z) = \frac{4\alpha^{-1}(1+2\alpha)}{(r^2+(1+2\alpha)z^2)^{3/2}}, \quad \forall (r,z).$$
Since $ g(r,z) > g_{\min}, \forall (r,z)\in \mathcal{H}$, for some $g_{\min}>0$, we conclude that
\begin{equation}\label{etamin}
\dot \eta> \eta_{\min},
\end{equation}
for a suitable constant $\eta_{\min}>0$. Hence $z(t)$ vanishes  and $w(t)$ transversely intersects $F\setminus \partial F$ infinitely many times forward and backward in time.

Let us check that such intersections are positive. Consider the orientation of $\mathfrak{M}$ induced by $\lambda \wedge d\lambda >0$. Since $i_{X_H} \omega_0 = - dH$ and the Liouville vector field $Y$ points outward $W$ at $\mathfrak{M}=\partial W$, we conclude that $\lambda(X_H) = \omega_0(Y,X_H)=dH \cdot Y >0.$ Then $X_H$ fits the co-orientation of $\lambda$, i.e. $X_H$ intersects $F\setminus \partial F$ positively. We conclude that every intersection of $w(t)$ with $F \setminus \partial F$ is positive, and thus $w(t)$ positively links with $\zeta_e = \partial F.$

Notice that $\lambda$ also induces an orientation of $F$ via $\iota_{X_H}(\lambda\wedge d\lambda)|_F>0$. It turns the basis
\eqref{basis} into a positive basis since $d\lambda(\partial_{p_r},\partial_r)=\omega_0(\partial_{p_r},\partial_r)>0$. This orientation of $F$ induces the oriented basis $\{\partial_{p_r}, \partial_r\}$ on the $(p_r,r)$-plane under the natural projection, and thus it induces on $\zeta_e=\partial F$ the  orientation   induced by the flow.

By Proposition \ref{prop: rho_e}, we have $\rho(\zeta_e)>2$. Then Theorem \ref{thm_L_connected} implies that $\zeta_e$ is the binding of an open book decomposition $\mathcal{O}_1$ whose disk-like pages are global surfaces  of section. Moreover, for every $J\in \J(\lambda)$, there exists a finite energy foliation on $\R \times \mathfrak{M}$ by $J$-holomorphic curves whose projection to $\mathfrak{M}$ is $\mathcal{O}_1$. This completes the proof of Theorem \ref{main1}-(i). \qed

\begin{rem}\label{rem_openbook} The disk $F\hookrightarrow \mathfrak{M}$ defined in \eqref{Eulerplane} is itself the page of an open book decomposition $\mathcal{O}$ whose pages are disk-like global surfaces of section. To see this, consider the projection $\Pi_{p_z,z}:\R^4\to \R^2$ to the $(p_z,z)$-plane and observe  that  $\Pi_{p_z,z}^{-1}(0,0)$ coincides with the Euler orbit $\zeta_e$. Now fix a point $(p_r,p_z,r,z)\in \mathfrak{M}$ so that $(p_z,z) \neq 0$.
The function $\tau \mapsto H(p_r,\tau p_z,r,\tau z),\ \tau>0,$ is strictly increasing and converges to a number in $(-1,+\infty]$ as $\tau \to +\infty$. Hence, for every $\tau \in [0,1),$ we have $H(p_r,\tau p_z,r,\tau z) < -1$. For a suitable $\hat p_r> p_r$ depending on $\tau$, $H(\hat p_r, \tau p_z, r, \tau z) = -1$. This implies that $(\hat p_r,\tau p_z, r,\tau z) \in \mathfrak{M} \Rightarrow (\tau p_z, \tau z) \in \Pi_{p_z,z} (\mathfrak{M})$.  We conclude that $\Pi_{p_z,z}(\mathfrak{M})$  is a star-shaped domain $\mathcal{D}_{p_z,z}\subset \R^2$ with smooth boundary. The pre-image of a boundary point $(p_{z,0},z_0)\in\partial \mathcal{D}_{p_z,z}$ coincides with a single point $(0,p_{z,0},r(z_0),z_0)$, where $r(z_0)>0$ is the unique minimum of $V(\cdot,z_0)$, see Proposition \ref{prop_Mcomp}.

Now notice that each radial segment $\ell$ issuing from $(0,0)$ to a point $(p_{z,0},z_0) \in \partial \mathcal{D}_{p_z,z}$ is the projection under $\Pi_{p_z,z}$ of an embedded disk bounded by $\zeta_e$. Indeed, since for each fixed $(p_{z,1},z_1)\in \ell \setminus \{0,(p_{z,0},z_0)\}$, the function $h:(p_r,r) \mapsto H(p_r,p_{z,1},r,z_1)$ has a unique critical point at $(0,r(z_1))$, which is a minimum and satisfies $H(0,p_{z,1},r(z_1),z_1)< -1$. Moreover, since $V(r,z)>V(r,0), \forall r>0, \forall z\neq 0,$ we conclude that $h>-1$ when restricted to $\Pi_{p_r,r}(\zeta_e)$. Hence $(p_{z,1},z_1)$ is the projection under $\Pi_{p_z,z}$ of a topological circle in $\mathfrak{M}$, which is in one-to-one correspondence with a topological circle in the interior of   $\Pi_{p_r,r}(\zeta_e)$, see Figure \ref{fig on z pz}.   Taking the union of all such circles in $\mathfrak{M}$, we obtain a smooth embedded disk $\Sigma_\ell \subset \mathfrak{M}$ bounded by the $\zeta_e$.

\begin{figure}[ht]
\centering
\includegraphics[width=0.7 \textwidth]{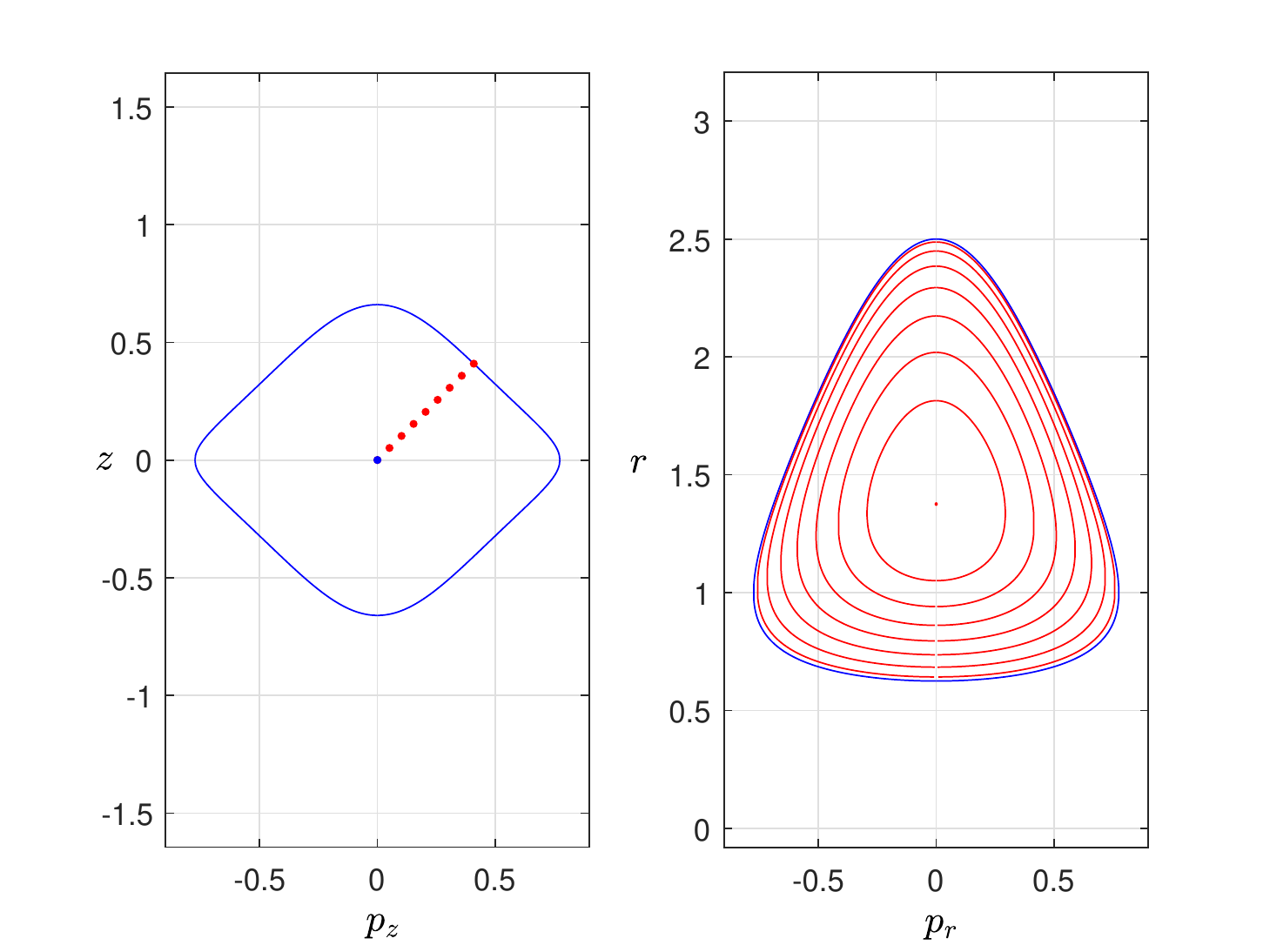}
\caption{Projection of the disk-like global surface of section to the $(p_z,z)$-plane (left) and the $(p_r,r)$-plane (right).}
\label{fig on z pz}
\end{figure}

The uniform estimate \eqref{etamin} shows that  $\Sigma_\ell$ is a disk-like global surface of section.  Finally, the family of all such disks $\Sigma_\ell$ determine an open book decomposition $\mathcal{O}$ of $\mathfrak{M}$ with binding orbit $\zeta_e$ and pages
\begin{equation}\label{equ: family of pages}
\Sigma_s :=\Pi_{p_z,z}^{-1}(\ell_s),\quad \ell_s := e^{2\pi s \sqrt{-1}}\R^+ \cap  \mathcal{D}_{p_z,z}, s\in \mathbb{R}/\mathbb{Z}.
\end{equation}

Observe that $F=\Sigma_0$ is a page of $\mathcal{O}$. See \cite{Kim3} for global surfaces of section admitting symmetries.
%It is unknown whether there exists $J \in \J(\lambda)$ which realizes the open book $\mathcal{O}$ as the projection to $\mathfrak{M}$ of a finite energy foliation in $\R \times \mathfrak{M}$.
\end{rem}

%Now we extend $s$-hitting map $g_s$ to $\Upsilon$ continuously. By definition, the $s$-hitting map can be defined as $$g_s=\varphi_{\tau_s(\cdot)}(\cdot):\Sigma_0\rightarrow \Sigma_s,$$
%where for any $x\in\Sigma_0$, $\tau_s(x)$ denotes the minimal time from $\Sigma_0$ to $\Sigma_s$ along the Hamiltonian flow $\varphi_t$. Denote by $\zeta(x,t)=(p_r,p_z,r,z)(x,t)$ the trajectory on $\mathfrak{M}$ with initial point $x\in\Sigma_0$. Write $p_z=f\cos\phi$ and $z=f\sin\phi$, where $f>0$. By using \eqref{ham}, we obtain
%$$
%\dot \phi=\cos^2\phi+\frac{4\alpha^{-1}(1+2\alpha)}{(r^2+(1+2\alpha)z^2)^{3/2}}\sin^2\phi.
%$$
%Then by definition, we have
%$\phi(x,\tau_s)\equiv 2\pi s$. %Since $\partial_t\phi>0$ near the Euler orbits and $\phi$ is $C^1$, then by using the implicit function theorem, we have $\tau_s=\tau_s(x)$ is continuous near $\zeta_e$. This implies that the function $\tau_s$ can be continuously extended to $\zeta_e$, so is for $s$-hitting map $g_s$. The map $g_s$ along $\zeta_e$ corresponds to the point for which the linearized flow applied to $\partial_{p_z}$ rotates $2\pi s$. By definition \ref{defi: GlobalSurface}, we have

\subsection{Proof of Theorem \ref{main1}-(ii)}

We start with Birkhoff's shooting method to prove the existence of a simple $z$-symmetric brake orbit $\zeta_z$ whose linking number with $\zeta_e$ is $+1$. Then we show that every periodic orbit in the complement of $L:=\zeta_e \cup \zeta_z$ is positively linked with a Birkhoff annulus bounded by $L$.

Let $x_e(t) = (p_{re}(t),0,r_e(t),0)\in\mathfrak{M}$ be the $T_e$-periodic solution whose image is $\zeta_e$. Assume that $r_e(0) = (r_{\min},0) \in \partial \mathcal{H}$, $r_e(T_e/2) = (r_{\max},0) \in \partial \mathcal{H}$ and $r(t) \in \mathcal{H} \setminus \partial \mathcal{H}, \forall t\in (0,T_e/2)$.

 %Denote the corresponding frame of $\xi$ also by $\{X_1,X_2\}$. %The linearized flow $\xi_{x_e(0)} \to \xi_{x_e(t)}$ is represented in this frame by a path of $2\times 2$ symplectic matrices $\Psi_t, t\geq 0,$ starting from the identity.
Consider the global frame $\{X_1,X_2\}\subset T\mathfrak{M}$ as in \eqref{frameX1X2}. This frame  induces a trivialization of $\xi$. Observe that
$$
X_1|_{\zeta_e} = -\partial_rV(r_e,0) \partial_{p_z} - p_r \partial_z \quad \mbox{ and } \quad X_2|_{\zeta_e} = p_r \partial_{p_z} - \partial_rV(r_e,0) \partial _z,
$$
giving
$$
\partial_z|_{x_e} = -\frac{p_r X_1 + \partial_rV(r_e,0)X_2}{p_r^2+\partial_rV(r_e,0)^2}.
$$
Let $F\subset \{z=0\}$ be the disk bounded by $\zeta_e$ defined as in \eqref{Eulerplane}. Let
 $$
 \eta_0:= X_2(x_e(0))=-\partial_rV(r_{\min},0)\partial_z.
 $$
 Notice that $\eta_0$ is transverse to $F$ and its projection  to the Hill region  is upward tangent to $\partial \mathcal{H}$. Let
 $$
 \eta(t)=:a_1(t)X_1+a_2(t)X_2, \quad \eta(0)=\eta_0,
 $$
 be the solution of the linearized flow along $\zeta_e$ projected to the plane spanned by $X_1,X_2$.  Then
 $$
 \eta \cdot \partial_z =(a_1,a_2) \cdot(-p_{re},-V_r(r_e,0)).
 $$
 We know from Proposition \ref{prop: rho_e} that the rotation number of $\zeta_e$ is $ >2$. This implies that the variation in the argument of  $(a_1(t),a_2(t))\in \C \setminus \{0\}, t\in [0,T_e/2],$ is  $>2\pi$. Since the variation in the argument of $(-p_{re}(t),-\partial_rV(r_e(t),0))\in \C \setminus \{0\}, t\in [0,T_e/2],$ is $\pi$, we conclude that there exists a least $t_0 \in (0,T_e/2)$ so that $\eta(t_0) \cdot \partial _z = 0$ and $\eta(t) \cdot \partial_z$ changes from positive to negative at $t_0$. In particular, any solution $x(t)=(p_r(t),p_z(t),r(t),z(t))\in \mathfrak{M}$ satisfying $(r(0),z(0)) \in \partial \mathcal{H},$ with $z(0)>0$ small and $r(0)$ close to $r_{\min}$, necessarily crosses $z=0$ for the first time at time $\bar t_0<T_e/2$, with $p_z(\bar t_0)<0$ and $p_r(\bar t_0) >0.$

 Arguing as above, we conclude that any solution $x(t)=(p_r(t),p_z(t),r(t),z(t))\in \mathfrak{M}$ satisfying $(r(0),z(0)) \in \partial \mathcal{H},$ with $z(0)>0$ small and $r(0)$ close to $r_{\max}$, also crosses $z=0$ for the first time at time $\bar t_0<T_e/2$, with $p_z(\bar t_0)<0$ and $p_r(\bar t_0) <0.$ We know from \eqref{ham} that all solutions $x(t) \in \mathfrak{M}$ starting from $\partial \mathcal{H} \cap \{z>0\}$ transversely hit $\{z=0\}$ for the first time with uniformly bounded hitting time, see Remark \ref{rem_openbook}. Hence, by the intermediate value theorem and the continuous dependence of solutions relative to initial conditions, we find $(r_*,z_*) \in \partial \mathcal{H},$ with $z_*>0,$ so that the solution $\zeta_z(t)$ starting from $(0,0,r_*, z_*)$ hits $\{z=0\}$ perpendicularly at  $t=T_0$, and $z(t)>0,\ \forall t\in [0, T_0)$. The $z$-symmetry and the reversibility of \eqref{ham} imply that $\zeta_z(t)$ is a $z$-symmetric periodic orbit with least period $4T_0$. The monotonicity of $z(t)$ in the interval $[0,2T_0]$ implies that $\zeta_z(t)$ is a simple brake orbit. The Reeb orbit $\zeta_z$ is the desired $z$-symmetric simple brake orbit.

 Now we claim that
\begin{lem} \label{claim of Hopf fiber}
$\zeta_z$ is a Hopf fiber and $L:=\zeta_e \cup \zeta_z$ is a Hopf link.
\end{lem}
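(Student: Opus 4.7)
The lemma has two claims: $\zeta_z$ is a Hopf fiber, and $L=\zeta_e\cup\zeta_z$ is a Hopf link. Since $\zeta_e$ is a Hopf fiber by part (i) and $\zeta_z$ is automatically transverse to $\xi$ (being a Reeb orbit), the remaining statements are: $\zeta_z$ is an unknot, $\sl(\zeta_z)=-1$, and $\lk(\zeta_z,\zeta_e)=+1$. Unknottedness and the linking number I would handle together as follows. Because $\zeta_z$ is a simple brake orbit, its projection $\hat\zeta_z$ is a simple arc in $\overline{\mathcal{H}}$ from $(r_*,z_*)\in\mathcal{B}_+$ to $(r_*,-z_*)\in\mathcal{B}_-$ with interior in $\mathcal{H}\setminus\partial\mathcal{H}$, so the preimage $\Sigma:=\Pi_{r,z}^{-1}(\hat\zeta_z)\cap\mathfrak{M}$ is a topological $S^2$: over interior points of the arc the $S^1$-fibres $\{p_r^2+p_z^2=2(h-V(r,z))\}$ vary smoothly, while at the two endpoints on $\partial\mathcal{H}$ they collapse to a single point. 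The orbit $\zeta_z$ sits as a great circle on $\Sigma$, separating it into two topological disks, which proves unknottedness. For the linking number I use the disk $F=\{z=0,\,p_z\ge 0\}$ from part (i), along which every crossing of a periodic orbit in $\mathfrak{M}\setminus\zeta_e$ is positive. Over the period $4T_0$ the brake orbit meets $\{z=0\}$ only at $t=T_0$ (perpendicularly, with $p_z(T_0)<0$) and at $t=3T_0$; the brake identity $x(4T_0-t)=(-p_r(t),-p_z(t),r(t),z(t))$, coming from reversibility and $x(0)$ having vanishing momenta, forces $p_z(3T_0)=-p_z(T_0)>0$, so only the second crossing lies in $F^o$ and hence $\lk(\zeta_z,\zeta_e)=+1$.

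For $\sl(\zeta_z)=-1$ I would mimic the winding calculation used in part (i). Take $D$ to be one of the two hemispheres of $\Sigma$. The natural candidate for a tangent-to-$D$ section of $\xi|_{\zeta_z}$ is the rotational field $W=-p_z\partial_{p_r}+p_r\partial_{p_z}$, which is tangent to every $S^1$-fibre and hence to $\Sigma$, lies in $T\mathfrak{M}$ since $dH(W)=0$, and whose $\xi$-component $W_\xi$ (obtained by projecting out the $X_H$-part) vanishes exactly at the two brake points. After perturbing $W_\xi$ near those points to a non-vanishing section $v=aX_1+bX_2$ written in the trivialization \eqref{frameX1X2}, the self-linking number equals minus the winding of $t\mapsto a(t)+ib(t)\in\C\setminus\{0\}$ around the origin; the expected count is $+1$, giving $\sl(\zeta_z)=-1$ and completing the proof that $L=\zeta_e\cup\zeta_z$ is a Hopf link.

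The topological inputs — unknottedness and the linking count — are immediate; the main obstacle is the winding computation. The section $W_\xi$ degenerates exactly at the brake points, where $\Sigma$ has cusp-like singularities (the $S^1$-fibres pinch at rate $|p|\sim\sqrt{s}$ in arclength $s$ along $\hat\zeta_z$ near $\partial\mathcal{H}$), so without a careful perturbation the curve $(a(t),b(t))$ passes through the origin and its winding is ambiguous. Verifying that a suitable non-vanishing perturbation has winding exactly $+1$ (and not $-1$ or $0$) requires a local analysis at each brake point — or, alternatively, a transverse-isotopy argument through a family of simple brake orbits connecting $\zeta_z$ to a known Hopf fiber, which by invariance of $\sl$ would settle the sign immediately.
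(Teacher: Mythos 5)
Your treatment of unknottedness (via the sphere $\Sigma=\Pi_{r,z}^{-1}(\hat\zeta_z)\cap\mathfrak{M}$) and of the linking number (counting signed crossings of $\zeta_z$ with $F$, using the reversibility identity to see that only the $t=3T_0$ crossing lies in $F^o$) are both correct and essentially the same idea the paper uses. The problem is the self-linking computation, and the gap you flag at the end is real and not merely a technical nuisance. The rotational field $W=-p_z\partial_{p_r}+p_r\partial_{p_z}$ does vanish at the two brake points, but there is a second, more structural difficulty you have not noticed: at a point $\zeta_z(t)$ with $t\notin\{0,2T_0\}$, the momentum $(p_r,p_z)$ is tangent to $\hat\zeta_z$, i.e.\ $(p_r,p_z)=c(t)(-n_z,n_r)$ where $\vec n=(n_r,n_z)$ is a unit normal to $\hat\zeta_z$ and $c(t)$ changes sign at the brake points (because the projected motion reverses there). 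A short computation gives $W|_{\zeta_z(t)}=-c(t)\,(n_r\partial_{p_r}+n_z\partial_{p_z})$, so $W$ points into one hemisphere of $\Sigma$ on the arc $t\in(0,2T_0)$ and into the \emph{other} hemisphere on $t\in(2T_0,4T_0)$. Thus $W_\xi$, even after a local perturbation near the two zeros, is not a push-off framing of either Seifert disk $D\subset\Sigma$, and the claim that ``the expected count is $+1$'' does not follow from any framing invariance you can invoke without further work; in fact you cannot ``settle the sign by isotopy'' either, since you have not produced a nondegenerate section to isotope.

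The paper sidesteps both problems by taking the field $\vec{n}\cdot(\partial_{p_r},\partial_{p_z})=n_r\partial_{p_r}+n_z\partial_{p_z}$ along $\zeta_z$: it is automatically nonvanishing (since $\vec n$ is a unit vector), tangent to the hemisphere $F_z=\{(p_r,p_z,r,z)\in\mathfrak M:(r,z)\in\hat\zeta_z,\ \vec n\cdot(p_r,p_z)\geq0\}$, and \emph{consistently} inward-pointing along all of $\zeta_z$. Projecting it to $\mathrm{span}\{X_1,X_2\}$ gives coefficients $a=n_r\partial_zV-n_z\partial_rV$ and $b=-n_rp_z+n_zp_r$, whose signs at $t=0,2T_0$ and on the open arcs can be traced directly to give winding $+1$, hence $\mathrm{sl}(\zeta_z)=-1$. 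If you want to keep your sphere $\Sigma$ as the organizing object, simply replace $W$ by this normal field; the rest of your outline then closes up cleanly.
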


\begin{proof}
Consider a parametrization $\zeta_z(t)=(p_{r}(t),p_{z}(t),r(t),z(t)) \in \mathfrak{M}$ of $\zeta_z$ so that $(r(0),z(0)),(r(2T_0),z(2T_0)) \in \partial\mathcal{H}$ and $(r(t),z(t)) \in \mathcal{H} \setminus \partial \mathcal{H}, \forall t\in (0,2T_0).$ The global frame in \eqref{frameX1X2} restricts to $\zeta_z$ as
\begin{equation}\label{equ: X1X2 along bz orbit}
 \begin{aligned}
 X_1|_{\zeta_z} & = \partial_zV(r,z) \partial_{p_r} -\partial_rV(r,z) \partial_{p_z}+p_{z} \partial_r - p_{r} \partial_z,\\
 X_2|_{\zeta_z} &  = -p_{z} \partial_{p_r} + p_{r} \partial_{p_z} + \partial_zV(r,z) \partial_r -\partial_rV(r,z) \partial_z.
 \end{aligned}
\end{equation}

Let $\hat \zeta_z: = \Pi_{r,z}(\zeta_z)$ be the projection of $\zeta_z$ to the Hill region $\mathcal{H}$. Then  $\mathcal{H} \setminus \hat \zeta_z$ has two components, say  $\mathcal{H}_1$ and $\mathcal{H}_2$. Since $\hat \zeta_z$ is  the image of a regular curve, we can choose a normal vector $\vec n$ along $\hat \zeta_z$ pointing outward $\mathcal{H}_1$. As in the case of the Euler orbit, we consider the disk $F_z \hookrightarrow \mathfrak{M}$ consisting of all $(p_r,p_z,r,z) \in \mathfrak{M}$ so that $(r,z) \in \hat \zeta_z$ and $\vec n \cdot (p_r,p_z) \geq 0$. Since $\hat \zeta_z$ has no self-intersections, $F_z$ is an embedded disk bounded by $\zeta_z$ and its interior is positively transverse to the flow. Denote $\vec n = n_r \partial_r + n_z \partial_z$. Then the non-vanishing vector field $(n_r \partial_{p_r}+n_z \partial_{p_z})|_{\zeta_z}$ is tangent to $F_z$ along $\zeta_z$. Its projection $\vec N$ to $\text{span}\{X_1,X_2\}$ writes as
 $$
 \vec N = \frac{1}{|X_1|^2} [(n_r\partial_zV(r,z) - n_z\partial_rV(r,z)) X_1 + (-n_r p_{z} + n_z p_{r})X_2].
 $$
 Let $a:=n_r\partial_zV(r,z) - n_z\partial_rV(r,z)$ and $b:=-n_r p_{z} + n_z p_{r}$. Both $a$ and $b$ depend on $t$. In the interval $[0,2T_0]$,  $\hat \zeta_z$ is traversed from  $(r(0),z(0))\in \partial \mathcal{H}$ to $ (r(2T_0),z(2T_0))\in \partial \mathcal{H}$. We observe that $b(0)=0$ and, replacing $\vec n$ with $-\vec n$ if necessary, we can assume that $a(0)>0$. We then have $b(t)>0,\ \forall t\in (0,2T_0),\ b(2T_0)=0$ and $a(2T_0)<0.$ Similarly, while traversing $\hat \zeta_z$ in the opposite direction from $(r(2T_0),z(2T_0))\in \partial \mathcal{H}$ to $(r(0),z(0))\in \partial \mathcal{H}$, we observe that $b(t)<0,\ \forall t \in (2T_0,4T_0)$, $a(4T_0)>0$ and $b(4T_0)=0$. Hence the winding of $t\mapsto (a(t),b(t)),\ t\in [0,4T_0]$ is $+1$ implying that the winding of $N$ in the frame $\{X_1,X_2\}$ along the period of $\zeta_z$ is $+1$. We conclude that $\sl(\zeta_z) = -1,$
 and hence $\zeta_z \subset (\mathfrak{M},\xi)$ is a Hopf fiber. By construction, the linking number between $\zeta_z$ and $\zeta_e$ is $+1$, and thus $L:=\zeta_e \cup \zeta_z$ is a Hopf link.
 \end{proof}

 Here we give a brief proof of the following important fact.
 \begin{lem}\label{claimbrake}
 The rotation number of any simple brake orbit is at least 1.
 \end{lem}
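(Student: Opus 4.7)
We plan to prove Lemma \ref{claimbrake} by analyzing the linearized Reeb flow along a simple brake orbit $\zeta\subset\mathfrak{M}$ with least period $T$ and brake points $\zeta(0),\zeta(T/2)$. We work in the global trivialization of $\xi$ induced by the frame $\{X_1,X_2\}$ of \eqref{frameX1X2} and exploit the reversibility involution $\sigma\colon(p_r,p_z,r,z)\mapsto(-p_r,-p_z,r,z)$, which is anti-symplectic, reverses the Hamiltonian flow, and fixes every brake point.

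At a brake point $p$ (so $p_r=p_z=0$), the frame \eqref{frameX1X2} reduces to
\[
X_1(p)=\partial_zV\,\partial_{p_r}-\partial_rV\,\partial_{p_z},\qquad X_2(p)=\partial_zV\,\partial_r-\partial_rV\,\partial_z,
\]
so $X_1(p)$ is purely in the momentum plane and $X_2(p)$ is tangent to the zero velocity curve $\partial\mathcal{H}$. A direct calculation shows that in the basis $\{X_1(p),X_2(p)\}$ the restriction $d\sigma|_{\xi_p}$ is $\mathrm{diag}(-1,1)$, which is anti-symplectic and preserves the brake direction $\Lambda_p:=\R\cdot X_2(p)$. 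The argument then proceeds in three steps. First, consider the Jacobi field $v(t)=a_1(t)X_1(\zeta(t))+a_2(t)X_2(\zeta(t))$ along $\zeta$ with $v(0)\in\Lambda_{\zeta(0)}$: the combination of reversibility and the $d\sigma$-invariance of $\Lambda$ forces $v(T/2)\in\Lambda_{\zeta(T/2)}$, so the argument of $(a_1,a_2)\in\C\setminus\{0\}$ changes by an integer multiple of $\pi$ over $[0,T/2]$. Second, we show that this multiple is at least one: the initial rotational direction of $v$ at $t=0^+$ is computed from the linearization of \eqref{ham} using $\nabla V(\zeta(0))\ne 0$, and the uniform monotonicity $\dot\eta>\eta_{\min}>0$ from \eqref{etamin}, applied to a one-parameter family of orbits generated by $v$, rules out any intermediate return of $v(t)$ to $\Lambda$ on $(0,T/2)$. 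Third, reversibility relates the linearized flow on $[T/2,T]$ to its restriction on $[0,T/2]$ by an anti-symplectic conjugation, so the second half-period contributes the same winding as the first, giving a full-period winding of at least $2\pi$. Since the mean index is read off this winding, $\hat i(\zeta)\ge 2$, i.e.\ $\rho(\zeta)\ge 1$.

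The principal obstacle is the lower bound in the second step: transferring the monotone rotation of $(p_z+\sqrt{-1}\,z)$ from the underlying trajectory to the Jacobi field in the $\{X_1,X_2\}$ trivialization. The cleanest route is to realize $v(t)$ as the infinitesimal generator of a genuine one-parameter family of nearby trajectories with slightly perturbed brake initial data on $\partial\mathcal{H}$, and to apply \eqref{etamin} to each member of the family separately; this forbids the family from collapsing back to the brake direction prematurely. A direct ODE comparison for $(a_1,a_2)$ against a model rotation driven by \eqref{etamin} is also available. Once this step is secured, the doubling via reversibility in the third step is a short symplectic computation.
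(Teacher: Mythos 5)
Your approach---tracking a Jacobi field in the $\{X_1,X_2\}$ trivialization and exploiting the reversing involution $\sigma$---is a genuinely different route from the paper's, but its first step contains a gap that is in fact unfixable.

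You assert that $v(0)\in\Lambda_{\zeta(0)}$ forces $v(T/2)\in\Lambda_{\zeta(T/2)}$. The reversibility relation $d\sigma\circ d\varphi_t=d\varphi_{-t}\circ d\sigma$ applied to $v(0)\in\Lambda$ gives only $d\sigma(v(t))=v(-t)$ for all $t$; at $t=T/2$ this reads $d\sigma(v(T/2))=v(-T/2)$, and nothing forces $v(-T/2)$ to coincide with $v(T/2)$. Indeed the assertion is generically false. If it held, the transverse return map $\Psi=d\varphi_T|_\xi$ would preserve the line $\Lambda$; combined with the reversibility identity $R\Psi R=\Psi^{-1}$ (where $R=\mathrm{diag}(-1,1)$ in the $\{X_1,X_2\}$ basis at a brake point), which forces the two diagonal entries of $\Psi$ to coincide, one concludes that $\Psi$ is $\pm$ a unipotent, so every simple brake orbit would be degenerate or negative hyperbolic. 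The Euler orbit is a simple brake orbit, and for a large set of parameters $\rho_e\notin\Q$, so $\Psi$ is a genuine elliptic rotation preserving no line. This contradiction shows the half-period quantization you want simply is not there. Your second step has the difficulty you yourself flag: \eqref{etamin} bounds the rotation of $(p_z,z)$ along individual trajectories, whereas what you need is a lower bound on the rotation of the variational field in the $\{X_1,X_2\}$ frame, and applying \eqref{etamin} to each member of a one-parameter family of trajectories does not control the argument of their difference.

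The paper's proof is shorter, purely topological, and uses no reversibility. It takes the embedded disk $F_z\hookrightarrow\mathfrak{M}$ over the projected arc $\hat\zeta_z\subset\mathcal{H}$: this disk is bounded by the brake orbit, its interior is positively transverse to the flow, and the outward tangent $\vec N$ to $F_z$ along the boundary has winding $+1$ in $\{X_1,X_2\}$, as already computed in the proof of Lemma \ref{claim of Hopf fiber}. If the rotation number were $<1$, the linearized flow would rotate negatively relative to a frame aligned with $\vec N$, producing nearby trajectories that cross $F_z$ with the wrong sign, contradicting positive transversality. This positivity argument requires no estimate on the linearized flow and applies verbatim to any simple brake orbit.
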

\begin{proof}Indeed, observe that a neighborhood of $\zeta_z$ in $F_z$ is an embedded strip whose interior is positively transverse to the flow. As we have checked above, the winding number of $\vec N$ in the global frame $\{X_1, X_2\}$ along the period of $\zeta_z$ is $+1$. Here, $\vec N$ is  outward tangent to $F_z$ along $\zeta_z$. Assume by contradiction that $\rho(\zeta_z)<1$. Then the linearized solutions rotate negatively with respect to a frame aligned to $\vec N$ and hence there exist solutions sufficiently close to $\zeta_z$ that negatively intersect $F_z$. This contradicts the fact that $F_z\setminus \zeta_z$ is positively transverse to the flow and the lemma is proved.
\end{proof}

 It remains to show that every periodic orbit positively links with a Birkhoff annulus $\Sigma \hookrightarrow \mathfrak{M}$ bounded by $L$. The orientation of $\Sigma$ is the one that induces on $\zeta_e$ and $\zeta_z$ the flow orientation. Observe that the orientations of $F$ and $F_z$ also induce on $\zeta_e$ and $\zeta_z$ the flow orientation, respectively. Denote by $\bar F$ and $\bar F_z$ the disks $F$ and $F_z$ with the opposite orientation. Then $\mathcal{C}:=\Sigma \cup \bar F \cup \bar F_z\subset S^3$ is a closed $2$-chain. In particular, if $P'\subset \mathfrak{M} \setminus L$ is a Reeb orbit, then the algebraic intersection number  $\text{int}(P',\mathcal{C})$ vanishes. In particular, $\text{int}(P',\Sigma) = \text{int}(P',F) + \text{int}(P',F_z) \geq 1 + 0=1$ since $F$ is a global surface of section and $F_z$ is positively transverse to the flow. Theorem \ref{thm_Birkhoff} and Lemma \ref{claimbrake} implies that the Hopf link $L=\zeta_e \cup \zeta_z$ is the binding of an open book decomposition whose pages are annulus-like global  surfaces of section. The proof of Theorem \ref{main1}-(ii) is complete. \qed

\section{Non-resonant Hopf links for large mass ratios}
In this section, we prove Theorem \ref{thm: nonresonant implies infy many}, i.e., we show that for $\alpha>0$ sufficiently large and $\varpi>0$ satisfying
\begin{equation}\label{pap req}
1<2\varpi^2<1+\alpha^{-2},
\end{equation}
the energy surface   $\mathfrak{M}$ carries an arbitrary number of simple $z$-symmetric brake orbits $\zeta_z$ with arbitrarily large rotation numbers. Since the rotation number of $\zeta_e$  is arbitrarily close to $2\sqrt{2} +1$ for $\alpha$ large, the Hopf link $L=\zeta_e \cup \zeta_z$ is non-resonant, that is $\rho(\zeta_e)^{-1} + \rho(\zeta_z)^{-1} \neq 1$. In particular,  the Poincar\'e-Birkhoff theorem for Reeb flows on the tight three-sphere, see Theorem \ref{thm_PB}, implies the existence of infinitely many periodic orbits with prescribed linking numbers with $\zeta_e$ and $\zeta_z$.

The proof of Theorem \ref{thm: nonresonant implies infy many} relies on a suitable re-scaling of our original system for which a limit exists as $\alpha \to +\infty$. To define this rescaling, consider the positive numbers
$$
n:= \frac{1+4\alpha^{-1}}{2} \quad \mbox{ and } \quad m:=\frac{\sqrt{(1+4\alpha^{-1})^2-2\varpi^2}}{2},
$$
and observe that $n\to \frac{1}{2}$ and $m \to 0$ as $\alpha \to +\infty$.

Recall that the Hill region satisfies $\mathcal{H} \subset [r_{\min},r_{\max}] \times \R,$ where
$$
r_{\min}  = n - m \quad \mbox{ and } \quad r_{\max}= n+m.
$$
Moreover, $\mathcal{H} \cap \{z=0\} = [r_{\min},r_{\max}]\times \{0\}$.

The change of coordinates $\psi: (p_r,p_z,r,z) \mapsto (p_v,p_u,u,v)$ given by
\begin{equation}\label{ch_cooridnates}
p_v:=\frac{p_r}{m}, \quad p_u:=\frac{p_z}{m}, \quad
v := \frac{r-n}{m}, \quad
u := \frac{z}{m},
\end{equation}
is symplectic up to the constant factor $\frac{1}{m^2}$. The equations of motion in the new coordinates become
\begin{equation}\label{motion_uv}
\left\{
\begin{aligned}
& \dot v  = p_v\\
& \dot p_v  = -(\frac{n-\varpi^2}{mv} + 1) \frac{v}{(mv+n)^3} - \frac{1}{\alpha m}\frac{4(mv+n)}{((mv+n)^2+(1+2\alpha)m^2u^2)^{3/2}}\\
& \dot u = p_u\\
& \dot p_u  = -\frac{(8+4\alpha^{-1})u}{((mv+n)^2+(1+2\alpha)m^2u^2)^{3/2}}
\end{aligned}
\right.
\end{equation}
and coincide with Hamilton's equations of
\begin{equation}\label{hamiltonianK}
\begin{aligned}
K :=\frac{1}{m^2}H(mp_v,mp_u,mv+n,mu)+\frac{1}{m^2} =\frac{p_v^2+p_u^2}{2}+ U(v,u),
\end{aligned}
\end{equation}
where
$$
U(v,u)=\frac{\varpi^2-2(mv+n) + 2(mv+n)^2}{2m^2(mv+n)^2} -\frac{4\alpha^{-1}}{m^2((mv+n)^2+(1+2\alpha)m^2u^2)^\frac{1}{2}}.
$$

The Hill region $\mathcal{H}$ in $(v,u)$-coordinates is a $u$-symmetric disk-like  region $$\mathcal{H}' =\{(v,u): U(v,u) \leq 0\} \subset \{(v,u):-1 \leq v \leq 1\},$$ so that $\mathcal{H}' \cap \{v=0\} = [-1,1].$ The energy surface $\mathfrak{M}$ is transformed into a sphere-like hypersurface
$$
\mathfrak{M}'=K^{-1}(0)\subset \R^4.
$$

The next lemma shows that $K$ converges locally to a non-trivial Hamiltonian $K_\infty$ as $\alpha \to +\infty$. This fact will be useful to find solutions of $K$ that approximate solutions of $K_\infty$.

\begin{lem}\label{lem_Kinfty}
Let
$$
K_\infty(p_v,p_u,v,u) := \frac{p_v^2+p_u^2}{2}+ 4v^2 - \frac{2}{(1/4 + 4u^2)^{1/2}}, \quad \quad \forall (p_v,p_u,v,u)\in \R^4.
$$
Then $K \to K_\infty$ in $C^\infty_{\text{loc}}$ as $\alpha \to \infty$.
\end{lem}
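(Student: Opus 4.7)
The plan is direct: since the kinetic term $(p_v^2+p_u^2)/2$ appears unchanged in both $K$ and $K_\infty$, it suffices to prove that the potential
\[
U(v,u) = \frac{\varpi^2-2(mv+n)+2(mv+n)^2}{2m^2(mv+n)^2} - \frac{4\alpha^{-1}}{m^2\sqrt{(mv+n)^2+(1+2\alpha)m^2u^2}}
\]
converges in $C^\infty_{\text{loc}}(\R^2_{v,u})$ to $U_\infty(v,u):=4v^2 - 2/\sqrt{1/4+4u^2}$. The $(p_v,p_u)$-dependence is trivial, so this yields $C^\infty_{\text{loc}}$ convergence of $K$ on $\R^4$.

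First I would rewrite $U$ in a more transparent form. Since $r_{\min}=n-m$ and $r_{\max}=n+m$ are the two roots of $r^2-2nr + \varpi^2/2 = 0$, substituting $r=mv+n$ yields the identity
\[
\frac{\varpi^2}{2r^2}-\frac{2n}{r}+1 = \frac{m^2(v^2-1)}{r^2}.
\]
Using $2n=1+4\alpha^{-1}$ to split $-1/r = -2n/r + 4\alpha^{-1}/r$ inside the formula $U=(V(r,z)+1)/m^2$, this converts the original expression for $U$ into
\[
U(v,u) = \frac{v^2-1}{(mv+n)^2} + \frac{4}{\alpha m^2}\left[\frac{1}{mv+n}-\frac{1}{\sqrt{(mv+n)^2+(1+2\alpha)m^2u^2}}\right],
\]
which is manifestly smooth in $(v,u)$ once $\alpha$ is large enough to keep $mv+n$ bounded away from zero on the compact set in question.

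Second I would collect the asymptotics of the scalar parameters under hypothesis \eqref{pap req}. From $n=(1+4\alpha^{-1})/2$ one has $n\to 1/2$, while $4m^2 = 1+8\alpha^{-1}+16\alpha^{-2} - 2\varpi^2$ together with $1<2\varpi^2<1+\alpha^{-2}$ gives the two-sided squeeze
\[
8\alpha^{-1}+15\alpha^{-2} < 4m^2 < 8\alpha^{-1}+16\alpha^{-2},
\]
so that $m\to 0$, $\alpha m^2\to 2$, $(1+2\alpha)m^2\to 4$, and $4/(\alpha m^2)\to 2$.

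Finally I would pass to the limit term by term. On any compact set $Q\subset\R^2_{v,u}$, for all sufficiently large $\alpha$ one has $mv+n\geq 1/4$ and $(mv+n)^2+(1+2\alpha)m^2u^2\geq 1/32$, so both summands of $U$ are smooth functions of $(v,u)$ depending smoothly on the five scalars $n$, $m$, $\alpha^{-1}$, $(1+2\alpha)m^2$, $(\alpha m^2)^{-1}$. Since these scalars converge to finite limits as $\alpha\to\infty$, every partial derivative of $U$ in $(v,u)$ converges uniformly on $Q$ to the corresponding derivative of $4(v^2-1)+2\bigl[2-1/\sqrt{1/4+4u^2}\bigr]$, which simplifies to $U_\infty$. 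This proves the claim. No serious obstacle is anticipated: the argument is essentially a smooth dependence on a converging parameter, and the only delicate step is the asymptotic $\alpha m^2\to 2$, which is precisely where hypothesis \eqref{pap req} enters in an essential way.
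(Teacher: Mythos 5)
Your argument is correct, and it proceeds by a genuinely different algebraic decomposition than the paper. The paper's proof splits $U(v,u)=C(v)+D(u,v)$ and then re-expands $C(v)$ into four summands $\frac{2\varpi^2-2n}{4m^2(mv+n)^2}+\frac{2n-1}{2m^2(mv+n)}+\frac{(2n-1)v}{2m(mv+n)^2}+\frac{v^2}{(mv+n)^2}$, each of whose coefficients is shown separately to converge. Your route instead exploits the observation that $r_{\min}=n-m$ and $r_{\max}=n+m$ are the roots of $r^2-2nr+\varpi^2/2$, which yields the clean identity $\frac{\varpi^2}{2r^2}-\frac{2n}{r}+1=\frac{m^2(v^2-1)}{r^2}$ at $r=mv+n$; combined with the split $-1/r=-2n/r+4\alpha^{-1}/r$ this packages $U$ into the two-term closed form $\frac{v^2-1}{(mv+n)^2}+\frac{4}{\alpha m^2}\bigl[\frac{1}{mv+n}-\frac{1}{\sqrt{(mv+n)^2+(1+2\alpha)m^2u^2}}\bigr]$, in which all $\alpha$-dependence sits in the convergent scalars $m\to 0$, $n\to 1/2$, $\alpha m^2\to 2$, $(1+2\alpha)m^2\to 4$. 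The paper's version has the virtue of being completely mechanical; yours is more compact and makes the limit transparent at a glance, the one price being the small algebraic lemma about the roots of the zero-velocity equation, which you verified. Both proofs invoke the hypothesis $1<2\varpi^2<1+\alpha^{-2}$ in exactly the same essential way (to pin down $\alpha m^2\to 2$), and both reduce the $C^\infty_{\text{loc}}$ convergence to smooth dependence on a finite list of converging scalar parameters once $mv+n$ is bounded below on compacta; your proposal is therefore a valid alternative.
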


\begin{proof}
Write $U(v,u)= C(v) + D(u,v),$ where
$$
\begin{aligned}
C(v) & := \frac{\varpi^2-2(mv+n) + 2(mv+n)^2}{2m^2(mv+n)^2}\\
& = \frac{2\varpi^2-2n}{4m^2(mv+n)^2} + \frac{(2n-1)}{2m^2(mv+n)}+\frac{(2n-1)v}{2m(mv+n)^2}+\frac{v^2}{(mv+n)^2}, \ \forall v\neq -\frac{n}{m},
\end{aligned}
$$
and
$$
D(u,v): =-\frac{4\alpha^{-1}}{m^2((mv+n)^2+(1+2\alpha)m^2u^2)^{1/2}}, \quad \forall u,\forall v \neq -\frac{n}{m}.
$$
Notice that \eqref{pap req} implies
$$
-\alpha^{-1} < \alpha(1-2\varpi^2)<0,
$$
and thus
\begin{equation}\label{limitparam}
\begin{aligned}
%0<m & <\sqrt{2\alpha^{-1} + 4\alpha^{-2}} \to 0,\\
%n & \to \frac{1}{2}, \\
\alpha m^2 & =\frac{1}{4}(\alpha(1-2\varpi^2) + 8  +16\alpha^{-1})  \to 2, \\
\frac{2n-1}{2m^2} & = \frac{2}{\alpha m^2} \to 1,\\
\frac{2\varpi^2-2n}{4m^2} & =-\frac{\alpha(1-2\varpi^2)+4}{\alpha(1-2\varpi^2) + 8 + 16\alpha^{-1}} \to -\frac{1}{2},
\end{aligned}
\end{equation}
as $\alpha \to +\infty$. Moreover,
$g(v):=mv+n, \forall v\in \R,$ converges  in $C^\infty_{\text{loc}}$ to the constant function  $\frac{1}{2}$ as  $\alpha \to +\infty$.

We conclude that $U(v,u)$ converges in $C^\infty_{\text{loc}}$ to the function $4v^2-\frac{2}{(1/4 + 4u^2)^{1/2}}$ as $\alpha \to +\infty$, proving the lemma.
\end{proof}

 The Hamiltonian $K_\infty$ given in Lemma \ref{lem_Kinfty} decouples as
 $$
 K_\infty = K_1(p_v,v) + K_2(p_u,u),
 $$
and its  flow is determined by the equations
\begin{equation}\label{motion_uv limit}
\left\{
\begin{aligned}
& \dot v  = p_v\\
& \dot p_v  = - 8v\\
& \dot  u  = p_u\\
& \dot p_u  = -\frac{8u}{(1/4+4u^2)^{3/2}}.
\end{aligned}
\right.
\end{equation}

 We are especially interested in the dynamics of \eqref{motion_uv limit} restricted to the unbounded energy surface
 $$
 \mathfrak{M}'_\infty:= K_\infty^{-1}(0).
 $$
 Since $K_1 \geq 0$, we have $\mathfrak{M}'_\infty \subset \{K_2 \leq 0\}.$

  The functions
\begin{equation}\label{initv}
v_\infty(t) = v_0\cos (2\sqrt{2}t), \quad p_{v,\infty}(t) = -2\sqrt{2}v_0 \sin (2\sqrt{2}t), \quad t\in \R,
\end{equation}
solve the first two equations in \eqref{motion_uv limit} with initial conditions  $v_\infty(0)=v_0$ and $p_{v,\infty}(0)=0$.

In the $(p_u,u)$-plane, the solutions admit the Hamiltonian
$$
K_2 = \frac{p_u^2}{2} - \frac{2}{(1/4+4 u^2)^{1/2}}.
$$
Its unique critical point at $(0,0)$ is a global minimum with a critical value $-4$. For every $c\in (-4,0)$, $K_2^{-1}(c)$ is a regular closed curve surrounding the origin. This curve corresponds to a periodic orbit $(p_{u,\infty}(t),u_\infty(t))$ of $K_2$.  Such a periodic orbit approaches $(0,0)$ as $c\to -4$ and is unbounded in the $u$-direction as $c \to 0$. We may assume that $p_{u,\infty}(0)=0$ and $u_\infty(0)=u_0 >0.$ We denote by $T_\infty>0$ the least $t>0$ so that $u_\infty(t)=0$. The existence of  $T_\infty$ follows from the fact that the periodic orbit $(p_{u,\infty}(t),u_\infty(t))$ surrounds the origin.

\begin{lem}\label{lem_T_infty}
Given initial conditions $p_{u,\infty}(0)=0$ and $u_\infty(0)=u_0>0,$ let $T_\infty=T_\infty(u_0)>0$ be the least $t>0$ so that $u_\infty(t)=0$. Then $T_\infty$ is strictly increasing from $\frac{\pi}{16}$ to $+\infty$ as $u_0$ increases from $0^+$ to $+\infty$.
\end{lem}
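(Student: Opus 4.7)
The $(p_u,u)$-subsystem in \eqref{motion_uv limit} is the one-dimensional mechanical system with Hamiltonian $K_2(p_u,u)=p_u^2/2+V(u)$, where $V(u):=-2/\sqrt{1/4+4u^2}$ is even, has a non-degenerate minimum at $u=0$ with $V(0)=-4$, and strictly increases to $0$ on $[0,+\infty)$. For $u_0>0$ the point $(0,u_0)$ lies on the regular level set $\{K_2=V(u_0)\}$, which is a closed curve enclosing the origin in the $(p_u,u)$-plane; by $u$-reflection symmetry and reversibility, $T_\infty(u_0)$ is precisely a quarter of the corresponding period. Conservation of energy then gives the classical formula
\[
T_\infty(u_0)=\int_0^{u_0}\frac{du}{\sqrt{2(V(u_0)-V(u))}}.
\]

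To extract the $u_0$-dependence transparently, I would rescale by $u=u_0 w$ and rationalize the numerator in $V(u_0)-V(u_0 w)$. Setting $b:=4u_0^2$ and
\[
I(b,w):=\sqrt{(1/4+bw^2)(1/4+b)}\,\bigl(\sqrt{1/4+b}+\sqrt{1/4+bw^2}\bigr),
\]
one checks the identity $V(u_0)-V(u_0 w)=2b(1-w^2)/I(b,w)$. Substituting this and using $\sqrt{b}=2u_0$ yields the compact form
\[
T_\infty(u_0)=\frac{1}{4}\int_0^1\frac{\sqrt{I(4u_0^2,w)}}{\sqrt{1-w^2}}\,dw,
\]
in which the $(1-w^2)^{-1/2}$ factor is the mild endpoint singularity typical of period integrals.

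The heart of the proof is monotonicity, which I would reduce to showing that for each fixed $w\in(0,1)$ the map $b\mapsto I(b,w)$ is strictly increasing on $(0,+\infty)$. Writing $A:=\sqrt{1/4+b}$ and $B:=\sqrt{1/4+bw^2}$, so that $I=A^2B+AB^2$, and using $\partial_b A=1/(2A)$, $\partial_b B=w^2/(2B)$, a direct calculation yields
\[
\partial_b I=B+Aw^2+\frac{A^2w^2}{2B}+\frac{B^2}{2A}>0.
\]
Since the integrand in the formula for $T_\infty$ is therefore strictly increasing in $u_0$ for each $w\in(0,1)$, $T_\infty$ is itself strictly increasing.

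The two endpoint limits would follow from the formula by monotone (or dominated) convergence. As $u_0\to 0^+$, $b\to 0$, so $A,B\to 1/2$, $I\to 1/4$ and $\sqrt{I}\to 1/2$ uniformly in $w$, whence $T_\infty(u_0)\to\tfrac{1}{8}\int_0^1 dw/\sqrt{1-w^2}=\pi/16$, matching the quarter-period of the harmonic linearization $\ddot u+64u=0$. As $u_0\to+\infty$, $b\to+\infty$, and for each $w\in(0,1)$ one has $A\sim\sqrt{b}$, $B\sim\sqrt{b}\,w$, whence $I(b,w)\sim b^{3/2}w(1+w)$; the integrand therefore grows like $b^{3/4}\sqrt{w(1+w)/(1-w^2)}=b^{3/4}\sqrt{w/(1-w)}$, and a direct computation gives $T_\infty\sim\tfrac{\pi b^{3/4}}{8}\to+\infty$. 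No step here is a genuine obstacle; the only computational care is in the algebraic identity producing the compact formula for $T_\infty$, after which both the monotonicity and the limiting values are essentially immediate.
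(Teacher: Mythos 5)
Your proof is correct but takes a genuinely different route from the paper's. Both start from the quarter-period integral $T_\infty(u_0)=\int_0^{u_0} du/\sqrt{2(V(u_0)-V(u))}$. The paper then performs two substitutions --- first $v=(1/4+4u^2)^{-1/2}$, then an affine change to $w\in[-1,1]$ --- to reach the explicit integral \eqref{equ: the hitting time}, differentiates it in $v_0$ to obtain \eqref{equ: diff of the hitting time}, and reads off that this is negative term by term; the divergence $T_\infty\to+\infty$ is handled separately by the uniform speed bound $|\dot u_\infty|\leq 2\sqrt{2}$. You instead rescale $u=u_0 w$ and rationalize $V(u_0)-V(u_0 w)$ to reach the fixed-domain representation $T_\infty=\frac{1}{4}\int_0^1 \sqrt{I(4u_0^2,w)}\,(1-w^2)^{-1/2}\,dw$, where the $u_0$-monotonicity is manifest from $\partial_b I>0$ and monotone convergence handles both endpoint limits simultaneously. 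Your algebraic identity $V(u_0)-V(u_0 w)=2b(1-w^2)/I(b,w)$, the compact formula, and the expression $\partial_b I=B+Aw^2+\frac{A^2w^2}{2B}+\frac{B^2}{2A}$ all check out. Your route is shorter and treats the two limits uniformly; the paper's pays off later, since \eqref{equ: the hitting time}--\eqref{equ: diff of the hitting time} are reused in a subsequent remark to show that $K_\infty^{-1}(0)$ is symplectomorphic to a strictly convex hypersurface. One small imprecision on your side: the claimed asymptotic $T_\infty\sim\pi b^{3/4}/8$ is not uniform in $w$ near $0$ (where $B\to 1/2$ rather than $\sim\sqrt{b}\,w$), so it needs a bit more argument than stated --- but the rate is not needed, since pointwise divergence of the integrand together with monotone convergence already yields $T_\infty\to+\infty$.
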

\begin{proof}
Using the conserved quantity $K_2$ and the initial conditions, we compute
$$
\dot u_\infty(t) =-2\left( \frac{1}{\sqrt{\frac{1}{4} + 4u_\infty(t)^2}} - \frac{1}{\sqrt{\frac{1}{4} + 4u_0^2}}\right)^{\frac{1}{2}}, \quad \forall t\in[0,T_\infty].
$$
We see from the equation above that $|\dot u_\infty|$  is  uniformly bounded from above by $2\sqrt{2}$. Hence $T_\infty \to +\infty$ as $u_0 \to +\infty$.

To prove the monotonicity of $T_\infty$ with respect to $u_0$, we first integrate $\dot u$ in $[0,T_\infty]$ to express the hitting time $T_\infty$ as the following improper integral
\begin{equation}\label{equ: the hitting time}
\begin{aligned}
T_\infty & = \frac{1}{2}\int_0^{u_0}\left( \frac{1}{\sqrt{\frac{1}{4} + 4u^2}} - \frac{1}{\sqrt{\frac{1}{4} + 4u_0^2}}\right)^{-\frac{1}{2}} \, du.
\end{aligned}
\end{equation}
Consider the coordinate change
$$
v = \frac{1}{\sqrt{\frac{1}{4} + 4u^2}} \Leftrightarrow u = \frac{1}{2} \sqrt{\frac{1}{v^2} - \frac{1}{4}}.
$$
Then
$$
T_\infty= \frac{1}{2}\int_{v_0}^2 \frac{1}{\sqrt{v-v_0}}\frac{1}{\sqrt{4-v^2}}\frac{1}{v^2}\, dv,
$$
where $0<v_0 = \frac{1}{\sqrt{\frac{1}{4} +4u_0^2}}<2.$
Now consider the linear coordinate change
$
v=\frac{2+v_0}{2} + \frac{2-v_0}{2}w
$
to obtain
$$
T_\infty=\frac{1}{2} \int_{-1}^1 \frac{1}{\sqrt{1-w^2}} \frac{1}{\sqrt{\frac{6+v_0}{2} + \frac{2-v_0}{2}w}}\frac{1}{(\frac{2+v_0}{2} + \frac{2-v_0}{2} w)^2} \, dw.
$$
We compute
$$
\begin{aligned}
\lim_{u_0\to 0^0}T_{\infty}(u_0)=\lim_{v_0\to 2^-}T_\infty(v_0)=\frac{1}{16}\int^1_{-1}\frac{1}{\sqrt{1-w^2}}dw=\frac{\pi}{16},
\end{aligned}
$$
and
\begin{equation}\label{equ: diff of the hitting time}
\begin{aligned}
\frac{\partial T_\infty}{\partial v_0}  = & -\frac{1}{8}\int_{-1}^1 \frac{1-w}{\sqrt{1-w^2}} \frac{1}{(\frac{6+v_0}{2} + \frac{2-v_0}{2}w)^\frac{3}{2}}\frac{1}{(\frac{2+v_0}{2} + \frac{2-v_0}{2} w)^2} \, dw\\
 &
-\frac{1}{2}\int_{-1}^1 \frac{1-w}{\sqrt{1-w^2}} \frac{1}{\sqrt{\frac{6+v_0}{2} + \frac{2-v_0}{2}w}}\frac{1}{(\frac{2+v_0}{2} + \frac{2-v_0}{2} w)^3} \, dw\\
& <0.
\end{aligned}
\end{equation}
Since $\frac{\partial v_0}{\partial u_0}<0,$ we conclude that $\frac{\partial T_\infty}{\partial u_0}>0,$ and thus $T_\infty$ is strictly increasing.
\end{proof}

% The projection $\Pi_{v,u}:(p_v,p_u,v,u) \mapsto (v,u)$ is such that $$\Pi_{v,u}(\mathfrak{M}') = \mathcal{H}'\subset [-1,1] \times \R.$$

\subsection{The limiting linearized flow} Let us consider the linearized flow  determined by
$$
\dot y = DX_{K_\infty}(x_\infty(t)) y,
$$
where $x_\infty(t) = (p_{v,\infty}(t),v_\infty(t),p_{u,\infty}(t),u_\infty(t))$ is a solution to \eqref{motion_uv limit} with initial conditions $p_{v,\infty}(0)=p_{u,\infty}(0) =0$, $v_\infty(0)=v_0$ and $u_\infty(0)=u_0>0$. Here, $X_{K_\infty}$ is the Hamiltonian vector field of $K_\infty$, and $$y=(a,b,c,d)\equiv a\partial_{p_v} + b\partial_{v} +c\partial_{p_u} + d\partial_u.$$ It follows from \eqref{motion_uv limit}, that
$$
\left(\begin{array}{cc} \dot a \\ \dot b \end{array} \right)= \left(\begin{array}{cc}0 & -8 \\
1 & 0 \\
\end{array} \right)\left(\begin{array}{cc}  a \\  b \end{array} \right)
\quad \mbox{ and } \quad \left(\begin{array}{cc} \dot c \\ \dot d \end{array} \right)= \left(\begin{array}{cc}0 & g(u_\infty(t)) \\
1 & 0 \\
\end{array} \right)\left(\begin{array}{cc}  c \\  d \end{array} \right),
$$
where $g(u):=\frac{\partial}{\partial u} \left(-\frac{8u}{\sqrt{\frac{1}{4} + 4 u^2}} \right)$.

Let us assume that $x_\infty$ is a $u$-symmetric brake orbit. This means that $x_\infty$ first hits $u=0$ perpendicularly at time $T_\infty>0$, that is $u_\infty(t)>0, \forall t\in[0,T_\infty),$ $p_u(T_\infty)<0$ and $p_v(T_\infty)=0.$ In particular, $T_\infty=\frac{k\pi}{2\sqrt{2}}$ for some $k\geq 1$ and $x_\infty$ is periodic with least period  $4T_\infty$.

Since $K_\infty$ is decoupled, the rotation number of $x_\infty$ is the sum of the rotation numbers on the respective invariant symplectic planes $(p_u,u)$ and $(p_v,v)$. In the $(p_u,u)$-plane, considering a non-vanishing vector pointing in the direction of the flow, we see that the contribution over the period $4T_\infty$ is $+1$. In the $(p_v,v)$-plane, using that the linearized flow rotates with mean angular velocity $2\sqrt{2}$, the contribution over the period $4T_\infty$ is  $\frac{4T_\infty \cdot 2\sqrt{2}}{2\pi}.$ Hence the rotation number of $x_\infty$ is
\begin{equation}\label{rho_x_infty}
\rho(x_\infty) = 1 + \frac{4\sqrt{2}}{\pi} T_\infty=1 + 2k\geq 3.
\end{equation}

We aim at finding $u$-symmetric brake orbits in the energy surface $\mathfrak{M}'_\infty$  starting from the zero velocity curve
$$
\ell_\infty:=\{(v,u)\in \R^2: 4v^2 = 2/\sqrt{1/4 +4u^2}\}.
$$
We shall consider the branch $\ell_\infty^+ \subset \ell_\infty$ satisfying $v>0$. Observe that $\ell_\infty^+$ is the graph of a smooth bounded function $v=\hat v_\infty(u)>0, u \in \R$.

\begin{lem}\label{lem_sequence} There exists a sequence $x_\infty^{(i)},i\in \N$ of $4T_\infty^{(i)}$-periodic $u$-symmetric brake orbits
$$
\begin{aligned}
x_\infty^{(i)}(t)  = (p_{v,\infty}^{(i)}(t),v_\infty^{(i)}(t),p_{u,\infty}^{(i)}(t),u_\infty^{(i)}(t))\in \mathfrak{M}'_\infty, \quad \forall t,\\
x_\infty^{(i)}(t + 4T_\infty^{(i)})  = x_\infty^{(i)}(t), \quad \forall t,
\end{aligned}
$$
with initial conditions
\begin{equation}\label{init_conditions}
p_{v,\infty}^{(i)}(0)=p_{u,\infty}^{(i)}(0)=0, \quad u_\infty^{(i)}(0)=u_0^{(i)}>0, \quad v_\infty^{(i)}(0)=\hat v_\infty(u_0^{(i)})>0,
\end{equation}
so that
$$
\begin{aligned}
 u_\infty^{(i)}(t)  > 0, \forall t\in [0,T_\infty^{(i)}), \quad u_\infty^i(T_\infty^{(i)}) =0,\\ p_{u,\infty}^{(i)}(T_\infty^{(i)})  <0, \quad p_{v,\infty}^{(i)}(T_\infty^{(i)}) =0,\\
 \lim_{i\to \infty} u_0^{(i)}  =  \lim_{i \to \infty} T_\infty^{(i)} =  \lim_{i\to \infty} \rho(x_\infty^{(i)}) = +\infty.
\end{aligned}
$$
\end{lem}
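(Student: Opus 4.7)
The plan is to exploit the fact that $K_\infty = K_1(p_v,v) + K_2(p_u,u)$ decouples into two commuting one-degree-of-freedom subsystems. The first, $K_1 = p_v^2/2 + 4v^2$, is a harmonic oscillator of frequency $2\sqrt{2}$, so the flow with $p_v(0)=0$, $v(0)=v_0$ is given by \eqref{initv}, and $p_{v,\infty}(t)=0$ exactly when $t=k\pi/(2\sqrt{2})$, $k\in \Z$. The second, $K_2 = p_u^2/2 - 2(1/4+4u^2)^{-1/2}$, has a unique equilibrium $(0,0)$ of value $-4$ and bounded level sets for energies in $(-4,0)$; starting from $p_u(0)=0$, $u(0)=u_0>0$, the first positive time at which $u=0$ is precisely the hitting time $T_\infty(u_0)$ studied in Lemma \ref{lem_T_infty}.

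The strategy is to choose $(v_0,u_0)$ subject to two conditions. First, the energy constraint $v_0=\hat v_\infty(u_0)$, which places the initial point $(0,v_0,0,u_0)$ on the zero-velocity curve $\ell_\infty^+\subset \mathfrak{M}'_\infty$. Second, the matching condition
\[
T_\infty(u_0)\ =\ \frac{k\pi}{2\sqrt{2}} \qquad \text{for some integer } k\geq 1,
\]
which, by the explicit form of $p_{v,\infty}$, forces $p_{v,\infty}(T_\infty)=0$. When both conditions hold, the resulting trajectory $x_\infty$ automatically satisfies $p_{u,\infty}(0)=p_{v,\infty}(0)=u_\infty(T_\infty)=p_{v,\infty}(T_\infty)=0$, while $p_{u,\infty}(T_\infty)<0$ follows from the monotone decrease of $u_\infty$ on $[0,T_\infty]$. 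The time-reversal symmetry of Hamilton's equations combined with the $u$-reflection symmetry $(p_u,u)\mapsto(-p_u,-u)$ of $K_\infty$ then forces $x_\infty$ to be $u$-symmetric, $4T_\infty$-periodic, and to return to a brake point at $t=2T_\infty$.

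To realize this scheme I appeal to Lemma \ref{lem_T_infty}: the map $u_0\mapsto T_\infty(u_0)$ is a continuous strictly increasing bijection $(0,+\infty)\to(\pi/16,+\infty)$. Since $\pi/16 < \pi/(2\sqrt{2})$, the intermediate value theorem yields, for every integer $k\geq 1$, a unique $u_0^{(k)}>0$ with $T_\infty(u_0^{(k)}) = k\pi/(2\sqrt{2})$, and $u_0^{(k)}\to +\infty$ as $k\to \infty$. Picking any sequence $k_i\to\infty$ and setting $u_0^{(i)}:= u_0^{(k_i)}$ produces brake orbits $x_\infty^{(i)}$ for which $u_0^{(i)}$ and $T_\infty^{(i)}=k_i\pi/(2\sqrt{2})$ diverge, while the rotation number computation already carried out in deriving \eqref{rho_x_infty} gives $\rho(x_\infty^{(i)}) = 1 + 2k_i \to +\infty$. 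The only subtle point is the verification that the matching condition together with the energy constraint genuinely produce a $u$-symmetric brake orbit, and this reduces to the standard reversibility/reflection argument outlined above; I do not anticipate any analytic difficulty beyond what is already contained in Lemma \ref{lem_T_infty}.
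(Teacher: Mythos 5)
Your proof is correct and follows essentially the same route as the paper's: use the decoupling of $K_\infty$, choose initial conditions on the zero-velocity curve $\ell_\infty^+$, impose the matching condition $T_\infty(u_0)=k\pi/(2\sqrt{2})$, invoke Lemma \ref{lem_T_infty} for the strict monotonicity of $T_\infty$ from $\pi/16$ to $+\infty$, and quote \eqref{rho_x_infty} for the rotation number. Your write-up is slightly more explicit about the intermediate value theorem step and the uniqueness of each $u_0^{(k)}$, but the underlying argument is the same.
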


\begin{proof}
We may choose initial conditions as in \eqref{init_conditions} so that the first hit to $\{u=0\}$ is perpendicular. Indeed, since $v_\infty^{(n)}(t)= \hat v(u_0) \cos (2\sqrt{2}t), \forall t,$  this occurs if the first hitting time $T_\infty(u_0)$ to $\{u=0\}$ coincides with $\frac{k \pi}{2\sqrt{2}}$ for some $k\in \N$. Since $\ell_\infty^+$ is the graph of a function defined on the whole $u$-axis, and $T_\infty(u_0)$ is strictly increasing to $+\infty$ as $u_0\to +\infty$, see Lemma \ref{lem_T_infty}, we find a sequence of  $u$-symmetric brake orbits $x_\infty^{(i)}\in \mathfrak{M}'_\infty$ with $u_0^{(i)},T_\infty^{(i)} \to +\infty$ as $i\to \infty$, as desired.  The limit $\rho(x_\infty^{(i)})\to +\infty$ as $i\to \infty$  follows  from \eqref{rho_x_infty}.
\end{proof}

We are ready to prove Theorem \ref{thm: nonresonant implies infy many}.

\subsection{Proof of Theorem \ref{thm: nonresonant implies infy many}}
\begin{proof}
Fix $N\in \Z^+,$  $C\gg 1$ large, and  $\epsilon>0$ small. Consider the sequence $x_\infty^{(i)}=(p_{v,\infty}^{(i)},p_{u,\infty}^{(i)},v_\infty^{(i)},u_\infty^{(i)})\in \mathfrak{M}'_\infty$ of $u$-symmetric brake orbits as in Lemma \ref{lem_sequence}.  Since $\ell_\infty^+$ is the graph of $v=\hat v_\infty(u)>0,$ and the flow of $K_\infty$ in the $(p_v,v)$-plane satisfies \eqref{motion_uv limit},  Lemma \ref{lem_T_infty} implies that there exist trajectories
$$
x_{\infty,\pm}^{(i)}=(p_{v,\infty,\pm}^{(i)},p_{u,\infty,\pm}^{(i)},v_{\infty,\pm}^{(i)},u_{\infty,\pm}^{(i)})\in \mathfrak{M}'_\infty, \quad i=1,2,\ldots,
$$
with $x_{\infty,\pm}^{(i)}(0)$ arbitrarily close to $x_\infty^{(i)}(0)$, so that
$$
\begin{aligned}
v_{\infty,\pm}^{(i)}(0)  = \hat v_\infty(u_{\infty,\pm}^{(i)}(0)), \quad u_{\infty,-}^{(i)}(0) & < u_{\infty}^{(i)}(0) < u_{\infty,+}^{(i)}(0),\\
p_{v,\infty,+}^{(i)}(T_{\infty,+}^{(i)})\cdot p_{v,\infty,-}^{(i)}(T_{\infty,-}^{(i)}) & <0,
\end{aligned}
$$
where the first hitting time $T_{\infty,\pm}^{(i)}$ to $\{u=0\}$ is arbitrarily close to $T_\infty^{(i)}$.

Fix $M>0$ large. By Lemma \ref{lem_Kinfty}, we know that the Hamiltonian $K$ converges in $C^\infty_{\rm loc}$ to $K_\infty$ as $\alpha \to +\infty$. Hence, on the compact set $\mathcal{K}_M:=[0,2]\times [-M,M]\subset \R^2,$ the zero velocity curve $\ell^+$ associated with $\mathfrak{M}'$ is the graph of a function $v=\hat v(u), u\in [-M,M],$ that converges in $C^\infty$ to $\hat v_\infty$ as $\alpha \to +\infty$. In particular, for each $i$ satisfying  $(v_{\infty,\pm}^{(i)}(0),u_{\infty,\pm}^{(i)}(0))\in \mathcal{K}_M$ and $\alpha$ sufficiently large we find Hamiltonian trajectories
$$
x^{(i)}_\pm=(p_{v,\pm}^{(i)},p_{u,\pm}^{(i)},v_\pm^{(i)},u_\pm^{(i)})\in \mathfrak{M}',
$$  starting from points arbitrarily close to $x_{\infty,\pm}$, and satisfying the following conditions
$$
v_{\pm}^{(i)}(0) = \hat v(u_\pm^{(i)}(0)), \quad u_{-}^{(i)}(0)< u_{\infty}^{(i)}(0) < u_{+}^{(i)}(0).
$$
Moreover, their first hitting time $T_{\pm}^{(i)}$ to $\{u=0\}$ is arbitrarily close to $T_{\infty,\pm}^{(i)}$, and
\begin{equation}
p_{v,+}^{(i)}(T_{+}^{(i)})\cdot p_{v,-}^{(i)}(T_{-}^{(i)}) <0.
\end{equation}

 We know that every trajectory of $X_K$ starting from a point in $\{u>0\}$ hits $\{u=0\}$ transversely, with uniformly bounded hitting time to $\{u=0\}$ if $u_0$ lies in a fixed bounded set. Hence, for   $\alpha$ sufficiently large, there exists a trajectory
$$
x^{(i)}=(p_{v}^{(i)},p_{u}^{(i)},v^{(i)},u^{(i)})\in \mathfrak{M}',
$$
starting from a point arbitrarily close to $x_\infty^{(i)}(0)$, and satisfying
$$
v^{(i)}(0) = \hat v(u^{(i)}(0)), \quad u_{-}^{(i)}(0)< u^{(i)}(0) < u_{+}^{(i)}(0), \quad p_v(T^{(i)})=0.
$$
Here, $T^{(i)}$ is the  first hitting time  to $\{u=0\}$.
Such an orbit is a $u$-symmetric brake orbit with period $4T^{(i)}$.
Since $K\to K_\infty$ in $C^\infty_{\rm loc}$ as $\alpha \to +\infty$, we have $T^{(i)} \to T^{(i)}_\infty$ and  $x^{(i)} \to x^{(i)}_\infty$ in $C^\infty$ as $\alpha \to +\infty$. By continuity of the rotation number under small perturbations,  $\rho(x^{(i)}) \to \rho(x^{(i)}_\infty) = 1 + \frac{4\sqrt{2}}{\pi}T^{(i)}_\infty$ as $\alpha \to +\infty$.

Finally, take  $N_0>N$, and fix $M>0$ sufficiently large so that the number of $u$-symmetric brake orbits $x^{(i)}_\infty\in \mathfrak{M}'_\infty$, starting from a point in $\mathcal{K}_M$ and with rotation number $>C$, is  $N_0.$ Take $\alpha_0\gg 0$ sufficiently large so that for $\alpha >\alpha_0$, there exist   $N_0$ $u$-symmetric brake orbits $x^{(i)}\in \mathfrak{M}'$ arbitrarily close to those $x_\infty^{(i)}$, and with rotation number $>C$. The corresponding trajectories
$$
\zeta_{z_i}(t):= \psi^{-1}(x^{(i)}(t)) \in \mathfrak{M},
$$
are the desired simple $z$-symmetric brake orbits. Their rotation numbers coincide with those of $x^{(i)}$. Since  $\rho_e=\rho(\zeta_e)$  is $\epsilon$-close to $1+2\sqrt{2}>1$ for $\alpha$ sufficiently large, and $\rho_{z_i}=\rho(\zeta_{z_i})>C \gg 0$, we have $\rho(\zeta_{z_i})^{-1} + \rho(\zeta_e)^{-1} \neq 1.$ This implies that
the Hopf link $L_i=\zeta_{z_i}\cup \zeta_e \subset \mathfrak{M}$ is non-resonant for every $i$. This proves (i). Item (ii) immediately follows from  Theorem \ref{thm_PB}. This finishes the proof of Theorem \ref{thm: nonresonant implies infy many}.\end{proof}

\begin{rem}\label{general setting}
More generally, there exists  a family of limiting systems as $\alpha \to +\infty$. Fix $c\in (0,+\infty)$, and take $\varpi^2=1/2+(4-c)\alpha^{-1}$. Then
\begin{equation*}
\begin{aligned}
4m^2 & =2c\alpha^{-1}+16\alpha^{-2}\rightarrow 0,\\ \alpha m^2 & =(2c+16\alpha^{-1})/4\to c/2,\\
\frac{2\varpi^2-2n}{4m^2} & =-\frac{c-2}{c+8\alpha^{-1}}\rightarrow \frac{2-c}{c},\\
\frac{2n-1}{2m^2} & =\frac{2}{\alpha m^2}\to \frac{4}{c},
\end{aligned}
\end{equation*}
as $\alpha \to +\infty$.
The limiting Hamiltonian becomes
\begin{eqnarray*}
K_\infty=\frac{p_r^2+p_z^2}{2}+\frac{4(4-c)}{c}+4v^2-\frac{8}{c\sqrt{\frac{1}{4}+cu^2}}.
\end{eqnarray*}
The behavior on the $(p_v,v)$-plane is the same as before. On the $(p_u,u)$-plane  we have
\begin{eqnarray*}
\left\{\begin{aligned}
& \dot  u  = p_u\\
& \dot p_u  = -\frac{8u}{(1/4+cu^2)^{3/2}}.
\end{aligned}
\right.
\end{eqnarray*}
 Consider the initial conditions $p_{u,0}=p_{v,0}=0$ and
$$
(v_0,u_0)\in l_\infty=\{(v,u):\ 4(4-c)/c+4v^2=8c^{-1}(1/4+cu^2)^{-1/2}\}.
$$
We have the following four cases:
\begin{itemize}
\item[(a)] $c\in(0,4)$. In this case, $8c^{-1}(1/4+cu_0^2)^{-1/2}\in(4(4-c)/c,16/c)$, which implies that
$v_0\in [-1,1]$ and $u_0\in [-u_{\text{max}},u_{\text{max}}]$, where $u_{max}=\frac{\sqrt{8-c}}{2(4-c)}$.
\item[(b)] $c\in (4,\infty)$. In this case, $v_0\in[-1,-\sqrt{\frac{c-4}{c}})\cup(\sqrt{\frac{c-4}{c}},1]$, where $v_0$ tends to $\sqrt{\frac{c-4}{c}}$ as $|u_0|\rightarrow +\infty$.
\item[(c)] $c\rightarrow 0$. Since $(v_0,u_0)\in l_\infty$ satisfies
$$(4+(v^2-1)c)^2=\frac{16}{1+4cu^2}=16-64cu^2+16(4cu^2)^2+\cdots,$$
  $l_\infty$  converges to the ellipsoid $v^2+8u^2=1$ as $c\to 0$. In particular, $u(t)$ converges to $u_0\cos(8t)$ in $C^\infty_{\text{loc}}$, and thus the rotation number of the periodic orbit $(v,u)=(v_0\cos(2\sqrt{2}t),0)$ converges to $1+2\sqrt{2}$, which coincides the rotation number of Euler orbit with $(\beta,\mathfrak{e})=(1,0)$.
\item[(d)] $c=4$. This case coincides with the situation we considered before, i.e. $v_0\in [-1,0)\cup(0,1]$ and $u_0\in(-\infty,+\infty)$.
\end{itemize}
As in Lemma \ref{lem_T_infty}, the hitting time function is
\begin{equation*}
T_\infty(u_0)=\frac{\sqrt{c}}{4}\int^{u_0}_0\big(\frac{1}{\sqrt{\frac{1}{4}+cu^2}}-\frac{1}{\sqrt{\frac{1}{4}+cu_0^2}}\big)^{\frac{1}{2}}du,
\end{equation*}
which is also a strictly increasing function on $[0,+\infty)$ from $\frac{16}{\pi}$ to $+\infty$ (or to $T_\infty(u_{max})$ on $[0,u_{max}]$). Analogous to Lemma \ref{lem_sequence}, there exists a finite number of $u$-symmetric brake orbits in $K_\infty^{-1}(0)$ for any $c\in(0,4)$. Once $c\geq 4$,  there exist infinitely many $u$-symmetric brake orbits in the limiting system. Finally, using the perturbation argument in the proof of Theorem \ref{thm: nonresonant implies infy many}, one can find a similar amount of $z$-symmetric simple brake orbits in $\mathfrak{M}$.
\end{rem}
\begin{rem}
The estimate \eqref{rho_x_infty} implies that $\mathfrak{M}_\infty'=K_\infty^{-1}(0)$ is actually dynamically convex. However,  $\mathfrak{M}_\infty'$ is not convex in coordinates $(p_v,p_u,v,u)$, see Figure \ref{picture of K2}.
Indeed, we find a canonical mapping so that $\mathfrak{M}_\infty'$ becomes strictly convex.
\begin{figure}[ht]
\centering
\includegraphics[width=0.7\textwidth]{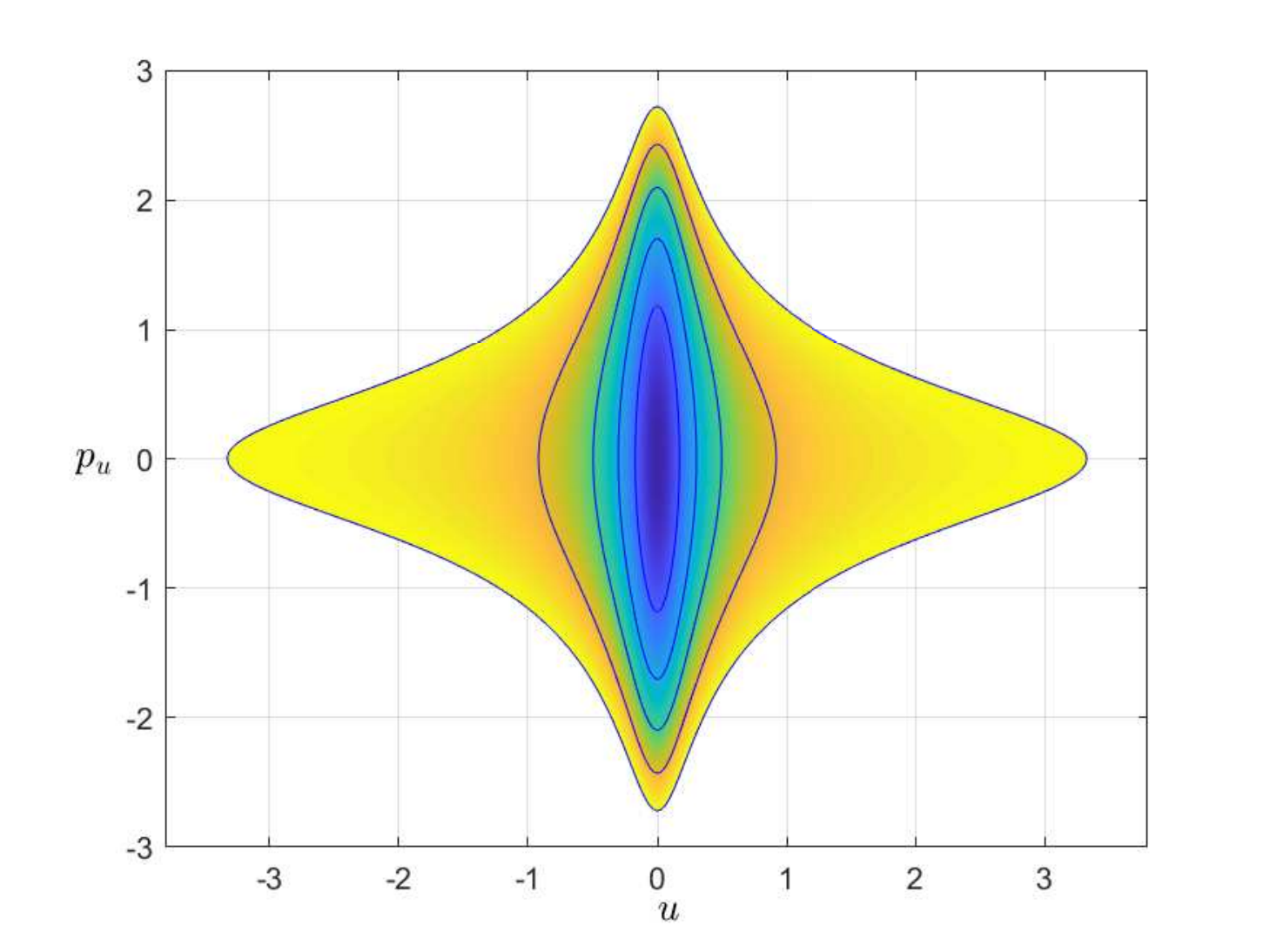}
\caption{ The level sets of $K_2$.}
\label{picture of K2}
\end{figure}
Firstly, since $K_2$ corresponds to an integrable system with 1-dimensional freedom, under the action-angle coordinates $(I,\phi)$, one can rewrite $K_2(p_u,u)$ as $h_u(I)$ and we obtain $\dot I=0,\ \dot\phi\equiv\partial_Ih_u$. The symplectic change of coordinates
$$(I,\phi)\rightarrow (x,y)=((I/\pi)^{1/2}\cos(2\pi\phi),(I/\pi)^{1/2}\sin(2\pi\phi)),$$
changes the Hamiltonian to $h_u(\pi(x^2+y^2))$. Direct computations show that
\begin{equation}\label{equ: hessen of h_u}
\nabla^2h_u(x,y)=(2\pi)^2h_u''(I)\begin{pmatrix}x^2 & xy\\ xy & y^2\end{pmatrix}+ 2\pi h_u'(I)\begin{pmatrix}1 & 0 \\ 0 & 1 \end{pmatrix}.
\end{equation}
Let $(p_u,u)(t)$ be a solution on $K_2^{-1}(h_u)$, where $h_u\in(-4,0)$. The period $T$ becomes
$$T=4T_\infty=2\int_{0}^{u_0}(\frac{1}{\sqrt{\frac{1}{4}+4u^2}}-\frac{1}{\sqrt{\frac{1}{4}+4u_0^2}})^{-\frac{1}{2}}du$$
where $u_0=(h_u^{-2}-4^{-2})^{\frac{1}{2}}$ (i.e. $h_u=-\frac{2}{\sqrt{\frac{1}{4}+4u_0^2}}$) and $T_\infty$ is the hitting time function in (\ref{equ: the hitting time}). Since $T$ also satisfies $T(I)={\dot \phi}^{-1}=(\partial_I h_u)^{-1}>0$, we have
$$\begin{aligned}
h_u''(I)=\frac{d(T^{-1})}{dI}=-\frac{\partial_{u_0} T}{T^{2}}\cdot \partial_{h_u}u_0\cdot h_u'(I)=\frac{4\partial_{u_0} T}{T^3h_u^2\sqrt{16-h_u^2}},
\end{aligned}$$
where $\partial_{u_0} T>0$, see Lemma \ref{lem_T_infty}.
Finally, \eqref{equ: hessen of h_u} becomes
$$
\begin{aligned}
\nabla^2h_u(x,y)=\frac{16\pi^2 \partial_{u_0} T}{T^3h_u^2\sqrt{16-h_u^2}}\begin{pmatrix}x^2 & xy\\ xy & y^2\end{pmatrix}+ \frac{2\pi}{T}I_2,
\end{aligned}
$$
which is positive definite for every $h_u\in(-4,0)$. Using  \eqref{equ: diff of the hitting time}, we obtain $\nabla^2h_u(x,y)\rightarrow 32I_2$ as $h_u\rightarrow -4$, and $\nabla^2h_u(x,y)\sim O(T^{-1})I_2\rightarrow 0$ as $h_u\rightarrow 0$.
\end{rem}
\section{Symmetric periodic orbits}\label{sec:twist condition and rational rotation}

In this section, we explore the symmetries of the first return map to the disk-like global surface of section in order to prove Theorems  \ref{thm infy z brake} and \ref{thm: rational rotation implies infy many}.

%Denote by $\Upsilon$ the  projection  $(p_r,p_z,r,z) \mapsto (p_r,r)$. In this section, we firstly reduce the study of symmetric periodic orbits to a symmetric map on $\Upsilon$, then we study the existence of periodic orbits in the following two cases: the Poincar\'e map with twist condition and rational rotation, i.e. subsection \ref{subsec: return map with twist condition} and \ref{subsec: return map with Rational rotation}. Finally, the proof of and

Let $\mathcal{O}=\{\Sigma_{s}\subset \mathfrak{M},\ s\in\mathbb{R}/\mathbb{Z}\},$ be the open book decomposition with disk-like pages
$$
\Sigma_s : = \{(p_r,p_z,r,z)\in \mathfrak{M}: p_z + zi \in \R^+e^{2\pi si}\}, \quad \forall s \in \R / \Z,
$$
bounded by the Euler orbit $\zeta_e$, as in Remark \ref{rem_openbook}. Recall that $\Sigma_0\subset \{z=0\}$ coincides with $F$, see \eqref{Eulerplane}, and each $\Sigma_s$ is a global surface of section for the Hamiltonian flow in $\mathfrak{M}$.

Denote by $\Pi_{p_r,r}:\R^4 \to \R^2$ the  projection  $(p_r,p_z,r,z) \mapsto (p_r,r)$. Then $$\Pi_{p_r,r}(\Sigma_s)=\Pi_{p_r,r}(\Sigma_0), \quad \forall s,$$ is a disk-like region
$$
\Upsilon:=\Pi_{p_r,r}(\mathfrak{M})\subset \R^2,
$$
bounded by $\Pi_{p_r,r}(\zeta_e) \equiv \zeta_e$.

Given $\zeta \in \Sigma_0^o$ and $s\in[0,1]$, there exists a smallest $t^+_s(\zeta) \geq 0,$  such that $\varphi_{t^+_s(\zeta)}(\zeta) \in \Sigma_s^o$. We call  $t^+_s$ the $s$-hitting time  and
\begin{equation}\label{equ: s-page reaching map}
g_s:=\varphi_{t^+_s(\cdot)}(\cdot) : \Sigma_0^o \to \Sigma_s^o,\ \forall s\in[0,1],
\end{equation}
the $s$-hitting map (a diffeomorphism, indeed) from $\Sigma_0^o$ to $\Sigma_s^o$. Notice that $t^+_0 \equiv 0$ and $g_0=\text{Id}$ is the identity map. The mapping $g_1$ is the so-called Poincar\'e first return map.

Since the tangent space $T_{\zeta_e}\Sigma_s$ is generated by the vectors
\begin{equation}\label{Tsigma_0 on the bdy}
\dot \zeta_e \quad \mbox{ and } \quad \cos(2\pi s)\partial_{p_z}+\sin(2\pi s)\partial_z,
\end{equation}
and the linearized flow along $\zeta_e$ positively  rotates the vectors in the $(p_z,z)$-direction,  we can continuously extend $g_s$ to a homeomorphism  $\Sigma_0 \to \Sigma_s$, also denoted $g_s$. In general, this extension is not always possible, see \cite[Chapter 9]{FvK18}.

\begin{figure}[ht]
\centering
\includegraphics[width=0.7\textwidth]{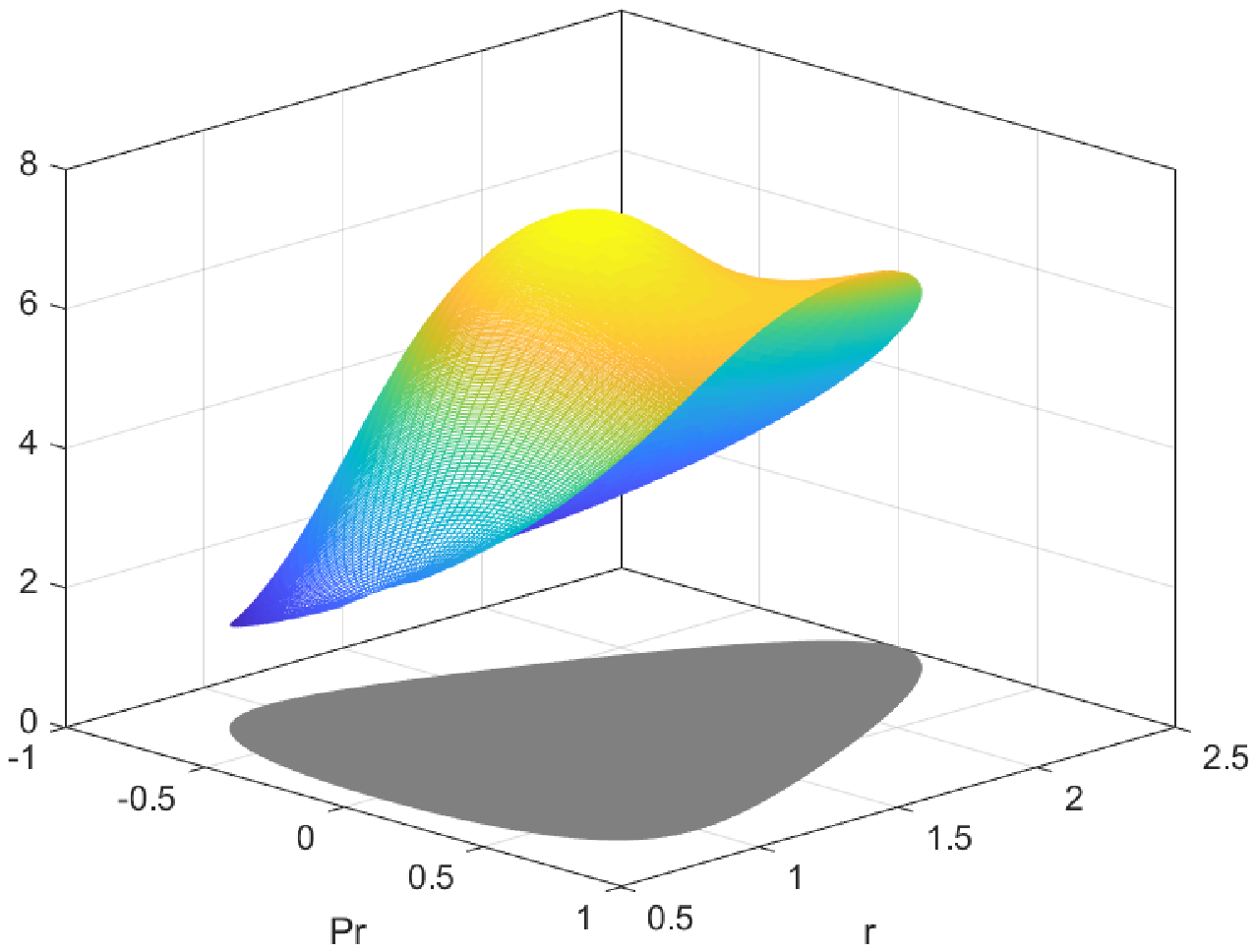}
\caption{The graph of $t^+_1$ over the $(p_r, r)$-plane with $\beta = \mathfrak{e}=0.6$.}
\label{fig:t+}
\end{figure}

Let $\mathcal{P}_s:=\Pi_{p_r,r}|_{\Sigma_s}: \Sigma_s\to \Upsilon$. Then $\mathcal{P}_s$ is a homeomorphism that restricts to a symplectomorphism $\Sigma_s^o \to \Upsilon \setminus \zeta_e$ which preserves the symplectic form $dp_r \wedge dr$. This follows from $$
dp_z \wedge dz|_{\Sigma_s} \equiv 0, \quad \forall s.$$
%By Lemma \ref{lem: foliation}-(i), there exists another family of diffeomorphism Since $X_{H_\varpi}$ transverses to $\Sigma^o_s$ by Lemma \ref{lem: foliation} (2), then $\omega_0|_{\Sigma_s}$ becomes a symplectic form on $\Sigma_s^o$, so is $dp_r\wedge dr$ on $\Upsilon$.
%\begin{rem}\label{remark: tilde g is symplectic}
In particular,
%It is worth to note that
%$$\mathcal{P}_s:=\Pi_1|_{\Sigma_s}: (\Sigma^o_s,\omega_0|_{\Sigma_s}) \to (\Upsilon^o, dp_r\wedge dr),\ g_s: (\Sigma_0^o, \omega_0|_{\Sigma_0} ) \to (\Sigma_s^o, \omega_0|_{\Sigma_s})$$
%and
\begin{equation}\label{def of tilde gs}
\tilde g_{s}:=\mathcal{P}_s\circ g_s\circ \mathcal{P}_0^{-1}: (\Upsilon, dp_r\wedge dr)\to (\Upsilon, dp_r\wedge dr)
\end{equation}
is a homeomorphism that restricts to a symplectomorphism $\Upsilon \setminus \zeta_e \to \Upsilon \setminus \zeta_e$.

%Firstly, since any element $\xi\in T_{\zeta}\Sigma^o_s$ has the form
%$\xi=a\partial p_r+b\partial r+c(\cos(2\pi s)\partial p_z+\sin(2\pi s)\partial z)$ by (\ref{equ: tangent space of int Sigma_s}), then we have
%$$\omega_0(\xi,\eta)=(dp_r\wedge dr)(d(\mathcal{P}_s)\xi,d(\mathcal{P}_s)\eta),\ \forall \xi,\eta\in T_{\zeta}\Sigma^o_s,$$
%which means that $\mathcal{P}_s:(\Sigma_s^o,\omega_0|_{\Sigma^o_s})\rightarrow (\Upsilon^o,dp_r\wedge dr)$ is symplectic.
%Secondly, by using definition (\ref{equ: s-page reacing map}), we have
%\begin{equation}\label{equ: differential of gs}
%dg_s|_{\zeta}(\xi)=d(\varphi_{t^+_s(\zeta)})\xi+dt^+_s(\xi)X_{H_\varpi},\ \forall \xi\in T_\zeta\Sigma^o_s.
%\end{equation}
%Then applying $(\ref{equ: differential of gs})$, we can conclude that for any $\xi,\eta\in T_\zeta\Sigma^o_s$,
%\begin{eqnarray*}
%&&\omega_0(dg_s(\xi),dg_s(\eta))-\omega_0(\xi,\eta)\\
%&=&\omega_0(d(\varphi_{t^+_s(\zeta)})\xi+dt^+_s(\xi)X_{H_\varpi},d(\varphi_{t^+_s(\zeta)})\eta+dt^+_s(\eta)X_{H_\varpi})-\omega_0(\xi,\eta)\\
%&=&\omega_0(d(\varphi_{t^+_s(\zeta)})\xi,dt^+_s(\eta)X_{H_\varpi})-%\omega_0(d(\varphi_{t^+_s(\zeta)})\eta,dt^+_s(\xi)X_{H_\varpi})=0.
%\end{eqnarray*}
%Therefore, $g_s: (\Sigma_0^o,\omega_0|_{\Sigma^o_0})\rightarrow (\Sigma_s^o,\omega_0|_{\Sigma^o_s})$ is also symplectic, so is $\tilde g_s$.
%\end{rem}

Let $g_{s_1,s_2}:=g_{s_2}\circ g^{-1}_{s_1},\ 0 \leq s_1\leq s_2 \leq 1$, be the hitting map from $\Sigma_{s_1}$ to $\Sigma_{s_2}$, and let $\tilde g_{s_1,s_2}:=\tilde g_{s_2}\circ \tilde g^{-1}_{s_1}$. The $z$-symmetry of the potential $V$ implies that $\tilde{g}_{1/2} = \tilde{g}_{1/2,1}$. For simplicity, we write $\bar{g}:=\tilde g_{1/2}$ and $\check g:=\tilde g_1 =\tilde g_{1/2,1} \circ \tilde g_{1/2} = \tilde g_{1/2}^2 = \bar g^2$.

If $\zeta_z \subset \mathfrak{M}\setminus \zeta_e$ is  a $z$-symmetric simple brake orbit with $\zeta_z(0)=\zeta_z(T)\in \Sigma_0^o$, then $\Pi_{p_r,r}(\zeta_{z}(0))=\Pi_{p_r,r}(\zeta_{z}(T/2))\in \{0\} \times \R$ is a fixed point of $\bar{g}$. %\textcolor{red}{
%According to the proof of Theorem \ref{main1} (i) and Lemma \ref{lem: foliation}, $\Upsilon$ is also the $(p_r,r)$-projection of the global surface of section (\ref{Eulerplane}). Let $\check g:=\tilde g_1$ be the projected Poincar\'e return map defined in Remark \ref{remark: tilde g is symplectic}. By Definition \ref{defi: GlobalSurface}, we can further reduce the three-dimensional Hamiltonian flow to the iterations of $\check g$ on $\Upsilon$.
%For the curiosity of the integrability of $\check g$, we provide  the returness of a hundred of random points on $\Upsilon$ for $\beta=0.6,\ \mathfrak{e}=0.795$, see Figure \ref{fig: the dynamics of check g}. The ``random" region in the middle of Figure \ref{fig: the dynamics of check g} can be filled by only one orbit, this indicate that this system is not integrable.}
\begin{figure}[ht]
\centering
\includegraphics[width=0.7\textwidth]{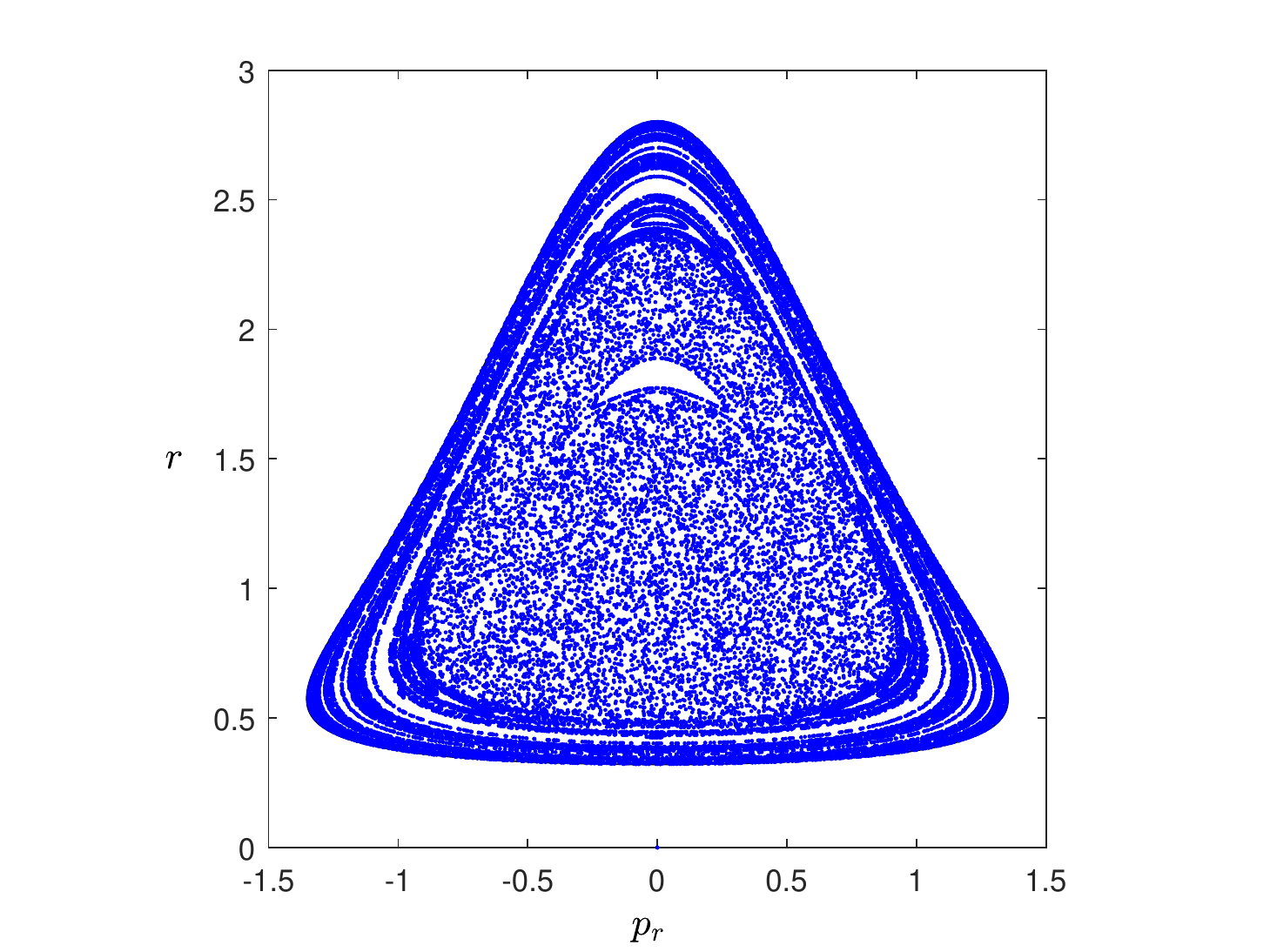}
\caption{The disk-like region $\Upsilon \subset \R^2$ bounded by $\Pi_{p_r,r}(\zeta_e)$ and some trajectories of the first return map $\bar g_1$  for $\beta=0.6$ and $\mathfrak{e}=0.795$.}
\label{fig: the dynamics of check g}
\end{figure}

Consider the involution $\mathcal{N}:(p_r,r) \mapsto (-p_r, r)$, and let $f: \Upsilon \to \Upsilon$ be an orientation preserving homeomorphism of the compact disk-like  region $\Upsilon \subset \R^2$. Following \cite{kang18},
 we assume that $f$ is reversible, that is  $f \circ \mathcal{N} \circ f=\mathcal{N}$. Let $z\in \Upsilon$ be a periodic point of $f$ with least period $k>0$, and let ${\rm Orb}(f,z) = \{z,f(z),\ldots,f^{k-1}(z)\}\subset \Upsilon$ be the orbit through $z$ under $f$. We call ${\rm Orb}(f,z)$ symmetric if $\mathcal{N}({\rm Orb}(f,z))={\rm Orb}(f,z).$ Observe that the reversibility of $f$ implies that if ${\rm Orb}(f,z) \cap {\rm Fix}(\mathcal{N}) \neq \emptyset,$ then ${\rm Orb}(f,z)$ is symmetric.
We say that a symmetric orbit ${\rm Orb}(f,z)$ is odd if there exists an odd number $l=l(z)>0$ so that $f^l(z) = \mathcal{N}(z)$. One easily checks that this definition does not depend on the point in ${\rm Orb}(f,z)$.

\begin{prop} \label{prop reversible g bar}
$\bar{g}:\Upsilon \to \Upsilon$ is a reversible mapping,  i.e. $\bar{g} \circ \mathcal{N} \circ \bar{g}=\mathcal{N}.$
\end{prop}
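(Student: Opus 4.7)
The plan is to exploit the reversibility symmetry of \eqref{ham}. Define the linear involution
\begin{equation*}
R : \R^4 \to \R^4, \qquad R(p_r,p_z,r,z) := (-p_r,-p_z,r,z).
\end{equation*}
Since $V$ is independent of the momenta and the kinetic term is quadratic in them, $H \circ R = H$; inspection of \eqref{ham} then shows that if $\gamma(t)$ solves \eqref{ham}, so does $t \mapsto R\gamma(-t)$, i.e.\ $\varphi_{-t} = R \circ \varphi_t \circ R$ on $\mathfrak{M}$.

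First I would check that $R$ preserves $\mathfrak{M}$ and interchanges the two distinguished pages of the open book $\mathcal{O}$ from Remark \ref{rem_openbook}: a point $(p_r,p_z,r,0) \in \Sigma_0$ has $p_z \geq 0$, and $R$ sends it to $(-p_r,-p_z,r,0) \in \Sigma_{1/2}$, with the binding $\zeta_e$ preserved setwise. Reading $R$ through the $(p_r,r)$-projections, one has
\begin{equation*}
\mathcal{P}_{1/2} \circ R|_{\Sigma_0} = \mathcal{N} \circ \mathcal{P}_0, \qquad \mathcal{P}_{0} \circ R|_{\Sigma_{1/2}} = \mathcal{N} \circ \mathcal{P}_{1/2},
\end{equation*}
so $R$ is exactly the upstairs counterpart of $\mathcal{N}$.

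The next step is a trajectory chase. By continuity of $\bar g$ it is enough to prove the identity on $\Upsilon \setminus \zeta_e$. Fix $(p_r,r)$ there, set $\zeta := \mathcal{P}_0^{-1}(p_r,r) \in \Sigma_0^o$, let $T := t^+_{1/2}(\zeta) > 0$, and put $\zeta' := \varphi_T(\zeta) \in \Sigma_{1/2}^o$, so that $(p_r',r') := \bar g(p_r,r) = \mathcal{P}_{1/2}(\zeta')$. The reversed, $R$-conjugated trajectory $\hat \gamma(t) := R\varphi_{T-t}(\zeta)$ is again a solution of \eqref{ham}, with $\hat \gamma(0) = R\zeta' \in \Sigma_0^o$ sitting over $\mathcal{N}(p_r',r')$ and $\hat \gamma(T) = R\zeta \in \Sigma_{1/2}^o$ sitting over $\mathcal{N}(p_r,r)$. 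Assuming $T$ is the first hitting time of $\hat \gamma$ from $\Sigma_0^o$ to $\Sigma_{1/2}^o$, the chase closes:
\begin{equation*}
\bar g\bigl(\mathcal{N}(p_r',r')\bigr) = \mathcal{P}_{1/2}\bigl(g_{1/2}(R\zeta')\bigr) = \mathcal{P}_{1/2}(R\zeta) = \mathcal{N}(p_r,r).
\end{equation*}

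The hard part is precisely this first-hitting check, but it is forced by the monotonicity \eqref{etamin}. Along $\gamma$ the argument $\eta := \mathrm{arg}(p_z + iz)$ is strictly increasing, moving from $0$ to $\pi$ on $[0,T]$ and staying in $(0,\pi)$ in between. Writing $-p_z(T-t) + iz(T-t) = |p_z(T-t) + iz(T-t)|\, e^{i(\pi - \eta(T-t))}$, I see that the corresponding argument along $\hat \gamma$ increases from $0$ to $\pi$ on $[0,T]$ and remains in $(0,\pi)$ on the interior, which rules out any earlier crossing of $\Sigma_0 \cup \Sigma_{1/2}$. This yields $\bar g \circ \mathcal{N} \circ \bar g = \mathcal{N}$ on $\Upsilon \setminus \zeta_e$, and hence on all of $\Upsilon$ by continuity.
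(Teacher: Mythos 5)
Your proof is correct and follows essentially the same route as the paper: both introduce the reflection $R=\mathrm{diag}(-1,-1,1,1)$ on $(p_r,p_z,r,z)$, use the time-reversal symmetry $\varphi_{-t}=R\circ\varphi_t\circ R$ to produce a new trajectory from the backward orbit, and read off the identity $\bar g\circ\mathcal N\circ\bar g=\mathcal N$ through the projections $\mathcal P_0,\mathcal P_{1/2}$. The only difference is cosmetic: where the paper simply asserts that $t_1$ is also the first hitting time of the reflected trajectory to $\Sigma_{1/2}$, you justify this via the monotonicity of $\eta=\arg(p_z+iz)$ from \eqref{etamin}, making explicit a step the paper leaves implicit.
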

\begin{proof}
We check the reversibility condition on the interior of $\Upsilon$. The reversibility of $\bar g$ then follows from $\mathcal{N}$ and the continuity of  $\bar g$.

Let $q\in\Upsilon^o=\Upsilon \setminus \partial \Upsilon$, let $\zeta(t)$ be the orbit starting from $\mathcal{P}^{-1}_0(q)$, and let $t_{1}>0$ be the first hitting time of $\zeta(t)$ to $\Sigma_{1/2}$,  i.e. $t_1 = t^+_{1/2}(\zeta(0))$ and $\zeta(t_{1})=g_{1/2}(\zeta(0))$. Let
\begin{equation}
\zt_{1}(t) := R_1 \cdot  \zt(t_1 -t),\quad \mbox{ where } R_1=\mathrm{diag}(-1,-1,1,1).\nonumber
\end{equation}
Then $\zeta_1(t)\in \mathfrak{M}, \forall t,$ is also a Hamiltonian trajectory of $H$, and
$$
\begin{aligned}
\zt_{1}(0)& =\mathcal{P}^{-1}_{0} \circ \mathcal{N} \circ \mathcal{P}_{1/2}(\zeta(t_1))=\mathcal{P}^{-1}_{0}\circ \mathcal{N}\circ \mathcal{P}_{1/2}\circ  g_{1/2}(\zeta(0))=\mathcal{P}^{-1}_{0}\circ \mathcal{N}\circ \bar g(q),\\
\zt_{1}(t_{1})& =\mathcal{P}^{-1}_{1/2}\circ \mathcal{N}\circ \mathcal{P}_0(\zt(0))=\mathcal{P}^{-1}_{1/2}\circ \mathcal{N}(q).
\end{aligned}
$$
Therefore, $t_{1}$ is also the first hitting time of $\zt_{1}(t)$ to $\Sigma_{1/2}$, and
$$
\mathcal{P}^{-1}_{1/2} \circ \mathcal{N}(q)=\zt_{1}(t_{1})=g_{1/2}(\zt_{1}(0))=g_{1/2} \circ \mathcal{P}^{-1}_{0}\circ \mathcal{N}\circ \bar g(q).
$$
Applying $\mathcal{P}_{1/2}$ to the identity above and using that
 $\bar g = \bar g_{1/2} = \mathcal{P}_{1/2} \circ g_{1/2} \circ \mathcal{P}_0^{-1}$, we obtain  $\mathcal{N}(q)=\bar g\circ \mathcal{N}\circ \bar g(q),$ as desired. %Since $\bar{g}$ can be extended on $\Upsilon$, continuously, hence $\bar{g}$ is a reversible map on $\Upsilon$.
\end{proof}

The proposition above implies that $\bar{g}^k$ is reversible for any $k\in\mathbb N$. Next, we provide the relation between symmetric points of $\bar{g}$ and $z$-symmetric (or brake) periodic orbits on $\mathfrak{M}$.

\begin{prop}\label{prop: symmetric orbits and periodic points}
Let $\zeta\subset \mathfrak{M}\setminus \zeta_e$ be a periodic orbit  with $\zeta(0)=\mathcal{P}_0^{-1}(p)\in\Sigma_0^o, p \in \Upsilon\setminus \partial \Upsilon$. Then
\begin{itemize}
\item[(i)] $\zt$ is a brake orbit $\Leftrightarrow$ ${\rm Orb}(\bar g,p)$ is an odd symmetric periodic orbit.

\item[(ii)] $\zt$ is  $z$-symmetric  and hits $\{z=p_r=0\}\Leftrightarrow$ ${\rm Orb}(\check g,p)$ is  symmetric.

%\item[(iii)] $\zt$ is a $z$-symmetric orbit and does not hit $\{z=p_r=0\}\Leftrightarrow$ ${\rm Orb}(\check g,p)$ and ${\rm Orb}(\bar g,p)$ are not symmetric.

\item[(iii)] $\zt$ is a $z$-symmetric brake orbit $\Leftrightarrow$ ${\rm Orb}(\check g,p)$ is a symmetric periodic orbit  with odd minimal period.
\end{itemize}
\end{prop}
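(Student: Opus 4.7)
The plan is to read off each of the three geometric properties of $\zeta$ from the fixed-point structure of an appropriate involution of $\mathfrak{M}$. Beyond the anti-symplectic time reversal $R_1:(p_r,p_z,r,z)\mapsto(-p_r,-p_z,r,z)$, whose fixed set on $\mathfrak{M}$ is precisely the brake locus $\{p_r=p_z=0\}$, the $z$-parity of the Hamiltonian supplies the symplectic symmetry $R_2:(p_r,p_z,r,z)\mapsto(p_r,-p_z,r,-z)$ and the anti-symplectic composition $R_1R_2$, whose fixed set $\{p_r=z=0\}$ consists exactly of the perpendicular $z=0$-crossings. Both $R_1$ and $R_1R_2$ induce $\mathcal{N}:(p_r,r)\mapsto(-p_r,r)$ on $\Upsilon$, but $R_1$ swaps $\Sigma_0\leftrightarrow \Sigma_{1/2}$ whereas $R_1R_2$ preserves each page; this is why brakes couple to $\bar g$ (page-alternating) and perpendicular crossings to $\check g=\bar g^2$ (returning to $\Sigma_0$).

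I would prove (i) and (ii) by parallel bisection arguments. For (i), normalize so that $\zeta(0)\in\Sigma_0^o$ and let $t_b>0$ be a brake time. Enumerating the $n$ page-hits $\tau_1<\cdots<\tau_n$ in $(0,t_b)$, the $R_1$-reversibility at $\zeta(t_b)$ forces the next hit to satisfy $\zeta(\tau_{n+1})=R_1\zeta(\tau_n)$, which lives on the opposite page. Projecting to $\Upsilon$ gives $\bar g^{n+1}(p)=\mathcal{N}(\bar g^n(p))$, and rearranging via $\mathcal{N}\bar g\mathcal{N}=\bar g^{-1}$ from Proposition~\ref{prop reversible g bar} yields $\bar g^{2n+1}(p)=\mathcal{N}(p)$: the $\bar g$-orbit is odd symmetric. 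Conversely, starting from $\bar g^{2n+1}(p)=\mathcal{N}(p)$, energy conservation together with the page-alternation identifies $\zeta(\tau_{2n+1})$ with $R_1\zeta(0)$, and then ODE uniqueness applied to $\tilde\zeta(t):=R_1\zeta(\tau_{2n+1}-t)$ forces $\tilde\zeta\equiv\zeta$, making $\zeta(\tau_{2n+1}/2)\in{\rm Fix}(R_1)$, i.e. a brake point. The argument for (ii) is verbatim with $R_1R_2$, $\check g$, and $\{p_r=z=0\}$ in place of $R_1$, $\bar g$, and the brake set; because $R_1R_2$ preserves $\Sigma_0$ there is no parity subtlety, and the identity $\zeta(t)=R_1R_2\zeta(\tau_k-t)$ obtained in passing immediately delivers both the perpendicular crossing at $\tau_k/2$ and the relations $r(t)=r(\tau_k-t)$, $z(t)=-z(\tau_k-t)$ expressing $z$-symmetry of $\hat\zeta$.

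For (iii), the forward direction combines (i) and (ii): a $z$-symmetric brake orbit automatically contains a perpendicular crossing between its two reflected brake points, so both $\bar g^{2n+1}(p)=\mathcal{N}(p)$ and $\bar g^{2l}(p)=\mathcal{N}(p)$ hold; their difference $\bar g^{2n+1-2l}(p)=p$ has odd exponent, so the minimal $\bar g$-period is odd, and since $\gcd(2,\mathrm{odd})=1$ the minimal $\check g=\bar g^2$-period equals it and is also odd. For the converse, let $k_0$ be the odd minimal $\check g$-period; the $\mathcal{N}$-action on a cyclic symmetric orbit of odd length has a unique fixed element $q=\check g^j(p)$, which lifts to a perpendicular $\Sigma_0$-crossing of $\zeta$ and hence gives $z$-symmetry by (ii). Once one upgrades this to the statement that the minimal $\bar g$-period is also $k_0$ (not $2k_0$), the identity $\bar g^{2l+k_0}(p)=\mathcal{N}(p)$ provides an odd exponent, and (i) concludes that $\zeta$ is a brake orbit.

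The main obstacle I anticipate is precisely this parity upgrade in the converse of (iii): one has to rule out the scenario in which ${\rm Orb}(\bar g,p)$ has length $2k_0$ while ${\rm Orb}(\check g,p)$ has length $k_0$, which geometrically corresponds to $\zeta$ being $R_1R_2$-reversible but not $R_2$-invariant in phase space (and not brake). The strategy will be to use $R_2$-equivariance of the flow together with the perpendicular crossing at $q$ to pair each $\Sigma_0$-hit of $\zeta$ with a $\Sigma_{1/2}$-hit of $\zeta$ having identical $\Upsilon$-coordinates; this forces $R_2\zeta=\zeta$ and collapses ${\rm Orb}(\bar g,p)$ onto ${\rm Orb}(\check g,p)$, at which point the argument above concludes.
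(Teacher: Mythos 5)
Your treatment of (i), (ii), and the forward implication of (iii) matches the paper in substance: identify $\text{Fix}(R_1)=\{p_r=p_z=0\}$ with the brake locus and $\text{Fix}(R_1R_2)=\{p_r=z=0\}$ with the perpendicular-crossing locus, translate page-level relations into phase-space symmetry via reversibility and ODE uniqueness, and exploit that $R_1$ swaps $\Sigma_0\leftrightarrow\Sigma_{1/2}$ while $R_1R_2$ preserves each page. The forward direction of (iii), taking the difference of the odd exponent from (i) and the even exponent from (ii) and then passing to $\check g=\bar g^2$, is the paper's computation up to relabeling.

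The gap is in your converse of (iii). Proving $R_2$-invariance of $\zeta(\mathbb{R})$ is not a route around the parity upgrade; it \emph{is} the parity upgrade. Once the perpendicular crossing $\zeta(\tau_0)\in\text{Fix}(R_1R_2)$ is in hand, $R_1R_2$-reversibility holds, so $R_2$-invariance of $\zeta(\mathbb{R})$ is equivalent to $R_1$-invariance, which is the brake-orbit conclusion itself. The $z$-symmetry of the projected curve $\hat\zeta$ does not give $R_2$-invariance in phase space: $R_2$-equivariance of the flow only tells you that $R_2\zeta$ is \emph{some} solution with the same projection; it could traverse the reflected curve in the opposite sense, in which case it is a different orbit. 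The pairing of $\Sigma_0$-hits with $\Sigma_{1/2}$-hits having equal $\Upsilon$-coordinates that you want to exhibit is precisely the assertion $R_1\zeta(\tau_0)\in\zeta(\mathbb{R})$, which is what needs to be proved, and nothing in your sketch produces it.

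The paper's converse of (iii) proceeds differently: oddness of the minimal $\check g$-period forces the $\mathcal{N}$-action on ${\rm Orb}(\check g,p)$ to have exactly one fixed point, and this is used to rule out a second perpendicular crossing with a distinct $r$-coordinate. Then $\zeta(\tau_0+T/2)\in\text{Fix}(R_1R_2)$ must have the same $r$ and the opposite sign of $p_z$ as $\zeta(\tau_0)$, hence equals $R_1\zeta(\tau_0)$, and ODE uniqueness produces a brake point at $\tau_0+T/4$. You should be aware, though, that the intermediate claim that ${\rm Orb}(\check g,p)$ would then meet $\{p_r=0\}$ in two distinct points is only immediate when the hypothetical second perpendicular crossing lands on $\Sigma_0$; if it lands on $\Sigma_{1/2}$, its $\Upsilon$-coordinate lies a priori only in ${\rm Orb}(\bar g,p)$ and not necessarily in ${\rm Orb}(\check g,p)$, which is the same parity ambiguity you flagged. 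So the subtlety is genuine, and if you want a complete argument you should follow the paper's fixed-point-counting route while making this case split explicit, rather than attempting to prove $R_2$-invariance directly.
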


\begin{rem} There may exist $z$-symmetric periodic orbits $\zeta(t) \subset \mathfrak{M},\zeta(0)=\mathcal{P}^{-1}(p),p\in \Upsilon,$ which do not hit $\{z=p_r=0\}$ and for which ${\rm Orb}(\check g,p)$ and ${\rm Orb}(\bar g,p)$ are not symmetric.
\end{rem}

\begin{proof}
(i) Let $\zeta(t)=(p_r,p_z,r,z)(t)$ be a brake orbit with $\zeta(0) = \mathcal{P}_0^{-1}(p)\in \Sigma_0^o, p\in \Upsilon$. As before, let $R_1: \mathfrak{M} \to \mathfrak{M}$ be the reflection $R_1(p_r,p_z,r,z) = (-p_r,-p_z,r,z)$. Then there exist $n\in \N$ and unique least times  $0<t_1<t_1' < \ldots t_{n-1} < t_{n-1}' < t_n$ such that
$$
\begin{aligned}
\zeta(t_i) & \in \Sigma_{1/2}^o, \quad & \forall i=1,\ldots,n,\\
\zeta(t_i') & \in \Sigma_0^o, \quad & \forall i=1,\ldots, n-1,\\
\zeta(t_n) & = R_1(\zeta(0)). &
\end{aligned}
$$
In this case $\bar g^{2n-1}(p) = \mathcal{N}(p)$. Hence ${\rm Orb}(\bar g,p)$ is symmetric and odd.

Now let ${\rm Orb}(\bar g,p)$ be symmetric and odd, and let $l>0$ be an odd number so that $\bar g^l(p) = \mathcal{N}(p).$ Denote $p=(p_r(0),r(0))$ and hence $\bar g^l(p) = (-p_r(0),r(0))$. Since $l$ is odd, there exists $t_l>0$ such that $(-p_r(0),r(0)) = \mathcal{P}_{1/2}(\zeta(t_l)).$ Hence $\zeta(t_l)=(-p_r(0),-p_z(0),r(0),0)=R(\zeta(0))$. The reversibility of the flow implies that $\zeta$ is a brake orbit. In fact,  $p_r(t_l/2)=p_z(t_l/2) = 0.$

(ii) Let $\zeta(t)=(p_r,p_z,r,z)(t)$ be a $z$-symmetric periodic orbit that hits $\{z=p_r=0\}$. We may assume that $\zeta(0) = \mathcal{P}_0^{-1}(p)\in \Sigma_0^o$, where $z(0)=p_r(0)=0$. Let $t_1>0$ be such that $z(t_1)=0$.
The $z$-symmetry and the reversibility of the flow imply that $z(-t_1)=0$, $r(-t_1)=r(t_1),$ $p_r(-t_1) =-p_r(t_1)$ and $p_z(t_1)=p_z(-t_1)$. This implies that ${\rm Orb}(\check g,p)$ is symmetric.

Let $p=(p_r(0),r(0))\in \Upsilon$ be such that ${\rm Orb}(\check g,p)$ is symmetric, and let $\zeta(t)=(p_r,p_z,r,z)(t) \in \mathfrak{M}$ be the trajectory satisfying $\zeta(0)=(p_r(0),p_z(0),r(0),0)=\mathcal{P}^{-1}_0(p)$. If $p_r(0)=0$ then $\zeta$ hits $\{z=p_r=0\}$ and hence the $z$-symmetry and the reversibility of the flow implies that $\zeta$ is $z$-symmetric.  If $p_r(0)\neq 0$ then since ${\rm Orb}(\check g,p)$ is symmetric, we find $t_1>0$ such that $z(t_1)=0$, $r(t_1)=r(0)$, $p_r(t_1)=-p_r(0)$ and $p_z(t_1)=p_z(0)>0$. The $z$-symmetry and the reversibility of the flow imply that $z(t_1/2)=0$ and $p_r(t_1/2)=0$. Thus we fall into the previous case and thus $\zeta$ hits $\{z=p_r=0\}$ and is $z$-symmetric.

(iii) Let $\zeta(t)=(p_r,p_z,r,z)(t)\in \mathfrak{M}$ be a $z$-symmetric brake orbit. Then there exist $t_2>t_1>0$ such that $z(t_1)=-z(t_2)$, $r(t_1)=r(t_2)$ and $p_r(t_1)=p_r(t_2)=p_z(t_1)=p_z(t_2)=0.$ By the $z$-symmetry and the reversibility of the flow, we obtain $z((t_1+t_2)/2)=p_r((t_1+t_2)/2)=0$. Reasoning as in (ii), we conclude that ${\rm Orb}(\check g,p)$ is symmetric, where $p=\mathcal{P}_0(\zeta(0)).$ Since $\zeta$ is a brake orbit, we find an odd number $l>0$ so that $\bar g^l(p) = p$. This implies $\check g^l(p)=p,$ and the minimal period of $p$ is also odd.

Let $p=\mathcal{P}_0(\zeta(0))\in \Upsilon \setminus \partial \Upsilon$ be such that ${\rm Orb}(\check g,p)$ is symmetric with odd minimal period $l>0$. In particular, ${\rm Orb}(\check g,p)$ has precisely an odd number of points and because ${\rm Orb}(\check g,p)$ is symmetric, it hits $p_r=0$ an odd number of times.  We know from (ii) that $\zeta$ is $z$-symmetric and there exists $t_1>0$ so that $z(t_1)=0$ and $p_r(t_1)=0$. If there exists a least $a>0$ with $z(t_1+a)=p_r(t_1+a)=0$ and $r(t_1+a)\neq r(t_1)$ then the $z$-symmetry and the reversibility of the flow imply that $2a>0$ is a period of $\zeta$. In this case ${\rm Orb}(\check g,p)$ intersects $\{p_r=0\}$ at precisely two distinct points, a
contradiction. Hence ${\rm Orb}(\check g,p)$ intersects $\{p_r=0\}$ at precisely one single point $\mathcal{P}_0(\zeta(t_1)).$ Again by the $z$-symmetry and the reversibility of the flow, this implies that there exists $a>0$ such that $z(t_1+a)=0$, $p_r(t_1+a)=0$ and $p_z(t_1+a)=-p_z(t_1).$ This force $\zeta$ to be a brake orbit.
\end{proof}

\begin{rem} We can use  Propositions \ref{prop reversible g bar} and \ref{prop: symmetric orbits and periodic points}-(iii) to prove the existence of a $z$-symmetric brake orbit $\zeta_z \subset \mathfrak{M}$ forming a Hopf link with the Euler orbit $\zeta_e$,
 see Theorem \ref{main1}-(ii).  Indeed, observe that $\bar g\circ \mathcal{N}$ and $\mathcal{N}$ are both orientation reversing involutions, i.e. $(\bar g \circ \mathcal{N})^2=\mathcal{N}^2=\mathrm{Id}$. Hence their fixed sets $\text{Fix}(\bar g \circ \mathcal{N})$ and $\text{Fix}(\mathcal{N})$ are properly embedded curves in  $\Upsilon\setminus \partial \Upsilon$, see \cite[Lemma 9.5.1]{FvK18}. Each such curve separates $\Upsilon\setminus \partial \Upsilon$ into two regions of equal area since both maps reverse the finite area form $dp_r \wedge dr$ on $\Upsilon$.  Hence  $\text{Fix}(\bar{g}\circ \mathcal{N})\cap \text{Fix}(\mathcal{N})\neq \emptyset$ and such an intersection point  $p\in \Upsilon \setminus \partial \Upsilon$ is such that $\text{Orb}(\check g,p)=\text{Orb}(\bar g,p)=\{p\}$ has period $1$.
\end{rem}

Let $\zeta_z\subset \mathfrak{M}$ be a $z$-symmetric simple brake orbit, whose existence is provided in Theorem \ref{main1}-(ii). The link $\zeta_z \cup \zeta_e$ is a Hopf link bounding an annulus-like global surface of section. We explore the disk-like global surface of section bounded by the Euler orbit, as in the previous section. In that case $\zeta_z$ gives rise to a fixed point $p\in \{p_r=0\}\subset \Upsilon$ of $\bar g$. Notice that $p$ is also a fixed point of $\check g$. We assume that the Hopf link $\zeta_z \cup \zeta_e$ is non-resonant. As we shall see below, this non-resonance condition implies a twist condition of $\check g$ which eventually provides the desired periodic points.

Recall that the zero velocity curve $\partial\mathcal{H}\subset \R^2$ splits as $\mathcal{B}_+\cap \mathcal{B}_-$, where $\mathcal{B}_+\subset \{z\geq 0\}$  and $\mathcal{B}_-\subset \{z\leq 0\}$. The projection of $\zeta_z(t)$ to the $(r,z)$ plane is a smooth $z$-symmetric simple arc connecting the interior of $\mathcal{B}_+$ to the interior of $ \mathcal{B}_-$. The end-points of this arc separate $\mathcal{B}_+$ and $\mathcal{B}_-$ into two closed sub-arcs $\mathcal{B}_\pm = \mathcal{B}_{\pm,1} \cup \mathcal{B}_{\pm,2}$, see Figure \ref{fig: CurvesB}.

\begin{figure}[ht]
 \centering
\includegraphics[width=0.7\textwidth]{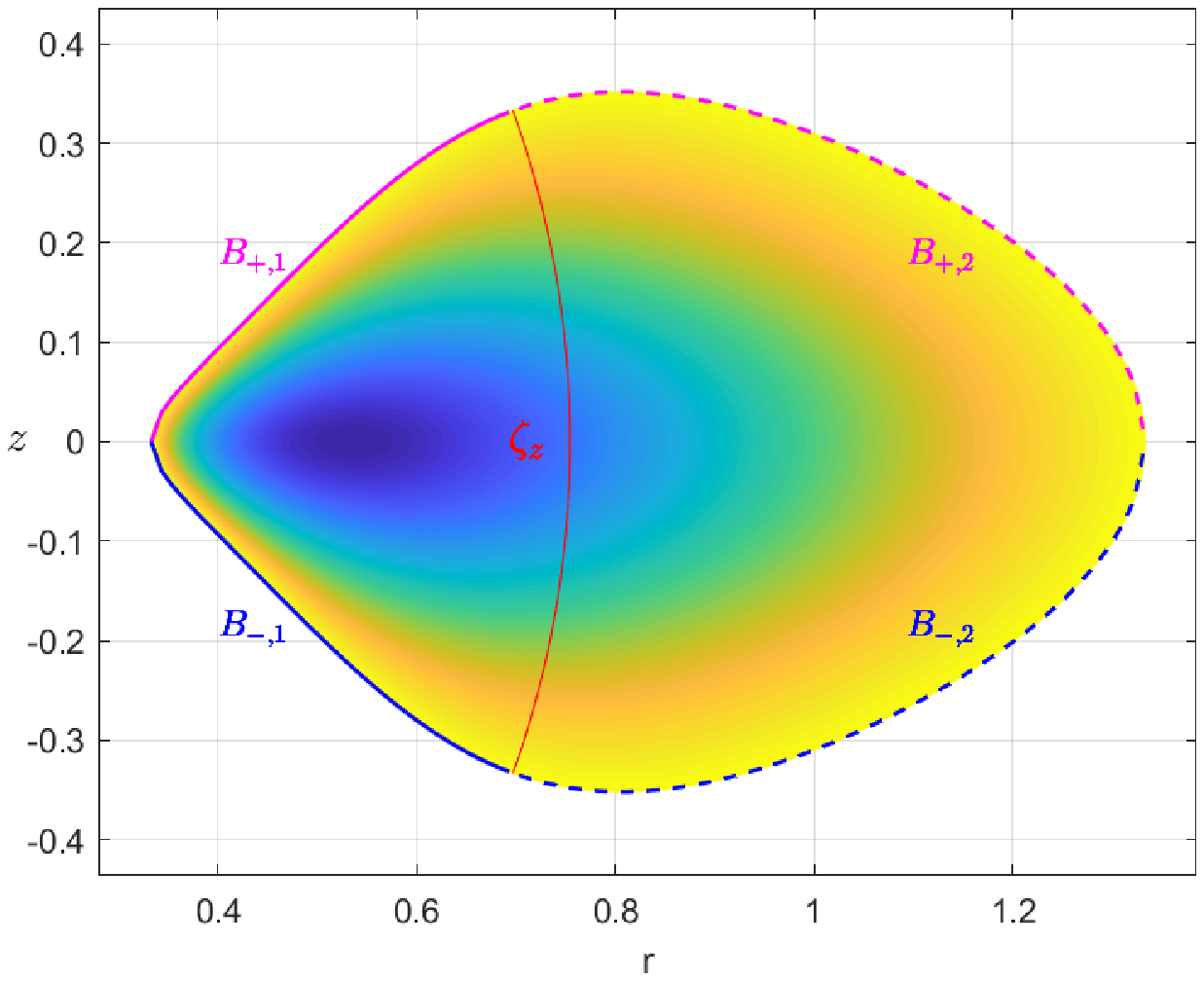}
\caption{ $\cb_{\pm, i}$ with $\beta = \mathfrak{e}=0.6.$ }
\label{fig: CurvesB}
\end{figure}

We see $\mathcal{B}_\pm$ and $\mathcal{B}_{\pm,i},i=1,2,$ as subsets of $\mathfrak{M}$ and forward flow them until they hit $\Sigma_{0}$ for the first time. In coordinates $(p_r,r)$, they give rise to arcs $\mathcal{C}_\pm,\mathcal{C}_{\pm,i}\subset \Upsilon,i=1,2.$ These are smooth simple arcs connecting $\partial \Upsilon$ to $p\in \text{Fix}(\mathcal{N})$. Notice that $p$ separates $\text{Fix}(\mathcal{N})$ into two closed line segments, denoted by $D_1$ and $D_2$, see Figure \ref{fig:CurvesCD}.

\begin{rem}\label{rem: continuous extension} In the discussion above, we have used that $\bar g$ and $\check g$ continuously extend to $\Upsilon$. To see this, we observe that the tangent space of $\Sigma_0,\Sigma_{1/2}\subset \{z=0\}$ at their boundaries $\zeta_e\subset \{z=p_z=0\}$ is spanned by the flow and the vector $\partial_{p_z}$. The transverse linearized flow along $\zeta_e$ is governed by the equation
\begin{equation}\label{linearpzz}
\left(\begin{array}{c} \dot a_{p_z} \\ \dot a_z \end{array} \right) = \left(\begin{array}{cc}0 & -\frac{4\alpha^{-1}(1+2\alpha)}{r_e(t)^3} \\ 1 & 0 \end{array} \right) \left(\begin{array}{c} a_{p_z} \\ a_z\end{array} \right),
\end{equation}
where $(a_{p_z} \ a_z)^T \equiv a_{p_z} \partial_{p_z} + a_z \partial _z$. Hence $a_z=0, a_{p_z} \neq 0 \Rightarrow \dot a_z \neq 0$, and this fact implies that the maps $\bar g$ and $\check g$ can be  continuously approximated by the linearized flow along $\zeta_e$. The maps $\bar g$ and $\check g$ along $\zeta_e$ correspond to points for which the linearized flow applied to $\partial_{p_z}$ rotates $\pi$ and $2\pi$, respectively.
\end{rem}

\begin{figure}[ht]
\centering
\includegraphics[width=0.7\textwidth]{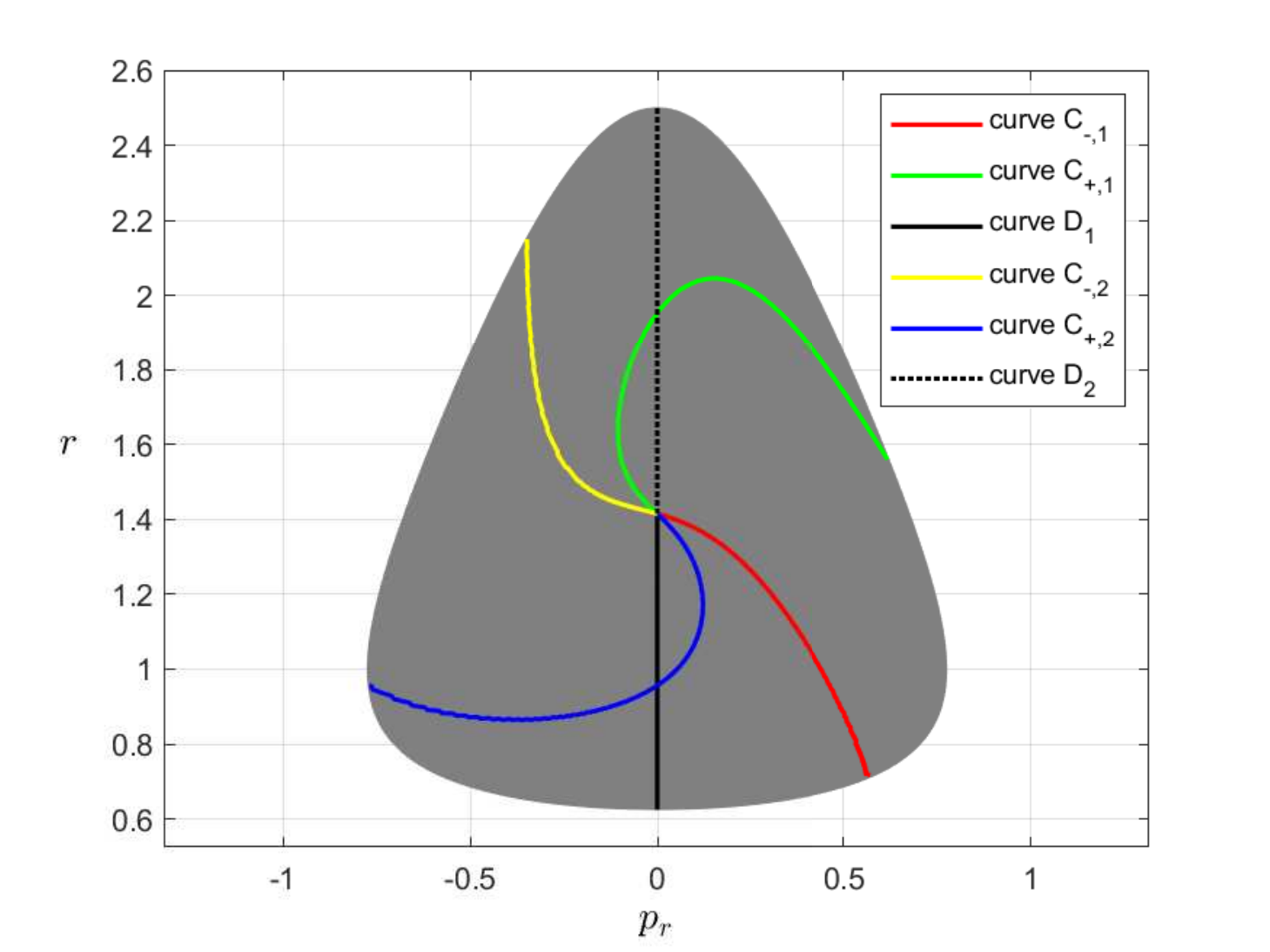}
\caption{ $\calc_{\pm, i}$ and $D_i$ with $\beta = \mathfrak{e}=0.6$. }
\label{fig:CurvesCD}
\end{figure}

%\begin{prop}
%\label{prop:z-symm brake} If $\zt(t) = (p_r, p_z, r, z)(t)$ is a $z$-symmetric brake orbit in the complement of $\zeta_z \cup \zeta_e$,   then $\zeta$ is a type-I brake orbit that does not intersect both $\cb_{+, i_1}$ and $\cb_{-, i_2}$, when $i_1 \ne i_2$.
%\end{prop}

%\begin{proof}
%Since $\zt$ is a brake, it intersects $\partial \mathcal{H}$ at precisely two distinct points. The $z$-symmetry of $\zeta$ implies the statement.  a brake orbit, there must exist $t_1 < t_2$ with $\zt(t_1) \ne \zt(t_2)$,
%$$ \zt(t_1), \zt(t_2) \in \cb \; \text{ and } \zt(t) \notin \cb, \; \forall t \in (t_1, t_2). $$

%If $\zt(t)$ is type-II, then either both $\zt(t_1)$ and $\zt(t_2)$ belong to $\cb^o_+$, or to $\cb^o_-$. As $\zt(t)$ is also $z$-symmetric, there must be four distinct intersections with $\cb$, which is a contradiction, since there are at most two intersections for each brake orbit. By similar reason, we can also exclude the case that $\zt(t_1) \in \cb_{+, i_1}$ and $\zt(t_2) \in \cb_{-, i_2}$ with $i_1\neq i_2$.
%\end{proof}
%Let $g: \Sigma_- \to \Sigma_-$ be the homeomorphism given in the previous section.

\begin{lem} \label{lem:PerOrbitSigma}
Let $\zt\subset \mathfrak{M}\setminus (\zeta_e\cup \zeta_z)$ be an  orbit with $\zt(0) \in \Sigma_0^o$. Let $p_0=\mathcal{P}_0(\zeta(0))\in \Upsilon \setminus (\partial \Upsilon\cup \{p\})$. Then
\begin{enumerate}

\item[(i)] if $p_0 \in \check{g}^n(\calc_\pm) \cap D$ for some $n>0$, then $\zt$ is a $z$-symmetric  brake orbit.

\item[(ii)] if $p_0 \in \check{g}^{n}(\calc_{\pm, i_0}) \cap \calc_{\mp, i_1}$ for some $n>0$, with $i_0 \ne i_1$, then $\zt$ is a non-$z$-symmetric type-I periodic brake orbit.

\item[(iii)] if $p_0 \in \check{g}^n(\calc_{\pm, i_0}) \cap \calc_{\pm, i_1}$ for some $n>0,$ with $i_0 \ne i_1$, then $\zt$ is a type-II periodic brake orbit. In particular $\zeta$ is non-$z$-symmetric.

\item[(iv)] if $p_0 \in \check{g}^n(D_{i_0}) \cap D_{i_1}$ for some $n>0$, with $i_0 \ne i_1$, then $\zt$ is a  $z$-symmetric non-brake periodic orbit.
\end{enumerate}
\end{lem}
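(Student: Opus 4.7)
My plan is to exploit two anti-symplectic involutions of $\mathfrak{M}$ encoding the reversibility and $z$-symmetry of \eqref{ham}:
\[
R_1(p_r,p_z,r,z) = (-p_r,-p_z,r,z), \qquad \Phi(p_r,p_z,r,z) = (-p_r,p_z,r,-z),
\]
each of which conjugates $X_H$ to its time-reverse. The two key observations are: an orbit meeting $\mathrm{Fix}(R_1)\cap\mathfrak{M} = \{p_r=p_z=0\}$ at time $t_*$ satisfies $\zeta(t_*+s)=R_1\zeta(t_*-s)$ and is therefore a brake orbit with brake point $\zeta(t_*)$; and a passage through $\mathrm{Fix}(\Phi)\cap\mathfrak{M}=\{p_r=z=0\}$ forces the Hill-region projection to be $z$-symmetric. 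In flow language, $p_0\in\mathcal{C}_\pm$ says the orbit through $\mathcal{P}_0^{-1}(p_0)$ has traversed $\mathcal{B}_\pm\subset\mathrm{Fix}(R_1)$ since the previous return to $\Sigma_0$, while $p_0\in D$ says $\mathcal{P}_0^{-1}(p_0)\in\mathrm{Fix}(\Phi)\cap\Sigma_0$; the prefix $\check g^n(\cdot)$ simply pushes the statement $n$ returns into the past.

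In each case the hypothesis will place $\zeta$ into the fixed set of a reversing involution at two distinct times, and composing the two involution symmetries along $\zeta$ produces a nontrivial time-translation, yielding periodicity. For (i), $\zeta$ meets both $\mathrm{Fix}(\Phi)$ and $\mathrm{Fix}(R_1)$, so it is simultaneously $z$-symmetric and a brake orbit. For (ii) and (iii), $\zeta$ meets $\mathrm{Fix}(R_1)$ at two distinct brake points; the $\pm/\mp$ pattern places them in opposite half-curves of $\partial\mathcal{H}$ (type I) or in the same half-curve (type II). To rule out $z$-symmetry I will use that the labeling $\mathcal{B}_{\pm,i}$ is induced by the $z$-symmetric projection of $\zeta_z$, so $z$-reflection swaps $\mathcal{B}_{+,i}\leftrightarrow\mathcal{B}_{-,i}$ for each $i$; the two brake points of a $z$-symmetric brake orbit would be $z$-reflections and would therefore share the second index, contradicting $i_0\neq i_1$ in (ii) and the same-half configuration in (iii).

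The main obstacle is the non-brake assertion of (iv), since the distinction between $z$-symmetric brake and non-brake periodic orbits is not captured by the involution calculus alone. I will first observe that the two distinct passages of $\zeta$ through $\mathrm{Fix}(\Phi)$ already give periodicity and $z$-symmetry. Assuming by contradiction that $\zeta$ is also a brake orbit, Proposition \ref{prop: symmetric orbits and periodic points}(iii) forces $\mathrm{Orb}(\check g,p_0)$ to be $\mathcal{N}$-symmetric with \emph{odd} minimal period $k$. I will then combine $\check g\circ\mathcal{N}\circ\check g=\mathcal{N}$ (inherited from Proposition \ref{prop reversible g bar} via $\check g=\bar g^2$) with $p_0\in\mathrm{Fix}(\mathcal{N})=D$ to deduce $\mathcal{N}(\check g^j(p_0))=\check g^{-j}(p_0)$, so that $\check g^j(p_0)\in D$ if and only if $\check g^{2j}(p_0)=p_0$, i.e. $k\mid 2j$. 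For odd $k$ this reduces to $k\mid j$, making $p_0$ the unique $D$-point of its $\check g$-orbit. This contradicts the second $D$-point $\check g^{-n}(p_0)\in D_{i_0}\neq D_{i_1}\ni p_0$ supplied by the hypothesis, and $\zeta$ must therefore be non-brake.
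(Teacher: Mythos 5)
Your plan is correct, and for items (i)--(iii) it follows the paper's approach almost exactly: $p_0\in\mathcal{C}_{\pm,i}$ (resp.\ $D_i$) is unwound along the backward flow into a visit to $\mathcal{B}_{\pm,i}=\mathrm{Fix}(R_1)\cap\mathfrak{M}$ (resp.\ $\mathrm{Fix}(\Phi)\cap\Sigma_0$), and the composition of the two resulting involutive symmetries along $\zeta$ gives periodicity and the required symmetry/brake structure. You actually go a bit further than the paper here: the paper asserts the non-$z$-symmetry in (ii) and (iii) without comment, whereas your observation that $z$-reflection swaps $\mathcal{B}_{+,i}\leftrightarrow\mathcal{B}_{-,i}$ and that a $z$-symmetric brake orbit must therefore have brake points with matching second index is exactly the missing justification, and it also explains why $i_0\neq i_1$ is needed in (iii) to keep the two brake points distinct.

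For (iv) your argument is genuinely different from the paper's. The paper works directly in the Hill region: having found $t_0<0$ with $\zeta(t_0)\in\mathrm{Fix}(\Phi)$, it supposes a brake time $t_1\in(t_0,0)$, reflects it by the $z$-symmetry about $t=0$ to a second brake time $-t_1$, and then observes that the image $\zeta(\mathbb{R})=\zeta([t_1,-t_1])$ of the resulting brake orbit cannot contain $\zeta(t_0)$ because $t_0\notin[t_1,-t_1]$ -- a short geometric contradiction. You instead port the problem to the return map $\check g=\bar g^2$ on $\Upsilon$, invoke Proposition~\ref{prop: symmetric orbits and periodic points}(iii) to force an odd minimal period, and then use the reversibility identity $\mathcal{N}\circ\check g^j=\check g^{-j}\circ\mathcal{N}$ together with $p_0\in\mathrm{Fix}(\mathcal{N})$ to show that $p_0$ would be the \emph{unique} $D$-point of its $\check g$-orbit, contradicting $\check g^{-n}(p_0)\in D_{i_0}$ with $D_{i_0}\cap D_{i_1}=\{p\}$ and $p_0\neq p$. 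Both arguments are valid; yours is longer but perhaps more robust, since it reuses the already-established surface-of-section machinery rather than relying on a delicate statement about the set $\zeta([t_1,-t_1])$, and it makes the role of the hypothesis $i_0\neq i_1$ completely transparent (it rules out $\check g^{-n}(p_0)=p_0$). One small point worth spelling out when you write this up: you should note explicitly that $i_0\neq i_1$ forces $\check g^{-n}(p_0)\neq p_0$, so the two $\mathrm{Fix}(\Phi)$-passages are genuinely distinct -- this is what makes the composition of the two $\Phi$-reflections a non-trivial time translation and also what feeds the contradiction at the end.
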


\begin{proof}
(i) By assumption, $p_r(0)=z(0)=0$ and $\zt(t_0) \in \cb_\pm$ for some $t_0<0$. The $z$-symmetry and the reversibility of the flow implies that $\zeta$ is a $z$-symmetric brake orbit.

(ii) By assumption,  $\zt(t_0) \in \cb_{\pm, i_0}$ and $\zt(t_1) \in \cb_{\mp, i_1}$ for some $t_0<t_1<0$, with $i_0 \neq i_1$. The reversibility of the flow imply that $\zt$ is  a non-$z$-symmetric type-I brake orbit. Similarly, we prove (iii).

(iv) By assumption, there exists $t_0<0$ such that $ p_r(t_0)=z(t_0)=0$ and $p_r(t)^2+z(t)^2\neq 0,\ \forall t \in (t_0, 0). $ Since $i_0\neq i_1$, we also have $(r(0),0)\neq(r(t_0),0)$. The reversibility and the $z$-symmetry of the flow imply that $\zt$ is a $z$-symmetric periodic trajectory, with least period $2|t_0|$. If there exists  $t_1 \in (t_0, 0)$ with $\zt(t_1) \in \cb$, then the $z$-symmetry and the reversibility of the flow imply that $\zt(-t_1) \in \cb$ and thus $\zeta(\R) = \zeta([t_1,-t_1])$ does not contain $\zeta(t_0)$, a contradiction.
\end{proof}

%To explain the non-resonance condition of the Hopf link $\zeta_z \cup \zeta_e$ in terms of the first return map $\check g:\Upsilon \to \Upsilon$, we need the following blow-up technique.

Let $\tilde g_s: \Upsilon \to \Upsilon, s>0,$ be the family of area-preserving homeomorphisms (diffeomorphisms of $\Upsilon^o$), satisfying $\tilde g_{s+1}=\tilde g_s\circ \tilde g_1,\forall s>0,$ where $\tilde g_s$ is given in (\ref{equ: s-page reaching map}). Observe that $\tilde g_s$ also extends as a homeomorphism of $\Upsilon$ for every $s$.

We denote by ${\rm Rot}(p)\in \R$ the rotation number of $p$ associated with the family $\tilde g_s$ computed with respect to the frame $\{\partial_{p_r},\partial_r\}$. More precisely, the family $\tilde g_s, s\in [0,1]$ determines a family of symplectomorphisms $D\tilde g(p):\R^2 \to \R^2$ starting from the identity, and ${\rm Rot}(p)$ is the rotation number induced by such family. Similarly, we denote by ${\rm Rot}(\partial \Upsilon)\in \R$ the rotation number of $\check g|_{\partial \Upsilon}$  associated with the family $\tilde g_s,s\in[0,1]$. Indeed, the family $\tilde g_s|_{\partial \Upsilon},s\in [0,1],$ lifts to a family  $\tilde f_s:\R \to \R, s\in[0,1],$ starting from the identity, and ${\rm Rot}(\partial \Upsilon)$ is the rotation number of $\tilde f_1$.

\begin{lem}\label{lem:Rot}We have
$$
{\rm Rot}(p) = \rho_z - 1 \quad \mbox{ and } \quad {\rm Rot}(\partial \Upsilon) = \frac{1}{\rho_e-1},
$$
where $\rho_z$ and $\rho_e$ are the rotation numbers of  $\zeta_z$ and  $\zeta_e$, respectively.
\end{lem}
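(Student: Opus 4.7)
The plan is to relate both rotation numbers to the mean rotation of the linearized Reeb flow in a single auxiliary trivialization of $\xi$. Define the \emph{page trivialization} $\mathcal{T}_{\rm page}$ of $\xi|_{\mathfrak{M}\setminus\zeta_e}$ by, at each $x\in\Sigma_s$, pushing the coordinate frame $\{\partial_{p_r},\partial_r\}$ on $\Upsilon$ forward via $d\mathcal{P}_s^{-1}$ and projecting the result to $\xi$ along $X_H$. Since $\tilde g_s=\mathcal{P}_s\circ \phi_{t^+_s}\circ \mathcal{P}_0^{-1}$, the chain rule identifies $\mathrm{Rot}(p)$ with the rotation number of the linearized Reeb flow along $\zeta_z$ expressed in $\mathcal{T}_{\rm page}$; hence $\rho_z-\mathrm{Rot}(p)$ equals the winding of $\mathcal{T}_{\rm page}$ relative to $\{X_1,X_2\}$ over one period of $\zeta_z$.

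The central step is then a \emph{linking formula}: for every smooth loop $\gamma\subset\mathfrak{M}\setminus\zeta_e$, the winding $w(\gamma)\in\mathbb Z$ of $\mathcal{T}_{\rm page}$ relative to $\{X_1,X_2\}$ along $\gamma$ equals $\lk(\gamma,\zeta_e)$. Since $w$ depends only on the free homotopy class of $\gamma$ and $\pi_1(\mathfrak{M}\setminus\zeta_e)\cong\mathbb Z$, it suffices to compute $w$ on a small meridian $\gamma_m$ of $\zeta_e$, parametrized so that $\gamma_m(s)\in\Sigma_s$ for $s\in S^1$. In a tubular neighborhood of $\zeta_e$, the page tangent direction at $\gamma_m(s)$ is proportional to $\cos(2\pi s)\partial_{p_z}+\sin(2\pi s)\partial_z$, which winds $+1$ relative to the constant Cartesian frame $\{\partial_{p_z},\partial_z\}$ as $s$ traverses $S^1$. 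Because $\gamma_m$ bounds a small disk in $\mathfrak{M}$ on which both $\{X_1,X_2\}$ and the Cartesian frame define continuous frames of $\xi$, these two frames have zero relative winding along $\gamma_m$, so $w(\gamma_m)=+1$ and the linking formula follows. Applied to $\zeta_z$, with $\lk(\zeta_z,\zeta_e)=+1$ by Lemma~\ref{claim of Hopf fiber}, this yields $\mathrm{Rot}(p)=\rho_z-1$.

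For $\mathrm{Rot}(\partial\Upsilon)=1/(\rho_e-1)$, I would linearize the Hamiltonian flow transverse to $\zeta_e$. In the $(p_z,z)$-plane the linearization is the $T_e$-periodic Hill equation $\ddot z=-\partial_{zz}V(r_e(t),0)\,z$, and the same winding argument—combined with the self-linking calculation $\sl(\zeta_e)=-1$ from the proof of Theorem~\ref{main1}(i), which already shows that $\{X_1,X_2\}$ winds $+1$ relative to $\{\partial_{p_z},\partial_z\}$ along $\zeta_e$—implies that the rotation number of this planar linearization in the frame $\{\partial_{p_z},\partial_z\}$ equals $\rho_e-1$. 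For points of $\Upsilon$ close to $\partial\Upsilon$, the first return to $\Sigma_0=\{z=0,\,p_z\geq 0\}$ requires $\arg(p_z+iz)$ to sweep a full turn, which in the linearized normal form takes time $T_e/(\rho_e-1)$; during this time the base point on $\zeta_e$ advances by a fraction $1/(\rho_e-1)$ of a period. Passing to the limit $x\to\partial\Upsilon$ and invoking the continuous extension of $\check g$ to $\partial\Upsilon$ from Remark~\ref{rem: continuous extension} yields the stated formula.

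The technically most delicate point is the meridian computation above, since $\mathcal{T}_{\rm page}$ is undefined on $\zeta_e$ itself. I would handle this by constructing explicit local coordinates on a tubular neighborhood of $\zeta_e$ compatible with both the open book structure and the frame $\{X_1,X_2\}$, reducing the comparison of the three frames $\{X_1,X_2\}$, $\{\partial_{p_z},\partial_z\}$, and the page tangent directions to a direct calculation in those coordinates.
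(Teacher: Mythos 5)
Your plan is correct, and for $\mathrm{Rot}(p)=\rho_z-1$ it takes a genuinely different route from the paper. The paper computes the relevant relative winding directly along $\zeta_z$: projecting $X_1$ to $\mathrm{span}\{\partial_{p_r},\partial_r\}$ yields coordinates $(\partial_zV,p_z)$, which trace a clockwise arc from $(0,a)$ to $(0,-a)$ over each half-period of the brake orbit, so the winding is $-1$ and $\rho_z=\mathrm{Rot}(p)+1$. You instead promote the relative winding of $\mathcal{T}_{\rm page}$ against $\{X_1,X_2\}$ to a homomorphism $\pi_1(\mathfrak{M}\setminus\zeta_e)\to\Z$ equal to $\lk(\cdot,\zeta_e)$, evaluated on a meridian. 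This is more conceptual, works verbatim for any orbit in $\mathfrak{M}\setminus\zeta_e$, and makes transparent that the shift by $1$ is a linking number; the paper's version is shorter and fully self-contained but tailored to $\zeta_z$. You correctly flag the delicate point: near $\zeta_e$ the vector $\pi_\xi\bigl(d\mathcal{P}_s^{-1}\partial_{p_r}\bigr)$ has a blowing-up coefficient, but its dominant component is a scalar multiple of $v_s=\cos(2\pi s)\partial_{p_z}+\sin(2\pi s)\partial_z$ of fixed sign, so the meridian winding is $+1$ and your formula holds. For $\mathrm{Rot}(\partial\Upsilon)=1/(\rho_e-1)$ your reciprocity argument is essentially the paper's; citing $\sl(\zeta_e)=-1$ from Theorem~\ref{main1}(i) is a fine shortcut, though you state the sign backwards: that computation shows $\partial_{p_z}$ winds $+1$ relative to $\{X_1,X_2\}$, hence $\{X_1,X_2\}$ winds $-1$ (not $+1$) relative to $\{\partial_{p_z},\partial_z\}$ along $\zeta_e$ — the resulting $\rho_e-1$ is nonetheless correct.
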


\begin{proof}
Recall that the global   frame $\{X_1,X_2\}$, transverse to the flow, is given by
$$
\begin{aligned}
 X_1 & = \partial_zV \partial_{p_r} - \partial_rV \partial_{p_z} +p_z \partial_r - p_r \partial_z,\\
X_2 & = -p_z \partial_{p_r} + p_r \partial_{p_z} + \partial_zV \partial_r - \partial_rV \partial_z.
\end{aligned}
$$
Since $(\partial_zV)^2 + p_z^2$ never vanishes along $\zeta_z$, the frame $\{X_1|_{\zeta_z},X_2|_{\zeta_z}\}$ non-trivially projects to a loop of frames $\{\hat X_1,\hat X_2\} \subset {\rm span}\{\partial_{p_r},\partial_r\}$, whose winding with respect to the frame $\{\partial_{p_r},\partial_r\}$ is $-1$ along the period of $\zeta_z$. Indeed, during  half of its period, while $\zeta_z$ moves from $\{z=0\}$ to a brake point and then back to $z=0$,  $\partial_zV$ does not change sign and $p_z$ is symmetric, so that $(\partial_zV,p_z)$ is an arc from $(0,a)$ to $(0,-a)$, monotone in the clockwise direction.   We conclude that $\rho_z = {\rm Rot}(p) + 1.$

Now the global frame $X_1,X_2$ restricted to $\zeta_e$ becomes $X_1|_{\zeta_e}=-\partial_rV\partial_{p_z}-p_r\partial_z$ and $X_2|_{\zeta_e} = p_r \partial_{p_z} - \partial_r V \partial_z$. We first observe that  $(\partial_rV,p_r)\in \R^2$ never vanishes along $\zeta_e$ and its winding number is $-1$. So the rotation number of $\zeta_e$ with respect to the frame $\{\partial_{p_z},\partial_z\}$ is $\rho_e - 1$.

For each $s$, the tangent space of $\Sigma_s$ along $\zeta_e$ is generated by the flow and the vector $v_s:=\cos (2\pi s) \partial_{p_z} + \sin(2\pi s) \partial_z$. The vector $v_{0}$ is taken under the first hitting map $\Sigma_{0} \to \Sigma_{s}$ to a positive multiple of $v_{s}$ for every $s>0$. The vectors $v_s$ represent infinitesimal curves inside $\Sigma_s$. Varying $s$ from $0$ to $1$, while $v_s$ makes a full turn with respect to the frame $\{\partial_{p_z},\partial_z\}$, we obtain via the linearized flow the family $\tilde g_s|_{\partial \Upsilon},s\in[0,1],$ which determines the rotation number  ${\rm Rot}(\partial \Upsilon)$. On the other hand, along the period of $\zeta_e$, while the base point along $\partial \Upsilon$ gives one positive full turn, the linearized flow  determines the rotation number $\rho_e-1$ with respect to the frame $\{\partial_{p_z},\partial_z\}$. As rotation numbers, we conclude that
$$
\frac{1}{{\rm Rot}(\partial \Upsilon)} = \frac{\rho_e-1}{1}.
$$
\end{proof}

We are ready to prove Theorem \ref{thm infy z brake}.

\subsection{Proof of Theorem \ref{thm infy z brake}} \label{subsec: return map with twist condition}
%\begin{proof}[Proof of Theorem \ref{thm infy z brake}]
Consider the mapping $\check g:\Upsilon \to \Upsilon$ representing the first return map $\Sigma_0 \to \Sigma_0$. Recall from last section that the family $\tilde g_s:\Upsilon \to \Upsilon, s\in[0,1],$ with $\tilde g_0 = {\rm Id}$ and $\tilde g_1 = \check g$, determines rotation numbers ${\rm Rot}(\partial \Upsilon)$ and ${\rm Rot}(p)$, where $p\in \Upsilon \setminus \partial \Upsilon$ satisfies $\check g(p) = p \in {\rm Fix}(\mathcal{N})$ is the fixed point associated with the intersection of $\zeta_z$ with $\Sigma_0$.

Our standing assumption that the Hopf link $\zeta_z \cup \zeta_e$ is non-resonant implies that
\begin{equation}\label{rotdif}
    \rho_z-1 \neq (\rho_e -1)^{-1}.
\end{equation}
Hence, from Lemma \ref{lem:Rot}, we obtain ${\rm Rot}(p) \neq {\rm Rot}(\partial \Upsilon)$.

Consider the curves $\mathcal{C}_{\pm,i},D_i,i=1,2,\subset \Upsilon$ with endpoints in $\{p\}$ and $\partial \Upsilon$ as in last section, and consider $\mathcal{C}_\pm = \mathcal{C}_{\pm,1}\cup \mathcal{C}_{\pm,2}$ and $D = D_1 \cup D_2$. Condition \eqref{rotdif} gives
$$
\lim_{n \to \infty} \# (\check g^n(\mathcal{C}_\pm) \cap D ) = +\infty,
$$
Lemma \ref{lem:PerOrbitSigma}-(i) implies that there are infinitely many $z$-symmetric brake orbits. This proves Theorem \ref{thm infy z brake}-(i). The cases (ii), (iii) and (iv) are similar.

%\begin{rem}
%The non-resonance condition for the Hopf link $\zeta_e\cup \zeta_z$ is in general difficult to be checked. We check it for large mass ratios, see Theorem \ref{thm: nonresonant implies infy many}. However, we suspect the condition holds for a much larger region in $\cd$.

%For instance, In the case of $\beta=0.6$ and $\mathfrak{e}=0.6$, the rotation number of $\check g|_{\partial\Upsilon}$ associated with the family $\tilde g_s$ is
%$\mathrm{Rot}(\partial\Upsilon)\thickapprox0.4090$. The rotation number of $p$ associated with the family $\tilde g_s$ is $\mathrm{Rot}(p)\thickapprox0.8599$. According to Lemma \ref{lem:Rot}, the non-resonance condition $(\rho_e-1)\neq (\rho_{z}-1)^{-1}$ satisfies obviously.
%\end{rem}

\subsection{Proof of Theorem \ref{thm: rational rotation implies infy many}}\label{subsec: return map with Rational rotation}

In this section, we consider parameters $(\beta,\mathfrak{e})\in\mathcal{D}$ for which the rotation number $\rho_e$ of the Euler orbit $\zeta_e$ is rational. Recall that $\rho_e>2$ as proved in Proposition \ref{prop: rho_e} below, and that $p=\mathcal{P}_0(\zeta_{z}(0))\in \Upsilon \setminus \partial \Upsilon, \zeta_z(0) \in \Sigma_0 \subset \{z=0\},$ is a symmetric fixed point of $\check g=\tilde g_1:\Upsilon \to \Upsilon$, see \eqref{def of tilde gs}.

Suppose that $\rho_e$ is rational.  By Lemma \ref{lem:Rot},  $\mathrm{Rot}(\partial\Upsilon)$ is rational and thus $\partial \Upsilon$ contains at least one periodic point.  A famous theorem of Franks \cite{Fr92} states that a homeomorphism of the closed or open disk preserving a finite area form and which has at least two periodic points must in fact have infinitely many interior periodic points. J. Kang \cite{kang18} extended this result with the additional hypothesis of reversibility, see \cite{LWY} for a refinement.

\begin{thm}[{Kang \cite[Corollary 1.2 and Theorem 1.3]{kang18}}]\label{thm: Kang} Let $f$ be a reversible homeomorphism of the closed or open disk that  preserves a finite area form.
\begin{itemize}
    \item[(i)] If $f$ has an interior symmetric fixed point and another periodic point, then $f$ has infinitely many symmetric periodic points in the interior of the disk.
    \item[(ii)] If $f$ has an interior symmetric fixed point and another periodic point with odd period, then $f$ has infinitely many symmetric periodic points with odd period in the interior of the disk.
\end{itemize}
\end{thm}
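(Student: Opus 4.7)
The plan is to translate the problem, via the disk-like global surface of section $\Sigma_0$ bounded by the Euler orbit (Theorem~\ref{main1}-(i)), to the study of the first-return map $\check g:\Upsilon\to\Upsilon$, and then apply Kang's Theorem~\ref{thm: Kang} combined with Proposition~\ref{prop: symmetric orbits and periodic points}. By Proposition~\ref{prop reversible g bar}, $\bar g$ satisfies $\bar g\circ\mathcal N\circ\bar g=\mathcal N$, from which $\check g=\bar g^2$ immediately inherits reversibility $\check g\circ\mathcal N\circ\check g=\mathcal N$; moreover $\check g$ preserves the finite area form $dp_r\wedge dr$ and extends continuously to $\partial\Upsilon$ (Remark~\ref{rem: continuous extension}). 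The $z$-symmetric simple brake orbit $\zeta_z$ from Theorem~\ref{main1}-(ii) intersects $\Sigma_0$ at a point $p\in\mathrm{Fix}(\mathcal N)\cap\Upsilon^o$ fixed by $\check g$, supplying the interior symmetric fixed point required by Kang's theorem. It remains to produce ``another periodic point'' of $\check g$, and this is precisely where rationality of $\rho_e$ enters.

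For part (i), assume $\rho_e\in\Q$. Lemma~\ref{lem:Rot} gives $\mathrm{Rot}(\partial\Upsilon)=(\rho_e-1)^{-1}\in\Q$, and Poincar\'e's classification of circle homeomorphisms yields a periodic orbit of $\check g$ on $\partial\Upsilon$. Kang's Theorem~\ref{thm: Kang}-(i) then produces infinitely many symmetric periodic points of $\check g$ in $\Upsilon^o$, which lift by Proposition~\ref{prop: symmetric orbits and periodic points}-(ii) to infinitely many $z$-symmetric periodic orbits on $\mathfrak{M}$ crossing $\{z=p_r=0\}$. Distinctness is preserved because each $\mathfrak{M}$-orbit meets $\Sigma_0^o$ in only finitely many points per period.

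For part (ii), assume $\rho_e=p/q$ with $p$ odd and $q$ even. Then
\[
\mathrm{Rot}(\partial\Upsilon)=\frac{q}{p-q},
\]
with $p-q$ odd. Setting $d:=\gcd(q,p-q)=\gcd(q,p)$, we see that $d$ divides the odd number $p$, hence $d$ is odd, and the reduced denominator $(p-q)/d$ is odd as well. Every periodic point of $\check g|_{\partial\Upsilon}$ therefore has odd minimal period, furnishing the required odd-period periodic point in Kang's Theorem~\ref{thm: Kang}-(ii). The resulting infinitude of symmetric periodic points of odd minimal period in $\Upsilon^o$ lifts by Proposition~\ref{prop: symmetric orbits and periodic points}-(iii) to infinitely many $z$-symmetric brake orbits on $\mathfrak{M}$.

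The main delicate point is ensuring that Kang's theorem genuinely applies when the ``second periodic point'' lies on $\partial\Upsilon$ rather than in the interior. The statement allows either the closed or the open disk, but one must verify that the equivariant Franks--Le Calvez machinery underlying Kang's proof tolerates a boundary orbit. If this turns out to be problematic, the workaround is to conjugate $\check g$ near $\partial\Upsilon$ to a normal form in a collar neighborhood and retract $\Upsilon$ onto a slightly smaller closed sub-disk $\Upsilon'\subset \Upsilon^o$, on which the boundary rotation number persists as the same rational value; applying Kang there yields the conclusion without loss. All other ingredients—reversibility, area-preservation, the symmetric interior fixed point from $\zeta_z$, and the parity bookkeeping in (ii)—are direct consequences of the machinery already established in the paper.
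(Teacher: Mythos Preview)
Your proposal does not prove the stated theorem. Theorem~\ref{thm: Kang} is a result of Kang quoted verbatim from \cite{kang18}; the paper does not prove it and offers no argument to compare against. What you have written is instead a proof of Theorem~\ref{thm: rational rotation implies infy many}, which \emph{applies} Kang's theorem as a black box. Nowhere in your text do you establish the conclusion of Theorem~\ref{thm: Kang} itself (infinitely many symmetric periodic points for an abstract reversible area-preserving disk map); rather, you assume it and deduce consequences for the isosceles three-body problem.

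If the intended target was in fact Theorem~\ref{thm: rational rotation implies infy many}, then your argument is correct and follows the paper's own proof essentially line for line: reversibility of $\check g$ from Proposition~\ref{prop reversible g bar}, the interior symmetric fixed point $p$ coming from $\zeta_z$, the boundary periodic point from $\mathrm{Rot}(\partial\Upsilon)=(\rho_e-1)^{-1}\in\Q$ via Lemma~\ref{lem:Rot}, the parity computation $q/(p-q)$ with odd reduced denominator, and the translation back via Proposition~\ref{prop: symmetric orbits and periodic points}. The paper handles the boundary-versus-interior issue simply by invoking Kang's theorem on the closed disk, without the collar-retraction workaround you sketch; that extra caution is unnecessary here since Kang's statement covers the closed disk case directly.
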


We know $\check g$ has a symmetric fixed point. Since $\rho_e\in \Q$, $\check g$ also has a periodic point on $\partial \Upsilon$. We conclude from Theorem \ref{thm: Kang}-(i) that $\check g$ has infinitely many symmetric periodic points in $\Upsilon \setminus \partial \Upsilon$. Proposition \ref{prop: symmetric orbits and periodic points}-(ii) implies the existence of infinitely many $z$-symmetric periodic orbits in $\mathfrak{M}$, and such periodic orbits  hit $\{z=p_r=0\}$. This proves Theorem \ref{thm: rational rotation implies infy many}-(i).

Assume now that $\rho_e = \frac{p}{q} \in \Q$,  where $p>0$ is odd and $q>0$ is even. Lemma \ref{lem:Rot} says that the rotation number of $\check g|_{\partial \Upsilon}$, with respect to the family $\tilde g_s$  is ${\rm Rot}(\partial \Upsilon)=\frac{1}{\rho(\zeta_e)-1} = \frac{q}{p-q}\in \Q$.  Since $q$ is even and $p-q$ is odd, we find a periodic point on $\partial \Upsilon$ with odd period. Theorem \ref{thm: Kang}-(ii) implies that $\check g$ has infinitely many symmetric periodic points in $\Upsilon \setminus \partial \Upsilon$ with odd period. Proposition \ref{prop: symmetric orbits and periodic points}-(iii) implies the existence of infinitely many $z$-symmetric brake orbits in $\mathfrak{M}$. This proves Theorem \ref{thm: rational rotation implies infy many}-(ii).

\begin{figure}[ht]
\centering
\includegraphics[width=0.7\textwidth]{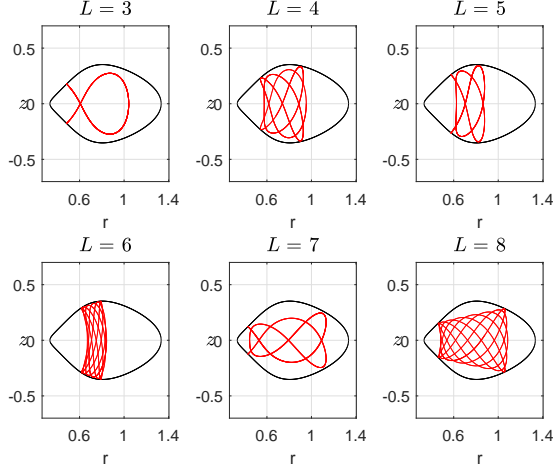}
\caption{Different types of $z$-symmetric orbits,  distinguished by the intersection number $L$ with $\Sigma_0$.}
\label{fig:z-sym}
\end{figure}

\subsection{The Euler orbit} Recall that $\zeta_e(t)=(p_{re},0,r_e,0)(t) \in \mathfrak{M}$ satisfies
$$
\ddot r_e = \frac{\varpi^2}{r_e^3} - \frac{\beta^{-1}}{r_e^2}.
$$
The preservation of energy gives
\begin{equation}\label{rde}
\frac{\dot r_e^2}{2} + \frac{\varpi^2}{2r_e^2}-\frac{\beta^{-1}}{r_e} = -1.
\end{equation}
and the linearized flow along $\zeta_e(t)=(p_{re},0,r_e,0)(t) \in \mathfrak{M}$, restricted to the plane spanned by $\partial_{p_z}$ and $\partial_z$, satisfies
$$
\dot \xi_1 = J_0\left(\begin{array}{cc}1 & 0 \\ 0 & (7 + \beta^{-1})r_e(t)^{-3} \end{array} \right) \xi_1,
$$
where $J_0=\left(\begin{array}{cc} 0 & -1\\ 1 & 0 \end{array}\right)$ and $\beta^{-1} = 1+4\alpha^{-1}$.

It will be convenient to reparametrize solutions using a new time coordinate $\theta \in \R / 2\pi \Z$. Let
$$
\tht(t):=\int_{0}^{t}\frac{\varpi}{r_e(\tau)^2}d\tau, \quad \forall t\in [0,T_e],
$$
where $T_e>0$ is the period of $\zeta_e$. We see that $\dot \theta(t)>0$ for every $t$.

\begin{lem}
$\theta(T_e) = 2 \pi.$
\end{lem}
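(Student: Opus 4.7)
The plan is to reduce to a one-degree-of-freedom radial problem on the plane $z = 0$ and then compute $\theta(T_e)$ by an explicit quadrature, recognizing it as a standard integral of the Kepler type.

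First, I would observe that on the Euler orbit the motion is entirely contained in the invariant submanifold $\{z = p_z = 0\}$, on which the equations of motion reduce to the radial system governed by $V(r,0) = \varpi^2/(2r^2) - \beta^{-1}/r$. Energy conservation at $h = -1$ yields
\begin{equation*}
\dot r_e(t)^2 \;=\; -2 - \frac{\varpi^2}{r_e(t)^2} + \frac{2\beta^{-1}}{r_e(t)},
\end{equation*}
so the radial coordinate oscillates monotonically between the two turning points $r_{\min} < r_{\max}$, which are the roots of the right-hand side. By definition of $T_e$, the orbit goes once from $r_{\min}$ to $r_{\max}$ and back in time $T_e$, with the two halves related by time reversal $t \mapsto T_e - t$.

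Next I would rewrite $\theta(T_e) = \int_0^{T_e} \varpi/r_e(t)^2\, dt$ as twice the integral over the half-period where $\dot r_e > 0$, using the symmetry noted above. Changing variables via $dt = dr/\dot r_e$ gives
\begin{equation*}
\theta(T_e) \;=\; 2 \int_{r_{\min}}^{r_{\max}} \frac{\varpi\, dr}{r^2 \sqrt{-2 - \varpi^2/r^2 + 2\beta^{-1}/r}}.
\end{equation*}
The substitution $u = 1/r$ eliminates the $r^{-2}$ factor from the measure and converts the radicand into $-\varpi^2 u^2 + 2\beta^{-1} u - 2$, which factorizes as $\varpi^2(u_+ - u)(u - u_-)$ with $u_\pm = 1/r_{\mp}$ (the same turning points, inverted). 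The $\varpi$ cancels and one is left with
\begin{equation*}
\theta(T_e) \;=\; 2 \int_{u_-}^{u_+} \frac{du}{\sqrt{(u_+ - u)(u - u_-)}}.
\end{equation*}

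Finally I would invoke the classical identity $\int_a^b du/\sqrt{(b-u)(u-a)} = \pi$, obtained by the trigonometric substitution $u = (a+b)/2 + ((b-a)/2)\sin\varphi$, to conclude $\theta(T_e) = 2\pi$. No real obstacle is expected here: the argument is the standard observation that the radial period of a bounded Kepler motion coincides with its angular period, which closes the orbit after exactly one turn in the $(x,y)$-plane. The only subtlety worth verifying is that the roots $u_\pm$ of the quadratic do lie in $(0,\infty)$ and bound the motion, which follows from the condition \eqref{cond1} guaranteeing that $\mathfrak{M}$ is a regular sphere-like hypersurface with $r_{\min}, r_{\max}$ both positive.
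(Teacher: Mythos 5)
Your proposal is correct and follows essentially the same route as the paper: both reduce $\theta(T_e)$ to a one-variable quadrature over $[r_{\min},r_{\max}]$ via energy conservation, using the factorization $r^2\dot r_e^2 = 2(r-r_{\min})(r_{\max}-r)$ together with $r_{\min}r_{\max}=\varpi^2/2$. The only cosmetic difference is that you pass through the substitution $u = 1/r$ to land on the standard beta-type integral $\int_a^b du/\sqrt{(b-u)(u-a)} = \pi$, whereas the paper writes down the arctangent antiderivative directly; the computations are equivalent.
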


\begin{proof}
Denote by $r_{\min},r_{\max}>0$ the minimum and the maximum values of $r_e$, and observe from \eqref{rde} that
$
r_{\min} r_{\max} = \frac{\varpi^2}{2}.
$
Then
\begin{equation}\label{intr}
\begin{aligned}
\theta(T_e) & = \int_0^{T_e} \frac{\varpi}{r_e(\tau)^2 \dot r_e(\tau)} \dot r_e(\tau)\ d\tau\\
& = 2\int_{r_{\rm min}}^{r_{\rm max}} \frac{\varpi}{r\sqrt{2(r-r_{\rm min})(r_{\rm max}-r)}} \ dr\\
& = 2\varpi \frac{-\sqrt{2}\arctan\left(\sqrt{\frac{r_{\min}(r_{\rm max}-r)}{r_{\max}(r-r_{\rm min})}} \right)}{\sqrt{r_{\rm min}r_{\rm max}}}\bigg|_{r_{\rm min}}^{r_{\rm max}} =2\pi.
\end{aligned}
\end{equation}

\end{proof}

Now we consider $r_e=r_e(\theta)$ and $\xi_1=\xi_1(\theta),\ \theta \in \R / 2\pi \Z.$ Using that
$
r_e' = \dot r_e t' = \dot r_e r_e^2\varpi^{-1},
$ and $r_e'' = \ddot r_e t'^2 + \dot r_e t''$, we end up with the second order equation satisfied by $r_e$
\begin{equation}\label{rell}
r_e'' = r_e -\frac{r_e^2}{\varpi^2 \beta} + 2\frac{r_e'^2}{r_e}.
\end{equation}

\begin{lem}Assume that $r_e(0) = r_{ \min}=\frac{\varpi^2\beta}{1+\mathfrak{e}}$ and $r_e'(0)=0$. Then
$$
r_e(\theta) = \frac{\varpi^2 \beta}{1+\mathfrak{e} \cos \theta}, \quad \forall \theta \in \R / 2 \pi \Z,
$$
where $\mathfrak{e} = \sqrt{1 + 2 h \varpi^2 \beta^2}$ is the eccentricity of the corresponding Keplerian orbit.
\end{lem}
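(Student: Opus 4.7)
The plan is to use the classical Kepler substitution $u(\theta) := 1/r_e(\theta)$ to linearize equation \eqref{rell}, which is the workhorse trick for reducing Kepler-type problems to a harmonic oscillator with constant forcing. Differentiating $u = 1/r_e$ twice gives
\begin{equation*}
u' = -\frac{r_e'}{r_e^2}, \qquad u'' = -\frac{r_e''}{r_e^2} + \frac{2(r_e')^2}{r_e^3}.
\end{equation*}
Substituting the expression for $r_e''$ from \eqref{rell} into the second identity, the awkward nonlinear term $2r_e'^2/r_e$ is exactly cancelled by the $2(r_e')^2/r_e^3$ contribution, and what remains is the linear inhomogeneous equation
\begin{equation*}
u'' + u \;=\; \frac{1}{\varpi^2\beta}.
\end{equation*}

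The general solution is $u(\theta) = \frac{1}{\varpi^2\beta} + A\cos\theta + B\sin\theta$. The next step is to read off $A,B$ from the initial conditions. The condition $r_e'(0) = 0$ translates immediately to $u'(0) = 0$, forcing $B=0$. The condition $r_e(0) = \varpi^2\beta/(1+\mathfrak{e})$ gives $u(0) = (1+\mathfrak{e})/(\varpi^2\beta)$, so $A = \mathfrak{e}/(\varpi^2\beta)$. Inverting $u$ then yields
\begin{equation*}
r_e(\theta) = \frac{\varpi^2\beta}{1+\mathfrak{e}\cos\theta},
\end{equation*}
which is the desired formula, and automatically $2\pi$-periodic in $\theta$ as required by the previous lemma.

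It remains to identify $\mathfrak{e}$ with $\sqrt{1+2h\varpi^2\beta^2}$. For this I would evaluate the conserved energy \eqref{rde} at $\theta = 0$, where $\dot r_e = 0$ (equivalently $r_e' = 0$) and $r_e = r_{\min} = \varpi^2\beta/(1+\mathfrak{e})$. Plugging in, the kinetic term vanishes, and
\begin{equation*}
\frac{\varpi^2}{2 r_{\min}^2} - \frac{\beta^{-1}}{r_{\min}} = \frac{(1+\mathfrak{e})^2}{2\varpi^2\beta^2} - \frac{1+\mathfrak{e}}{\varpi^2\beta^2} = \frac{\mathfrak{e}^2-1}{2\varpi^2\beta^2} = h,
\end{equation*}
which rearranges to $\mathfrak{e}^2 = 1 + 2h\varpi^2\beta^2$. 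Since this identity is consistent with the standing normalization $h=-1$ and the earlier definition $\mathfrak{e} = (1-2\varpi^2\beta^2)^{1/2}$, the identification is complete. There is no serious obstacle here: the only delicate point is the cancellation of the nonlinear term under the $u = 1/r_e$ substitution, which is a routine but essential computation; the rest is initial-data fitting and energy bookkeeping.
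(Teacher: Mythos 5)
Your proof is correct and follows essentially the same route as the paper: the paper substitutes the shifted quantity $u = r_e^{-1} - (\varpi^2\beta)^{-1}$ to get the homogeneous equation $u'' = -u$, while you use the unshifted $u = r_e^{-1}$ and carry the constant forcing $1/(\varpi^2\beta)$, but these are the same computation up to a trivial translation. Your added energy verification of the identity $\mathfrak{e}^2 = 1 + 2h\varpi^2\beta^2$ is a correct and useful supplement that the paper leaves implicit.
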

\begin{proof}Let $u=\frac{1}{r_e} - \frac{1}{\varpi^2 \beta}\Leftrightarrow r_e = \frac{\varpi^2\beta}{\varpi^2\beta u +1}$. Then $u(0) = \frac{\mathfrak{e}}{\varpi^2 \beta}$ and $u'(0)=0$. A straightforward computation using \eqref{rell} gives $u'' = - u$. Hence $u= \frac{\mathfrak{e}\cos \theta}{\varpi^2 \beta}$ and the lemma follows.\end{proof}

Now using that $\xi_1'(\theta) = \dot \xi_1(t) t'(\theta)=\dot \xi_1(t)r_e(\theta)^2\varpi^{-1}$, $t=t(\theta)$, we obtain
\begin{equation}\label{xi11}
\xi_1' = J_0\left(\begin{array}{cc}r_e(\theta)^2 \varpi^{-1} & 0 \\ 0 & (7 + \beta^{-1})r_e(\theta)^{-1}\varpi^{-1} \end{array} \right) \xi_1,
\end{equation}

To further simplify the above linear system, we still need a time-dependent transformation of $\xi_1$. Let
\begin{equation}\label{xi22}
\xi_2(\theta) = \mathcal{R}(\theta) \xi_1(\theta), \quad \forall \theta \in \R / 2\pi \Z,
\end{equation}
where $\mathcal{R}: \R / 2\pi \Z \to {\rm Sp}(2)$ is the smooth path of $2 \times 2$ symplectic matrices given by
\begin{equation}
 \label{equ: translation matrix} \mathcal{R}(\theta):=\left(
\begin{array}{cccc}
  \frac{r_e}{\sqrt{\varpi}} &  \frac{-\sqrt{\varpi}r'_e}{r_e^2} \\
 0 & \frac{\sqrt\varpi}{ r_e} \end{array} \right)=\begin{pmatrix}
 \frac{r_e}{\sqrt{\varpi}}  &   - \frac{\sqrt{\varpi}}{r_e} \frac{\mathfrak{e} \sin \tht}{1 + \mathfrak{e} \cos \tht} \\
 0 & \frac{\sqrt\varpi}{ r_e} \end{pmatrix}= \begin{pmatrix}
 \frac{r_e}{\sqrt{\varpi}}  &   - \frac{\mathfrak{e} \sin \tht}{\sqrt{\varpi}^3\beta} \\
 0 & \frac{\sqrt\varpi}{ r_e} \end{pmatrix}.
\end{equation}
Notice that the Maslov index of $\mathcal{R}$ is $0$ since $\mathcal{R}(\theta)$ preserves $\R(0,1)^T$ for every $\theta$.  Straightforward computations using \eqref{xi11}, \eqref{xi22} and \eqref{equ: translation matrix} give
\bea \label{linear euler z new co}
\xi_2'=J_0 \left(
\begin{array}{cccc}
 1 & 0\\
 0 & 1+7\beta(1+ \mathfrak{e}\cos\theta)^{-1} \end{array} \right)\xi_2.
 \eea

From the equation above we immediately obtain the following proposition.

\begin{prop}\label{prop: rho_e} The rotation number $\rho_e$ of the Euler orbit is $>2$, for every $(\beta,\mathfrak{e})\in (0,1) \times [0,1)$.
\end{prop}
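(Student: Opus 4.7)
My plan is to compute the rotation number of $\zeta_e$ directly from the reduced linearised equation \eqref{linear euler z new co}, namely $\xi_2'=J_0\,\mathrm{diag}(1,1+f(\theta))\xi_2$ with $f(\theta):=7\beta/(1+\mathfrak{e}\cos\theta)$, which is strictly positive for every $\theta$ and every admissible $(\beta,\mathfrak{e})\in(0,1)\times[0,1)$. I pass to the Prüfer angle $\phi(\theta)$ by writing $\xi_2(\theta)=|\xi_2(\theta)|(\cos\phi(\theta),\sin\phi(\theta))^T$; a direct calculation gives
$$\phi'(\theta)=1+f(\theta)\sin^2\phi(\theta)\ge 1,$$
with strict inequality off the isolated zeros of $\sin\phi$. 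Since $\phi$ is strictly monotone with $1\le\phi'\le 1+f_{\max}$, where $f_{\min}=7\beta/(1+\mathfrak{e})>0$ and $f_{\max}=7\beta/(1-\mathfrak{e})$, over every $\theta$-interval of length $2\pi$ the angle $\phi$ sweeps through at least one full turn of $\sin$; changing variables from $\theta$ to $\phi$ then yields a positive, $N$-linear lower bound for $\int_0^{2\pi N}f\sin^2\phi\,d\theta$, so
$$\rho_{(p_z,z)}\;:=\;\lim_{N\to\infty}\frac{\phi(2\pi N)-\phi(0)}{2\pi N}\;>\;1.$$
(Equivalently, Sturm comparison for $2$D periodic Hamiltonian systems $\xi'=J_0 B(\theta)\xi$ gives $\rho_{(p_z,z)}>1$ at once from the strict pointwise inequality $B(\theta)\succ I$, the constant reference system $\xi'=J_0\xi$ having rotation number exactly $1$.)

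The remaining step is the framing shift that relates $\rho_{(p_z,z)}$ to $\rho_e=\hat i(\zeta_e)/2$. The symplectic path $\mathcal R(\theta)$ used to derive the reduced equation is a $2\pi$-periodic loop of upper-triangular symplectic matrices with positive diagonal, hence contractible in $\Sp(2)$ and of zero Maslov index; therefore the rotation number of the $\xi_2$-equation coincides with that of the $\xi_1$-equation, i.e.\ with the rotation number of the linearised flow measured in the fixed trivialisation $\{\partial_{p_z},\partial_z\}$ of the invariant $(p_z,z)$-plane along $\zeta_e$. By contrast, $\rho_e$ is measured in the $d\lambda$-positive global frame $\{X_1,X_2\}$ on $\xi$ introduced in the proof of Theorem \ref{main1}-(i). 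Along $\zeta_e$ that frame takes values in the $(p_z,z)$-plane, and the identity $\sl(\zeta_e)=-1$ established there is exactly the statement that $\partial_{p_z}$ winds $+1$ with respect to $\{X_1,X_2\}$ over one period; equivalently, the frame $\{X_1,X_2\}$ winds $-1$ relative to $\{\partial_{p_z},\partial_z\}$. Consequently
$$\rho_e\;=\;\rho_{(p_z,z)}\,+\,1\;>\;2.$$

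The bound $\phi'>1$ is essentially a one-line consequence of the positivity of $f$, which is why the paper presents the proposition as immediate from the reduced equation. The delicate part of this plan is the framing bookkeeping: one must carefully identify the canonical trivialisation of $\xi|_{\zeta_e}$ implicit in the definition of the mean index with the global frame $\{X_1,X_2\}$, and track the $+1$ shift coming from its winding $-1$ relative to the naive frame $(\partial_{p_z},\partial_z)$ in which the reduced equation was derived. Once this identification is in place, the inequality $\rho_e>2$ follows at once from the easy estimate $\rho_{(p_z,z)}>1$.
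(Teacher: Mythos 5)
Your proof is correct and follows the same two-step scheme as the paper's: the Pr\"ufer-angle computation $\phi' = 1 + 7\beta(1+\mathfrak{e}\cos\theta)^{-1}\sin^2\phi > 1$ (a.e.) gives $\rho_{p_z,z} > 1$, and a $+1$ shift then gives $\rho_e > 2$. The difference is purely one of bookkeeping for the $+1$. The paper states it informally as "the $(p_r,r)$-plane contributes $\rho_{p_r,r} = +1$"; you instead derive the shift by identifying the trivialisation of $\xi$ in which $\rho_e$ is measured (the global frame $\{X_1, X_2\}$ from the proof of Theorem \ref{main1}-(i)) and computing its winding $-1$ relative to $\{\partial_{p_z},\partial_z\}$, which you correctly tie to the $\mathrm{sl}(\zeta_e)=-1$ computation. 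That is in fact exactly the identity the paper uses later in the proof of Lemma \ref{lem:Rot} ("the rotation number of $\zeta_e$ with respect to the frame $\{\partial_{p_z},\partial_z\}$ is $\rho_e - 1$"), so your accounting is sound and arguably makes the paper's informal $\rho_{p_r,r}=+1$ step precise. Your observation that the Maslov index of $\mathcal{R}$ vanishes because the loop is contractible (upper triangular with positive diagonal) is equivalent to the paper's reason ($\mathcal{R}(\theta)$ preserves $\R(0,1)^T$). One small inaccuracy: in your parenthetical Sturm-comparison remark you write $B(\theta)\succ I$, but $B(\theta) - I = \mathrm{diag}(0, 7\beta(1+\mathfrak{e}\cos\theta)^{-1})$ has a zero eigenvalue and is only positive semidefinite; the genuine strictness in the argument comes from $\phi' > 1$ off the measure-zero set $\{\sin\phi = 0\}$ (together with compactness of the projective circle to get a uniform gap and hence $\rho_{p_z,z}>1$ strictly), which your main Pr\"ufer argument already supplies.
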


\begin{proof} Writing $\xi_2 = (a \ \ b)^T$ and using polar coordinates $a+ib=ue^{i\phi}$, $u>0$, we obtain from \eqref{linear euler z new co} that $\dot \phi(\theta) = \cos^2\phi + (1+7\beta(1+\mathfrak{e}\cos \theta)^{-1}) \sin^2 \phi = 1 + 7\beta(1+\mathfrak{e}\cos \theta)^{-1} \sin^2 \phi>1,$ except for $\phi=k \pi,k\in \Z$. Hence $\phi(2\pi) - \phi(0) >2\pi$ for any initial condition $r(0)>0,\theta(0)\in \R$. In particular, the rotation number associated with the symplectic path generated by \eqref{linear euler z new co} is $\rho_{p_z,z}>1$. Now since $(p_{re}(\theta),r_e(\theta)),\theta \in [0,2\pi],$ is a simple close curve whose tangent vector rotates precisely one positive full turn, its contribution to $\rho_e$ is $\rho_{p_r,r}=+1$. Hence $\rho_e=\rho_{p_z,z}+\rho_{p_r,r} > 2$.
\end{proof}

\subsection{Proof of Theorem \ref{thm:Dk}} The proof of Theorem \ref{thm:Dk}-(i) and (ii) is motivated by the works on the stability of elliptic relative equilibria \cite{HLS14}, \cite{Zhou17a}.

We start with the proof of (i). Let us assume that for some $(\beta_*,\mathfrak{e}_*)\in (0,1)\times [0,1)$ the rotation number $\rho_e = k>0$ is a positive integer and $\zeta_e$ is degenerate, that is there exists a non-trivial  periodic solution $\xi_2(\theta)=[y(\theta) \ x(\theta)]^T,\theta \in \R / 2\pi \Z.$ Using \eqref{linear euler z new co}, we see that
\begin{equation}\label{xfourier}
\ddot x = - (1+7\beta(1+\mathfrak{e}\cos \theta)^{-1})x, \quad \forall \theta,
\end{equation}
Consider the Fourier expansion of $x$
$$
x(\theta) = \sum_{n \geq 0} a_n \cos n \theta + b_n \sin n \theta, \quad \forall \theta.
$$
Multiplying both sides of  \eqref{xfourier} by $(1+\mathfrak{e}\cos \theta)$, and using that
\begin{equation}\label{symmcs}
\cos \theta \cos n \theta = \frac{1}{2}(\cos(n+1)\theta + \cos (n-1) \theta), \ \cos \theta \sin n \theta = \frac{1}{2} (\sin(n+1)\theta + \sin (n-1) \theta),
\end{equation}
for every $n\geq 1$, we obtain the relations for $a_n$
$$
\begin{aligned}
& \frac{\mathfrak{e}}{2}a_1 = (1+7\beta)a_0+\frac{\mathfrak{e}}{2}a_1\Rightarrow a_0=0,\\
& \frac{\mathfrak{e}}{2}\left(((n+1)^2-1)a_{n+1}+((n-1)^2-1)a_{n-1}\right)=(1+7\beta-n^2)a_n,\quad  \forall n \geq 1.
\end{aligned}
$$
Hence $a_2 = \frac{14\beta}{3\mathfrak{e}}a_1$, and $a_{n+1}$ depends linearly on $a_n$ and $a_{n-1}$ for every $n\geq 2$.

Because of the symmetry in \eqref{symmcs}, similar relations holds for $b_n$
$$
\begin{aligned}
& 0 = (1+7\beta)b_0\Rightarrow b_0=0,\\
& \frac{\mathfrak{e}}{2}\left(((n+1)^2-1)b_{n+1}+((n-1)^2-1)b_{n-1}\right)=(1+7\beta-n^2)b_n,\quad  \forall n \geq 1.
\end{aligned}
$$
Hence  $b_2 = \frac{14\beta}{3\mathfrak{e}}b_1$, and $b_{n+1}$ depends linearly on $b_n$ and $b_{n-1}$ for every $n\geq 2$. We may assume that $a_1 =1\neq 0$. Because the relations for $a_n$ and $b_n$ are the same, we see that $\hat a_n:=\mu a_n, \hat b_n := \nu a_n, n\geq 1,$ with $\hat a_0=\hat b_0=0$, determine $2\pi$-periodic solutions to \eqref{xfourier} for every $\mu,\nu \in \R.$ Hence the space of such periodic solutions is two-dimensional and the transverse linearized map along $\zeta_e$ is the identity map.

Next we show that for $\beta\neq \beta_*$ sufficiently close to $\beta_*$ and $\mathfrak{e}=\mathfrak{e}_*$ the rotation number of $\zeta_e$ is not equal to $k$, that is $\zeta_e$ is a nondegenerate elliptic orbit. To prove it, we see from \eqref{linear euler z new co} that an argument $\phi$ of $\xi_2$ satisfies
\begin{equation}\label{equ of phi}
\dot \phi = 1+\frac{7\beta}{1+\mathfrak{e}\cos \theta} \sin^2 \phi, \quad \forall \theta,
\end{equation}
which is strictly increasing in $\beta\in [0,1]$, for fixed $\theta$ and $\phi$, except for $\phi = n\pi, n\in \Z.$ In that case, we compute
$$
\dot \phi = 1,  \quad \ddot \phi =0 \quad \mbox{ and } \quad \dddot \phi =\frac{14 \beta}{1+\mathfrak{e} \cos \theta}>0, \quad \mbox{ if } \phi=n\pi, n\in \Z.
$$
%Then $\phi(2\pi)-\phi(0)$ is strictly increasing in $\beta$ for any fixed $\phi(0)\in\mathbb{R}$.
We see that $\dddot \phi$ is also strictly increasing in $\beta$ if $\phi =n \pi, n\in \Z$. We summarize this discussion in the following lemma.

\begin{lem}\label{lembeta} The argument variation $\phi(2\pi) - \phi(0)$ is strictly increasing in $\beta$ for any fixed $\phi(0)\in \R$. \end{lem}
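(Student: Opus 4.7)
The plan is to differentiate the flow with respect to $\beta$ and show that the derivative at time $2\pi$ is strictly positive. Let $\phi(\theta;\beta)$ denote the unique solution of \eqref{equ of phi} with initial value $\phi(0;\beta)=\phi(0)$ fixed independently of $\beta$. Set $\psi(\theta) := \partial_\beta\phi(\theta;\beta)$; then $\psi(0)=0$ and the claim $\partial_\beta(\phi(2\pi)-\phi(0))>0$ reduces to showing $\psi(2\pi)>0$.

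First I would differentiate \eqref{equ of phi} with respect to $\beta$, using smoothness of solutions in the parameter, to obtain the linear inhomogeneous ODE
$$
\dot\psi \;=\; \frac{14\beta\sin\phi\cos\phi}{1+\mathfrak{e}\cos\theta}\,\psi \;+\; \frac{7\sin^2\phi}{1+\mathfrak{e}\cos\theta}, \qquad \psi(0)=0.
$$
Variation of parameters then yields
$$
\psi(2\pi) \;=\; \int_0^{2\pi}\exp\!\left(\int_s^{2\pi}\frac{14\beta\sin\phi(u)\cos\phi(u)}{1+\mathfrak{e}\cos u}\,du\right)\,\frac{7\sin^2\phi(s)}{1+\mathfrak{e}\cos s}\,ds.
$$
The exponential prefactor is strictly positive and the source term $7\sin^2\phi(s)/(1+\mathfrak{e}\cos s)$ is nonnegative, so the integrand is nonnegative on $[0,2\pi]$.

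To conclude I would verify that the integrand is not identically zero. It vanishes precisely on the set $Z:=\{s\in[0,2\pi]\,:\,\phi(s)\in\pi\Z\}$, and at any $s\in Z$ the original equation \eqref{equ of phi} gives $\dot\phi(s)=1\ne 0$, so $\phi$ crosses $\pi\Z$ transversally; consequently $Z$ is finite. Therefore the integrand is strictly positive on a set of full measure in $[0,2\pi]$, giving $\psi(2\pi)>0$ and proving the claimed monotonicity.

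The only delicate point is the behavior at $\phi\in\pi\Z$: a naive direct comparison between two solutions $\phi_{\beta_1},\phi_{\beta_2}$ with $\beta_1<\beta_2$ and equal initial data would, at points where they coincide on the axis $\sin\phi=0$, yield the same $\dot\phi$, forcing one to invoke higher-order derivatives (as the discussion preceding the lemma sets up with the explicit computation of $\dddot\phi$). The linearization route above sidesteps this obstacle entirely, because the very same transversality $\dot\phi=1$ at $\phi\in\pi\Z$ automatically makes the zero set of the source term discrete; no higher-order analysis is required.
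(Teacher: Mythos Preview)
Your proof is correct. The linearization/variational argument is sound: the variation-of-parameters formula is right, the source term is nonnegative, and your transversality observation (in fact $\dot\phi\geq 1>0$ everywhere, so $\phi$ is strictly increasing and hits each level $n\pi$ at most once) cleanly shows the zero set is finite, forcing $\psi(2\pi)>0$.

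The paper takes a different tack: it argues by direct comparison of two solutions $\phi_{\beta_1},\phi_{\beta_2}$ with $\beta_1<\beta_2$, noting that the right-hand side of \eqref{equ of phi} is strictly increasing in $\beta$ except on the axis $\phi\in\pi\Z$, and then handles that exceptional set by computing $\ddot\phi=0$ and $\dddot\phi=14\beta/(1+\mathfrak{e}\cos\theta)$ there to separate the solutions to third order. That route works but, as you point out in your last paragraph, it requires tracking higher derivatives to get strict inequality past the axis crossings. Your approach trades that bookkeeping for a single integral formula in which the positivity is manifest; the only residual work is showing the source is not identically zero, and that follows from the same transversality that the paper's argument also relies on. The paper's computation of $\dddot\phi$ is still used downstream (in the proof of Theorem~\ref{thm:Dk}-(iii)), so it is not wasted, but for the lemma itself your argument is more self-contained.
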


Continuing with the proof of (i), because the first return map along $\zeta_e$ is the identity map for $(\beta_*,\mathfrak{e}_*)$ we conclude that the rotation number of $\zeta_e$ is strictly larger than $k$ for $\beta-\beta_*>0$ sufficiently close to $0$ and $\mathfrak{e}=\mathfrak{e}_*$, and strictly smaller than $k$ for $\beta-\beta_*<0$ sufficiently close to $0$ and $\mathfrak{e}=\mathfrak{e}_*$.

Finally, for $\beta=0$ and any $\mathfrak{e} \in [0,1)$, \eqref{linear euler z new co}  admits a two-dimensional space of periodic solutions, and $\rho_e \to 2^+$ as $\beta \to 0^+$. Since $\rho_e$ continuously depends on $(\beta,\mathfrak{e})$, the argument above shows that there exists no pair $(\beta,\mathfrak{e})\in (0,1)\times [0,1)$ so that $\zeta_e$ has a positive rotation number and is positive hyperbolic.  The proof of Theorem \ref{thm:Dk}-(i) is complete.

Now we prove (ii). Let us assume that for some $(\beta_*,\mathfrak{e}_*)\in (0,1)\times [0,1)$ the rotation number $\rho_e = k+\frac{1}{2}>0$ for some positive integer $k>1$. In this case, $\zeta_e$ is nondegenerate, and there exists a $4\pi$-periodic solution $\xi_2(\theta)=[y(\theta) \ x(\theta)]^T,\theta \in \R / 2\pi \Z,$ satisfying
$$
\xi_2(\theta+2\pi) = -\xi_2(\theta), \quad \forall \theta.
$$
We conclude that
\begin{equation}\label{xfourier2}
\begin{aligned}
& \ddot x  = - (1+7\beta(1+\mathfrak{e}\cos \theta)^{-1})x, \quad \forall \theta,\\
& x(\theta+2\pi)  =-x(\theta), \quad \forall \theta.
\end{aligned}
\end{equation}

If $\mathfrak{e}=0$, then whenever $\beta=\frac{(n+1/2)^2-1}{7},\ n\in\mathbb{N}_+$, the space of such solution is two dimensional and generated by
\begin{equation}\label{e=0}
x_1(\theta)=\cos(\sqrt{1+7\beta}\theta),\quad x_2(\theta)=\sin(\sqrt{1+7\beta}\theta).
\end{equation}

Assume $\mathfrak{e}>0$. The  Fourier expansion of $x$ has the form
$$
x(\theta) = \sum_{n \geq 0} a_n \cos (n+1/2) \theta + b_n \sin \left(n+1/2\right) \theta, \quad \forall \theta.
$$
Multiplying both sides of  \eqref{xfourier} by $(1+\mathfrak{e}\cos \theta)$, and using that
\begin{equation}\label{symmcs2}
\begin{aligned}
&\cos \theta \cos \left(n+1/2\right) \theta = \frac{1}{2}\left(\cos \left(n+3/2\right) \theta + \cos \left(n- 1/2\right) \theta\right),
\\ &\cos \theta \sin \left(n+ 1/2 \right) \theta = \frac{1}{2} \left(\sin\left(n+1/2\right)\theta + \sin \left(n-1/2\right) \theta \right),
\end{aligned}
\end{equation}
for every $\theta\in \R,n\geq 1$, we obtain the relations for $a_n$

$$
\begin{aligned}
& a_1 = \frac{6+56\beta+3\mathfrak{e}}{5\mathfrak{e}}a_0,\\
& \frac{\mathfrak{e}}{2}\left(((n+3/2)^2-1)a_{n+1}+(\left(n-1/2\right)^2-1)a_{n-1}\right)=\left(1+7\beta-(n+1/2)^2\right)a_n,
\end{aligned}
$$
for every $n\geq 1$. Hence $a_{n+1}$ depends linearly on $a_n$ and $a_{n-1}$ for every $n\geq 2$.

Using \eqref{symmcs2} again, we obtain the following relations for $b_n$
$$
\begin{aligned}
& b_1 = \frac{6+56\beta-3\mathfrak{e}}{5\mathfrak{e}}b_0,\\
& \frac{\mathfrak{e}}{2}\left(((n+3/2)^2-1)b_{n+1}+(\left(n-1/2\right)^2-1)b_{n-1}\right)=\left(1+7\beta-(n+1/2)^2\right)b_n,
\end{aligned}
$$
for every $n\geq 1$. Hence $b_{n+1}$ depends linearly on $b_n$ and $b_{n-1}$ for every $n\geq 2$.

\begin{lem}One of the two possibilities hold:
\begin{itemize}
    \item[(i)] $(a_n)$ vanishes identically and $(b_n)$ does not vanish identically.
    \item[(ii)] $(b_n)$ vanishes identically and $(a_n)$ does not vanish identically.    \end{itemize}
\end{lem}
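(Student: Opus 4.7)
The plan is to exploit the parity symmetry $\theta\mapsto-\theta$ of the Hill equation \eqref{xfourier2}. Its coefficient $p(\theta):=1+7\beta(1+\mathfrak{e}\cos\theta)^{-1}$ is manifestly even in $\theta$, so if $x(\theta)$ solves $\ddot x+p(\theta)x=0$, then both $x_e(\theta):=\tfrac{1}{2}(x(\theta)+x(-\theta))$ and $x_o(\theta):=\tfrac{1}{2}(x(\theta)-x(-\theta))$ solve it as well. The antiperiodicity $x(\theta+2\pi)=-x(\theta)$ is preserved under $\theta\mapsto-\theta$, so $x_e$ and $x_o$ are themselves $4\pi$-periodic solutions in the sense of \eqref{xfourier2}.

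Next I would make the Fourier identification explicit. Since $\cos((n+\tfrac{1}{2})\theta)$ is even in $\theta$ and $\sin((n+\tfrac{1}{2})\theta)$ is odd, the sequence $(a_n)$ records precisely the Fourier data of $x_e$ and $(b_n)$ records that of $x_o$. In particular $(a_n)\equiv 0$ if and only if $x_e\equiv 0$, and analogously for $(b_n)$ and $x_o$, so the lemma is equivalent to the parity statement that exactly one of $x_e,x_o$ vanishes identically.

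The heart of the argument is a one-dimensional count on the space $V$ of $2\pi$-antiperiodic solutions. Via the transverse linearized monodromy $M\in\Sp(2)$ along $\zeta_e$, $V$ is naturally identified with $\ker(M+I)$. Under the standing hypothesis $\rho_e=k+\tfrac{1}{2}$ together with the nondegeneracy of $\zeta_e$ asserted just before the lemma (i.e.\ $M\neq -I$), the eigenvalue $-1$ of $M$ is simple, so $\dim V=1$. The degenerate case $M=-I$ is exactly the one already isolated in \eqref{e=0} at $\mathfrak{e}=0$, $\beta=((n+\tfrac{1}{2})^2-1)/7$, which the running assumption $\mathfrak{e}>0$ excludes.

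The conclusion is then immediate: writing the given nontrivial antiperiodic $\xi_2$ as $x=x_e+x_o$, both summands lie in the one-dimensional space $V$; since a nonzero even function and a nonzero odd function are linearly independent, one of $x_e,x_o$ must vanish, and the other is necessarily nonzero because $x\not\equiv 0$. Reading this on the Fourier side gives precisely the two alternatives (i) and (ii). The main obstacle I expect is the verification that $M\neq -I$ under the stated hypotheses --- I would address this by noting that $M=-I$ would force the two separate recurrences, initiated by the distinct relations $a_1=\tfrac{6+56\beta+3\mathfrak{e}}{5\mathfrak{e}}a_0$ and $b_1=\tfrac{6+56\beta-3\mathfrak{e}}{5\mathfrak{e}}b_0$, to \emph{simultaneously} produce convergent Fourier series, and that this joint convergence is a codimension-one condition on $(\beta,\mathfrak{e})$ realised only on the exceptional set in \eqref{e=0}.
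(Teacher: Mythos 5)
Your parity decomposition is an elegant reformulation: $(a_n)$ and $(b_n)$ are the Fourier data of the even and odd parts $x_e,x_o$ of the antiperiodic solution, and the lemma is equivalent to the claim that the space $V$ of $2\pi$-antiperiodic solutions of \eqref{xfourier2} is one-dimensional, i.e.\ the transverse monodromy $M$ satisfies $M\neq -I$. The problem is that you use $\dim V=1$ as an input, citing the paper's ``nondegeneracy'' assertion and glossing it as $M\neq -I$. But in the paper's convention (and the standard Conley--Zehnder one) nondegenerate only means $1\notin\operatorname{spec}(M)$; this is automatic once $\rho_e\in\tfrac12+\Z$ and says nothing about whether $M=-I$, which is a perfectly nondegenerate matrix. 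In fact, $M\neq -I$ is precisely what this lemma together with Lemma~\ref{anbn} is designed to establish, so invoking it at this stage is circular.

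You recognize the obstacle at the end and propose to close it via the recurrences, asserting that joint convergence of both Fourier series is a ``codimension-one'' condition on $(\beta,\mathfrak{e})$. That heuristic is exactly where the real mathematical work sits, and you have not supplied it. The paper's proof at this very step observes that $(a_n)$ and $(b_n)$ obey the same linear recursion $y_n=T_n y_{n-1}$ from \eqref{hyper system}, initiated on two \emph{distinct} lines in $\R^2$ (distinct precisely because $\mathfrak{e}>0$); if both sequences were nontrivial, the decay $a_n,b_n\to 0$ would then hold for every initial condition in $\R^2$; but $T_n\to T_\infty$ in \eqref{hyper system1}, which is hyperbolic with an unstable eigenvalue of modulus greater than $1$, and a cone estimate around the unstable direction produces initial conditions whose iterates grow geometrically --- a contradiction. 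Your outline is compatible with this, but without the hyperbolicity/cone argument the one-dimensionality of $V$, and hence the lemma, remains unproved.
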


\begin{proof}
    Notice that except for the expressions for $a_1$ and $b_1$, the induction formulas for $a_n$ and $b_n$ coincide for every $n\geq 2$. Indeed, we see that both $y_n=(a_{n+1}, a_n)^T$ and $y_n=(b_{n+1}, b_n)^T$ satisfy
    $$
    y_n = T_n(y_{n-1}), \quad \forall n,
    $$
    where  $T_n:\R^2 \to \R^2$ is the invertible linear mapping given by
\begin{equation}\label{hyper system}
T_n(y):=
\begin{pmatrix}\frac{1+7\beta-(n+1/2)^2}{\frac{\mathfrak{e}}{2}((n+3/2)^2-1)} & -\frac{(n-1/2)^2-1}{(n+3/2)^2-1} \\ 1 & 0 \end{pmatrix}y, \quad \forall y\in \R^2, \quad \forall n,
\end{equation}
 with respective initial conditions $y_0=(a_1,a_0)\in \R(\frac{6+56\beta+3\mathfrak{e}}{5\mathfrak{e}},1)$ and $y_0=(b_1,b_0)\in \R(\frac{6+56\beta-3\mathfrak{e}}{5\mathfrak{e}},1)$.

We assume by contradiction that both $(a_n)$ and $(b_n)$ do not vanish identically. Then the initial conditions $(a_1,a_0)$ and $(b_1,b_0)$ are linearly independent, and since   $a_n,b_n \to 0$ as $n \to \infty$,  we conclude that  $y_n:=T_n(y_{n-1}), n\geq 1,$ converges to $0$ for every initial condition $y_0 \in \R^2$.

Observe that   $T_n$ converges to the hyperbolic linear mapping $T_\infty :\R^2 \to \R^2$ given by
\begin{equation}\label{hyper system1}
T_\infty(y):=
\begin{pmatrix}-2/\mathfrak{e} & -1 \\ 1 & 0 \end{pmatrix}y, \quad \forall y\in \R^2.
\end{equation}
The mapping $T_\infty$ admits an unstable direction $v_u:= \R(-\frac{1+\sqrt{1-\mathfrak{e}^2}}{\mathfrak{e}},1)$ associated with the eigenvalue $\frac{-1-\sqrt{1-\mathfrak{e}^2}}{\mathfrak{e}}\in(-\infty,-1)$ and a stable direction $\R(-\frac{1-\sqrt{1-\mathfrak{e}^2}}{\mathfrak{e}},1)$ associated with the eigenvalue $\frac{\sqrt{1-\mathfrak{e}^2}-1}{\mathfrak{e}}\in(-1,0)$. Hence there exists a closed cone $C_0 \subset \R^2$ centered at $v_u$ and $\nu>1$ so that for $n$ sufficiently large
$$
T_n(C_0) \subset C_0,\ \ \forall n \gg 0 \quad \mbox{ and } \quad |T_n(y)| \geq \nu |y|,\ \  \forall y\in C_0.
$$
 Hence we find  initial conditions $y_0\in \R^2$ so that the sequence $y_n:=T_n(y_{n-1}), n\geq 1,$  is divergent,  a contradiction. We conclude that one of the sequences  $(a_n)$ or $(b_n)$ vanishes and the other never vanishes. \end{proof}

\begin{lem}\label{anbn}
    The linearized first return map along $\zeta_e$ is a shear with eigenvalue $-1$. In particular, $d\phi_{T_e}(\zeta_e(0)):\xi|_{\zeta_e(0)} \to \xi|_{\zeta_e(0)}$ is not $-\text{Id}$.
\end{lem}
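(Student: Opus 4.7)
The plan is to argue by contradiction: assume $M := d\phi_{T_e}(\zeta_e(0)) = -\text{Id}$ and derive a contradiction from the Fourier analysis already set up. The hypothesis $\rho_e = k + 1/2$ supplies a non-trivial $4\pi$-antiperiodic solution of \eqref{linear euler z new co}, hence $-1$ is an eigenvalue of $M$; combined with $\det M = 1$ this forces $-1$ to be a double eigenvalue, so $M$ is either $-\text{Id}$ or a negative shear. Ruling out $M = -\text{Id}$ will establish the lemma.

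Suppose $M = -\text{Id}$. Then the space $V$ of solutions of $\ddot x = -q(\theta) x$, with $q(\theta) := 1 + 7\beta(1+\mathfrak{e}\cos\theta)^{-1}$, satisfying $x(\theta+2\pi) = -x(\theta)$, is two-dimensional. Since $q$ is even in $\theta$, the equation is invariant under the reflection $\theta \mapsto -\theta$, so $V$ splits as $V = V_{\text{even}} \oplus V_{\text{odd}}$. Each summand is at most one-dimensional (an even or odd solution is determined by $x(0)$ or $\dot x(0)$ alone), so two-dimensionality of $V$ forces both $V_{\text{even}}$ and $V_{\text{odd}}$ to be exactly one-dimensional. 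In Fourier terms this means the existence of two independent antiperiodic solutions: one of pure cosine type (non-trivial $(a_n)$, with $(b_n) \equiv 0$) and one of pure sine type (non-trivial $(b_n)$, with $(a_n) \equiv 0$).

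The main obstacle, and the heart of the proof, is to rule out this coexistence when $\mathfrak{e} > 0$. Recall from the proof of the preceding lemma that both sequences $y_n = (a_{n+1}, a_n)$ and $y_n = (b_{n+1}, b_n)$ obey the same recurrence $y_n = T_n(y_{n-1})$ for $n \geq 1$, with $T_n \to T_\infty$ as in \eqref{hyper system1}, and $T_\infty$ is hyperbolic with a one-dimensional stable eigenspace. The decay $a_n, b_n \to 0$ required for the Fourier series to represent a genuine solution forces the initial vector to lie on that stable eigenspace. However, the admissible initial vectors for the cosine and sine series are constrained to the distinct one-dimensional lines
$$
\R\!\bigl(\tfrac{6+56\beta+3\mathfrak{e}}{5\mathfrak{e}}, 1\bigr) \quad \text{and} \quad \R\!\bigl(\tfrac{6+56\beta-3\mathfrak{e}}{5\mathfrak{e}}, 1\bigr),
$$
which are different precisely because $\mathfrak{e} > 0$. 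Since a one-dimensional subspace can coincide with at most one of these two distinct lines, at most one of a non-trivial cosine solution or non-trivial sine solution can exist.

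This contradicts $\dim V = 2$, hence $M \neq -\text{Id}$. Since $-1$ is a double eigenvalue of $M$ and $M$ is not diagonalizable, $M$ is a negative shear with eigenvalue $-1$, which is the desired conclusion.
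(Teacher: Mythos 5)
Your proof is correct, and it takes a route that is organized differently from the paper's. Both arguments ultimately rest on the same hyperbolicity contradiction — the recurrence $y_n = T_n y_{n-1}$ with $T_n \to T_\infty$ hyperbolic, together with the two distinct initial lines $\R\bigl(\tfrac{6+56\beta+3\mathfrak{e}}{5\mathfrak{e}},1\bigr)$ and $\R\bigl(\tfrac{6+56\beta-3\mathfrak{e}}{5\mathfrak{e}},1\bigr)$ — but they reach it by different decompositions. The paper first establishes (in the unnamed lemma preceding this one) that for any \emph{single} antiperiodic solution $x$, exactly one of $(a_n)$, $(b_n)$ vanishes; then, in the proof of the present lemma, it supposes two independent solutions exist and uses a superposition argument ($\hat x := x + \tilde x$) to show they must have the \emph{same} vanishing sequence, after which the fixed initial line forces proportionality and hence a one-dimensional solution space. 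You instead assume $M = -\mathrm{Id}$ directly, observe that the even parity of $q(\theta) = 1 + 7\beta(1+\mathfrak{e}\cos\theta)^{-1}$ makes the reflection $\theta \mapsto -\theta$ a symmetry of the antiperiodic problem, and so split the hypothetical two-dimensional space $V$ into a one-dimensional even (pure cosine) piece and a one-dimensional odd (pure sine) piece; the coexistence of a non-trivial decaying $(a_n)$ and a non-trivial decaying $(b_n)$ on linearly independent initial lines then contradicts the expanding cone for $T_\infty$. Your parity argument is structurally cleaner in that it replaces both the preceding lemma as a standalone black box and the superposition step with a single natural symmetry consideration; the paper's version has the modular advantage that the single-solution dichotomy is recorded separately. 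One small caveat: your phrase "forces the initial vector to lie on that stable eigenspace" is slightly loose since the $T_n$ are not constant and there is no fixed stable eigenspace; what the argument really shows (as in the paper's preceding lemma) is that if both initial vectors led to decaying orbits then by linearity every initial vector would, which the invariant expanding cone forbids. Since you explicitly invoke that argument, the conclusion is sound.
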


\begin{proof} Assume there exists a non-trivial $4\pi$-periodic solution $\tilde x\neq x$ solving \eqref{xfourier2}. Recall the sequences $(a_n),(b_n)$ from the Fourier expansion of $x$ and consider the corresponding sequences $(\tilde a_n),(\tilde b_n)$ for the Fourier expansion of $\tilde x$.
By Lemma \ref{anbn} either $(a_n)$ or $(b_n)$ vanishes identically, and the same holds for $(\tilde a_n),(\tilde b_n)$. If both $(a_n)$ and $(\tilde b_n)$ do not vanish identically, we obtain from the linearity of \eqref{xfourier2} a solution $\hat x$ whose Fourier coefficients $(\hat a_n),(\hat b_n)$ do not vanish identically, a contradiction to Lemma \ref{anbn}. The same holds with $(\tilde a_n)$ and $(b_n)$. We conclude that either both $(a_n)$ and $(\tilde a_n)$ vanish identically or $(b_n)$ and $(\tilde b_n)$ vanish identically. The induction formulas for the Fourier coefficients imply that there exists $\nu \neq 0$ so that $\tilde a_n = \nu a_n$ and $\tilde b_n=\nu b_n$ for every $n$. In particular, the space of solutions to \eqref{xfourier2} is one-dimensional and  $d\phi_{T_e}(\zeta_e(0))$ is a shear with eigenvalue $-1$. \end{proof}

Continuing with the proof of (ii), since the first return map $d\phi_{T_e}(\zeta_e(0))$ is a shear, the rotation interval $\{\phi(2\pi)-\phi(0): \phi(0)\in \R\}$ determined by the solutions of \eqref{equ of phi} is a non-trivial interval $I_*\subset \R$ of length $0<\ell(I_*) < \pi$ so that $\pi +2k\pi$ is a boundary point. Fixing $\mathfrak{e}=\mathfrak{e}_*$ and varying $\beta$ we denote by $I_\beta$ the corresponding rotation interval for the solutions of \eqref{equ of phi}. Suppose $I_*$ lies to the left of $\pi+2k\pi$. Then Lemma \ref{lembeta} implies that $I_\beta$  contains $\pi + 2k \pi$ as an interior point for every $\beta-\beta_*>0$ sufficiently small. This implies that $\zeta_e$ is negative hyperbolic for such values of $\beta$. Analogously, if $I_*$ lies to the right of $\pi + 2k \pi$, then $\pi + 2k \pi$ is an interior point of $I_\beta$ if $\beta-\beta_*<0$ is sufficiently small. In this case, $\zeta_e$ is also negative hyperbolic for those values of $\beta$. In each case, we find a non-trivial open interval of $\beta$ either to the left  or to the right of $\beta_*$ so that the Euler orbit is negative hyperbolic.

The above argument shows that if there exists $(\beta_*,\mathfrak{e}_*)\in (0,1) \times [0,1)$ so that the Euler orbit has rotation number $\rho_e = k + 1/2,$ then there exists an open interval $\mathcal{I}$ so that for every $\beta \in \mathcal{I}$ and $\mathfrak{e} = \mathfrak{e}_*$, the Euler orbit associated with $(\beta,\mathfrak{e})$ is negative hyperbolic.

It remains to show that for any integer $k\geq 2$ there exists  $(\beta,\mathfrak{e})\in (0,1) \times [0,1)$ so that the Euler orbit has rotation number $k+1/2$.

\begin{lem}For every integer $k\geq 2$, there exists $(\beta,\mathfrak{e})\in (0,1) \times [0,1)$ so that the Euler orbit has rotation number $k+1/2$.
\end{lem}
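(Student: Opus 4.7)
The plan is to exploit the intermediate value theorem on the one-parameter family $\mathfrak{e} \mapsto \rho_e(\beta_0,\mathfrak{e})$ for a suitably chosen $\beta_0\in(0,1)$. Concretely, I will show that $\rho_e$ is continuous on $(0,1)\times[0,1)$, compute an explicit starting value $\rho_e(\beta_0,0)<5/2$, and then prove that $\rho_e(\beta_0,\mathfrak{e})\to +\infty$ as $\mathfrak{e}\to 1^-$. Since the interval $[0,1)\ni\mathfrak{e}$ is connected, continuity plus these two endpoint behaviors will force $\rho_e(\beta_0,\mathfrak{e})$ to assume every value in $(\rho_e(\beta_0,0),+\infty)$, in particular every half-integer $k+1/2$ with $k\geq 2$.

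The first two ingredients are easy. Continuity of $\rho_e$ in $(\beta,\mathfrak{e})$ follows from continuous dependence of the solutions of \eqref{linear euler z new co} on parameters together with nondegeneracy of the symplectic path (which, via \eqref{equ of phi}, prevents jumps in the rotation). At $\mathfrak{e}=0$, the system \eqref{linear euler z new co} has constant coefficients with matrix $J_0\,\mathrm{diag}(1,1+7\beta)$, whose eigenvalues are $\pm i\sqrt{1+7\beta}$, so the $(p_z,z)$-contribution is $\sqrt{1+7\beta}$ and $\rho_e(\beta,0)=1+\sqrt{1+7\beta}$. Choosing $\beta_0=1/7$ gives $\rho_e(\beta_0,0)=1+\sqrt{2}<5/2$.

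The main obstacle is the divergence $\rho_e(\beta_0,\mathfrak{e})\to +\infty$ as $\mathfrak{e}\to 1^-$. My approach is Sturm comparison with an Euler equation. Setting $u=\theta-\pi$, one has
\[
1+\mathfrak{e}\cos\theta = (1-\mathfrak{e})+\mathfrak{e}\,\tfrac{u^2}{2}+O(u^4),
\]
so for $|u|\geq\sqrt{2(1-\mathfrak{e})}$ and $\mathfrak{e}\geq 1/2$ we get $1+\mathfrak{e}\cos\theta\leq \mathfrak{e}u^2\leq u^2$, hence
\[
p(\theta):=1+\frac{7\beta_0}{1+\mathfrak{e}\cos\theta}\ \geq\ \frac{7\beta_0}{u^2}=\frac{1}{u^2}.
\]
The comparison equation $y''+u^{-2}y=0$ (with $7\beta_0=1$) has the explicit oscillatory solutions $y=\sqrt{u}\cos(\tfrac{\sqrt{3}}{2}\ln u + \varphi)$, since $7\beta_0-1/4=3/4>0$. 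These have $(\sqrt{3}/(2\pi))\,\ln(\delta/\sqrt{2(1-\mathfrak{e})})$ zeros on $u\in[\sqrt{2(1-\mathfrak{e})},\delta]$ for any fixed small $\delta>0$. By Sturm's theorem, the $(p_z,z)$-component of our linearized flow must oscillate at least as often on this interval, so its rotation number over $\theta\in[\pi+\sqrt{2(1-\mathfrak{e})},\pi+\delta]$ is bounded below by a quantity that tends to $+\infty$ as $\mathfrak{e}\to 1^-$. By symmetry the same holds on $[\pi-\delta,\pi-\sqrt{2(1-\mathfrak{e})}]$, and $\dot\phi\geq 1$ elsewhere, so $\rho_e(\beta_0,\mathfrak{e})\to +\infty$.

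With the two endpoint behaviors $\rho_e(\beta_0,0)<5/2$ and $\lim_{\mathfrak{e}\to 1^-}\rho_e(\beta_0,\mathfrak{e})=+\infty$ plus continuity, the intermediate value theorem applied to $\mathfrak{e}\mapsto\rho_e(\beta_0,\mathfrak{e})$ on $[0,1)$ produces, for each integer $k\geq 2$, some $\mathfrak{e}_k\in(0,1)$ with $\rho_e(\beta_0,\mathfrak{e}_k)=k+\tfrac12$. The hardest point is verifying the divergence as $\mathfrak{e}\to 1^-$; the Sturm–Liouville comparison with the Euler equation is essentially the only step where care is needed, and the choice $\beta_0=1/7$ is made precisely so that $7\beta_0>1/4$ and the model equation truly oscillates.
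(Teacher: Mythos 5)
Your proposal is correct in substance, and it follows a genuinely different path from the paper's own argument. The paper proves the lemma by working at the degenerate boundary value $\beta=1$, showing directly (via an explicit blow-up comparison in the phase ODE $\dot\phi = 1+\tfrac{7}{1+\cos\theta}\sin^2\phi$, after ruling out a finite limit $A\ne k\pi$ and then comparing near $\theta=\pi$ with $\dot\phi = 1+\tfrac{\phi^2}{2(\theta-\pi)^2}$) that $\phi(\theta)\to+\infty$ as $\theta\to\pi^-$ when $\mathfrak{e}=1$; it then invokes continuous dependence to make $\rho_e$ large for $\mathfrak{e}$ near $1$, and finally runs the intermediate value theorem in $\beta$ using $\rho_e(0,\mathfrak{e})=2$. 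You instead fix $\beta_0=1/7$, establish $\rho_e(\beta_0,0)=1+\sqrt{2}<5/2$ from the constant-coefficient case, prove directly that $\rho_e(\beta_0,\mathfrak{e})\to+\infty$ as $\mathfrak{e}\to 1^-$ by Sturm oscillation against the Euler equation $y''+u^{-2}y=0$ near $\theta=\pi$, and run the intermediate value theorem in $\mathfrak{e}$. Each approach buys something: the paper's singular-limit argument at $\mathfrak{e}=1$ makes the blow-up mechanism at the collision angle $\theta=\pi$ transparent, while your Sturm-Liouville comparison avoids the singular limit altogether, gives an explicit oscillation count $\sim\tfrac{\sqrt{3}}{2\pi}\ln\bigl(\delta/\sqrt{1-\mathfrak{e}}\bigr)$, and has the advantage of a fixed, well-chosen $\beta_0$ (any $\beta_0\in(1/28,5/28)$ would do: $>1/28$ makes the Euler comparison oscillatory, and $<5/28$ makes $\rho_e(\beta_0,0)<5/2$).

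One small blemish worth noting, though it does not affect the conclusion: the chain
\[
1+\mathfrak{e}\cos\theta \le \mathfrak{e}u^2 \le u^2
\]
for $|u|\ge\sqrt{2(1-\mathfrak{e})}$ is not correct at the left endpoint, since $1+\mathfrak{e}\cos\theta\le\mathfrak{e}u^2$ is equivalent (using $1-\cos u\le u^2/2$) to $u^2\ge 2(1-\mathfrak{e})/\mathfrak{e}$, which is slightly larger than $2(1-\mathfrak{e})$. However, what you actually use is only the weaker bound $1+\mathfrak{e}\cos\theta\le u^2$, and that one does hold on $|u|\ge\sqrt{2(1-\mathfrak{e})}$ for all $\mathfrak{e}\in[0,1]$: $(1-\mathfrak{e})+\mathfrak{e}u^2/2\le u^2$ reduces to $(1-\mathfrak{e})\le u^2(2-\mathfrak{e})/2$, which follows from $u^2\ge 2(1-\mathfrak{e})$ and $(1-\mathfrak{e})(2-\mathfrak{e})\ge(1-\mathfrak{e})$. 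So either drop the intermediate $\mathfrak{e}u^2$ or enlarge the threshold to $|u|\ge 2\sqrt{1-\mathfrak{e}}$; the logarithmic growth of the zero count, and hence the divergence of $\rho_e$, is unaffected.
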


\begin{proof} Consider $\phi=\phi(\theta),\theta \in [0,\pi),$  a solution to
$$
\dot \phi = 1 + \frac{7}{1+\cos \theta}\sin^2 \phi \geq 1,
$$
starting from any $\phi(0)\in \R.$
We claim that
\begin{equation}\label{limphi}
\lim_{\theta \to \pi} \phi(\theta) = +\infty.
\end{equation}
To prove this claim, we assume by contradiction that $\lim_{\theta \to \pi} \phi(\theta) = A \in \R.$ This limit should exist since $\dot \phi>1$ and thus $\phi$ is monotone increasing. We show that $A = k\pi$ for some $k\in \Z$. Indeed, if $\sin^2 A \neq 0$, then $\dot \phi> 1+ \frac{7}{1+\cos \theta}\frac{\sin^2 A}{2}$ for $\theta$ sufficiently close to $\pi$. Since $ \int_{\pi - \epsilon}^\pi \frac{c}{1+\cos \theta} \ d\theta = +\infty$ for any $c>0$ and any  $\epsilon>0$ small, we obtain $A-\phi(0)=\int_0^\pi \dot \phi \ d\theta = +\infty$, a contradiction.

From the periodicity of $\dot \phi$ with respect to $\phi$ we may assume that $A = 0$. Since $\lim_{\theta \to \pi} \phi(\theta) = 0,$ we conclude that
$$
\dot \phi > 1 + \frac{\phi^2}{2(\theta - \pi)^2},
$$
for every $\theta$ sufficiently close to $\pi$. Hence any solution $\phi_0=\phi_0(\theta)$ to the equation $\dot \phi = 1 + \frac{\phi^2}{2(\theta - \pi)^2}$, with $\phi_0(\pi-\epsilon) = \phi(\pi - \epsilon)$, $\epsilon>0$ small, satisfies
$$
\phi_0(\theta) = (\theta - \pi)(1 -  \tan((c_1 - \log (\pi - \theta))/2)) <\phi(\theta), \quad \forall \pi-\epsilon<\theta<\pi.
$$
However, a direct computation shows that $\phi_0$ blows up for some $\theta_*\in(\pi-\epsilon,\pi).$ This contradiction proves our claim and the limit in \eqref{limphi} holds.

By continuous dependence of solutions with respect to parameters, we find $\mathfrak{e}_*\in [0,1)$ close to $1$ so that the solution to \eqref{equ of phi} with $\beta=1$ and $\mathfrak{e}=\mathfrak{e}_*$ is such that $\phi(2\pi) - \phi(0) > 2\pi (k+1/2)$ for any initial condition $\phi(0)$. This implies, in particular, that $
\rho_e > k + 1/2$. Since $\rho_e = 2$ for $\beta=0$ we find $\beta_*\in (0,1)$ so that for $\beta= \beta_*$ and $\mathfrak{e}=\mathfrak{e}_*$, the rotation number of the Euler orbit is $k+1/2.$ \end{proof}

To complete the proof of Theorem \ref{thm:Dk}-(ii) we show that, for any fixed $\mathfrak{e}\in(0,1/3)$, there exist non-trivial intervals $I_{5/2},I_{7/2}\subset (0,1),$ depending on $\mathfrak{e}$, so that $\rho_e=5/2$ if $\beta\in I_{5/2}$,  and $\rho_e=7/2$ if $\beta\in I_{7/2}$. Moreover, such intervals $I_{5/2}$ and $I_{7/2}$ converge to $5/28$ and $3/4$ as $\mathfrak{e} \to 0^+$, respectively.

For $\mathfrak{e}=0$, we see from \eqref{e=0}  that $\rho_e=\rho_{p_r,r}+\rho_{p_z,z}=1+\sqrt{1+7\beta}$ for every $\beta.$
Hence $\rho_e$ is strictly increasing in $\beta\in[0,1]$ from $2 < 5/2$ to $1+2\sqrt{2} > 7/2$. For $\beta=5/28$ and $3/4$, respectively, we have $\rho_e=5/2$ and $\rho_e=7/2$. If $\beta=0$, then $\rho_e=2$ is independent of $\mathfrak{e}$. If $\beta=1$ and $\mathfrak{e}\in (0,1/3)$, then \eqref{equ of phi} gives $\dot \phi \geq 1+\frac{21}{4} \sin^2 \phi$. Comparing with the linear system, we obtain $\rho_e>1 +\sqrt{1+7/(1+1/3)}=7/2$. By the continuity of $\rho_e$ with respect to parameters and the monotonicity of $\rho_e$ with respect to $\beta$, we obtain from the argument above that for each $\mathfrak{e} \in (0,1/3)$ there exist unique non-trivial intervals $I_{5/2}$ and $I_{7/2}$ with the desired properties. For $(\beta,\mathfrak{e})$ on such intervals, the Euler orbit is negative hyperbolic. Since hyperbolicity is preserved under small perturbation of parameters, we conclude that $\mathcal{D}_2 \cap \mathcal{D}$ and $\mathcal{D}_3 \cap \mathcal{D}$ are non-trivial open subsets. The proof of Theorem \ref{thm:Dk}-(ii) is finished.

Now we prove Theorem \ref{thm:Dk}-(iii).

We  show that $\mathcal{D}_{{\rm odd/even}}\subset (0,1)\times [0,1)$ is dense. This implies, in particular, that $\mathcal{D}_{\mathcal{R}}\subset (0,1)\times [0,1)$ is dense. Recall that $\mathcal{D}_{k+1/2}$ is the collection of all $(\beta,\mathfrak{e})\in (0,1)\times[0,1)$ so that $\zeta_e$ is negative hyperbolic with rotation number $k+1/2$.
For any $(\beta, \mathfrak{e})\in \mathcal{D}\setminus (\cup_{k>1}\overline{\mathcal {D}_{k+1/2}})$,  $\zeta_e$ is either degenerate or nondegenerate elliptic. So the eigenvalues of the linearized first return map are $\lambda_\pm = e^{\pm\eta \sqrt{-1}}\in \mathbb{U}$. If $\eta \in \Q$ then some $k_0$-iterate of $\zeta_e$ is degenerate with eigenvalue $1$.  In that case, we know from Theorem \ref{thm:Dk}-(i) that the linearized first return map is necessarily the identity map, and thus  we see from \eqref{equ of phi} that the rotation number of $\zeta_e^{k_0}$ is strictly increasing in $\beta$. This implies, in particular, that $\rho_e$ is also strictly increasing in $\beta$. To deal with the case $\eta \in \R \setminus \Q$, we first observe that for  $\tilde \beta>\beta$ and fixed $\mathfrak{e}$, there exists $\delta>0$ so that the corresponding solutions $\tilde \phi$ and $\phi$ of \eqref{equ of phi} starting from the same initial condition satisfy $\tilde \phi(2k\pi)> \phi(2k\pi) + \delta$ for every $k\in \N$.
If $\eta \in \R \setminus \Q$ for certain $(\beta,\mathfrak{e})$, then for a certain sequence $k_n \to +\infty$ the linearized first return map along $\zeta_e^{k_n}$ converges to the identity map. This implies that the length of the rotation interval associated with $\zeta_e^{k_n}$ converges to $0$. Hence, taking $\tilde \beta > \beta$ with fixed $\mathfrak{e}$, we obtain from the uniform shift of the rotation interval that for $n$ sufficiently large the corresponding rotation intervals are separated by a positive distance, and thus the rotation number of $\zeta_e$ for $\tilde \beta$ is strictly larger than that for $\beta$. The continuity of $\rho_e$ with respect to parameters implies that there exists $\tilde \beta$ arbitrarily close to $\beta$ so that $\rho_e\in \Q$.
We have proved that the subset $\mathcal{D}_{{\rm odd/even}}\subset (0,1)\times [0,1)$ of parameters $(\beta,\mathfrak{e})$ for which $\rho_e=p/q\in \Q$ with $p>0$ odd and $q>0$ even is  dense.

The proof of Theorem \ref{thm:Dk} is now complete.

\begin{rem}\label{rem: alternative proof}
We briefly discuss an operator theory approach to Theorem \ref{thm:Dk}. Fix $\omega \in \mathbb{U} \subset \C$, let $f(\theta):= (1+\mathfrak{e}\cos \theta)^{1/2}$, and let
$$
A_\omega \cdot x (\theta) :=f(\theta)[-d^2/d\theta^2-1](x(\theta)f(\theta))-7\beta x(\theta),\quad \forall \theta,
$$
be the self-adjoint operator acting on curves $x\in W^{2,2}([0,2\pi],\mathbb{C})$ satisfying $\omega (\dot x,x)(0)=(\dot x, x)(2\pi)$. Notice that $A_\omega$ is bounded from below.

 Let $v(A_\omega) :=\dim_{\mathbb{C}}\ker_{\mathbb{C}}(A_\omega)\in\{0,1,2\}$, and let $\mathcal{M}(A_\omega)$ be the Morse index of $A_\omega$, i.e., the total multiplicity of the negative spectrum. From \eqref{more equa mas omega}, we know that
\begin{equation}\label{index equ}
\mathcal{M}(A_\omega)=i_{\omega}(\gamma) \quad \mbox{ and } \quad v(A_\omega)=\nu_{\omega}(\gamma),
\end{equation}
where $\gamma:[0,2\pi]\to {\rm Sp}(2)$ is the fundamental solution of \eqref{linear euler z new co} starting from the identity,  and $(i_{\omega},\nu_{\omega})$ is given by Definition \ref{def:Maslov-type index}.

If $v(A_\omega)=\nu\in \{1,2\}$ for some $(\beta,\mathfrak{e})=(\beta_*,\mathfrak{e}_*)$, then the solutions of
\begin{equation}\label{2nd ode with pm 1 bdd}
\ddot x=-x-\frac{7\beta_*}{1+\mathfrak{e}_*\cos\theta}x, \quad
\omega(\dot x,x)(0)=(\dot x,x)(2\pi),
\end{equation}
form a $\nu$-dimensional space.
In that case, we say  that $\gamma$ is $\omega$-degenerate for $(\beta_*,\mathfrak{e}_*)$. Actually, if $\omega=\pm 1$, then the multiplicity $\nu$ is related to the `coexistence' problem for the Ince's equation. More precisely,  $y(u):=x(2u)$, satisfies a special case of  Ince's equation
\begin{equation}\label{ince equation}
(1+\mathfrak{e}_*\cos2u)\ddot y(u)+(4(1+7\beta_*)+4\mathfrak{e}_*\cos2u)y(u)=0.
\end{equation}
Applying Theorems 7.1 and 7.3 from \cite{MW66},
we conclude that if $\gamma$ is $1$-degenerate for $(\beta_*,\mathfrak{e}_*)$, then $\nu=2$, and if $\gamma$ is $-1$-degenerate for $(\beta_*,\mathfrak{e}_*)$, then $\nu=1$. Moreover, fixing $\mathfrak{e}=\mathfrak{e}_*$ and taking $\beta$ slightly larger than $\beta_*$, the first identity in \eqref{index equ} implies that   $i_\omega(\gamma)$ and $\mathcal{M}(A)$ change by the same amount.
%$$i_{\omega}(\gamma_{\beta,\mathfrak{e}_*})-i_{\omega}(\gamma_{\beta_*,\mathfrak{e}_*})=\mathcal{M}(A_\omega(\beta,\mathfrak{e}_*))-\mathcal{M}(A_\omega(\beta_*,\mathfrak{e}_*))=\nu.$$
By \eqref{def of mean index}, the  mean index $\hat i(\gamma)$ is the average of $i_{\omega}(\gamma),\ \omega \in \mathbb{U}$. Since $i_\omega(\gamma)$ is non-decreasing in $\beta$, we conclude from \eqref{def: mean index and rotation number}  that $\hat i(\gamma)$ and $\rho_e=1+\hat i(\gamma)/2$ are non-decreasing in $\beta$ as well. If the transverse linearized first return map along $\zeta_e$ admits the eigenvalues $\omega, \bar \omega \in \mathbb{U} \setminus \{\pm1\}$ for $(\beta_*,\mathfrak{e}_*)$, then  \eqref{mean index 2} implies that $\hat i(\gamma)$ is strictly increases in $\beta$.

The spectrum of $A_\omega$ is a discrete and unbounded subset of $\R$. Moreover, it is strictly decreasing in $\beta\in \R$, and there exists a sequence $\beta_n \to +\infty$ such that $\gamma$ is $\omega$-degenerate for $(\mathfrak{e}_*,\beta_n)$. The continuity of $A_\omega$ in $\beta$ and $\mathfrak{e}$ implies the existence of a sequence of  disjoint curves
$$\Gamma_n^1:=\{(\mathfrak{e},\beta_n(\mathfrak{e})),\ \mathfrak{e}\in[0,1)\},\quad
\Gamma_{n,\pm}^{-1}:=\{(\mathfrak{e},\beta_{n,\pm}(\mathfrak{e})),\ \mathfrak{e}\in(0,1)\},$$
so that $\gamma$ is $1$-degenerate with rotation number $n+1$ for $(\mathfrak{e},\beta) \in \Gamma^1_n$, and $-1$-degenerate with rotation number $n+3/2$ for $(\mathfrak{e},\beta) \in \Gamma^{-1}_{n,\pm}$. The curve $\Gamma^1_n$ intersects the $\beta$-axis at $((n^2-1)/7,0)$, and both curves $\Gamma^{-1}_{n,\pm}$ approach the $\beta$-axis at $(((n+1/2)^2-1)/7,0)$, see Figure \ref{picture of Gamma}. The curves $\Gamma^1_1, \Gamma^{-1}_{1,-}$ approach the point $(0,1)$ as $\mathfrak{e} \to 1$, and for every $n > 1$, the curves $\Gamma^{-1}_{1,+},\Gamma^1_n,\Gamma^{-1}_{n,\pm}$ approach the point $(1/28,1)$ as $\mathfrak{e} \to 1$, see \cite{HOT}. For each $n$, $\Gamma^{-1}_{n,-}$ and $\Gamma^{-1}_{n,+}$ bound an open set of parameters for which the Euler orbit is negative hyperbolic. It is unknown whether the functions $\beta_n(\mathfrak{e}),\beta_{n,\pm}$ are monotone in $\mathfrak{e}.$

\begin{figure}[ht]
\centering
\includegraphics[width=0.7\textwidth]{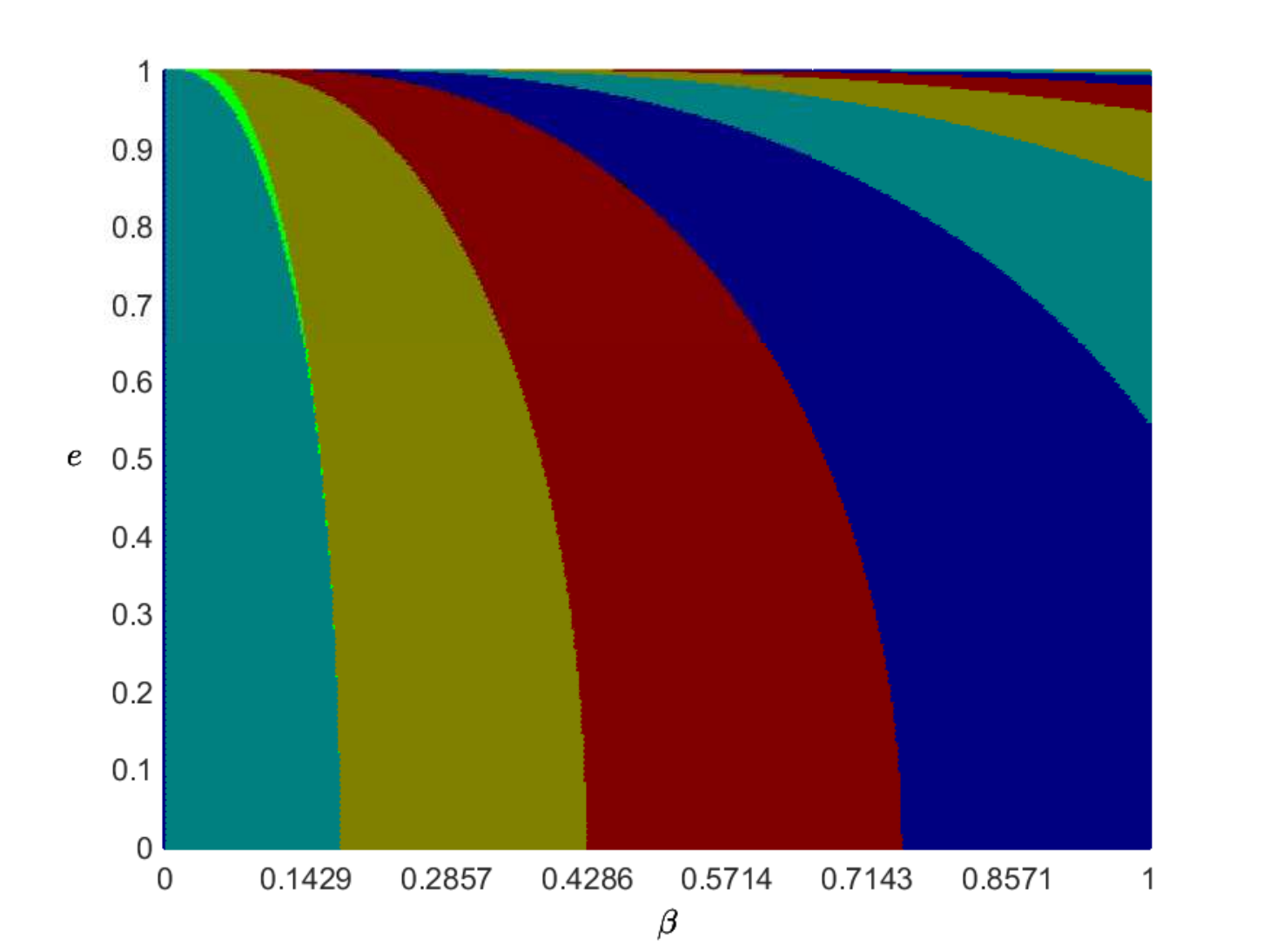}
\caption{The curves $\Gamma_n^1$ and $\Gamma_{n,\pm}^{-1}$ in the square $[0,1] \times [0,1]$. The curve $\Gamma^1_1$ is contained in the $\mathfrak{e}$-axis, and the curves $\Gamma_{1,\pm}^{-1}$ bound an open set (green region) where the Euler orbit is negative hyperbolic with rotation number $5/2$.}
\label{picture of Gamma}
\end{figure}
\end{rem}

\section{Convexity of Energy Surfaces}\label{sec: convexity}

In this section, we study  the convexity of the energy surface $\mathfrak{M}$. Recall that the parameters $\varpi,\alpha>0$ for which the energy surface $\mathfrak{M} = H^{-1}(-1)$ is diffeomorphic to the three-sphere satisfy
\begin{equation}\label{intervalvarpi}
\frac{1}{2} < \varpi^2 < \frac{(1 + 4\alpha^{-1})^2}{2}.
\end{equation}

We prove that for each fixed $\alpha>0$ there exists $\frac{1}{2}< \hat \varpi^2 <  \frac{(1 + 4\alpha^{-1})^2}{2}$ so that if  $\hat \varpi^2 < \varpi^2 < \frac{(1 + 4\alpha^{-1})^2}{2}$, then $\mathfrak{M}$ is strictly convex and if $\frac{1}{2} < \varpi^2 < \hat \varpi^2$, then $\mathfrak{M}$ is not convex.
%The curve $\alpha \mapsto \hat \varpi(\alpha)$ will be explicitly expressed as an algebraic curve.

In \cite{Sa04} we find the following criterion for an energy surface of a mechanical system to be strictly convex.

\begin{thm}[\cite{Sa04}] \label{thm: convexity}Let $\mathfrak{M} =H^{-1}(-1) \subset \R^4$ be a regular sphere-like energy surface of a mechanical Hamiltonian $H = \frac{p_r^2+p_z^2}{2} + V(r,z)$. Denote by $\mathcal{H}=\{V\leq -1\} \subset \R^2$ the disk-like Hill region given by the projection of $\mathfrak{M}$ to the $(r,z)$-plane. Then $\mathfrak{M}$ is strictly convex if and only if
\begin{equation}\label{criterion formula}
\begin{aligned}
\Delta:= & -2(1+V)(\partial_{rr}V\partial_{zz}V - (\partial_{rz}V)^2) \\ & + \partial_{rr}V(\partial_zV)^2 + \partial_{zz}V (\partial_rV)^2 - 2\partial_{rz}V\partial_r V \partial_z V
\end{aligned}
\end{equation}
is positive on $\mathcal{H}$. Moreover, if $\Delta$ is somewhere negative in $\mathcal{H}$, then $\mathfrak{M}$ is not convex, that is $\mathfrak{M}$ bounds a subset of $\R^4$ that is not convex.
\end{thm}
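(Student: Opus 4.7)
The plan is to apply the criterion of Theorem \ref{thm: convexity} directly to the explicit potential
\[
V(r,z)=\frac{\varpi^2}{2r^2}-\frac{1}{r}-\frac{4\alpha^{-1}}{(r^2+(1+2\alpha)z^2)^{1/2}},
\]
rewrite the resulting function $\Delta(r,z)$ in the parameters $(\beta,\mathfrak{e})$ via the substitutions $\beta=(1+4\alpha^{-1})^{-1}$ and $\varpi^2=(1-\mathfrak{e}^2)/(2\beta^2)$, and then determine the sign of $\min_{\mathcal H}\Delta$ as a function of $(\beta,\mathfrak{e})$. The entire theorem will follow once I show: (a) for every fixed $\beta\in[0,1]$ the minimum is a strictly monotone function of $\mathfrak{e}$; (b) this minimum vanishes at a unique $\mathfrak{e}=\mathfrak{e}_{\text{conv}}(\beta)\in(0,1)$; (c) the resulting function has the stated boundary values, is strictly decreasing and concave.

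The first block of work is to locate the minimum of $\Delta$ on $\mathcal{H}$. Using the $z$-symmetry $V(r,z)=V(r,-z)$ I have $\partial_zV|_{z=0}=\partial_{rz}V|_{z=0}=0$, so on $\{z=0\}$ the formula reduces to
\[
\Delta(r,0)=\partial_{zz}V(r,0)\bigl[-2(1+V(r,0))\,\partial_{rr}V(r,0)+(\partial_rV(r,0))^2\bigr].
\]
Since $\partial_{zz}V(r,0)>0$ on $\mathcal H\cap\{z=0\}$, the sign of $\Delta$ on this slice is governed by the bracket, which is an explicit rational function of $r$. I will first show that on $\partial\mathcal H\cap\{z=0\}$, where $1+V=0$, the bracket becomes $(\partial_rV)^2\ge 0$ and vanishes exactly at the two endpoints of $\mathcal{H}\cap\{z=0\}$ (the turning points of the Euler orbit), which correspond to $r=\beta\varpi^2/(1\pm\mathfrak{e})$. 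The second block is to show that the minimum of $\Delta$ on all of $\mathcal{H}$ is actually attained on the zero-velocity curve $\partial\mathcal H$, and in fact at one of these two symmetric points. Along $\partial\mathcal H$ the first term in \eqref{criterion formula} vanishes and the remaining piece is the Hessian of $V$ evaluated on the unit tangent to $\partial\mathcal H$ (up to a positive factor $|\nabla V|^2$); this is a one-variable problem on a closed curve and can be analyzed by differentiating in the arc-length parameter. The $z$-symmetry of $V$ forces the critical points of this restriction at $z=0$, and a direct comparison argument--monotonicity of the Hessian quadratic form along the arc from $(r_{\min},0)$ to $(r_{\max},0)$--will show the absolute minimum is attained at $(r_{\max},0)$ (the ``outer'' brake point).

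At this stage the problem is reduced to analyzing a single explicit scalar
\[
\Phi(\beta,\mathfrak{e}):=\bigl.\Delta(r,0)\bigr|_{r=\beta\varpi^2/(1-\mathfrak{e})}
\]
(up to positive factors), which, after clearing denominators, becomes a polynomial in $\mathfrak{e}$ for each fixed $\beta$. I would then verify three properties by direct algebraic computation: (i) $\partial_{\mathfrak{e}}\Phi<0$ whenever $\Phi=0$, so that each root is simple and implicit differentiation defines a smooth strictly decreasing curve $\mathfrak{e}=\mathfrak{e}_{\text{conv}}(\beta)$; (ii) at $\beta=0$ the defining equation degenerates to a quadratic whose relevant root is $(7+\sqrt{17})/16$; and (iii) at $\beta=1$, combined with the constraint $(\beta,\mathfrak{e})\in\mathcal D$, the root is forced to $\mathfrak{e}=0$. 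Concavity of $\mathfrak{e}_{\text{conv}}$ will be obtained by implicit differentiation,
\[
\mathfrak{e}_{\text{conv}}''(\beta)=-\frac{\Phi_{\beta\beta}+2\mathfrak{e}_{\text{conv}}'\Phi_{\beta\mathfrak{e}}+(\mathfrak{e}_{\text{conv}}')^{2}\Phi_{\mathfrak{e}\mathfrak{e}}}{\Phi_{\mathfrak{e}}}\Big|_{\mathfrak{e}=\mathfrak{e}_{\text{conv}}(\beta)},
\]
and checking that the numerator has a definite sign on the locus $\{\Phi=0\}$.

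The main obstacle is step (b): proving that the worst point on $\mathcal H$ genuinely lies on $\partial\mathcal H$ at $z=0$. A priori the three terms in $\Delta$ mix the Hessian eigenvalues of $V$ with the factor $-2(1+V)\ge 0$, and an interior minimum could appear as $\mathfrak{e}$ grows and the Hill region elongates in the $z$-direction. I would handle this by writing $\Delta=-2(1+V)\det(\mathrm{Hess}V)+\mathrm{Hess}V(\nabla V,\nabla V)$ and showing that along any radial ray issuing from $(r_0,0)$ the dominant term is controlled by its value at the boundary; concretely, tracking $\partial_z\Delta$ and exploiting that $\partial_{rz}V$ is odd in $z$ should give $\partial_z\Delta|_{z=0}\equiv 0$ together with a definite sign of $\partial_{zz}\Delta$. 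The remaining algebra for the boundary values $(7+\sqrt{17})/16$ and $0$, and the sign of $\Phi_{\mathfrak{e}}$, is routine but lengthy and is best relegated to a computation, quoting \cite{Sa04} for the convex/non-convex dichotomy at the critical curve.
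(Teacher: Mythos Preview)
Your proposal does not prove the stated Theorem~\ref{thm: convexity}: that result is quoted from \cite{Sa04} and is not proved in the paper, and you do not prove it either---you \emph{apply} it. What you are actually sketching is a proof of Theorem~\ref{thm_convexity}. I will compare your approach to the paper's proof of that result.

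The central step of your plan is wrong. You claim that the minimum of $\Delta$ over $\mathcal H$ is attained on $\partial\mathcal H$, and in fact at the outer turning point $(r_{\max},0)$ of the Euler orbit. The paper proves the opposite. First, $\Delta>0$ strictly on $\partial\mathcal H\cap\{z=0\}$: at those two points $1+V=0$, $\partial_zV=\partial_{rz}V=0$, so $\Delta=\partial_{zz}V\,(\partial_rV)^2$, and $\partial_rV\neq 0$ there because $\mathfrak M$ is a regular hypersurface. Your assertion that the bracket ``vanishes exactly at the two endpoints'' is therefore false. More importantly, Lemma~\ref{lem: w^2 for the first point on the boundary} shows that at the value $\varpi=\varpi_0$ where $\min\Delta|_{\partial\mathcal H}$ first reaches $0$, one already has $\min\Delta|_{\mathcal H\setminus\partial\mathcal H}<0$. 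Hence convexity is lost strictly \emph{before} the boundary minimum becomes nonpositive, i.e.\ $\hat\varpi>\varpi_0$, and the transition point lies in the interior of $\mathcal H$ at a slope $k$ with $k^2\in\bigl(\tfrac{11+\sqrt{17}}{8},\tfrac{5}{2}\bigr)$---not at $z=0$. Your reduction to a one-variable polynomial $\Phi(\beta,\mathfrak e)$ built from the value of $\Delta$ at $(r_{\max},0)$ therefore detects the wrong curve (the dashed curve in Figure~\ref{picture of econv and bconv}, not the solid one).

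The paper's route is substantially more involved: after a rescaling it writes $\Delta$ as a quadratic in $\varpi^2$, passes to the slope variable $k$ with $z^2=(k^2-1)r^2$, solves $\Delta=0$ for $\varpi^2=w_\pm(r,k)$, and shows (Lemma~\ref{lem: the first zero appear on the red curve}) that the interior critical point where convexity is first lost must lie on the curve $\{F_1=0\}$. The restriction $w_{1,+}=w_+|_{\{F_1=0\}}$ is then analyzed (Lemma~\ref{lem: differential of w^2 along F1=0}), and its unique local maximum at $k=\hat k_1$ produces the explicit formula \eqref{formula varpihat} for $\hat\varpi^2$. The curve $(\beta_{\text{conv}},\mathfrak e_{\text{conv}})$ is finally obtained by parametrizing with $\nu=(2\hat k_1^2-1)^{1/2}$, and monotonicity and concavity are checked by direct computation on the resulting explicit expressions. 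Your proposed shortcut via the boundary cannot reproduce this analysis.
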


Under the  linear change of coordinates $r \mapsto r/\alpha $ and $z\mapsto z/(\alpha \sqrt{1+2\alpha} )$,
which does not alter the convexity properties of $\mathfrak{M}$, we may assume that

\begin{equation}\label{The potential after scalling}
V=\frac{\varpi^2\alpha^2}{2 r^2}-\frac{\alpha}{r}-\frac{4}{\sqrt{r^2+z^2}}.
\end{equation}

A direct computation using  \eqref{The potential after scalling} and \eqref{criterion formula} gives

\begin{equation}\label{quadratic form}
\Delta=A(r,z)\varpi^4+B(r,z)\varpi^2+C(r,z),
\end{equation}
where
\begin{equation}
\begin{aligned}
A(r,z) & =\frac{8(2z^2-r^2)\alpha^4}{(r^2+z^2)^{5/2}r^6},\\
B(r,z) & =\frac{24(r^2-2z^2)}{(r^2+z^2)^{5/2}r^2}(\frac{\alpha^3}{r^3}-\frac{\alpha^2}{r^2})+\frac{48(2r^2-3z^2)\alpha^2}{(r^2+z^2)^3r^4},\\
C(r,z)& =\frac{4(r^2-2z^2)}{(r^2+z^2)^{5/2}r^2}(-\frac{3\alpha^2}{r^2}+\frac{4\alpha}{r})-\frac{192}{(r^2+z^2)^{7/2}}-\frac{96(r^2-z^2)\alpha-64r^3}{(r^2+z^2)^3r^3}.
\end{aligned}
\end{equation}
It will be convenient to use coordinates $(r,k),$ instead of $(r,z)$, where $k\geq 1$ satisfies
$$
z^2=(k^2-1)r^2.
$$
In this case, $\Delta$ is replaced with
\begin{equation}\label{quadratic form 1}
\widetilde \Delta:=\frac{k^7r^9}{4}\Delta=a(r,k)\varpi^4+b(r,k)\varpi^2+c(r,k),
\end{equation}
where
\begin{equation}\label{coefficients}
\begin{aligned}
a(r,k)
& =2\alpha^4k^2(2k^2-3),\\
 b (r,k) & =-6\alpha^2kr((\alpha k-rk+3)(2k^2-3)-1),\\
c(r,k) & =r^2(\alpha k(2k^2-3)-4)(12+3\alpha k-4kr).
\end{aligned}
\end{equation}
For simplicity we denote $\widetilde \Delta$ again by $\Delta = \Delta(r,k)$. We also keep denoting by $\mathcal{H}$ the Hill region in coordinates $(r,k)$. The parameter $k\geq 1$ will be referred to as the slope.

In what follows the parameter $\alpha>0$ is fixed, while $\varpi$ may  vary in the interval given in \eqref{intervalvarpi}.

If $\varpi^2$ is sufficiently close to $(1+4\alpha^{-1})^2/2$, then $\mathfrak{M}$ is sufficiently close to a nondegenerate minimum of $H$ and thus strictly convex. This means that $\Delta|_{\mathcal{H}}>0$. As $\varpi^2$ decreases from $(1+4\alpha^{-1})^2/2$  to $1/2$, we show that there exists a special value for $\varpi$, denoted $\hat \varpi = \hat \varpi(\alpha)$, so that strict convexity ($\Delta|_{\mathcal{H}}>0$) holds for $\varpi > \hat \varpi$ and non-convexity  ($\Delta<0$ somewhere in $\mathcal{H}$) holds for $\varpi < \hat \varpi.$ Moreover, the first point where $\Delta$ vanishes on $\mathcal{H}$  occurs in $\mathcal{H} \setminus \partial \mathcal{H}.$

Let us explain the strategy of finding $\hat \varpi$. For any fixed $(r,k)$, $k\geq 1, k^2\neq 3/2$, $\Delta$ is a quadratic function of $\varpi^2$. It turns out that its discriminant $b^2-4ac$ is non-negative and we can solve $\Delta=0$ for $\varpi^2$ to obtain functions
$$
w_\pm  := \frac{-b \pm \sqrt{b^2-4ac}}{2a}.
$$
For each $\varpi$, we consider the level curves
$$
\ell_\pm:=\{w_\pm(r,k) = \varpi^2\}.
$$
If $\mathcal{H}$ does not intersect $\ell_\pm$, then $\Delta>0$ on $\mathcal{H}$ and thus $\mathfrak{M}$ is strictly convex. It is worth mentioning that  $\Delta$ is always positive on $ \partial \mathcal{H} \cap \{z=0\}$.

As $\varpi$ decreases,  strict convexity is lost for the first time if one of the following situations occurs:
\begin{itemize}
    \item[a)]  $\ell_\pm$ intersects $\partial \mathcal{H}$. This case will be ruled out by Lemma \ref{lem: w^2 for the first point on the boundary} below. Indeed, we show that if $\Delta=0$ somewhere in $\partial \mathcal{H}$, then there exist nearby points in the interior of $\mathcal{H}$ where $\Delta<0$. In particular, convexity of $\mathfrak{M}$ was already lost.

    \item[b)]  $\ell_\pm$ intersects  $\mathcal{H}\setminus \partial \mathcal{H}$. In this case, we show that $\ell_+$ or $\ell_-$ intersects only at the interior of $\mathcal{H}$ and localizing such intersection points is reduced to the study of $\Delta =0$ (local maximum) and $\nabla w_+=0$ (or $\nabla w_-=0$) in the interior of $\mathcal{H}$.
    \end{itemize}

%\subsection{Losing convexity at $\partial \mathcal{H}$}

In the first step, we identify a slope for which $\Delta|_{\partial \mathcal{H}}$ vanishes for the first time as $\varpi$ decreases. Moreover, we show that convexity is first lost in the interior of $\mathcal{H}$  as $\varpi$ decreases.

\begin{lem}\label{lem: w^2 for the first point on the boundary}
Fix $\alpha>0$. There exist $\frac{\sqrt{11+\sqrt{17}}}{2\sqrt{2}}< k_0 < \sqrt{5/2}$ and  $\frac{1}{\sqrt{2}} < \varpi_0 < \frac{1+4\alpha^{-1}}{\sqrt{2}}$  such that

\begin{itemize}
\item[(i)] if $\varpi>\varpi_0$, then $\Delta>0$ everywhere on $\partial\mathcal{H}$.

\item[(ii)] if $\varpi<\varpi_0$, then $\Delta<0$ at some point $(r_0,k_0)\in \partial\mathcal{H}$.

\item[(iii)] if $\varpi=\varpi_0$, then $\min \Delta|_{\partial \mathcal{H}}=0$ and $\min \Delta|_{\mathcal{H} \setminus \partial \mathcal{H}}<0$.
\end{itemize}
\end{lem}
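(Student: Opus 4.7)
The plan is to restrict $\Delta$ to the zero velocity curve $\partial \mathcal{H}$, parametrize it by the slope $k$, and locate the critical slope $k_0$ by solving the resulting Lagrange system.

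\emph{Step 1 (Reduction to the boundary).} On $\partial \mathcal{H}$, the identity $V = -1$ together with $\sqrt{r^2+z^2} = kr$ yields the explicit parametrization
\[
\varpi^2 = \frac{2r(\alpha k + 4 - kr)}{\alpha^2 k}.
\]
Substituting this into \eqref{quadratic form 1} turns $\widetilde\Delta|_{\partial \mathcal{H}}$ into an explicit rational function of $(r, k; \alpha)$, and $\partial \mathcal{H}$ is realized as the level set $\varpi = \mathrm{const}$ in the $(r,k)$-plane. Define $\Phi(\varpi) := \min_{(r,k) \in \partial \mathcal{H}(\varpi)} \Delta$. For $\varpi$ near $(1+4\alpha^{-1})/\sqrt{2}$, the Hill region collapses to the nondegenerate minimum of $V$ at $z=0$, and a Taylor expansion there gives $\Delta \approx 2|1+V|\, V_{rr}V_{zz} > 0$, so $\Phi > 0$. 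For $\varpi$ near $1/\sqrt{2}$, $\partial \mathcal{H}$ extends to arbitrarily large $k$, and a direct evaluation at a suitable large-$k$ point forces $\Phi < 0$. By continuity there exists $\varpi_0$ with $\Phi(\varpi_0) = 0$ and the minimum is attained at some $(r_0, k_0)$; since $\Delta|_{\partial \mathcal{H} \cap \{k=1\}} > 0$ by direct substitution (the $z=0$ equator), $k_0 > 1$. Parts (i) and (ii) follow immediately.

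\emph{Step 2 (Bounds on $k_0$).} At the critical configuration, the Lagrange condition $\nabla_{(r,k)}\Delta = \mu \nabla_{(r,k)} V$ together with $\Delta = 0$ and $V = -1$ gives a polynomial system in $(r_0, k_0, \alpha)$. Eliminating $r_0$ and $\varpi_0$ yields a single polynomial $P(k, \alpha)$ whose relevant root is $k_0$. The upper bound $k_0 < \sqrt{5/2}$ comes from sign constraints on the coefficients in \eqref{coefficients} required for $\Delta = 0$ to be compatible with the boundary equation, while the lower bound $\sqrt{(11+\sqrt{17})/8} < k_0$ emerges from a discriminant-type analysis in which a quadratic-in-$k^2$ factor has roots involving $\sqrt{17}$. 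The presence of $\sqrt{17}$ is consistent with its appearance in $\mathfrak{e}_{\text{conv}}(0) = (7+\sqrt{17})/16$ from Theorem \ref{thm_convexity}, which corresponds to the boundary case $\alpha \to 0$ and thus fixes the extremal value of the bound.

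\emph{Step 3 (Part (iii)).} At $(r_0, k_0)$, the formula \eqref{criterion formula} reduces to $V_{rr}V_z^2 + V_{zz}V_r^2 - 2V_{rz}V_rV_z = 0$. Moving into the interior by $-t \nabla V$, the term $-2(1+V)(V_{rr}V_{zz} - V_{rz}^2)$ contributes $2t|\nabla V|^2(V_{rr}V_{zz} - V_{rz}^2) + O(t^2)$, and the boundary piece contributes $-t \, \nabla(V_{rr}V_z^2 + V_{zz}V_r^2 - 2V_{rz}V_rV_z)\cdot \nabla V + O(t^2)$. A sign check at the critical configuration, equivalent to verifying $\mu > 0$ for the Lagrange multiplier, shows that the combined first-order variation of $\Delta$ in the inward normal direction is strictly negative, producing points of $\mathcal{H} \setminus \partial \mathcal{H}$ with $\Delta < 0$. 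The sign $\mu > 0$ itself follows from a deformation argument: implicit differentiation of $\Phi$ at $\varpi_0$, combined with the fact that $\Phi$ transitions from positive to negative as $\varpi$ decreases past $\varpi_0$, pins down the sign.

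The computational heart is Step 2, where extracting the two uniform-in-$\alpha$ bounds on $k_0$ requires a carefully chosen factorization that exposes the factor with root $(11+\sqrt{17})/8$. Blind Gr\"obner-type elimination produces unwieldy expressions, so the main challenge is to identify the change of variables (presumably making the key factor quadratic in $k^2$) that renders the algebraic structure transparent and allows the bounds to be verified uniformly in $\alpha \in (0,\infty)$.
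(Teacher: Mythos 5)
Your overall flavor is right — restrict $\Delta$ to the zero velocity curve, parametrize by the slope $k$, and find a critical slope $k_0$ — but there are concrete gaps that would need to be filled. In Step 1, the intermediate-value argument only produces \emph{some} $\varpi_0$ with $\Phi(\varpi_0)=0$; parts (i) and (ii) require the sharp dichotomy that $\Phi>0$ for all $\varpi>\varpi_0$ and $\Phi<0$ for all $\varpi<\varpi_0$, and that monotone transition is not established by continuity alone. Moreover, your heuristic that for $\varpi$ near $1/\sqrt{2}$ a "suitable large-$k$ point" forces $\Delta<0$ is misleading: at the apex of the zero velocity curve (where $r_L=r_R$, i.e.\ $J:=\sqrt{(4+\alpha k)^2-2\varpi^2\alpha^2k^2}=0$), $\Delta$ reduces to $4(4+\alpha k)(k^2-1)>0$ regardless of $\varpi$, so large $k$ is not where the sign first flips. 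The critical slope in fact lies in the uniformly bounded interval $(\sqrt{(11+\sqrt{17})/8},\sqrt{5/2})$. Also, your small-$\varpi$ positivity argument conflates the interior with the boundary: on $\partial\mathcal{H}$ one has $1+V\equiv 0$, so $\Delta|_{\partial\mathcal{H}}$ equals $(\nabla V^\perp)^T\mathrm{Hess}(V)\nabla V^\perp$ and the positivity near the minimum comes entirely from this term, not from $2|1+V|V_{rr}V_{zz}$.

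The missing idea, which the paper uses and which converts everything into tractable algebra, is to parametrize the boundary not by $(r,k)$ directly but by $(J,k)$: writing the two boundary radii as $r_{L,R}=(4+\alpha k\mp J)/(2k)$ and $\varpi^2=\bigl((4+\alpha k)^2-J^2\bigr)/(2\alpha^2k^2)$, the renormalized quantities $\Delta_{L,R}$ become \emph{quadratic} in $J$ with leading coefficient $-(2k^2-3)$; solving $\Delta_L=0$ for $J=J_{L,+}(k)$ and back-substituting gives a threshold function $W(k)$ with $\Delta|_{\partial\mathcal{H}}>0$ iff $\varpi^2>W(k)$ for all admissible $k$. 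The sharp dichotomy (i)/(ii) is then automatic with $\varpi_0^2=\max_kW(k)=W(k_0)$, and the bounds on $k_0$ come from analyzing $W'(k)$, whose zero satisfies $4(13-22k_0^2+8k_0^4)=\alpha k_0(k_0^2-1)(2k_0^2-5)^2$ — this is where the factor with root $(11+\sqrt{17})/8$ appears, not from a discriminant elimination in a Lagrange system. Finally, your Step 3 sign argument is circular: you cannot appeal to "$\Phi$ transitions from positive to negative" to determine the Lagrange multiplier sign when that transition was only obtained via IVT in Step 1. The paper sidesteps this by computing $\partial_k\Delta(r_L(k_0),k_0)>0$ directly (after substituting $\alpha$ and $\varpi_0$ in terms of $k_0$) and using that $V(r,k)$ is strictly increasing in $k$ at fixed $r$, so decreasing $k$ from $k_0$ stays inside $\mathcal{H}$ while $\Delta$ becomes negative.
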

The proof of Lemma \ref{lem: w^2 for the first point on the boundary} is left to Appendix \ref{subsec: proof of lemma for convexity}.

Recall that $\hat \varpi=\hat \varpi(\alpha)$ is defined as
\begin{equation}\label{hatvarpi}
\hat \varpi:= \sup \{\varpi\in(1/\sqrt{2},(1+4\alpha^{-1})/\sqrt{2}): \min \Delta|_{\mathcal{H}} \leq 0\}.
\end{equation}

We know from Lemma \ref{lem: w^2 for the first point on the boundary}  that
$\varpi_0 < \hat \varpi < (1+4\alpha^{-1})/\sqrt{2},$ that is strict convexity is lost in the interior of $\mathcal{H}$ as $\varpi$ decreases. Hence we are led to the study of
 the level curves $w_\pm =\varpi^2$ solving $\Delta=0$.

 If $k^2=3/2$, then $a=0$ and thus
$$
\Delta=3\sqrt{6}\alpha^2r\varpi^2-r^2(48+2\sqrt{6}(3\alpha-4r)).
$$
Hence
 $$
 w_+=\frac{-8r^2+6\alpha r+8\sqrt{6}r}{3\alpha^2} \mbox{ and } w_- \mbox{ is not well-defined}.
 $$
 The maximum value of $w_+$  is attained at $r=\frac{3\alpha+4\sqrt{6}}{8}$.

If $k^2 \neq 3/2$, then $b^2-4ac = 4\alpha^4k^2r^2I$
where
\begin{equation}\label{formula of I}
I:=9((3+\alpha k-rk)(2k^2-3)-1)^2
-2(2k^2-3)(\alpha k(2k^2-3)-4)(12+3\alpha k-4kr).
\end{equation}
In this case, we obtain
\begin{equation}\label{equ: w pm}
w_\pm=\frac{3r}{2a^2k}\bigg(\alpha k-rk+3+\frac{-3\pm \sqrt{I}}{3(2k^2-3)}\bigg).
\end{equation}

\begin{lem}\label{lem: roots of Delta}
Assume that $k^2\neq 3/2$. Then the following assertions hold:
\begin{itemize}
\item[(i)] $I\geq 0$. Moreover, $I=0$ precisely at a unique point $(r,k), k^2> 3/2,$  satisfying
$$
\alpha=\frac{4}{k(2k^2-3)} \quad  \mbox{
 and  } \quad r=\frac{6(k^2-1)}{k(2k^2-3)}.
 $$

\item[(ii)] If $(r,k)$ is fixed, then $\Delta$, as a function of $\varpi^2$, is increasing  at $\varpi^2=w_+$ and decreasing at $\varpi^2=w_-$.

\item[(iii)]Fix $r>0$. Then $w_+$ smoothly extends near $k^2=3/2$,  $w_-$ is smooth in $k^2\neq 3/2,$ and
$\lim_{k^2\rightarrow{3/2}^\pm} w_-=\mp\infty.$

\item[(iv)]    If $I\neq 0$ and $\partial_r w_{\pm}=0$, then
\begin{equation}\label{pkw}
\partial_k w_\pm=\frac{18rF_1F_2}{\alpha^2k^2(2k^2-3)^2(I+rk(2k^2-3)F_0)},
\end{equation}
where
\begin{equation}\label{formula of F1 and F2}
\begin{aligned}
F_0 & := 74+15\alpha k-54k^2-10\alpha k^3-27kr+18k^3r,\\
F_1 & :=3\alpha+8k-\alpha k^2(3-2k^2) +4(1-2k^2)r,\\
F_2 & :=-16+12k^2-\alpha k(3-2k^2) +2(3-2k^2)rk.
\end{aligned}
\end{equation}
\item[(v)] If $I=0$, then   $F_2=0$ and $F_1 <0$.

\end{itemize}
\end{lem}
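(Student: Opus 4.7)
All five assertions follow from a single identity in part (i), together with routine implicit differentiation and direct substitution for the remaining parts.

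For (i), I would introduce the substitution $u := \alpha k$, $v := rk$, $s := 2k^2-3$, under which
$$I = 9\bigl((3+u-v)s - 1\bigr)^2 - 2s(us - 4)(12 + 3u - 4v)$$
becomes a quadratic polynomial in $s$ with constant term $9$. I expect the sum-of-squares identity
$$9I = \bigl((3u-7v+21)s + 9\bigr)^2 + 2\bigl((3u-4v+12)s\bigr)^2,$$
which I would verify by comparing the coefficients of $1, s, s^2$ on both sides. This immediately yields $I \geq 0$. Equality forces both summands to vanish: the first summand then equals $81$ if $s=0$, so $s\neq 0$ and we need $3u-4v+12=0$ together with $us=4$. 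Solving these simultaneously (necessarily with $s>0$, i.e., $k^2>3/2$) gives the unique critical point $\alpha=4/(k(2k^2-3))$, $r=6(k^2-1)/(k(2k^2-3))$.

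Part (ii) is immediate: since $\Delta=a\varpi^4+b\varpi^2+c$, one has $\partial_{\varpi^2}\Delta = 2a\varpi^2 + b$, which at $\varpi^2=w_\pm$ equals $\pm\sqrt{b^2-4ac} = \pm 2\alpha^2 kr\sqrt{I}$ by (i), so it has the desired sign. For (iii), observe that at $k^2=3/2$ we have $a=0$ but $b|_{k^2=3/2} = 6\alpha^2 kr > 0$. Expanding $\sqrt{b^2-4ac} = b - 2ac/b + O(a^2)$ as $a\to 0$ shows $w_+ = -c/b + O(a)$ (the simple root of the limiting linear equation $b\varpi^2+c=0$, hence smoothly extendible), whereas $w_- = -b/a + c/b + O(a)$ diverges; since $a=2\alpha^4k^2(2k^2-3)$ changes sign with $2k^2-3$, the divergence goes to $\mp\infty$ as $k^2\to 3/2^\pm$.

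For (iv), implicit differentiation of $\Delta(r,k,w_\pm(r,k))\equiv 0$ yields
$$\partial_r w_\pm = -\frac{\partial_r \Delta}{\partial_{\varpi^2}\Delta}, \qquad \partial_k w_\pm = -\frac{\partial_k \Delta}{\partial_{\varpi^2}\Delta},$$
where $\partial_{\varpi^2}\Delta$ is evaluated at $\varpi^2 = w_\pm$. When $\partial_r w_\pm = 0$ we have $\partial_r\Delta = (\partial_r b)\varpi^2 + \partial_r c = 0$, so $\varpi^2 = -\partial_r c/\partial_r b$ at such a point. Multiplying numerator and denominator of $\partial_k w_\pm$ by $\partial_{\varpi^2}\Delta$ and using $(\partial_{\varpi^2}\Delta)^2 = b^2-4ac = 4\alpha^4 k^2 r^2 I$ eliminates the square root and produces a rational expression, which a tedious but direct algebraic simplification — grouping by the quantities $F_0,F_1,F_2$ in \eqref{formula of F1 and F2} — puts in the stated form. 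Part (v) is then immediate by substitution: at the zero of $I$ from (i), $\alpha k(2k^2-3)=4$ and $rk(2k^2-3)=6(k^2-1)$, whence
$$F_2 = -16+12k^2+4-12(k^2-1) = 0, \qquad F_1 = -\frac{12(2k^2-1)(k^2-1)}{k(2k^2-3)} < 0,$$
the latter being strictly negative because $k^2>3/2$ makes each factor positive.

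The main obstacle is discovering the sum-of-squares decomposition in (i); without it neither the non-negativity of $I$ nor the location of its unique zero is transparent. Part (iv) is the most computationally involved but poses no conceptual difficulty once implicit differentiation is carried out and $b^2-4ac$ is recognized in the denominator.
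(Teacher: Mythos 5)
Your proposal is correct, and for part (i) it takes a genuinely different route from the paper. You rewrite $I$ in the variables $u=\alpha k$, $v=rk$, $s=2k^2-3$ and produce the sum-of-squares identity $9I=\bigl((3u-7v+21)s+9\bigr)^2+2\bigl((3u-4v+12)s\bigr)^2$, which I have verified by comparing coefficients of $1,s,s^2$; nonnegativity is then manifest, and the unique zero falls out of the two linear constraints, which reduce to $us=4$ and $3u-4v+12=0$. The paper instead views $I$ as a quadratic in $r$ alone with positive leading coefficient $9(2k^3-3k)^2$, completes the square, and reads off the minimum value $\tfrac{2}{9}(\alpha k(2k^2-3)-4)^2\ge0$; this simultaneously delivers $I\ge0$, the equality constraint $\alpha k(2k^2-3)=4$, and the formula for $r$ at the vertex. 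Both are valid; the paper's one-variable completion is a bit more economical since the vertex of the parabola hands you $r$ directly, whereas your SOS requires solving a short linear system but makes the nonnegativity of the discriminant arguably more transparent. Parts (ii), (iii) and (v) match the paper essentially verbatim. For (iv), the paper computes $\partial_r w_\pm$ explicitly, solves $\partial_r w_\pm=0$ for $\sqrt{I}$ as a rational expression, and substitutes into $\partial_k w_\pm$; you instead use implicit differentiation of $\Delta(r,k,w_\pm)\equiv 0$ together with the key observation that $\partial_r a=0$ makes $\partial_r\Delta$ linear in $\varpi^2$, so the critical-point condition pins down $\varpi^2=w_\pm$ as a rational function. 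These are two bookkeepings for the same radical elimination, and both of you defer the final algebraic simplification into the $F_0,F_1,F_2$ form without writing it out.
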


\begin{lem}\label{lem: the first zero appear on the red curve}
Assume $\varpi = \hat \varpi$, and let $(\hat r, \hat k)\in \mathcal{H}\setminus \partial \mathcal{H}$ solve $\Delta=0$. Then
$$
F_1(\hat r, \hat k) =0,
$$
where $F_1$ is defined in \eqref{formula of F1 and F2}.
\end{lem}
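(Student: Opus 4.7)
The plan is to exploit the variational nature of $\hat\varpi$. By the definition \eqref{hatvarpi} as a supremum, together with the continuity of $\Delta$ in $(r,k,\varpi)$, the function $(r,k)\mapsto \Delta(r,k,\hat\varpi^2)$ is nonnegative on $\mathcal{H}$ and vanishes somewhere. Lemma~\ref{lem: w^2 for the first point on the boundary} yields $\hat\varpi > \varpi_0$, so $\Delta$ is strictly positive on $\partial\mathcal{H}$ at $\varpi=\hat\varpi$; hence every zero of $\Delta$ at this parameter value lies in $\mathcal{H}\setminus\partial\mathcal{H}$. The given point $(\hat r,\hat k)$ is therefore an interior local minimum of $(r,k)\mapsto\Delta(r,k,\hat\varpi^2)$ with value zero, so
$$
\partial_r\Delta(\hat r,\hat k,\hat\varpi^2) \;=\; \partial_k\Delta(\hat r,\hat k,\hat\varpi^2) \;=\; 0.
$$

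To transfer these vanishings to the branches $w_\pm$ from \eqref{equ: w pm}, I use the factorization $\Delta(r,k,\varpi^2) = a(r,k)\bigl(\varpi^2-w_+(r,k)\bigr)\bigl(\varpi^2-w_-(r,k)\bigr)$, valid wherever $a\ne 0$. Differentiating in $r$ and $k$ and evaluating at $\varpi^2=w_\epsilon(\hat r,\hat k)$, where $\epsilon\in\{+,-\}$ denotes the branch containing $(\hat r,\hat k)$, gives
$$
\partial_r\Delta \;=\; \pm\, a\,(w_+-w_-)\,\partial_r w_\epsilon, \qquad \partial_k\Delta \;=\; \pm\, a\,(w_+-w_-)\,\partial_k w_\epsilon,
$$
with a sign depending on $\epsilon$. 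The locus $\hat k^2=3/2$, where $a=0$ and only one branch exists, is handled separately by a direct computation from the explicit formula for $w_+$ given just after \eqref{formula of I}, which in fact yields $F_1=0$ at the unique critical point there. Elsewhere $a\ne 0$, and by Lemma~\ref{lem: roots of Delta}(i) the coincidence $w_+=w_-$ happens only at a single isolated parameter, which can be excluded by perturbing $\alpha$. In all remaining cases $a\,(w_+-w_-)\neq 0$, so $\partial_r w_\epsilon(\hat r,\hat k) = \partial_k w_\epsilon(\hat r,\hat k) = 0$.

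With $\partial_r w_\epsilon = 0$ and $I\neq 0$, I then apply Lemma~\ref{lem: roots of Delta}(iv) and formula \eqref{pkw} to obtain
$$
0 \;=\; \partial_k w_\epsilon(\hat r,\hat k) \;=\; \frac{18\, r\, F_1\, F_2}{\alpha^2 k^2(2k^2-3)^2\bigl(I + rk(2k^2-3)F_0\bigr)}\bigg|_{(\hat r,\hat k)}.
$$
Since $r>0$ and a short preliminary check shows that the denominator $I + rk(2k^2-3)F_0$ does not vanish at $(\hat r,\hat k)$, this forces $F_1(\hat r,\hat k)\,F_2(\hat r,\hat k) = 0$.

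The main obstacle is ruling out $F_2(\hat r,\hat k)=0$. My plan here is a second-order argument. Because $(\hat r,\hat k)$ is a local minimum of $\Delta(\cdot,\cdot,\hat\varpi^2)$, it must be a local maximum of $w_\epsilon$ on $\mathcal{H}^o$, so in particular $\partial_{rr}w_\epsilon(\hat r,\hat k) \leq 0$. Assuming $F_2=0$ and $F_1\ne 0$, I would compute $\partial_{rr}w_\epsilon(\hat r,\hat k)$ by twice implicitly differentiating $\Delta(r,k,w_\epsilon(r,k))=0$ in $r$, then simplifying using the relations $\partial_r w_\epsilon = 0$ and $F_2 = 0$ together with the explicit formulas \eqref{coefficients}, \eqref{formula of I}, and \eqref{formula of F1 and F2}. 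The expectation is that the resulting expression is strictly positive on the relevant portion of $\mathcal{H}^o$, contradicting $\partial_{rr}w_\epsilon \leq 0$. The algebraic bookkeeping required to trace this sign through the substitutions is the delicate technical step; once it is in place, $F_2=0$ is excluded and $F_1(\hat r,\hat k)=0$ follows.
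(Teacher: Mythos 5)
Your overall framework matches the paper's: you correctly identify that $(\hat r,\hat k)$ is an interior critical point of $\Delta(\cdot,\cdot,\hat\varpi^2)$, transfer the gradient conditions to $\nabla w_\epsilon=0$, and then apply Lemma~\ref{lem: roots of Delta}(iv) and formula~\eqref{pkw} to extract $F_1F_2=0$. Your treatment of $\hat k^2=3/2$ is fine, and your observation that the denominator $I+rk(2k^2-3)F_0$ in \eqref{pkw} cannot vanish when $I\neq 0$ is also correct (it follows directly from the relation $\sqrt{I}=\pm(-I-rk(2k^2-3)F_0)/\bigl((2k^2-3)(3\alpha k-6rk+9)-3\bigr)$ derived in the proof of Lemma~\ref{lem: roots of Delta}(iv)). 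However, there are two genuine gaps.

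First, your treatment of the degenerate locus $I=0$ via ``perturbing $\alpha$'' does not work as stated: $\alpha$ is a fixed parameter, and the lemma must be proved for every $\alpha$ individually. The paper instead uses Lemma~\ref{lem: roots of Delta}(v), which gives $F_2=0$ at the $I=0$ point (and, incidentally, $F_1<0$), thereby reducing to the $F_2=0$ case, which is then shown to be incompatible with $I=0$ because it forces $\hat k=1$ whereas $I=0$ requires $\hat k^2>3/2$. If you want a perturbation/continuity argument instead, you would need to first prove the result for all $\alpha$ with $I(\hat r,\hat k)\neq 0$ and then pass to the limit, with some care about continuity of $(\hat r,\hat k)$ in $\alpha$ — a non-trivial addition that you do not supply.

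Second, and more seriously, your handling of the $F_2=0$ case via a second-order argument ($\partial_{rr}w_\epsilon\leq 0$ at an interior local maximum, hoping to derive a sign contradiction) is both incomplete (you acknowledge the ``algebraic bookkeeping'' is unfinished) and misdirected. The paper's route is a \emph{first-order} argument: solving $F_2(\hat r,\hat k)=0$ for $\hat r$ and substituting into $\partial_r w_\pm$ gives the explicit value $\partial_r w_\pm(\hat r,\hat k)=\frac{6(1-\hat k^2)}{\alpha^2\hat k(2\hat k^2-3)}$ (or the same with $12$ in place of $6$), which cannot vanish unless $\hat k=1$; at $\hat k=1$ one computes $(\hat r,\hat k)=(\frac{\alpha+4}{2},1)$ and checks directly that $F_1=0$. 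Thus the constraints $F_2=0$, $\partial_r w_\epsilon=0$, $F_1\neq 0$ are already inconsistent at first order, and there is no point at which you could evaluate $\partial_{rr}w_\epsilon$ — your proposed contradiction, if carried out, would be analyzing an empty set. Note also that your phrasing ``ruling out $F_2=0$'' is not quite what is needed: $F_2=0$ genuinely occurs at $\hat k=1$, and in that case $F_1=0$ follows by direct substitution, which is what the lemma asserts.
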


The proof of Lemmas \ref{lem: roots of Delta} and \ref{lem: the first zero appear on the red curve} are left to Appendix \ref{subsec: proof of lemma for convexity}.

\begin{figure}[ht]
\centering
\includegraphics[width=1\textwidth]{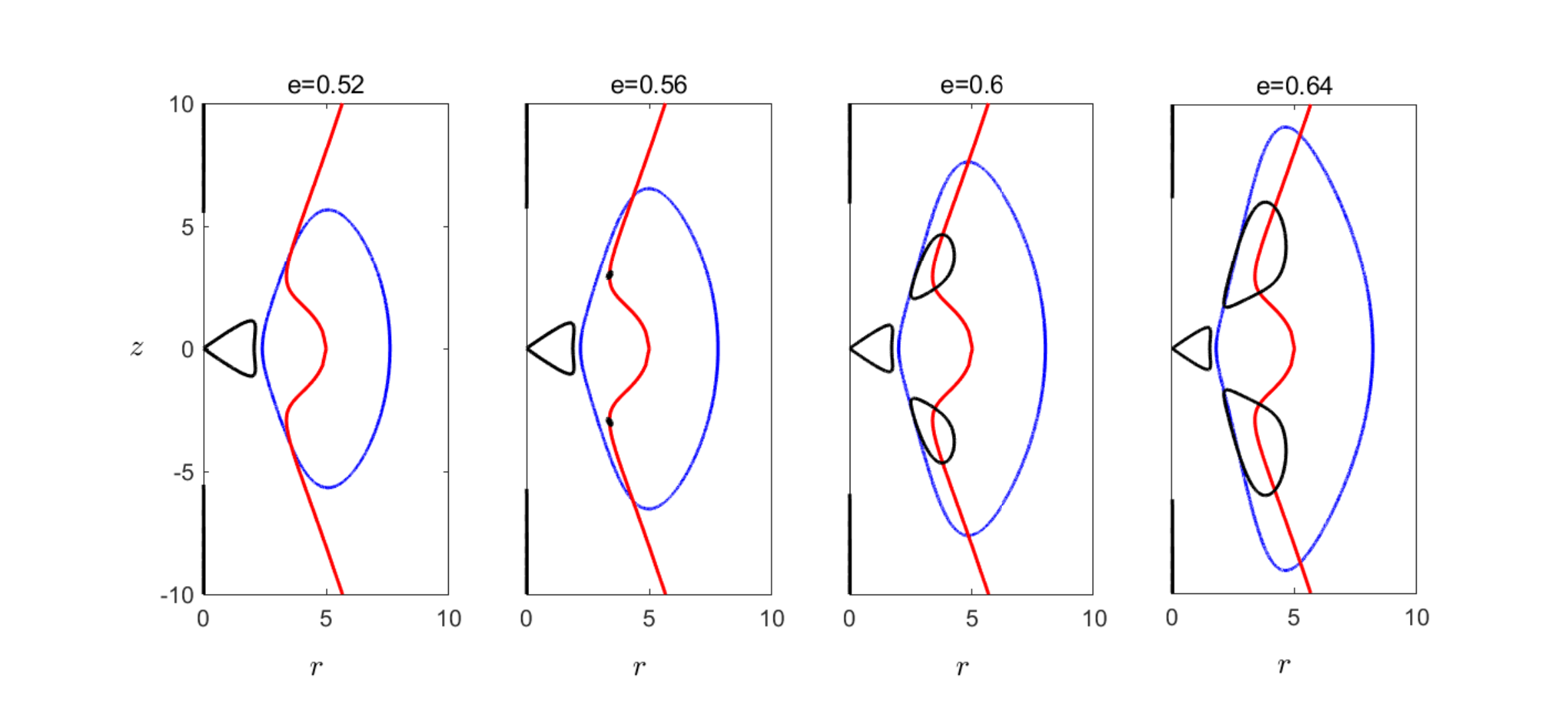}
\caption{The blue curve is the boundary of the Hill region on the $(r,z)$-plane. The black curve is the zero set of $\Delta$. The red curve is the set points where $F_1=0$  ($\alpha=6$ and increasing values of $\mathfrak{e}$, or, equivalently, decreasing values of $\varpi$).}
\label{picture of first zeros points}
\end{figure}

%\subsection{Convexity in the interior of Hill region}

%Let $(\hat r,\hat k)\in\mathcal{H}^\circ_{\hat\varpi(\alpha)}$ be one of the first zeros such that $\Delta(\hat\varpi(\alpha),\alpha,\hat r,\hat k)=0$. Then we have the following results.
Now we study the behavior of the discriminant $I$ and the functions $w_\pm$ along the curve
$$
\ell_1:=\{F_1=0\},
$$
where $F_1$ is given in \eqref{formula of F1 and F2}. Notice that $\ell_1$ is the graph of a function $r=r_1(k), k\geq 1.$

\begin{lem}\label{lem: differential of w^2 along F1=0} Let
$$w_{1,\pm}(k)  :=w_\pm(r_1(k),k),\quad  \forall k\geq 1,$$ where
\begin{equation}\label{equ: r1}
r_1(k): = -\frac{3\alpha+8k-\alpha k^2(3-2k^2)}{4(1-2k^2)}, \quad \forall k\geq 1.
\end{equation}
Then following assertions hold:

\begin{itemize}
\item[(i)] The derivative of $w_{1,\pm}$ with respect to $k$ is
$$
\begin{aligned}
 w_{1,\pm}' & =\frac{6(k^2-1)F_3F_4}{\alpha^2 k^2(2k^2-1)^2\left(\pm D-(5-2k^2)\sqrt{I_1}\right)\sqrt{I_1}}\\
 & = \frac{3(k^2-1)F_3(\pm D+(5-2k^2)\sqrt{I_1})}{8\alpha^2 k^2(2k^2-1)^3(2k^2-3)^2\sqrt{I_1}},
\end{aligned}
$$
where
\begin{equation}\label{formula of F3 and F4}
\begin{aligned}
I_1 & := 16(2k^2-1)^2 I(r_1(k),k),\\
D & :=8(51-116k^2+88k^4-24k^6)\\
&+\alpha k(2k^2-5)(2k^2-3)(13-17k^2+6k^4),\\
F_3 & := 256k^3(k^2-1)+\alpha^2 k^3(2k^2-3)^3(1+2k^2)\\
&-2\alpha(-9+81k^2-54k^4-20k^6+8k^8),\\
F_4 & := \alpha^2 k^2(k^2-1)(2k^2-5)^3+32(17-34k^2+16k^4)\\
&-8\alpha k(2k^2-5)(13-22k^2+8k^4).
\end{aligned}
\end{equation}

\item[(ii)] $F_3$ has a unique root $1<\hat k_0 <\sqrt{3/2}$, which  increasingly depends on $\alpha$.

\item[(iii)] $F_4$ has precisely three roots
$$\frac{\sqrt{17+\sqrt{17}}}{4} < \hat k_1 < \sqrt{5/2}< \hat k_2 <  \hat k_3.$$
Moreover, $\hat k_0 < \hat k_1$,  $\hat k_1,-\hat k_2, -\hat k_3$  increasingly depend on $\alpha$, and satisfy
\begin{equation}\label{evaluation of D at three k's}
D(\hat k_1),D(\hat k_2)<0, D(\hat k_3)>0.
\end{equation}
\item[(iv)] $\hat k_1\in ( (17+\sqrt{17})^{1/2}/4,\sqrt{5/2})$ is in a smooth bijection with $\alpha\in (0,+\infty)$ according to the relation
$$
\alpha=\frac{4(13-22\hat k_1^2+8\hat k_1^4+(2\hat k_1^2-1)^{\frac{3}{2}})}{\hat k_1(2\hat k_1^2-5)^2(\hat k_1^2-1)}, \quad \forall \hat k_1.
$$
\item[(v)] $w_{1,-} > w_{1,+}$
 on $[1,\sqrt{3/2})$ and $w_{1,-} < w_{1,+}$
 on $(\sqrt{3/2},+\infty)$.

\item[(vi)] $\hat k_1$ is the unique local maximum of $w_{1,+}$, and  $\hat k_0$ and $\hat k_3$ are minimum points of $w_{1,+}$. $\hat k_2$ is the unique local maximum of $w_{1,-}$, and $\hat k_0$ is a local minimum of $w_{1,-}$.
\end{itemize}
\end{lem}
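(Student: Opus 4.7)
The plan is to prove (i) by direct substitution of the constraint $r = r_1(k)$ from \eqref{equ: r1} into the closed form \eqref{equ: w pm} for $w_\pm$, and then computing $w'_{1,\pm}(k)$ via the chain rule $w'_{1,\pm} = (\partial_r w_\pm) r_1'(k) + \partial_k w_\pm$. After collecting terms, the part of $w_{1,\pm}$ not involving $\sqrt{I_1}$ should differentiate into a multiple of $F_3$, while the part involving $\sqrt{I_1}$ differentiates into a multiple of $F_4/\sqrt{I_1}$; careful bookkeeping of the factors of $(2k^2-1)$, $(2k^2-3)$ and $(k^2-1)$ produces the first displayed form. Multiplying numerator and denominator by the conjugate $\pm D + (5-2k^2)\sqrt{I_1}$ and using the polynomial identity
\begin{equation*}
D^2 - (5-2k^2)^2\,I_1 \;=\; 16(2k^2-1)(2k^2-3)^2\,F_4,
\end{equation*}
which is to be verified by direct expansion, yields the second equivalent form.

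For (ii)--(iv), I treat $F_3$ and $F_4$ as polynomials in $k$ with coefficients depending on $\alpha$. Direct evaluation gives $F_3(1,\alpha) = -3\alpha^2 - 12\alpha < 0$ and $F_3(\sqrt{3/2},\alpha) = 128(3/2)^{3/2} + 72\alpha > 0$, so the intermediate value theorem provides a root $\hat k_0 \in (1,\sqrt{3/2})$; uniqueness follows by checking $\partial_k F_3 > 0$ on this subinterval, and monotonicity of $\hat k_0$ in $\alpha$ is obtained from the implicit function theorem applied to $F_3(\hat k_0(\alpha),\alpha)=0$. An analogous but more delicate sign analysis for $F_4$ isolates the three roots $\hat k_1<\hat k_2<\hat k_3$ in the claimed intervals; the signs of $D$ at these roots in \eqref{evaluation of D at three k's} are verified by reducing $D(\hat k_i)$ modulo the relation $F_4(\hat k_i,\alpha)=0$ to an expression whose sign is accessible. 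For (iv), the equation $F_4(\hat k_1,\alpha)=0$ is quadratic in $\alpha$; its discriminant factors as $64\hat k_1^2(\hat k_1^2-1)^2(2\hat k_1^2-5)^4(2\hat k_1^2-1)^3$, a direct polynomial identity, and the quadratic formula with the sign choice consistent with $\alpha>0$ and $\hat k_1$ in the claimed range yields the stated closed form.

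For (v), the difference $w_{1,+}-w_{1,-}$ equals $\frac{2 r_1\sqrt{I(r_1,k)}}{\alpha^2 k (2k^2-3)}$ directly from \eqref{equ: w pm}; since $r_1>0$ and $\sqrt{I_1}>0$ on the relevant domain, the sign is controlled by $2k^2-3$, which flips at $k=\sqrt{3/2}$, giving the claimed ordering. For (vi), the first form in (i) shows that $w'_{1,\pm}$ vanishes precisely when $(k^2-1)F_3 F_4 = 0$, i.e. at $k=1$, $k=\hat k_0$, and $k\in\{\hat k_1,\hat k_2,\hat k_3\}$. A sign analysis of the right-hand side of the explicit formula, using the signs of $D$ from (iii), the positivity of $I_1$, and the local sign change of each of $F_3$ and $F_4$ at its respective root, determines whether each critical point is a local maximum or minimum of $w_{1,\pm}$, matching the assertions of (vi).

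The main technical obstacle is the polynomial identity $D^2-(5-2k^2)^2 I_1 = 16(2k^2-1)(2k^2-3)^2 F_4$ that bridges the two equivalent forms in (i); this is tedious to verify by hand but amounts to a finite computation in the polynomial ring $\Q[k,\alpha]$. A secondary difficulty is the uniqueness of the roots of $F_3$ and $F_4$ in the claimed intervals, since these polynomials have high degree in $k$ and quadratic coefficients in $\alpha$, so spurious roots outside the intervals of interest must be excluded either by sharpening the sign estimates at suitable test points or by establishing strict monotonicity of $F_3$ and $F_4$ in $k$ on each relevant subinterval.
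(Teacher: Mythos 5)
Your proposal follows the same broad structure as the paper's proof, and your polynomial identity $D^2-(5-2k^2)^2 I_1 = 16(2k^2-1)(2k^2-3)^2 F_4$ for item (i) is correct. However, there are several concrete problems.

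The most serious gap is in your argument for (vi). You claim that the first form of $w'_{1,\pm}$ in (i) shows that $w'_{1,\pm}$ vanishes precisely at $\{1,\hat k_0,\hat k_1,\hat k_2,\hat k_3\}$. This is incorrect: in that first form, $F_4$ sits in the numerator, but the factor $\pm D-(5-2k^2)\sqrt{I_1}$ sits in the denominator, and at every root of $F_4$ the identity $D^2=(5-2k^2)^2 I_1$ forces one of $D\pm(5-2k^2)\sqrt{I_1}$ to vanish as well, so you are looking at a removable $0/0$ whose value is only visible from the second form. Working from the second form, the numerator of $w'_{1,+}$ involves $D+(5-2k^2)\sqrt{I_1}$ and that of $w'_{1,-}$ involves $-D+(5-2k^2)\sqrt{I_1}$; using the signs of $D$ in \eqref{evaluation of D at three k's} one finds $D+(5-2\hat k_i^2)\sqrt{I_1}=0$ only for $i=1,3$ and $D-(5-2\hat k_2^2)\sqrt{I_1}=0$ only for $i=2$. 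Thus $w'_{1,+}$ vanishes at $\{1,\hat k_0,\hat k_1,\hat k_3\}$ and $w'_{1,-}$ at $\{1,\hat k_0,\hat k_2\}$, not at the full set for both. Your sign analysis, built on the wrong critical set, would not reproduce the claimed extremality pattern; the key step of splitting the roots of $F_4$ between the two functions via the sign of $D$ is missing.

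Two smaller but concrete issues. In (ii), checking $\partial_k F_3>0$ only on $(1,\sqrt{3/2})$ gives uniqueness of the root in that interval but does not exclude further roots in $(\sqrt{3/2},+\infty)$; the lemma asserts a unique root, and the subsequent sign analysis in (vi) needs $F_3>0$ on all of $(\hat k_0,+\infty)$, so you must establish monotonicity (or positivity) of $F_3$ on the whole half-line. In (iv), your stated discriminant $64\hat k_1^2(\hat k_1^2-1)^2(2\hat k_1^2-5)^4(2\hat k_1^2-1)^3$ is wrong; using the coefficients of $F_4$ as a quadratic in $\alpha$, the discriminant is $64k^2(2k^2-5)^2(2k^2-1)^3$, which is what produces the $\pm(2k^2-1)^{3/2}$ term in the closed form for $\alpha$. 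Finally, your sketch for (iii) (``reducing $D(\hat k_i)$ modulo $F_4=0$'') is vague; the paper instead solves $F_4=0$ and $D=0$ for $\alpha$ as explicit functions $\alpha_\pm(k)$ and $\alpha_D(k)$, proves monotonicity of these curves in the $(k,\alpha)$-plane, and deduces the root count and the interlacing of the zeros of $D$ and $F_4$ from that geometric picture — a cleaner route that also handles uniqueness of the three roots of a degree-$10$ polynomial in $k$, which your outline does not address. (Your item (v) is fine apart from an inconsequential factor of $2$; the correct difference is $w_{1,+}-w_{1,-}=\dfrac{r_1\sqrt{I(r_1,k)}}{\alpha^2 k(2k^2-3)}$.)
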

See the proof in Appendix \ref{subsec: proof of lemma for convexity} and see the graph of $w_{1,\pm}$ in Figure \ref{wpm} for $\alpha=6$.
\begin{figure}[ht]
\centering
\includegraphics[width=0.6\textwidth]{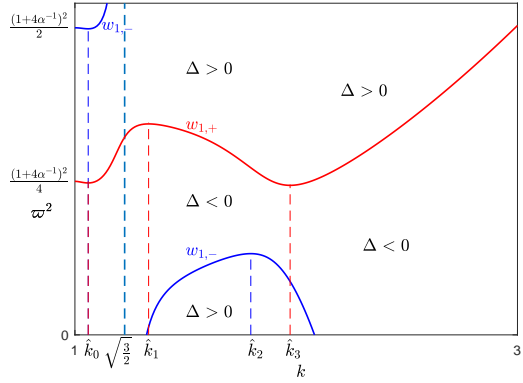}
\caption{The restricted functions $w_{1,+}$ and $w_{1,-}$ corresponding to $\Delta=0$, for $\alpha=6$. }
\label{wpm}
\end{figure}

Our next task is to compute $\hat \varpi$, as defined in \eqref{hatvarpi}, and identify the point $(\hat r,\hat k)\in \mathcal{H} \setminus \partial \mathcal{H}$ where convexity is lost for the first time as $\varpi$ decreases from $(1+4\alpha^{-1})/\sqrt{2}$ to $1/\sqrt{2}$.
%According to Lemma \ref{lem: differential of w^2 along F1=0}-(iv) and (v), the only candidates are $(r_1(\hat k_1),\hat k_1)$ are $(r_1(1),1)=((\alpha+4)/2,1)\in \mathcal{H} \setminus \partial \mathcal{H}$.
In the next Lemma, we show that $(\hat r,\hat k) = (r_1(\hat k_1),\hat k_1)\in \mathcal{H} \setminus \partial \mathcal{H}$.

\begin{lem}\label{lem: the first zero appear inside}
For any fixed $\alpha>0$, we have
\begin{equation}\label{formula varpihat}
\hat \varpi^2 =  w_{1,+}(\hat k_1)
   = \frac{(\alpha \hat k_1(2\hat k_1^2-5)-8)(8\hat k_1+\alpha(3-3\hat k_1^2+2\hat k_1^4))}{4\alpha^2\hat k_1(5-12\hat k_1^2+4\hat k_1^4)}.
\end{equation}
Moreover, the following statements hold:
\begin{itemize}
    \item[(i)]  if $\varpi = \hat \varpi$, then  $(r_1(\hat k_1),\hat k_1)\in \mathcal{H} \setminus \partial \mathcal{H}$ is the only point in $\mathcal{H}$ solving $\Delta =0$.
    \item[(ii)]  if $1/\sqrt{2}< \varpi < \hat \varpi$, then $\min \Delta|_{\mathcal{H}} <0.$
    \end{itemize}
  Here, $w_{1,+}$, $r_1$ and $\hat k_1$ are as in Lemma \ref{lem: differential of w^2 along F1=0}.
\end{lem}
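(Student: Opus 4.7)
The plan is to track the interior minimum of $\Delta$ over $\mathcal{H}$ as $\varpi$ decreases, identify the critical locus where this minimum first vanishes, and match it with the distinguished point $(r_1(\hat k_1),\hat k_1)$ on the red curve $\ell_1=\{F_1=0\}$. By definition \eqref{hatvarpi} and continuity of $\Delta$ in $\varpi$, at $\varpi=\hat\varpi$ there exists some point where $\min\Delta|_\mathcal{H}=0$. Lemma \ref{lem: w^2 for the first point on the boundary} gives $\hat\varpi>\varpi_0$, so strict convexity has already been lost at boundary-adjacent points, forcing the zero of $\Delta$ at $\varpi=\hat\varpi$ to occur at an interior point $(\hat r,\hat k)\in\mathcal{H}\setminus\partial\mathcal{H}$. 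Being a minimum of zero, $\nabla_{(r,k)}\Delta(\hat r,\hat k,\hat\varpi^2)=0$, and Lemma \ref{lem: the first zero appear on the red curve} then forces $F_1(\hat r,\hat k)=0$; thus $\hat r=r_1(\hat k)$.

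Next I would determine the correct root and the correct critical value. Since $\Delta(\hat r,\hat k,\varpi^2)>0$ for $\varpi^2$ just above $\hat\varpi^2$ and $=0$ at $\hat\varpi^2$, we have $\partial_{\varpi^2}\Delta>0$ there; by Lemma \ref{lem: roots of Delta}-(ii) this identifies the branch as $\hat\varpi^2=w_+(\hat r,\hat k)=w_{1,+}(\hat k)$. Moreover, $\hat k$ must be a local maximum of $w_{1,+}$: otherwise nearby points on $\ell_1$ would provide zeros of $\Delta$ at $\varpi^2$ slightly larger than $\hat\varpi^2$, contradicting the supremum in \eqref{hatvarpi}. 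By Lemma \ref{lem: differential of w^2 along F1=0}-(vi), the unique local maximum of $w_{1,+}$ is at $\hat k_1$, so $\hat k=\hat k_1$ and $\hat r=r_1(\hat k_1)$. Substituting $r=r_1(\hat k_1)$ into \eqref{equ: w pm} and simplifying using the explicit form of $r_1$ in \eqref{equ: r1}, together with Lemma \ref{lem: differential of w^2 along F1=0}-(iv) to relate $\alpha$ and $\hat k_1$, yields the closed form \eqref{formula varpihat}.

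For item (i), I would argue that any other interior zero $(\tilde r,\tilde k)$ at $\varpi=\hat\varpi$ must likewise lie on $\ell_1$ and make $\hat\varpi^2$ a critical value of either $w_{1,+}$ or $w_{1,-}$ restricted to the intersection of $\ell_1$ with $\mathcal{H}$; the critical points of $w_{1,\pm}$ are enumerated in Lemma \ref{lem: differential of w^2 along F1=0}-(vi) as $\hat k_0,\hat k_1,\hat k_2,\hat k_3$, and the candidates coming from $w_{1,-}$ (the maximum at $\hat k_2$) are ruled out by comparing values: if $w_{1,-}(\hat k_2)\geq w_{1,+}(\hat k_1)$, then the zero of $\Delta$ would already have been hit at $(r_1(\hat k_2),\hat k_2)$ before the slice $\varpi^2=\hat\varpi^2$, forcing $\Delta(r_1(\hat k_2),\hat k_2,\hat\varpi^2)<0$ and contradicting $\min\Delta|_{\mathcal{H}}=0$. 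For item (ii), once $(\hat r,\hat k)$ is pinned down, Lemma \ref{lem: roots of Delta}-(ii) gives $\partial_{\varpi^2}\Delta(\hat r,\hat k,\hat\varpi^2)>0$, so $\Delta(\hat r,\hat k,\varpi^2)<0$ for $\varpi^2$ slightly below $\hat\varpi^2$; this negativity persists throughout $1/\sqrt{2}<\varpi<\hat\varpi$ by the supremum property in \eqref{hatvarpi}.

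The main obstacle is the global, rather than just local, comparison between the $+$ and $-$ branches and the verification that the interior zero at $\varpi=\hat\varpi$ sits in the correct portion of $\ell_1$ relative to $\mathcal{H}$. Concretely, one must track $w_{1,-}$ across the singular slope $k=\sqrt{3/2}$ (where it blows up to $\mp\infty$ by Lemma \ref{lem: roots of Delta}-(iii)), use the sign information on $D$ at $\hat k_1,\hat k_2,\hat k_3$ from \eqref{evaluation of D at three k's}, and exploit the parameterization of $\hat k_1\leftrightarrow\alpha$ in Lemma \ref{lem: differential of w^2 along F1=0}-(iv) to confirm $(r_1(\hat k_1),\hat k_1)\in\mathcal{H}\setminus\partial\mathcal{H}$ for every admissible $\alpha$. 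Completing this bookkeeping, together with the explicit simplification leading to \eqref{formula varpihat}, is the technical bottleneck of the argument.
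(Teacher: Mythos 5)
Your branch-identification step is a genuine improvement over the paper's route: by noting that $\Delta(\hat r,\hat k,\varpi^2)>0$ for $\varpi^2$ slightly above $\hat\varpi^2$ (since $(\hat r,\hat k)$ stays interior to $\mathcal{H}$ for a small perturbation), hence $\partial_{\varpi^2}\Delta\geq 0$ at the zero, and then using Lemma~\ref{lem: roots of Delta}-(v) with $F_1(\hat r,\hat k)=0$ to get $I\neq 0$ and hence strict positivity, you pin the zero to the $w_+$ branch directly. The paper instead keeps both $w_{1,\pm}$ in play, enumerates the local maxima of each, and must separately rule out $\hat k_2$ (the maximum of $w_{1,-}$). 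Your shortcut eliminates $\hat k_2$ from consideration from the outset. You should, however, make the $\geq 0\Rightarrow >0$ step explicit; as written you assert $\partial_{\varpi^2}\Delta>0$ without the $I\neq 0$ justification.

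There are two genuine gaps. First, your enumeration ``the unique local maximum of $w_{1,+}$ is at $\hat k_1$'' omits the boundary endpoint $\hat k=1$. By \eqref{sign of w minus k}, $w_{1,+}'<0$ on $[1,\hat k_0)$, so $k=1$ is a boundary local maximum of $w_{1,+}$ on its domain $[1,\infty)$, and $(r_1(1),1)=(\alpha/2+2,1)$ is an admissible interior point of $\mathcal{H}$ with vanishing $\partial_r w_+$. This candidate must be ruled out by the explicit comparison $w_{1,+}(1)=(1+4\alpha^{-1})^2/4 < w_{1,+}(\hat k_1)$, which is the content of the paper's Claim~\ref{claimF2} and requires the computation reducing to the monotonicity of $W(\nu)=w_{1,+}/(1+4\alpha^{-1})^2$ on $((1+\sqrt{17})/4,2)$. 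Second, and more seriously, your argument for item (ii) does not close. The statement ``negativity persists throughout $1/\sqrt{2}<\varpi<\hat\varpi$ by the supremum property'' is not a valid deduction: \eqref{hatvarpi} only ensures $\min\Delta|_{\mathcal{H}}>0$ for $\varpi>\hat\varpi$, and a priori allows $\{\varpi<\hat\varpi: \min\Delta\leq 0\}$ to fail to be an interval. The paper proves (ii) by establishing the explicit inequality $w_{1,+}(1)>w_{1,-}(\hat k_2)$ (equation~\eqref{lastpart}, a nontrivial positivity check for a degree-ten polynomial after substitution), and then tracking the sign of $\Delta$ at the two witness points: $\Delta(r_1(\hat k_1),\hat k_1)<0$ for $w_{1,+}(1)\leq\varpi^2<\hat\varpi^2$ (using $w_{1,-}(\hat k_1)<w_{1,-}(\hat k_2)<w_{1,+}(1)$ so the quadratic in $\varpi^2$ is negative on the whole range), and $\Delta(r_1(1),1)<0$ for $\varpi^2<w_{1,+}(1)$. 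Without some version of this global comparison, the proof of (ii) is incomplete.
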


\begin{figure}[ht]
\centering
\includegraphics[width=0.55\textwidth]{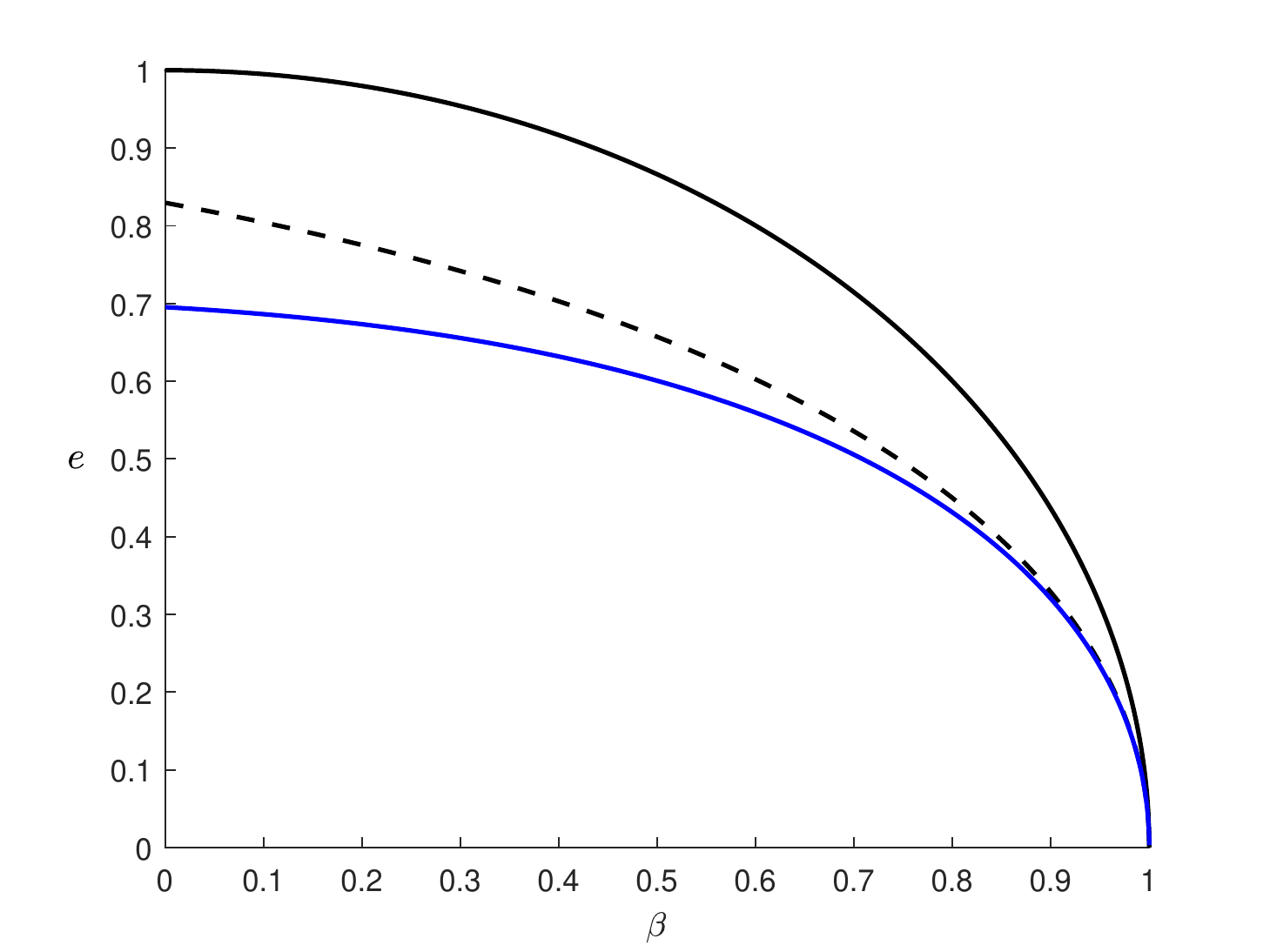}
\caption{The blue curve is the graph  $\mathfrak{e} = \mathfrak{e}_{\text{conv}}(\beta), \beta \in [0,1],$  corresponding to the loss of convexity in the interior of $\mathcal{H}$ as $\mathfrak{e}$ increases. The dashed curve corresponds to the loss of convexity on the boundary of $\mathcal{H}$.}
\label{picture of econv and bconv}
\end{figure}

See the proof in Appendix \ref{subsec: proof of lemma for convexity}. Now we are ready to prove Theorem \ref{thm_convexity}.

\subsection{Proof of Theorem \ref{thm_convexity}}

For each fixed $\alpha>0$, we know from  Lemmas \ref{lem: differential of w^2 along F1=0} and \ref{lem: the first zero appear inside} that the energy surface $\mathfrak{M}$ changes from strictly convex to non-convex precisely at $\varpi=\hat \varpi$, where
\begin{equation}\label{varpik1}
\hat \varpi^2=\frac{(\alpha \hat k_1(2\hat k_1^2-5)-8)(8\hat k_1+\alpha(3-3\hat k_1^2+2\hat k_1^4))}{4\alpha^2\hat k_1(5-12\hat k_1^2+4\hat k_1^4)}\in(1/2,(1+4\alpha^{-1})^2/2),
\end{equation}
and $\hat k_1$ is the unique point
in the interval $( (17+\sqrt{17})^{1/2}/4,\sqrt{5/2})$
satisfying
\begin{equation}\label{alphak1}
\alpha=\frac{4(13-22\hat k_1^2+8\hat k_1^4+(2\hat k_1^2-1)^{\frac{3}{2}})}{\hat k_1(2\hat k_1^2-5)^2(\hat k_1^2-1)}\in (0,+\infty).
\end{equation}
%The parameter $\hat k_1$ lies , and the function  $\alpha=\alpha(\hat k_1)$ is a strictly increasing bijection, see Lemma \ref{lem: differential of w^2 along F1=0}-(iv).
Therefore, the curve $\hat k_1 \mapsto (\alpha(\hat k_1), \hat \varpi(\hat k_1))$ separates strict convexity of $\mathfrak{M}$ from non-convexity of $\mathfrak{M}$ in the parameter space $(\alpha, \varpi)$. This means that if $(\alpha,\varpi)$ satisfies $\alpha = \alpha(\hat k_1)$ and $\varpi > \hat \varpi(\hat k_1),$ then $\mathfrak{M}$ is strictly convex. Otherwise, if $\alpha = \alpha(\hat k_1)$ and $\varpi < \hat \varpi(\hat k_1),$ then $\mathfrak{M}$ is not convex.

Defining a new parameter $
\nu =: (2\hat k_1^2-1)^{1/2}, \nu \in ((1+\sqrt{17})/4,2),$
and using the relations $\beta = \alpha/(\alpha + 4)$ and $\mathfrak{e} = (1-2\varpi^2 \beta^2)^{1/2}$, we obtain a new curve $\nu \mapsto (\beta_{\text{conv}},\mathfrak{e}_{\text{conv}})(\nu)\in \mathcal{D}$ corresponding to the convexity transition of $\mathfrak{M}$ in the  parameter space $(\beta,\mathfrak{e})$.

A straightforward computation using \eqref{varpik1} and \eqref{alphak1} gives
$$
\beta_{\text{conv}}(\nu)  = \frac{2\sqrt{2}(2\nu^2-\nu-2)}{G(\nu)} \quad \mbox{ and } \quad
\mathfrak{e}_{\text{conv}}(\nu)  = \big(1- 2W(\nu)\big)^{1/2},
$$
where
$$
\begin{aligned}
    W(\nu) & := \frac{\nu^2(12 +4\nu - 9 \nu^2-2\nu^3+3\nu^4)}{G(\nu)^2},\\
    G(\nu) & : = 2\sqrt{2}(-2 - \nu +2\nu^2)+(2-\nu)^2(1+\nu)(2+\nu)(1+\nu^2)^{1/2}.
\end{aligned}
$$
It is immediate to check that $G$ is positive on $[(1+\sqrt{17})/4,2)$, and
$$
\begin{aligned}
(\beta_{\text{conv}},\mathfrak{e}_{\text{conv}})((1+\sqrt{17})/4) & = (0,(7+\sqrt{17})/16),\\ (\beta_{\text{conv}},\mathfrak{e}_{\text{conv}})(2) & = (1,0).
\end{aligned}
$$
The image of  $(\beta_{\text{conv}},\mathfrak{e}_{\text{conv}})$ is depicted in Figure \ref{picture of econv and bconv}.

 Since $\varpi$ is strictly decreasing in $\mathfrak{e}$, we conclude that if $(\beta,\mathfrak{e})\in \mathcal{D}$ satisfies $\beta = \beta_{\text{conv}}$ and $\mathfrak{e}< \mathfrak{e}_{\text{conv}}$, then $\mathfrak{M}$ is strictly convex. Otherwise,  if $\beta = \beta_{\text{conv}}$ and $\mathfrak{e}> \mathfrak{e}_{\text{conv}}$, then $\mathfrak{M}$ is not convex.

Since $\beta$ is strictly increasing with $\alpha$, it is also strictly increasing with $\nu$. In fact,
$$
\beta_{\text{conv}}'(\nu)=-\frac{2\sqrt{2} W_1(\nu)}{(1+\nu^2)^{1/2}G(\nu)^2}>0,
$$
where
$$
\begin{aligned}
W_1(\nu) : = &  -3(2-\nu)^2(2-\nu)-3(2-\nu)^3(2-\nu)\nu\\
  & -(34-25\nu+7\nu^2+6\nu^3)(2-\nu)\nu^2
\end{aligned}
$$
is negative on $ [(1+\sqrt{17})/4,2)$, see the proof of Lemma \ref{lem: the first zero appear inside}.

 From $\beta'_{\text{conv}}>0$, the curve $(\beta_{\text{conv}},\mathfrak{e}_{\text{conv}})$ can be expressed as a graph
 $$
 \mathfrak{e} = \mathfrak{e}_{\text{conv}}(\beta), \quad \beta \in [0,1].
 $$ Below, we show that
  $\mathfrak{e}_{\text{conv}}(\beta)$ is a strictly decreasing and concave function.
To show that $\mathfrak{e}_{\text{conv}}(\beta)$ is strictly decreasing, we compute
$$
\mathfrak{e}_{\text{conv}}'(\beta)=\frac{\mathfrak{e}_{\text{conv}}'(\nu)}{\beta'(\nu)}=\frac{W_2(\nu)}{\sqrt{2}G(\nu) \mathfrak{e}_{\text{conv}}(\nu)}<0, \quad \forall \beta = \beta_{\text{conv}}(\nu),
$$
where
$$
W_2(\nu)  :=  -4+\nu^2-\nu^4+2\sqrt2\sqrt{1+\nu^2}
$$
is negative on $[(1+\sqrt{17})/4,2)$, see the proof of Lemma \ref{lem: the first zero appear inside}.

Finally, we compute
$$
\begin{aligned}
\mathfrak{e}_{\text{conv}}''(\beta)& =\frac{1}{\beta'(\nu)}\frac{d}{d\nu}( \mathfrak{e}_{\text{conv}}'(\nu)/\beta'(\nu))\\
& =\frac{W_2'G\mathfrak{e}_{\text{conv}}-W_2G'\mathfrak{e}_{\text{conv}}-W_2G\mathfrak{e}_{\text{conv}}'}{\sqrt{2}G^2\mathfrak{e}_{\text{conv}}^2\beta'}(\nu)<0,
\end{aligned}
$$
for every $\beta = \beta_{\text{conv}}(\nu)\in [0,1].$

To prove it, we compute
$$
\begin{aligned}
(W_2'G-W_2G')(\nu) &= (S_1(\nu)+(2+2\nu^2)^{1/2}S_2(\nu))(1+\nu^2)^{-1/2}\\
%&<(S_1(\tilde k)+(1+\tilde k^2)S_2(\tilde k))(1+\tilde k^2)^{-\frac{1}{2}},\quad \forall \tilde k\in[\frac{1+\sqrt{17}}{4},2],
\end{aligned}
$$
where
$$
\begin{aligned}
S_1(\nu)&:=24-48\nu+24\nu^2-96\nu^3-27\nu^4+12\nu^5-5\nu^6+3\nu^7+\nu^9,\\
S_2(\nu)& :=4(2-\nu)(1+\nu^2)(4\nu^2+5\nu-2)>0.
\end{aligned}
$$
Clearly, $S_2$ is positive on $((1+\sqrt{17})/4,2)$, and thus
$$
\begin{aligned}
(W_2'G-W_2G')(\nu) & <
(S_1(\nu)+(1+\nu^2)S_2(\nu))(1+\nu^2)^{-1/2}\\
& =(\nu^2-1)S_3(\nu)(1+\nu^2)^{-1/2},
\end{aligned}
$$
where
$$
S_3(\nu): = -8-12\nu^2+16\nu^3+7\nu^4-12\nu^5+\nu^7.
$$

Since
$$
\begin{aligned}
S_3(2-s)= & -(s-1)(s-13)s^5-(s-1)(59s-108)s^3\\
& -4(18-36s+9s^2+11s^3),
\end{aligned}
$$
is negative for every $s\in(0,1)$, we conclude that $S_3$ is negative on $((1+\sqrt{17})/4,2)$. Hence $\mathfrak{e}_{\text{conv}}''(\beta)<0$ for every $\beta \in [0,1].$
The proof of Theorem \ref{thm_convexity} is complete.
%\end{proof}

\appendix

\section{The Maslov-type index for symplectic paths} \label{sec: Maslov-type index}
The Maslov-type index (or the $\omega$-index) is a generalized version of the Conley-Zehnder index. It was developed by Long during the 1990s, see \cite{Lon02}. In this section, we briefly introduce this general concept and explain the Conley-Zehnder index and the mean index. Moreover, we provide an alternative proof of Theorem \ref{thm:Dk} using the Maslov-type index theory, see Remark \ref{rem: alternative proof}.

Denote by $\mathrm{Sp}(2n)$ the symplectic group, i.e. the collection of $M\in {\rm GL}(2n, \mathbb{R})$ so that $M^TJ_0M=J_0$, where
$$
J_0 = \left(\begin{array}{cc} 0_n & -I_{n}\\ I_n & 0_n \end{array} \right),
$$
 $I_n$ is the $n \times n$ identity matrix, and $0_n$ is the $n\times n$ zero matrix.

Fix $T>0$ and consider the set $\mathcal{P}_{T}(2n)$ of symplectic paths $\gamma:[0,T] \to {\rm Sp}(2n)$ starting from  $\gamma(0)=I_{2n} \in \Sp(2n)$. For every $\omega$ in the unit circle $\mathbb{U}\subset \C$, we denote by
$\Sp(2n)_\omega^0$ the collection of $M\in\Sp(2n)$ possessing the eigenvalues $\omega$ and $\bar\omega$. This set is a boundaryless singular hypersurface of $\Sp(2n)$ and we let $\Sp(2n)^*_{\omega}$ be the complement of $\Sp(2n)_\omega^0$. The nowhere vanishing vector field
$$\mathcal{V}:=\frac{d}{dt}Me^{tJ_0}\big|_{t=0},\quad \quad
 \forall  M\in \Sp(2n)_\omega^0,$$
is everywhere transverse to $\Sp(2n)_\omega^0$,  and hence determines a positive co-orientation of $\Sp(2n)_\omega^0$.

For any continuous paths $\xi,\eta:[0, T] \rightarrow \operatorname{Sp}(2n)$ with $\xi(T)=\eta(0)$, we denote their concatenation path by $\eta * \xi$. Let
$$
\xi_{n}(t):=\begin{pmatrix}
L(t) & 0_n \\
0_n & L(t)^{-1}
\end{pmatrix},\quad L(t):=(2-t/T)I_n,\quad \quad \forall 0 \leq t \leq T.
$$

The Maslov-type index is then introduced as follows:

\begin{defi}\label{def:Maslov-type index} For every $\omega\in \mathbb{U}$, and $\gamma\in \mathcal{P}_{T}(2n)$, the $\omega$-index of $\gamma$ is defined as the algebraic intersection number
\[i_\omega(\gamma):=(e^{-\vep J_0}(\gamma*\xi_n))\cdot \Sp(2n)_\omega^0, \]
where $\vep>0$ is small enough. The $\omega$-nullity of $\gamma$ is defined as $$\nu_\omega(\gamma):=\dim_{\mathbb{C}}\ker_{\mathbb{C}}(\gamma(T)-\omega I_{2n}).$$
\end{defi}

Given a symplectic path $\gamma\in\mathcal{P}_{T}(2n)$ and $m\in\mathbb{Z}^+$, we define the $m$-th iteration $\gamma^m: [0,mT]\to\Sp(2n)$ as
$$
\gamma^{m}(t):=\gamma(t-j T) \gamma(T)^{j}, \quad \forall j T \leqslant t \leqslant(j+1) T,\quad \forall j=0,1, \ldots, m-1.
$$
The mean index of $\gamma$ is defined by
\begin{equation} \label{def of mean index}
\hat{i}(\gamma)=\lim_{m\to\infty}\frac{i_1(\gamma^m)}{m}=\frac{1}{2 \pi} \int_{0}^{2 \pi} i_{e^{\sqrt{-1} \theta}}(\gamma)\, d \theta,
\end{equation}
where the last identity follows from Bott iteration formula.

If $\omega=1$, then the $\omega$-index $i_1(\gamma)$ coincides with the generalized Conley-Zehnder index ${\rm CZ}(\gamma)$ as defined in \cite{convex}.

An alternative way of understanding the Conley-Zehnder index in $\Sp(2)$ is as follows: let $v\in \mathbb{C}\setminus \{0\}$, and  let $\eta(t)$ be a continuous argument of $\gamma(t)v$. Let $\Delta(v):=(\eta(T) - \eta(0))/(2\pi)$ be the net variation of $\eta$ on the interval $[0,T]$. Define
$$
I_\gamma:= \{\Delta(v): v \in \mathbb{C} \setminus \{0\}\} \subset \mathbb{R}.
$$
Then $I_\gamma$ is a closed interval of length $|I_\gamma|<1/2$. Therefore, there exists $k\in \mathbb{Z}$ such that for every $\epsilon>0$ sufficiently small, either $I_\gamma-\epsilon\in (k,k+1)$ or $k\in I_\gamma-\epsilon$. In the former case, we define the generalized Conley-Zehnder index of $\gamma$ by $\mathrm{CZ}(\gamma):=2k+1,$ and in the latter case,  by $\mathrm{CZ}(\gamma): = 2k.$ Recall that $\gamma$ is called nondegenerate if the linear map $\gamma(T)$ does not have $1$ as an eigenvalue. In that case, the boundary of $\partial I_\gamma \cap \mathbb{Z}=\emptyset.$

The rotation number of $\gamma$ is defined as
\begin{equation}\label{def: mean index and rotation number}
\rho(\gamma) :=\frac{\hat i(\gamma)}{2}= T\lim_{t\to \infty}\frac{\eta(t)}{2\pi t}.
\end{equation}

In \cite{Lon02}, Long  provided an explicit formula for the mean index. Firstly, any $M\in \Sp(2)$ is symplectically similar to one of the following normal forms:
\begin{equation*}
D(\lambda)=\begin{pmatrix}
\lambda & 0 \\
0 & \lambda^{-1}
\end{pmatrix},\
N_{1}(\nu, a)=\begin{pmatrix}
\nu & a \\
0 & \nu
\end{pmatrix},\
R(\theta)=\begin{pmatrix}
\cos \theta & -\sin \theta \\
\sin \theta & \cos \theta
\end{pmatrix},
\end{equation*}
where $\lambda\in \mathbb{R}\setminus\{0\}$, $\nu=\pm 1, a=\pm 1,0$, and $\theta \in(0, \pi) \cup(\pi, 2 \pi)$. As a particular case of Corollary 8.3.2 in \cite{Lon02}, we have
\begin{equation} \label{mean index 2}
\hat{i}(\gamma)=\left\{\begin{array}{ll}
i_1(\gamma)-1+\theta/\pi, & \text {if } \gamma(T)\approx R(\theta), \theta \in(0, \pi) \cup(\pi, 2 \pi).  \\
i_1(\gamma)+1, & \text {if }\gamma(T)\approx N_1(1,a), a=1,0. \\ i_1(\gamma), & \text{otherwise.}
\end{array}\right.
\end{equation}

Finally we introduce the Morse index theorem needed in Section \ref{subsec: return map with Rational rotation}. Consider the second order linear system $\ddot{x}=R(t)x$, where $R(t+T)=R(t)$ is a continuous family of $n \times n$ symmetric matrices. Let $\mathcal{A}:=-\frac{d^2}{dt^2}+R(t)$ be a self-adjoint operator on $W^{2,2}([0,T],\C^n) \subset L^2([0,T], \mathbb{C}^n)$ with self-adjoint boundary conditions.

Let $v(\mathcal{A}):=\dim\ker(\mathcal{A})$ be the nullity of $\mathcal{A}$, and let $\mathcal{M}(\mathcal{A})$ be the Morse index of $\mathcal{A}$, i.e. the total (finite) number of negative eigenvalues of $\mathcal{A}$.

By standard Legendre transformation, the system $\ddot{x}=R(t)x$ becomes into
\bea\label{ham equ}
\dot{z}(t)=J_0\mathcal{B}(t)z(t),\quad \mathcal{B}(t)=\begin{pmatrix}I_n & 0_n \\ 0_n & -R(t)\end{pmatrix},
\eea
where $z= (x,\dot x).$
Let $\gamma(t)$ be the fundamental solution of \eqref{ham equ} starting from $\gamma(0)=I_{2n}$.

\begin{thm}[{Long \cite[Theorem 7.3.4]{Lon02}}] Let $\mathcal{A}_{\omega}:=\mathcal{A}|_{\Lambda_\omega},$ where
$\Lambda_\omega:=\{x\in W^{2,2}([0,T],\C^n): x(T)=\omega x(0) \mbox{ and } \dot x(T) = \omega \dot x(0)\}.$
Then
\begin{equation}\label{more equa mas omega}
 \mathcal{M}(\mathcal{A}_{\omega})=i_\omega(\gamma),\quad v(\mathcal{A}_{\omega})=\nu_\omega(\gamma).
\end{equation}
\end{thm}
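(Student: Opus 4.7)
The plan is to establish the two equalities in sequence. The nullity identity $v(\mathcal{A}_\omega) = \nu_\omega(\gamma)$ is an immediate matching of kernels, while the Morse index identity $\mathcal{M}(\mathcal{A}_\omega) = i_\omega(\gamma)$ is obtained by a spectral-flow argument that matches zero-eigenvalue crossings of a one-parameter family of operators with intersections of the corresponding symplectic paths through the singular hypersurface $\Sp(2n)_\omega^0$.

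First I would handle the nullity. An element $x \in \ker \mathcal{A}_\omega$ is a solution of $\ddot x = R(t) x$ satisfying $x(T) = \omega x(0)$ and $\dot x(T) = \omega \dot x(0)$. Setting $z := (x, \dot x)$, the boundary conditions become $z(T) = \omega z(0)$, and by \eqref{ham equ} this is equivalent to $\gamma(T) z(0) = \omega z(0)$. Hence $x \mapsto z(0)$ is a $\C$-linear isomorphism from $\ker \mathcal{A}_\omega$ onto $\ker_{\C}(\gamma(T) - \omega I_{2n})$, yielding $v(\mathcal{A}_\omega) = \nu_\omega(\gamma)$.

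Next, I would prove the Morse index identity via the linear homotopy $R_s(t) := s R(t)$, $s \in [0, 1]$, with associated operators $\mathcal{A}_{\omega, s} := -d^2/dt^2 + s R(t)$ and symplectic paths $\gamma_s$, so that $\mathcal{A}_{\omega, 1} = \mathcal{A}_\omega$ and $\gamma_1 = \gamma$. Both $s \mapsto \mathcal{M}(\mathcal{A}_{\omega, s})$ and $s \mapsto i_\omega(\gamma_s)$ are constant except at a finite set of crossings, i.e.\ values of $s$ where $\mathcal{A}_{\omega,s}$ has a nontrivial kernel, or equivalently where $\gamma_s(T)$ acquires $\omega$ as an eigenvalue. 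At $s = 0$ the operator reduces to $-d^2/dt^2$, whose spectrum under $\omega$-twisted boundary conditions is computable by Fourier expansion, and $\gamma_0$ is the explicit shear $t \mapsto \exp(t J_0 \mathrm{diag}(I_n, 0_n))$, whose $\omega$-index is read off from Definition \ref{def:Maslov-type index}; a direct computation verifies the base case $\mathcal{M}(\mathcal{A}_{\omega, 0}) = i_\omega(\gamma_0)$. After a generic $C^1$-small perturbation of the homotopy we may assume every crossing along $(0,1)$ is regular with a one-dimensional kernel spanned by some $x_0$, and first-order perturbation theory yields
\begin{equation*}
\frac{d}{ds}\bigg|_{s_0}\lambda(s) \;=\; \int_0^T \langle R(t) x_0(t), x_0(t)\rangle\, dt,
\end{equation*}
while differentiating $\gamma_s(T) z_s = \omega z_s$ and pairing against the co-orienting vector field $\mathcal{V}$ of $\Sp(2n)_\omega^0$ produces the same integral up to a universal positive factor. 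Summing signed crossings then gives $\mathcal{M}(\mathcal{A}_\omega) - \mathcal{M}(\mathcal{A}_{\omega, 0}) = i_\omega(\gamma) - i_\omega(\gamma_0)$, and the base case closes the identity.

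The main obstacle is the crossing-form matching in the last step: one must check in coordinates that the analytic sign of the zero-eigenvalue crossing and the geometric sign of the intersection with $\Sp(2n)_\omega^0$ are governed by the same quadratic form $x_0 \mapsto \int_0^T \langle R(t) x_0, x_0\rangle\, dt$. This is the $\omega$-twisted incarnation of the Robbin--Salamon identification of spectral flow with Maslov index. Further technical care is needed for (i) the small shift $e^{-\epsilon J_0}$ appearing in Definition \ref{def:Maslov-type index}, which imposes a transversality condition that must be preserved by the perturbation, and (ii) possible degeneracies at the endpoints $s = 0, 1$, which can be absorbed by first replacing $R$ with $R + \delta I$ for small $\delta > 0$ and letting $\delta \to 0^+$ at the very end.
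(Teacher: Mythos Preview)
The paper does not prove this theorem at all: it is quoted verbatim as \cite[Theorem 7.3.4]{Lon02} and used as a black box in Appendix~\ref{sec: Maslov-type index}. There is therefore no ``paper's own proof'' to compare against.

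That said, your outline is the standard route to results of this type. The nullity identity is exactly as you describe, and the Morse index identity is the $\omega$-twisted version of the Morse--index/Maslov--index equality, which one proves by spectral flow along a homotopy together with a crossing-form computation identifying analytic and geometric crossing signs. Long's own treatment in \cite{Lon02} proceeds via his $\omega$-index iteration theory and normal-form decompositions rather than the Robbin--Salamon spectral-flow formalism you invoke, so your sketch is closer in spirit to the latter literature; both approaches are valid and yield the same statement. The points you flag as obstacles---the crossing-form matching, the $e^{-\epsilon J_0}$ shift in Definition~\ref{def:Maslov-type index}, and endpoint degeneracies---are precisely the places where the argument requires genuine work, and your proposal correctly identifies them without fully resolving them.
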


%\section{Appendix for convexity}\label{sec: appendix for convexity}

\section{Proof of Lemmas \ref{lem: w^2 for the first point on the boundary},  \ref{lem: roots of Delta}, \ref{lem: the first zero appear on the red curve},  \ref{lem: differential of w^2 along F1=0} and \ref{lem: the first zero appear inside}}\label{subsec: proof of lemma for convexity}

 Some algebraic manipulations below were performed by Mathematica \cite{Math}.

\begin{proof}[Proof of Lemma \ref{lem: w^2 for the first point on the boundary}]
In $(r,k)$ coordinates, the Hill region $\mathcal{H}$ is determined by
$$
V=\frac{\varpi^2\alpha^2}{2r^2}-\frac{\alpha}{r} -\frac{4}{kr}\leq -1,$$
and hence if $(r,k)\in \mathcal{H}$, then $k\in[1,\frac{4}{\alpha(\sqrt{2}\varpi-1)}]$ and $r\in [r_L,r_R]$, where
\begin{equation}\label{formula of rL and rR}
r_L : = \frac{4+\alpha k-J}{2k} \quad \mbox{ and } \quad
r_R :=\frac{4+\alpha k + J}{2k},
\end{equation}
and
$$
J : =\sqrt{(4+\alpha k)^2-2\varpi^2\alpha^2k^2}\in[0,\sqrt{(4+\alpha)^2-\alpha^2k^2}).
$$
If $k=\frac{4}{\alpha(\sqrt{2}\varpi-1)}$, then $r_L=r_R$ and $J=0$.

Using that
\begin{equation}\label{w2J}
  \varpi^2 = \frac{(4+\alpha k)^2 - J^2}{2\alpha^2 k^2},
\end{equation} and that $4+\alpha k\pm J>0$,
we find
\begin{equation}\label{def: Delta_L and Delta_R}
\begin{aligned}
\Delta_L & :=\Delta(r_L,k)\cdot(4+\alpha k-J)^{-2}4k^2\\
& =-(2k^2-3)J^2-4(k^2-1)J+4(4+\alpha k)(k^2-1),\\
\Delta_R  & :=\Delta(r_R,k)\cdot(4+\alpha k+J)^{-2}4k^2\\
&=-(2k^2-3)J^2+4(k^2-1)J+4(4+\alpha k)(k^2-1).
\end{aligned}
\end{equation}

If $k=1$, then $\Delta_L= \Delta_R = J^2>0$ since, otherwise, if $J=0$, we obtain from \eqref{w2J} that $\varpi^2 = (1+4\alpha^{-1})^2/2$, a contradiction. This estimate along the boundary will be improved below.

 If $k>1$, then
\begin{equation}\label{DeltaLR}
\begin{aligned}
\Delta_L&=0  \Leftrightarrow J=J_{L,\pm}:=\frac{2(k^2-1)}{2k^2-3}\left(-1\pm\sqrt{1+\frac{(4+\alpha k)(2k^2-3)}{k^2-1}}\right),\\
\Delta_R& =0  \Leftrightarrow J=J_{R,\pm}:=\frac{2(k^2-1)}{2k^2-3}\left(1\pm\sqrt{1+\frac{(4+\alpha k)(2k^2-3)}{k^2-1}}\right).
\end{aligned}
\end{equation}

If $1<k^2<3/2$ and $J_{L,\pm},J_{R,\pm}$ are not real numbers, then $\Delta_L,\Delta_R >0$. Otherwise, if $J_{L,\pm},J_{R,\pm}$ are real numbers, then
$$
J_{R,+}\leq J_{R,-}< 0< J_{L,+}\leq J_{L,-}.
$$
Since $0\leq J < 4+\alpha k$, we see from \eqref{def: Delta_L and Delta_R} that
$$
\Delta_R>\Delta_L> 4(k^2-1)(-J + 4 + \alpha k) >0.
$$

If $k^2=3/2$, then, as in the previous case, we have $\Delta_R > \Delta_L > 0$.

If $\sqrt{3/2}< k\leq \frac{4}{\alpha(\sqrt{2}\varpi-1)}$, then $J_{L,\pm},J_{R,\pm}$ are real numbers and
$$
J_{L,-}< J_{R,-}< 0 < J_{L,+}< J_{R,+}.
$$

We conclude from the computations above that
\begin{equation}\label{boundary1}
\Delta|_{\partial \mathcal{H}} > 0 \Leftrightarrow 0\leq J < J_{L,+}, \quad  \forall \sqrt{3/2} <k \leq \frac{4}{\alpha(\sqrt{2}\varpi-1)}.
\end{equation}

Using \eqref{w2J}, we translate \eqref{boundary1} into
\begin{equation}\label{boundary2}
\Delta|_{\partial \mathcal{H}} > 0 \Leftrightarrow \varpi^2 > W(k), \quad  \forall \sqrt{3/2} <k \leq \frac{4}{\alpha(\sqrt{2}\varpi-1)},
\end{equation}
where
$$
W(k):=\frac{(4+\alpha k)^2-J_{L,+}^2}{2\alpha^2k^2}.
$$
Similarly,  for any $\sqrt{3/2} <k \leq \frac{4}{\alpha(\sqrt{2}\varpi-1)}$, we have
\begin{equation}\label{boundary2b}
\Delta(r_L(k),k)<0 \Leftrightarrow \varpi^2 < W(k).
\end{equation}

We shall study the maximum value of $W$ for $k\in [\sqrt{3/2},4\alpha^{-1}/(\sqrt{2}\varpi-1)]$.

We claim that  $W$ has only one critical point $k_0$ in $[\sqrt{3/2},+\infty)$ which is a global maximum, and satisfies  $k_0^2\in(\frac{11+\sqrt{17}}{8},5/2)$ and
\begin{equation}\label{equ of k0}
4(13-22k_0^2+8k_0^4)-\alpha k_0(k_0^2-1)(2k_0^2-5)^2=0.
\end{equation}
To see this, we compute
\begin{eqnarray*}
W'=-\frac{6(4+\alpha k)^2(D_1+2(2k^2-3)D_2^{1/2})}{\alpha^2k^3(k^2-1)D_2^{1/2}(1+D_2^{1/2})^3},
\end{eqnarray*}
where
$$
\begin{aligned}
D_1 & =-26-5\alpha k+12k^2+2\alpha k^3,\\
D_2 & =1+\frac{(4+\alpha k)(2k^2-3)}{k^2-1}.
\end{aligned}
$$
Direct computation shows that $D_1,D_2$ are both strictly increasing when $k^2>3/2$, and also $D_1(\sqrt{3/2})=-8-\sqrt{6}\alpha<0\Rightarrow W'(\sqrt{3/2})>0$. Therefore, $W'$ possesses only one zero  $k_0$ in $[\sqrt{3/2},+\infty)$, which is a maximum of $W$. We can further check that
$-D_1(k_0)^2 + 4(2k_0^2-3)^2D_2(k_0)$ is equal to
$$
 \frac{4+\alpha k_0}{k_0^2-1}(4(13-22k_0^2+8k_0^4)-\alpha k_0(k_0^2-1)(2k_0^2-5)^2).
$$
It shows that $k_0$ satisfies \eqref{equ of k0}.
Since $13-22k_0^2+8k_0^4=0$ at $k_0=\sqrt{\frac{11+\sqrt{17}}{8}}$ and $$D_1(\sqrt{\frac{11+\sqrt{17}}{8}})<0,\ D_1(\sqrt{5/2})>0,$$ we have
$W'(\sqrt{\frac{11+\sqrt{17}}{8}})>0$ and $W'(\sqrt{5/2})<0$. Hence  $k_0^2\in(\frac{11+\sqrt{17}}{8},5/2)$.

Note that $k_0$ does not depend on $\varpi$ and we define
$$
\varpi_0:= \sqrt{W(k_0)}.
$$

Now if $\varpi > \varpi_0,$ then we obtain from \eqref{boundary2} that $\Delta|_{\partial \mathcal{H}}>0.$ This proves (i).

If $\varpi< \varpi_0$, then we claim that $k_0 < 4\alpha^{-1}/(\sqrt{2}\varpi-1).$ Indeed, using \eqref{equ of k0}, we obtain
\begin{equation}\label{def: alpha(k0) and w^2b(k0)}
\alpha=\frac{4(13-22k_0^2+8k_0^4)}{k_0(k_0^2-1)(2k_0^2-5)^2} \Rightarrow \varpi_0^2=W(k_0)=\frac{(2k_0^2-3)^4(2k_0^2-1)}{2(13-22k_0^2+8k_0^4)^2}.
\end{equation}
Using \eqref{def: alpha(k0) and w^2b(k0)}, we now check that $J(k_0)>0$ and this implies that $k_0 < 4\alpha^{-1}/(\sqrt{2}\varpi -1).$ Indeed,
$$
\begin{aligned}
J(k_0)^2 &  = (4+\alpha k_0)^2 - 2\varpi^2\alpha^2k_0^2 \\ & > (4+\alpha k_0)^2 - 2\varpi^2_0\alpha^2k_0^2
\\ & = \frac{16(2k_0^2-3)^2}{5-2k_0^2} >0.
\end{aligned}
$$
We conclude from \eqref{boundary2b} that there exists $r_0>0$ so that $(r_0,k_0) \in \partial \mathcal{H}$ and $\Delta(r_0,k_0)<0$. This proves (ii).

One easily checks from \eqref{def: alpha(k0) and w^2b(k0)} that $\alpha$ is strictly increasing in $k_0\in (\sqrt{\frac{11+\sqrt{17}}{8}},\sqrt{5/2})$ and thus there exists a bijective correspondence between $\alpha\in (0,+\infty)$ and $k_0 \in (\sqrt{\frac{11+\sqrt{17}}{8}},\sqrt{5/2})$.

To prove (iii), we first compute $r_L(k_0)$ using  \eqref{formula of rL and rR} and \eqref{def: alpha(k0) and w^2b(k0)},  assuming that $\varpi = \varpi_0$,
$$
r_L(k_0)=\frac{2(2k_0^2-3)^3}{k_0(5-2k_0^2)^2(k_0^2-1)}.
$$
Then we compute
$$
\partial_k\Delta(r_L(k_0),k_0)=\frac{192(2k_0^2-3)^6(2k_0^2-1)}{k_0^3(k_0^2-1)^2(5-2k_0^2)^5}>0,
$$
using $\alpha$ and $\varpi=\varpi_0$ as in \eqref{def: alpha(k0) and w^2b(k0)}.
Hence $\Delta(r_L(k_0),k)<0$ for $k_0-k>0$ sufficiently small. Since $V(r_L(k_0),k_0)=-1$ and
$
V=\frac{\varpi^2_0\alpha^2}{2r^2}-\frac{\alpha+4/k}{r}$
 is strictly increasing with $k$, we conclude that $V(r_L(k_0),k)<-1$ for any $1\leq k<k_0$.  Hence $\Delta$ is somewhere negative in $\mathcal{H} \setminus \partial \mathcal{H}$ close to $(r_L(k_0),k_0)$. The proof of (iii) is complete. \end{proof}

\begin{proof}[Proof of Lemma \ref{lem: roots of Delta}] We rewrite $I$ as a quadratic form on $r$
\begin{equation*}
\begin{aligned}
I &=9(2k^3-3k)^2r^2+2(2k^3-3k)(74-54k^2-5\alpha(2k^3-3k))r\\
&-6(4+\alpha k)(2k^2-3)(\alpha k(2k^2-3)-4)+9((3+\alpha k)(2k^2-3)-1)^2.
\end{aligned}
\end{equation*}
A direct computation at the minimum point of this quadratic form gives
$$r=\frac{74-54k^2-5\alpha(2k^3-3k)}{-9(2k^3-3k)} \Rightarrow I=\frac{2}{9}(\alpha k(2k^2-3)-4)^2\geq 0.$$
Hence $I=0$ is equivalent to $\alpha=\frac{4}{k(2k^2-3)}$ and
$$
r=\frac{74-54k^2-5\alpha(2k^3-3k)}{-9(2k^3-3k)}=\frac{6(k^2-1)}{k(2k^2-3)}.$$ Since $k^2>3/2$, the point $(r,k)$ is unique. This proves (i).

Item (ii) follows immediately from the derivative of $\Delta$ with respect to $\varpi^2$ computed at $w_\pm$
$$
\partial_{\varpi^2} \Delta(w_\pm) = 2aw_\pm+b = \pm 2\alpha^2kr \sqrt{I}\geq 0.
$$

To see (iii), we rewrite \eqref{equ: w pm} as
$$
w_+=\frac{-2c}{b+\sqrt{b^2-4a c}} \quad \mbox{  and  } \quad w_-=\frac{-b-\sqrt{b^2-4a c}}{2a}.
$$
Since $b>0$ near $k^2=3/2$, we see that $w_+$ has a smooth continuation near $k^2=3/2$. Moreover, since $a=2\alpha^4k^2(2k^2-3)$, we see that $w_- \to \mp \infty$ as $k^2 \to 3/2^\pm$.

To prove (iv) we compute
 $$
 \partial_r w_{\pm}=\frac{3}{2\alpha^2k}\left(\alpha k-2rk+3+\frac{-3\sqrt{I}\pm (I+ rk(2k^2-3)F_0)}{3(2k^2-3)\sqrt{I}} \right),
 $$
 where $F_0$ is as in the statement.
Assuming that $\partial_rw_\pm=0$, we obtain
$$
\sqrt{I}=\pm \frac{-I - rk(2k^2-3)F_0}{(2k^2-3)(3\alpha k-6rk + 9) -3}.
$$
Replacing the expression above into the expression for $\partial_k w_\pm$, we obtain \eqref{pkw}.

If $I=0$, then  (i) gives $k^2>3/2$ and using the values of $\alpha$ and $r$ we obtain $F_2=0$. Moreover,
$$
F_1=\frac{12(2k^2-1)(k^2-1)}{k(3-2k^2)}<0, \quad \forall k^2>3/2.$$
This proves (v).
\end{proof}

\begin{proof}[Proof of Lemma \ref{lem: the first zero appear on the red curve}]
Firstly, if $\varpi=\hat \varpi$ and $(\hat r,\hat k)\in \mathcal{H} \setminus \partial \mathcal{H}$ solves $\Delta=0$, then either  $\varpi^2 = w_+(\hat r,\hat k)$ or $\varpi^2 = w_-(\hat r,\hat k)$.
From the definition of $\hat \varpi$ and Lemma \ref{lem: w^2 for the first point on the boundary}, $(\hat r,\hat k)$ is a local maximum of $w_+$ or $w_-$.
We have to consider the following cases:
\begin{itemize}
\item[(a)] $\hat k^2 = 3/2$. In this case, $w_-$ is not well-defined and, since $w_+(r,\sqrt{3/2})=\frac{-8r^2+(6\alpha+8\sqrt{6})r}{3\alpha^2}$ has a unique critical point, we have $\hat r=\frac{3\alpha+4\sqrt{6}}{8}$. A direct computation gives $F_1(\hat r,\hat k)=0$.

\item[(b)] $I(\hat r,\hat k)=0.$ In this case, Lemma \ref{lem: roots of Delta}-(v) gives $F_2(\hat r,\hat k)=0$, and we are led to the next case.

\item[(c)] $F_2(\hat r,\hat k)=0$. From \eqref{formula of F1 and F2}, we obtain $\hat r=\frac{-16-3\alpha \hat k+12\hat k^2+2\alpha \hat k^3}{2\hat k(2\hat k^2-3)}$. Then a direct computation gives
\begin{equation}
\partial_r w_\pm(\hat r,\hat k) = \frac{6(1-\hat k^2)}{\alpha^2\hat k(2\hat k^2-3)} \quad  \mbox{ or } \quad\partial_r w_\pm(\hat r,\hat k) =  \frac{12(1-\hat k^2)}{\alpha^2\hat k(2\hat k^2-3)}.
\end{equation}
From $\partial_r w_\pm(\hat r,\hat k)=0$  we conclude that $(\hat r,\hat k)=(\frac{\alpha+4}{2},1)$, and one easily checks that $F_1(\hat r,\hat k)=0$.
\end{itemize}

The proof of Lemma \ref{lem: the first zero appear on the red curve} is complete.
\end{proof}

\begin{proof}[Proof of Lemma \ref{lem: differential of w^2 along F1=0}]
By \eqref{equ: w pm} and \eqref{equ: r1}, we have
\begin{equation}\label{formula of tilde w}
w_{1,\pm}=\frac{(8k+\alpha(3-3k^2+2k^4))(G_1\pm \sqrt{I_1})}{32\alpha^2k(2k^2-3)(2k^2-1)^2}.
\end{equation}
where $G_1:=120-240k^2+96k^4-3\alpha k(-21+47k^2-28k^4+4k^6)$. Taking the derivative of $w_{1,\pm}$ with respect to $k$, we obtain
\begin{equation}\label{differential of w wrt k 1}
w_{1,\pm}'=\frac{3(k^2-1)F_3(\pm D+(5-2k^2)\sqrt{I_1})}{8\alpha^2 k^2(2k^2-1)^3(2k^2-3)^2\sqrt{I_1}}.
\end{equation}
Using that
\begin{equation}\label{simplify F3}
D^2- (5-2k^2)^2I_1=16(2k^2-3)^2(2k^2-1)F_4,
\end{equation}
we can multiply by $\pm D-(5-2k^2)\sqrt{I_1}$  both the numerator and the denominator of \eqref{differential of w wrt k 1}, to obtain the expression for $w_{1,\pm}'$ in (i).

To prove (ii), we first compute
\begin{equation*}
\begin{aligned}
F_3' & =k\cdot(256k(-3+5k^2)+\alpha^2k(3-2k^2)^2(-9-12k^2+44k^4)\\
&-4\alpha(3+2k^2)(27-54k^2+16k^4)).
\end{aligned}
\end{equation*}
Since $27-54k^2+16k^4<0$ on $[1,\sqrt{2}]$, we conclude that $F_3'>0$ on $[1,\sqrt{2}]$.

For every $k>\sqrt{2}$,  $F_3'/k$ is a quadratic form on $\alpha$ with negative discriminant
\begin{eqnarray*}
-432(2k^2-1)^2(k^4(k^2-2)(512k^2-448)+52k^4+252k^2-243).
\end{eqnarray*}
 Hence $F_3'$ is everywhere positive on $[1,+\infty)$  and thus $F_3$ is strictly increasing from $F_3(1)=-3\alpha(4+\alpha)<0$ to $+\infty$ in $[1,+\infty)$.

 Since $F_3(\sqrt{3/2})=24(4\sqrt{6}+3\alpha)>0$, we conclude that
 \begin{equation}\label{signF3}
 \begin{aligned}F_3 < 0 \mbox{ on } (1,\hat k_0) \quad \mbox{ and } \quad  F_3>0 \mbox{ on } (\hat
 k_0,+\infty),
 \end{aligned}
 \end{equation}
 for some $1<\hat k_0 < \sqrt{3/2}.$

Regarding $F_3$ as a quadratic form $A\alpha^2+B\alpha + C$, for each $k\in (1,\sqrt{3/2})$, we have $A<0,C>0,$ and $B^2-4AC >0$. We see that for each $k\in (1,\sqrt{3/2})$, there exists a unique $\alpha>0$ such $\hat k_0=k$. Indeed, a direct computation shows that $\partial_\alpha F_3(\hat k_0)=2A\alpha +B = -\sqrt{B^2-4AC} <0$ and thus, since $F_3'(\hat k_0) >0$,  $\hat k_0$ increases with $\alpha$. Moreover, $\hat k_0 \to 1$ as $\alpha\to 0$ and $\hat k_0 \to \sqrt{3/2}$ as $\alpha \to +\infty$. This proves (ii).

Now we prove (iii). Regarding $F_4$  as a quadratic form on $\alpha$, we solve $F_4=0$ for $\alpha$ to obtain
\begin{equation}\label{roots of F3=0}
\alpha_\pm:=\frac{4(13-22k^2+8k^4\pm(2k^2-1)^{\frac{3}{2}})}{k(2k^2-5)^2(k^2-1)}.
\end{equation}

We claim that $\alpha_+$ is increasing on  $\big(\frac{\sqrt{17+\sqrt{17}}}{4},\sqrt{5/2}\big)$ from $0$ to $+\infty$, and both $\alpha_-<\alpha_+$ are decreasing on $(\sqrt{5/2},+\infty)$ from $+\infty$ to $0$. Everywhere else, $\alpha_\pm$ is negative. To prove the claim, we see from \eqref{roots of F3=0} that $\alpha_-<0$ on  $(1,\sqrt{5/2})$ and $\alpha_+(\frac{\sqrt{17+\sqrt{17}}}{4})=0$. Indeed, $13-22k^2+8k^4-(2k^2-1)^{3/2}<0$ on $(1,\sqrt{5/2})$ and $13-22k^2+8k^4+(2k^2-1)^{3/2}=0$ at $k=\frac{\sqrt{17+\sqrt{17}}}{4}$ as one readily checks. After differentiating $\alpha_\pm$ with respect to $k$ and replacing $k$ with  $\nu=(2k^2-1)^{\frac{1}{2}}$, we obtain
\begin{eqnarray*}
\alpha_\pm(\nu)=\frac{8\sqrt{2}(-2\mp \nu+2\nu^2)}{(\mp2+\nu)(\nu^2-4)(\pm1+\nu)(\nu^2+1)^{\frac{1}{2}}},
\end{eqnarray*}
and
\begin{eqnarray*}
\alpha_\pm'(\nu)=\frac{16(\pm12+12 \nu\pm  \nu^2-7 \nu^3\pm 4\nu^4+6\nu^5)}{(\pm2-\nu)(\nu^2-4)^2(\pm1+\nu)^2(\nu^2+1)}.
\end{eqnarray*}
Notice that  $k\in\big(\frac{\sqrt{17+\sqrt{17}}}{4},\sqrt{5/2}\big)$ corresponds to $ \nu\in(\frac{1+\sqrt{17}}{4},2)$. Since $\alpha_+'(\nu)>0$ on $\big(\frac{1+\sqrt{17}}{4},2\big)$, and $\alpha'_\pm<0$ on $(2,+\infty)$, the claim holds.

To prove that $\hat k_0< \hat k_1$, we show that $F_3(\hat k_1)>0$.  Taking $\alpha=\alpha_+(k)$ and $k = \sqrt{(\nu^2+1)/2}$, we compute
\begin{equation}
F_3=\frac{8\sqrt{2} \nu^4G_3(\nu)}{(\nu-2)^2(\nu+1)^2(\nu^2-4)^2(\nu^2+1)^{\frac{1}{2}}},
\end{equation}
where
$$
\begin{aligned}
G_3(\nu) & :=48 + (\nu - 1) (48 (2 - \nu)^2 + 27 (2 - \nu)^4 \nu + (12 (2 - \nu)^6 \\ & + (2 - \nu)^2) \nu^2 +
(87 (2 - \nu)^4 + (2 - \nu)^3) \nu^3 + (7 (2 - \nu)^4 \\ & + 3 (\nu - 1)^5 + 6 (\nu - 1)^4) \nu^4
 + (15 (2 - \nu)^4 + (2 - \nu)^2) \nu^5 \\ & + (300 - 358 \nu + 107 \nu^2) \nu^6).
%G_3(\nu):=-144-48\nu+284\nu^2+216\nu^3-244\nu^4-228\nu^5+199\nu^6+117\nu^7-101\nu^8-21\nu^9+18\nu^{10}.
\end{aligned}
$$ It turns out that $G_3>0$ on $[1,2]$.
Therefore, (ii) implies that $\hat k_1>\hat k_0$.

We conclude that  for any $\alpha>0$, there exist precisely three zeros of $F_4$ $$
\frac{\sqrt{17+\sqrt{17}}}{4}<\hat k_1 < \sqrt{5/2} < \hat k_2 < \hat k_3,
$$
where $\hat k_1 > \hat k_0$, and $\alpha_+(\hat k_1)=\alpha_+(\hat k_3)=\alpha_-(\hat k_2)=\alpha$ .

We may rewrite
$$
F_4=k^2(k^2-1)(2k^2-5)^3(\alpha-\alpha_-)(\alpha-\alpha_+) \quad \forall k>1, k\neq \sqrt{5/2}.
$$
By the monotonicity of $\alpha_\pm$, we can further obtain
\begin{equation}\label{sign of F4}
\begin{aligned}
F_4 & >0 \mbox{ on } (\hat k_1,\hat k_2)\cup(\hat k_3,+\infty), \\
F_4 & <0 \mbox{ on } [1,\hat k_1)\cup(\hat k_2,\hat k_3).
\end{aligned}
\end{equation}
Indeed, $F_4(1)<0$, $\alpha-\alpha_-$  changes sign at $\hat k_2$, and $\alpha - \alpha_+$ changes sign at $\hat k_1,\hat k_3$.

Now we study the equation $D=0$. Regarding $D$ as a linear function of $\alpha$, we solve $D=0$ for $\alpha$ to obtain
$$
\alpha_D:=\frac{8(-51+116k^2-88k^4+24k^6)}{k(2k^2-5)(2k^2-3)(13-17k^2+6k^4)}, \quad k\geq 1, k\neq \sqrt{3/2}, \sqrt{5/2}.
$$
%We claim that $\alpha_D'$ has a unique zero in $z\in [1,+\infty)$, and that $z\in (\sqrt{3/2},\sqrt{5/2})$.
We claim that  $\alpha_D$ is increasing on $(1,\sqrt{3/2})$ from $4/3$ to $+\infty$,  and  decreasing on $(\sqrt{5/2},+\infty)$ from $+\infty$ to $0$. Moreover, $\alpha_D<0$ on $(\sqrt{3/2},\sqrt{5/2})$. Indeed, $-51+116k^2-88k^4+24k^6>0$ is strictly increasing on $(1,+\infty)$, and $(5k-2k^3)(3-2k^2)(13-17k^2+6k^4)>0$ is strictly decreasing to $0$ on $(1,\sqrt{3/2})$. Hence $\alpha_D$ is increasing on $(1,\sqrt{3/2})$. The change of signs on the denominator implies that $\alpha_D<0$ on $(\sqrt{3/2},\sqrt{5/2})$. Since
$$a_D'=\frac{-8G_4(k^2)}{k^2(2k^2-5)^2(2k^2-3)^2(13-17k^2+6k^4)^2},$$
where
$$
\begin{aligned}
G_4(s):=& -9945+48219k-107798k^2+138476k^3\\
& -109232k^4+52848k^5-14496k^6+1728k^7.
\end{aligned}
$$
Since $G_4(5/2)=10240$ and
$$
\begin{aligned}
G_4'(s+2)= & 2067+23012s+80580s^2+133312s^3\\
& +120240s^4+58176s^5+12096s^6>0,
\end{aligned}
$$
we conclude that $a_D'<0$ on $(\sqrt{5/2},+\infty)$.
Therefore, for any $\alpha\geq 4/3$, there exist
$$1\leq k_1 < \sqrt{3/2} < \sqrt{5/2}< k_2,
$$
solving $\alpha_D(k_i)=\alpha,i=1,2$. If $0<\alpha<4/3$, then $k_1<1$.

We compute
$$
D=k(2k^2-5)(2k^2-3)(13-17k^2+6k^2)(\alpha-\alpha_D).
$$
By the monotonicity of $\alpha_D$, we can further obtain that
\begin{equation}\label{sign of D}
\begin{aligned}
D & >0 \mbox{ on } [1,k_1)\cup(k_2,+\infty), \\
D & <0 \mbox{ on } (k_1, k_2).
\end{aligned}
\end{equation}

A direct computation using $\alpha=\alpha_D(k)$ gives
\begin{eqnarray*}
F_4=-\frac{32(2k^2-1)^3G_5(k^2)}{(39-77k^2+52k^4-12k^6)^2}.
\end{eqnarray*}
where
$$
G_5(s):=153-390s+397s^2-188s^3+36s^4.
$$
Since
$$G_5(s+1)=8(s-1)^2+(41-44s+36s^2)s^2>0,\ \forall s\geq 0,$$
we conclude that $F_4<0, \forall k\geq 1.$  Therefore, for every $\alpha \geq 4/3$, we conclude that $F_4(k_1),F_4(k_2)<0$ and from  \eqref{sign of F4}, we obtain
$$k_1<\hat k_1<\sqrt{5/2}<\hat k_2<k_2<\hat k_3.$$
This implies \eqref{evaluation of D at three k's}. The proof of (iii) is complete.

Since $\alpha=\alpha_+(\hat k_1),$ $\alpha_+(\frac{\sqrt{17+ \sqrt{17}}}{4})=0$ and $\alpha_+(\sqrt{5/2})=+\infty,$ (iv) follows.

Now we prove (v). By Lemma \ref{lem: roots of Delta}-(v), $I>0$  if $F_1=0$, that is $w_{1,+}\neq w_{1,-}$. Item (v) then follows directly from \eqref{equ: w pm}.

Finally we prove (vi). From (iii) we know that
$1<\hat k_0<\hat k_1<\sqrt{5/2}<\hat k_2<\hat k_3$, where $\hat k_0$ solves $F_3=0$ and $\hat k_i,i=1,2,3,$ solves $F_4=0$. From \eqref{simplify F3}, we have $D^2-(5-2\hat k_i^2)^2I_1=0, \forall i=1,2,3.$ Using inequalities
 in \eqref{evaluation of D at three k's}, we conclude that
 \begin{equation}\label{equ: sqrtI1}
 D+(5-2\hat k_i^2)\sqrt{I_1}=0, i=1,3, \quad \mbox{ and } \quad D-(5-2\hat k_2^2)\sqrt{I_1}=0.
 \end{equation}
Moreover,
 $$
 D+(5-\hat k_2^2)\sqrt{I_1}<0, \
 D-(5-2\hat k_1^2)\sqrt{I_1}<0,\
 D-(5-2\hat k_3^2)\sqrt{I_1}>0.
 $$
We conclude that
\begin{equation}\label{sign of D plus I}
\begin{aligned}
D+(5-2 k^2)\sqrt{I_1}<0  & \mbox{ on } (\hat k_1,\hat k_3),\\
D+(5-2 k^2)\sqrt{I_1}>0 & \mbox{ on } (1, \hat k_1) \cup (\hat k_3,+\infty),\\
D-(5-2 k^2)\sqrt{I_1}<0 & \mbox{ on } [1,\hat k_2),\\
D-(5-2 k^2)\sqrt{I_1}>0 & \mbox{ on } (\hat k_2,+\infty).
\end{aligned}
\end{equation}

Finally, combining \eqref{differential of w wrt k 1}, \eqref{signF3} and \eqref{sign of D plus I}, we obtain
\begin{equation}\label{sign of w minus k}
\begin{aligned}
     w_{1,-}' <0 \quad & \mbox{ on } \quad [1,\hat k_0)\cup (\hat k_2,+\infty),\\
w_{1,-}' >0 \quad & \mbox{ on } \quad (\hat k_0,\sqrt{3/2})\cup(\sqrt{3/2},\hat k_2),\\
     w_{1,+}' <0 \quad & \mbox{ on } \quad [1,\hat k_0)\cup (\hat k_1,\hat k_3),\\
w_{1,+}' >0  \quad & \mbox{ on } \quad (\hat k_0,\hat k_1)\cup(\hat k_3, +\infty).
\end{aligned}
\end{equation}
Item (vi) follows.
\end{proof}

\begin{proof}[Proof of Lemma \ref{lem: the first zero appear inside}]
Assume that $\varpi = \hat \varpi$. By the definition of $\hat \varpi$, we have $\min \Delta|_{\mathcal{H}} = 0$, and if $(\hat r,\hat k)\in \mathcal{H}$ is such that $\Delta(\hat r,\hat k) =0$, then either $w_+(\hat r,\hat k)=\hat \varpi^2$ or $w_-(\hat r,\hat k)=
\hat \varpi^2$. Also, Lemma \ref{lem: w^2 for the first point on the boundary}  implies that $(\hat r,\hat k)\in \mathcal{H} \setminus \partial \mathcal{H}$ and thus $\nabla \Delta(\hat r,\hat k ) =0$. This implies that either $\nabla w_+(\hat r,\hat k)=0$ or $\nabla w_-(\hat r,\hat k) =0$, if $\hat k>1$. Moreover, by Lemma \ref{lem: the first zero appear on the red curve}, $F_1(\hat r,\hat k)=0$ and if $\hat k>1$, then $\hat k$ is a critical point of $w_{1,+}$ or $w_{1,-}$. Also, by Lemma \ref{lem: differential of w^2 along F1=0}-(iv) and (v), we conclude that the only candidates for $(\hat r, \hat k)$ are the points $(r_1(1),1), (r_1(\hat k_i),\hat k_i),i=1,2,3.$ Suppose $\hat k>1$ is a critical point of $w_{1,+}$ or $w_{1,-}$. If $\hat k$ is not a local maximum for $w_{1,+}$ and also not a local maximum for $w_{1,-}$, then there exist points $k$ arbitrarily close to $\hat k$ such that $w_{1,+}(k)$ or $w_{1,-}(k)$ is slightly greater than $\hat \varpi^2$. Hence, for $\varpi$ slightly greater than $\hat \varpi$, we obtain points in the interior of $\mathcal{H}$ solving $\Delta=0$, a contradiction with the definition of $\hat \varpi$. We conclude that either $\hat k\in \{1, \hat k_1,\hat k_2\}$.

Let us exclude the case $\hat k=\hat k_2$. By Lemma \ref{lem: differential of w^2 along F1=0}-(iii), (iv) and (v), we know that $\hat k_2 > \sqrt{5/2}$ and thus $w_{1,+}(\hat k_1) >w_{1,+}(\hat k_2) > w_{1,-}(\hat k_2)$.  If $\hat k = \hat k_2$, then $w_{1,-}(\hat k_2) =\hat \varpi^2$. However, for $\varpi^2 = w_{1,+}(\hat k_1)>\hat\varpi^2$, the point $(r_1(\hat k_1),\hat k_1)\in \mathcal{H} \setminus \partial \mathcal{H}$, see Claim \ref{claimF1} below, and $\Delta(r_1(\hat k_1),\hat k_1) =0.$ This contradicts the definition of $\hat \varpi$.  We conclude that $\hat k \in \{1,\hat k_1\}$.

Now we exclude the case $\hat k = 1$. In this case, $(\hat r,\hat k) = (r_1(1),1)=(\alpha/2+2,1)\in \mathcal{H}\setminus \partial \mathcal{H}$. We explicitly compute
$$
w_{1,+}(1) = \frac{(1+4\alpha^{-1})^2}{4}< w_{1,-}(1) = \frac{(1+4\alpha^{-1})^2}{2}.
$$ Claim \ref{claimF2} below gives $w_{1,+}(\hat k_1) > w_{1,+}(1)$. Hence, if $\hat k=1$, then $w_{1,+}(1)=\hat\varpi^2$ and we can use Claim \ref{claimF1} again to conclude that for $\varpi^2=w_{1,+}(\hat k_1) > \hat \varpi^2$,  $\Delta$ vanishes on $(r_1(\hat k_1),\hat k_1) \in \mathcal{H}\setminus \partial \mathcal{H}$, a contradiction with the definition of $\hat \varpi$.

We have proved that $(\hat r,\hat k) = (r_1(\hat k_1),\hat k_1)$ and, since $\hat k_1$ is not a critical point of $w_{1,-}$, we conclude that  $\hat \varpi^2 = w_{1,+}(\hat k_1)$.

\begin{claim}\label{claimF1} If $\varpi^2 = w_{1,+}(\hat k_1)$, then $(r_1(\hat k_1),\hat k_1)\in \mathcal{H} \setminus \partial \mathcal{H}$, that is $$V(r_1(\hat k_1), \hat k_1) < -1.$$
\end{claim}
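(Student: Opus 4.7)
The strategy is a direct algebraic verification. Since $\hat k_1 \in ((17+\sqrt{17})^{1/2}/4, \sqrt{5/2})$ parametrizes $\alpha \in (0,+\infty)$ bijectively via the identity in Lemma \ref{lem: differential of w^2 along F1=0}-(iv),
\[
\alpha = \frac{4(13 - 22\hat k_1^2 + 8\hat k_1^4 + (2\hat k_1^2-1)^{3/2})}{\hat k_1(2\hat k_1^2-5)^2(\hat k_1^2-1)},
\]
we may view the quantity $V(r_1(\hat k_1), \hat k_1) + 1$ as a function solely of $\hat k_1$ (equivalently, of $\alpha$). The plan is to show that this function is strictly negative on the entire open interval $((17+\sqrt{17})^{1/2}/4, \sqrt{5/2})$, and to check appropriate limiting behaviour at the endpoints.

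First, I would substitute $r = r_1(\hat k_1)$ from \eqref{equ: r1} and the value $\varpi^2 = w_{1,+}(\hat k_1) = \hat\varpi^2$ from \eqref{formula varpihat} into the expression $V(r, k) = \frac{\varpi^2 \alpha^2}{2r^2} - \frac{\alpha + 4/k}{r}$, obtaining a rational expression in $\hat k_1$ and $\alpha$. Then, paralleling the end of the proof of Theorem \ref{thm_convexity}, I would perform the change of variable $\nu := (2\hat k_1^2 - 1)^{1/2} \in ((1+\sqrt{17})/4, 2)$, under which both $\alpha$ and the combination $\hat\varpi^2 \alpha^2$ become rational functions of $\nu$ with transparent sign structure. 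This reduces the inequality $V(r_1(\hat k_1),\hat k_1) + 1 < 0$ to verifying the sign of a polynomial inequality in $\nu$ on the interval $((1+\sqrt{17})/4, 2)$.

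Next, I would factor the resulting polynomial using the substitution $\nu = 2 - s$ with $s \in (0, 2 - (1+\sqrt{17})/4)$, mirroring the factoring strategy already used in the paper to handle $S_3$ and related quantities in the proof of Theorem \ref{thm_convexity}. One should expect a factor of $(2-\nu)$ (or higher power) coming from the fact that at $\nu = 2$ (i.e., $\hat k_1 = \sqrt{5/2}$, $\alpha \to +\infty$) the Hill region degenerates in a controlled way, together with a residual polynomial of manifestly definite sign. As consistency checks, I would verify that at the left endpoint $\nu \to (1+\sqrt{17})/4$, which corresponds to $\alpha \to 0^+$, the quantity $V(r_1(\hat k_1),\hat k_1) + 1$ tends to a negative limit (since in this regime $\hat\varpi^2 \to (7+\sqrt{17})/16 \cdot$ something consistent with Theorem \ref{thm_convexity}), and similarly at $\nu \to 2^-$.

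The main obstacle will be the size and apparent complexity of the polynomial expression one gets for $V(r_1(\hat k_1),\hat k_1) + 1$ as a function of $\nu$; experience with the preceding lemmas (particularly the treatment of $I_1$, $F_3$, $F_4$, and the convexity transition curve in $(\beta,\mathfrak e)$-coordinates) suggests that the numerator will factor neatly once one clears denominators and writes everything in terms of $\nu$, but identifying the correct grouping of terms (and in particular isolating a visibly non-negative factor multiplied by $(\nu - 2)$ or $(\nu^2 - 4)$ to an appropriate power) is the delicate step. Once that factoring is in hand, the sign is immediate on the relevant $\nu$-interval, and the claim follows.
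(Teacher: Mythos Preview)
Your proposal is correct and follows essentially the same route as the paper: substitute $r=r_1(\hat k_1)$, $\varpi^2=w_{1,+}(\hat k_1)$, and $\alpha=\alpha_+(\hat k_1)$, then pass to the variable $\nu=(2\hat k_1^2-1)^{1/2}$ to reduce the inequality to an elementary sign check on $((1+\sqrt{17})/4,2)$. The only tactical difference is that the paper, having obtained the explicit rational expression
\[
V_1(\nu)=\frac{4(-4-2\nu+3\nu^2+\nu^3-\nu^4)}{12+4\nu-9\nu^2-2\nu^3+3\nu^4},
\]
shows $V_1'>0$ and $V_1(2)=-1$, whereas your plan to factor $V_1+1$ directly also succeeds cleanly: one finds $V_1+1=-\dfrac{(\nu-2)^2(\nu+1)^2}{12+4\nu-9\nu^2-2\nu^3+3\nu^4}$, confirming your expectation of a $(2-\nu)$ factor and giving the strict inequality immediately once the denominator is seen to be positive.
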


\begin{proof}[Proof of Claim \ref{claimF1}] Recall from Lemma \ref{lem: differential of w^2 along F1=0} that $\hat k_1\in (\sqrt{17+\sqrt{17}}/4,\sqrt{5/2}).$

Using a new variable $\nu := (2\hat k_1^2-1)^{1/2}\in((1+\sqrt{17})/4,2)$ instead of $\hat k_1$, we compute $V(r_1(\hat k_1), \hat k_1)$ as
$$
V_1:=\frac{4(-4-2\nu+3\nu^2+\nu^3-\nu^4)}{12+4\nu-9\nu^2-2\nu^3+3\nu^4},\quad \forall
 \nu \in((1+\sqrt{17})/4,2).
 $$
 In the formula above, we have used that $\alpha$ and $\hat k_1$ satisfy the relation $\alpha=\alpha_+(\hat k_1)$, see \eqref{roots of F3=0}.
Taking the derivative  with respect to $\nu$, we obtain
$$
V_1'=\frac{4(4-\nu^2)(\nu^2-1)(\nu^2+2)}{(12+4\nu-9\nu^2-2\nu^3+3\nu^4)^2}>0,\quad \forall \nu\in ((1+\sqrt{17})/4,2).
$$
Since $V_1(2) = -1$, we conclude that $V(r_1(\hat k_1),\hat k_1) <-1$ as desired.  \end{proof}

\begin{claim}\label{claimF2}
$w_{1,+}(1)<w_{1,+}(\hat k_1)<w_{1,-}(1).$
\end{claim}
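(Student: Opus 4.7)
The strategy is to reduce both inequalities to explicit polynomial comparisons via the substitution $\nu := (2\hat k_1^2 - 1)^{1/2}$, which by Lemma \ref{lem: differential of w^2 along F1=0}-(iv) gives a smooth bijection between $\nu \in ((1+\sqrt{17})/4,\,2)$ and $\alpha \in (0,+\infty)$ through $\alpha = \alpha_+(\hat k_1)$. Writing $\beta := \alpha/(\alpha+4)$ and tracking the same algebraic reductions used in the proof of Theorem \ref{thm_convexity}, one obtains $\beta = 2\sqrt{2}(2\nu^2-\nu-2)/G(\nu)$ with
\begin{equation*}
G(\nu):=2\sqrt{2}(2\nu^2-\nu-2)+(2-\nu)^2(1+\nu)(2+\nu)\sqrt{1+\nu^2}.
\end{equation*}
Consequently,
\begin{equation*}
w_{1,+}(1)=\frac{(1+4\alpha^{-1})^2}{4}=\frac{G(\nu)^2}{32(2\nu^2-\nu-2)^2},\qquad w_{1,-}(1)=2\,w_{1,+}(1),
\end{equation*}
while substituting $\alpha=\alpha_+(\hat k_1)$ into formula \eqref{formula varpihat} and cancelling the common factor $P(\nu):=(2-\nu)^2(1+\nu)(2+\nu)$ collapses $\hat\varpi^2$ to
\begin{equation*}
w_{1,+}(\hat k_1)=\frac{\nu^2(12+4\nu-9\nu^2-2\nu^3+3\nu^4)}{8(2\nu^2-\nu-2)^2}.
\end{equation*}

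With these three expressions sharing the positive denominator $32(2\nu^2-\nu-2)^2$, the sought sandwich $w_{1,+}(1)<w_{1,+}(\hat k_1)<w_{1,-}(1)$ becomes the purely analytic statement
\begin{equation*}
G(\nu)^2\;<\;4\nu^2(12+4\nu-9\nu^2-2\nu^3+3\nu^4)\;<\;2\,G(\nu)^2
\end{equation*}
on $\nu\in((1+\sqrt{17})/4,\,2)$. Writing $G=Q+P\sqrt{1+\nu^2}$ with $Q:=2\sqrt{2}(2\nu^2-\nu-2)>0$ and $P>0$ on this interval, one has $G^2=Q^2+P^2(1+\nu^2)+2PQ\sqrt{1+\nu^2}$. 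Isolating the surd on one side of each inequality and squaring—after checking the signs of both sides from elementary polynomial estimates—converts each inequality into a polynomial inequality in $\nu$ alone.

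At $\nu=2$ the right-hand inequality degenerates to equality, reflecting $\mathfrak{e}_{\text{conv}}(1)=0$, so the corresponding polynomial difference must factor through $(\nu-2)$. Extracting this factor and substituting $\nu=2-s$ with $s\in(0,(7-\sqrt{17})/4)$ reduces the verification to checking positivity of the expanded remainder in $s$—the same device successfully used for the polynomial $S_3$ at the end of the proof of Theorem \ref{thm_convexity}. The left inequality is strict at both endpoints of the interval, and an analogous expansion confirms its sign. The main obstacle is not conceptual but computational: the polynomials obtained after squaring are of moderately high degree and best handled by computer algebra, in line with the convention used throughout Appendix \ref{subsec: proof of lemma for convexity} (see \cite{Math}).
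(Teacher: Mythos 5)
Your proposal is correct but takes a genuinely different route from the paper. You and the paper share the same starting reductions: the change of variable $\nu=(2\hat k_1^2-1)^{1/2}$, the substitution $\alpha=\alpha_+(\hat k_1)$ from \eqref{roots of F3=0}, and the identity $w_{1,-}(1)=2w_{1,+}(1)=(1+4\alpha^{-1})^2/2$. You also correctly recover the three explicit rational expressions over the common denominator $32(2\nu^2-\nu-2)^2$, so the sandwich is indeed equivalent to $G^2<4\nu^2(12+4\nu-9\nu^2-2\nu^3+3\nu^4)<2G^2$. Where the two arguments diverge is in how this is closed. You rationalize by writing $G=Q+P\sqrt{1+\nu^2}$, isolate the surd, square, and reduce to polynomial positivity—checking signs before squaring and exploiting the exact cancellation at $\nu=2$ by factoring out $(\nu-2)$ and shifting $\nu=2-s$. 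The paper instead normalizes to $W:=w_{1,+}(\nu)/(1+4\alpha^{-1})^2 = \nu^2(12+\cdots)/G^2$, rewrites the claim as $1/4<W<1/2$, differentiates to get $W'=2\nu W_1W_2/(\sqrt{1+\nu^2}\,G^3)$ with explicit factors $W_1,W_2$ shown negative, and then evaluates $W$ at the two endpoints, getting $W((1+\sqrt{17})/4)=(95-7\sqrt{17})/256>1/4$ and $W(2)=1/2$. Both arguments ultimately lean on symbolic computation; the paper's monotonicity route yields the extra structural information that $W$ increases across the parameter range and produces somewhat smaller intermediate polynomials (the factors $W_1$ and $W_2$ rather than the degree-inflated squares), while your route is more uniform with the techniques you cite from the end of the proof of Theorem \ref{thm_convexity}. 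The one thing your write-up leaves implicit—and which the paper makes concrete—is the sign checks needed before squaring; these are not automatic, so if you flesh this out you should display those intermediate inequalities rather than gesture at them.
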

\begin{proof}[Proof of Claim \ref{claimF2}]
From \eqref{formula of tilde w} and \eqref{equ: sqrtI1}, we obtain
$$
w_{1,+}(\hat k_1)
   = \frac{(\alpha \hat k_1(2\hat k_1^2-5)-8)(8\hat k_1+\alpha(3-3\hat k_1^2+2\hat k_1^4))}{4\alpha^2\hat k_1(5-12\hat k_1^2+4\hat k_1^4)}.
$$
Using a new variable $\nu := (2\hat k_1^2-1)^{1/2}\in((1+\sqrt{17})/4,2)$ instead of $\hat k_1$, replacing $\alpha$ with $\alpha_+(\hat k_1)$ as in \eqref{roots of F3=0}, we compute
\begin{equation*}
w_{1,+}(\nu) = \frac{\nu^2 (12 + 4 \nu - 9 \nu^2 - 2 \nu^3 + 3 \nu^4)}{8 (2 + \nu - 2 \nu^2)^2},
\end{equation*}
and
\begin{equation}\label{formula of wb}
W:=\frac{w_{1,+}(\nu)}{(1+4\alpha^{-1})^2}=
\frac{\nu^2(12+4\nu-9\nu^2-2\nu^3+3\nu^4)}{G^2},
\end{equation}
where
\begin{equation}\label{func G}
G := 2\sqrt{2}(-2-\nu+2\nu^2)+(2-\nu)^2(1+\nu)(2+\nu)\sqrt{1+\nu^2}
\end{equation}
is positive for every $ \nu\in((1+\sqrt{17})/4,2).$

Then
\begin{equation}\label{wbk}
W'=\frac{2\nu W_1W_2}{\sqrt{1+\nu^2}G^3},
\end{equation}
where
$$
\begin{aligned}
W_1  := & -3(2-\nu)^2(2-\nu)-3(2-\nu)^3(2-\nu)\nu\\
  & -(34-25\nu+7\nu^2+6\nu^3)(2-\nu)\nu^2,\\
W_2  := & -4+\nu^2-\nu^4+2\sqrt2\sqrt{1+\nu^2}.
\end{aligned}
$$
Notice that $W_1 < 0$ for every $\nu\in((1+\sqrt{17})/4,2),$ $W_2((1+\sqrt{17})/4)<0$, and
$$
W_2'=-\frac{2\nu((2\nu^2-1)\sqrt{1+\nu^2}-\sqrt{2})}{\sqrt{1+\nu^2}}<0,\quad \forall \nu\in((1+\sqrt{17})/4,2).
$$
It follows that  $W_1$ and $W_2$ are both negative on $((1+\sqrt{17})/4,2)$.
In particular, $W$ is strictly increasing from $W((1+\sqrt{17})/4)=(95-7\sqrt{17})/256>1/4$ to $W(2)=1/2$. The claim follows.
\end{proof}
\noindent Finally, Lemma \ref{lem: the first zero appear on the red curve} and Claim \ref{claimF2} imply that $\Delta$ is positive everywhere in $\mathcal{H}$ except at $(r_1(\hat k_1),\hat k_1)$.  The proof of (i) is complete.

Now we prove (ii). Assume that $\varpi^2<\hat \varpi^2= w_{1,+}(\hat k_1)$. We claim  that
\begin{equation}\label{lastpart}
(1+4\alpha^{-1})^2/4 = w_{1,+}(1) > w_{1,-}(\hat k_2).
\end{equation}
Since $w_{1,+}(\hat k_1) > w_{1,+}(1)$ and $w_{1,-}(\hat k_2) > w_{1,-}(\hat k_1)$, this inequality implies that $\Delta(r_1(\hat k_1),\hat k_1) <0,$ for every $w_{1,+}(1)\leq \varpi^2 < \hat \varpi^2.$ Moreover, since $(r_1(1),1),$ $(r_1(\hat k_1),\hat k_1) \in \mathcal{H}\setminus \partial \mathcal{H}$ for every $\varpi < \hat \varpi$ and $\Delta (r_1(1),1)<0$ for every $\varpi^2 < w_{1,+}(1),$ we conclude that $\min \Delta|_{\mathcal{H}}<0$ for every $\varpi < \hat \varpi,$ as desired.

To prove the claim, we use \eqref{formula of tilde w} and \eqref{equ: sqrtI1}, replacing $\alpha$ with $\alpha_-(\hat k_2)$ as in \eqref{roots of F3=0}, to obtain
$$
w_{1,-}(\nu) = \frac{\nu^2 (12 - 4 \nu - 9 \nu^2 + 2 \nu^3 + 3 \nu^4)}{8 (-2 + \nu + 2 \nu^2)^2},
$$
where $\nu = (2\hat k_2^2-1)^{1/2}, \forall \nu\in (2,+\infty).$ Replacing $\alpha$ with $\alpha_-(\hat k_2)$ in $w_{1,+}(1)=(1+4\alpha^{-1})^2/4,$ we obtain from a straightforward computation that
$$
w_{1,+}(1)=\frac{(2\sqrt{2}(-2+\nu + 2\nu^2)+(-2 +\nu)(-1+ \nu)(2+ \nu)^2(1+\nu^2)^{1/2})^2}{32(-2 + \nu + 2\nu^2)^2}
$$
Then
$$
\begin{aligned}
w_{1,+}(1) - w_{1,-}(\nu) & = \frac{(\nu-2)(f_1+f_2\sqrt{2+2\nu^2})}{32(-2 + \nu + 2 \nu^2)^2},
\end{aligned}
$$
where
$$
\begin{aligned}
f_1 & =-48 + 24 \nu + 72 \nu^2 + 12 \nu^3 - 10 \nu^4 - 23 \nu^5 - 22 \nu^6 -
 2 \nu^7 + 4 \nu^8 + \nu^9,\\
f_2 & = 4 (2 + \nu)^2  (2 - 3 \nu - \nu^2 + 2 \nu^3)
 \end{aligned}
$$
Observe that $f_2>0, \forall \nu >2$ and hence
$$
\begin{aligned}
w_{1,+}(1) - w_{1,-}(\nu)
& > \frac{(\nu-2)(f_1+f_2(\nu + 1/\nu))}{32(-2 + \nu + 2 \nu^2)^2}.
\end{aligned}
$$
Denoting $f_3(\nu):=\nu f_1(\nu) + f_2(\nu)(\nu^2 + 1),$
we obtain
$$
\begin{aligned}
f_3(\nu + 2)  = & 1280 + 7280 \nu + 18296 \nu^2 + 26252 \nu^3 + 23712 \nu^4\\ & +
  14118 \nu^5 + 5633 \nu^6 + 1490 \nu^7 + 250 \nu^8 + 24 \nu^9 + \nu^{10},
\end{aligned}
$$
which is clearly positive for every $\nu >0$. The claim is proved and the proof of Lemma \ref{lem: the first zero appear inside}.
\end{proof}

\hfill\newline
\noindent{\bf Acknowledgement.}
This work is partially supported by the National Key R\&D Program of China (2020YFA0713303). XH is partially supported by NSFC (\# $12071255$). YO is partially supported by the Qilu Young Scholar Program of Shandong University, and NSFC (\# 11801583). GY is partially supported by NSFC (\#12171253) and the Nankai Zhide Foundation. All authors sincerely thank Professor Jian Wang for the useful discussion about reversible maps. LL thanks the support of School of Mathematics at Shandong University and the support of BICMR at Peking University. LL and PS acknowledge the support of NYU-ECNU Institute of Mathematical Sciences at NYU Shanghai and the 2022 National Foreign Experts Program.

%\textbf{Acknowledgments}.
%\bibliographystyle{abbrv}

%\bibliography{ref-IsoscelesThree}

%\bibliographystyle{abbrv}

%\bibliography{ref-IsoscelesThree}

\begin{thebibliography}{10}

\bibitem{AFFHO12}
P.~Albers, J.~W. Fish, U.~Frauenfelder, H.~Hofer, and O.~van Koert.
\newblock Global surfaces of section in the planar restricted 3-body problem.
\newblock {\em Arch. Ration. Mech. Anal.}, 204(1), 273--284, 2012.

\bibitem{Alk69}
V.~M.~Alekseev.
\newblock Quasi-random dynamical systems
\newblock{\em Mat. Zametki},
6(4), 489--498, 1969.

\bibitem{Alk72}
V.~M. Alekseev.
\newblock Quasirandom oscillations and qualitative problems in celestial
  mechanics.
\newblock In {\em Ninth {M}athematical {S}ummer {S}chool ({K}aciveli, 1971)
  ({R}ussian)},  212--341. 1972.
\newblock Three papers on smooth dynamical systems.

\bibitem{Bk13}
G.~D. Birkhoff.
\newblock Proof of {P}oincar\'{e}'s geometric theorem.
\newblock {\em Trans. Amer. Math. Soc.}, 14(1), 14--22, 1913.

\bibitem{BH} B. Bramham and H. Hofer. \newblock First steps towards a symplectic dynamics. \newblock {\em Surveys in Differential Geometry.}, 17, 127--178, 2012.

\bibitem{Zhang19}
X. Cen, X. Cheng, Z. Huang, M. Zhang.
\newblock On the Stability of Symmetric Periodic Orbits of the Elliptic Sitnikov Problem.
\newblock{\em SIAM J. Appl. Dyn. Syst.},
19(2), 1271--1290, 2020.

\bibitem{Zhang21}
X. Cen, C. Liu, M. Zhang.
\newblock A Proof for a Stability Conjecture on Symmetric Periodic Solutions of the Elliptic Sitnikov Problem.
\newblock{\em SIAM J. Appl. Dyn. Syst.},
20(2), 941--952, 2021.


\bibitem{Chazy} J. Chazy. Sur l'allure finale du mouvement dans le probl\`eme des trois corps. {\em Bull. Astrn.}, 8, 403--436, 1932.

\bibitem{Chen13}
N.-C. Chen.
\newblock Periodic brake orbits in the planar isosceles three-body problem.
\newblock {\em Nonlinearity}, 26(10), 2875--2898, 2013.

\bibitem{CM00}
A.~Chenciner and R.~Montgomery.
\newblock A remarkable periodic solution of the three-body problem in the case
  of equal masses.
\newblock {\em Ann. of Math. (2)}, 152(3), 881--901, 2000.

\bibitem{Con63}
C.~Conley.
\newblock On some new long periodic solutions of the plane restricted three body
  problem.
\newblock {\em Comm. Pure Appl. Math.}, 16, 449--467, 1963.

\bibitem{CorLli04}
M. Corbera, J. Llibre.
\newblock Families of periodic orbits for the spatial isosceles three-body problem.
\newblock{\em SIAM J. Math. Anal.}, 35(5), 1311--1346, 2004.

%\bibitem{CHHL21}
%D.~Cristofaro-Gardiner, U.~L. Hryniewicz, M.~Hutchings, and H.~Liu.
%\newblock Contact three-manifolds with exactly two simple Reeb orbits,
%\newblock{\em arXiv:2102.04970}, to appear in Geometry and Topology.

%\bibitem{PHKS} Naiara V. de Paulo, U. Hryniewicz, S. Kim, Pedro A. S. Salom\~ao.
%\newblock Genus zero transverse foliations for weakly convex Reeb flows on the tight $2$-sphere, \newblock{\em arXiv: 2206.12856}.

%\bibitem{PS1} Naiara V. de Paulo, Pedro A. S. Salom\~ao. \newblock Systems of transversal sections near critical energy levels of Hamiltonians systems in $\R^4$,
%\newblock{\em Memoirs of the Amer. Math. Soc.}, 252(1202), 1--105, 2018.

\bibitem{PS2} Naiara V. de Paulo,  Pedro A. S. Salom\~{a}o. \newblock On the multiplicity of periodic orbits and homoclinics near critical energy levels of Hamiltonian systems in $\R^4$.
\newblock {\em Transactions of the American Mathematical Society}, 372(2), 859--887, 2019.


%\bibitem{PS22}
%Naiara V. de~Paulo and P.~A.~S. Salom\~{a}o.
%\newblock Reeb flows, pseudo-holomorphic curves and transverse foliations,  \newblock{\em S\~{a}o Paulo J. Math. Sci.}, 16(1), 314--339, 2022.

\bibitem{Dev80}
R.~L. Devaney.
\newblock Triple collision in the planar isosceles three-body problem.
\newblock {\em Invent. Math.}, 60(3), 249--267, 1980.

%\bibitem{FH} J. Fish, H. Hofer.
%\newblock Feral curves and minimal %sets,
%\newblock{\em Ann. of Math.}, 197(2), 533--738, 2023.

%\bibitem{FS} J. W. Fish, R. Siefring.
%\newblock Connected sums and finite energy foliations I:
%Contact connected sums,
%\newblock{\em J. Symplectic Geom.}, 16(6), 1639¨C-1748, 2018.

\bibitem{Fr90}
J.~Franks.
\newblock Periodic points and rotation numbers for area preserving
  diffeomorphisms of the plane.
\newblock {\em Inst. Hautes \'{E}tudes Sci. Publ. Math.}, (71), 105--120, 1990.

\bibitem{Fr92}
J.~Franks.
\newblock Geodesics on {$S^2$} and periodic points of annulus homeomorphisms.
\newblock {\em Invent. Math.}, 108(2), 403--418, 1992.

\bibitem{FvK18}
U.~Frauenfelder and O.~van Koert.
\newblock The restricted three-body problem and holomorphic curves.
\newblock{\em Pathways in Mathematics}. Birkh\"{a}user/Springer, Cham, 2018.

\bibitem{FKZ16}
U.~Frauenfelder, O.~van Koert, L.~Zhao.
\newblock {A convex embedding for the rotating Kepler problem,}
\newblock {\em arXiv: 1605.06981}.

%\bibitem{Geiges08}
%H.~Geiges.
%\newblock An introduction to contact topology.
%\newblock{\em Cambridge University Press}, New York, 2008.

\bibitem{HK03}
B.~Hasselblatt and A.~Katok.
\newblock A first course in dynamics.
\newblock{\em Cambridge University Press}, New York, 2003.
\newblock With a panorama of recent developments.

\bibitem{Hofer93} H. Hofer.
\newblock Pseudoholomorphic curves in symplectizations with applications to the Weinstein conjecture in dimension three,
\newblock{\em Invent. Math.}, 114(3), 515--563, 1993.


\bibitem{props1}H. Hofer, K. Wysocki and E. Zehnder. \newblock Properties of pseudoholomorphic curves in symplectisations I: Asymptotics.
\newblock{\em Ann. Inst. H. Poincar\'e Anal. Non Lin\'eaire}, 13, 337--379, 1996.

\bibitem{props2}H. Hofer, K. Wysocki and E. Zehnder. \newblock Properties of pseudoholomorphic curves in symplectisations II: Embedding controls and algebraic invariants.
\newblock{\em Geom. Funct. Anal.}, 5(2), 270--328, 1995.

\bibitem{props3}H. Hofer, K. Wysocki and E. Zehnder. \newblock Properties of pseudoholomorphic curves in symplectizations III: Fredholm theory.
\newblock{\em Topics in nonlinear analysis}, Birkh\"auser, Basel,  381--475, 1999.

\bibitem{convex}H. Hofer, K. Wysocki and E. Zehnder.
\newblock The dynamics of strictly convex energy surfaces in $\R^4$.
\newblock{\em Ann. of Math.}, 148, 197--289, 1998.

\bibitem{fols}H. Hofer, K. Wysocki and E. Zehnder. \newblock Finite energy foliations of tight three-spheres and Hamiltonian dynamics.
\newblock{\em Ann. of Math}, 157, 125--255, 2003.



\bibitem{HZ94}
H.~Hofer and E.~Zehnder.
\newblock  Symplectic Invariants and Hamiltonian Dynamics, volume 207.
\newblock Birkh\"{a}user Verlag, Basel, 1994.



\bibitem{HMS15}
U.~Hryniewicz, A.~Momin, and Pedro A.~S. Salom\~{a}o.
\newblock A {P}oincar\'{e}-{B}irkhoff theorem for tight {R}eeb flows on
  {$S^3$}.
\newblock {\em Invent. Math.}, 199(2), 333--422, 2015.

\bibitem{Hr} U. Hryniewicz.
\newblock Systems of global surfaces of section for dynamically convex Reeb flows on the $3$-sphere.
\newblock{\em J. Symplectic Geom.}, 12, 791--862, 2014.


\bibitem{HS2} U. Hryniewicz and Pedro A. S. Salom\~ao.
\newblock Elliptic bindings for dynamically convex Reeb flows on the real projective three-space.
\newblock{\em Calc. Var. Partial Differential Equations}, 55(2), Art. 43, 57 pp, 2016.


\bibitem{HSW} U. Hryniewicz,  Pedro A. S. Salom\~ao, and K. Wysocki.
\newblock Genus zero global surfaces of section for Reeb flows and a result of Birkhoff.
\newblock{\em Journal of the European Mathematical Society}, 2022.

\bibitem{HLS14}
X.~Hu, Y.~Long, and S.~Sun.
\newblock Linear stability of elliptic Lagrangian solutions of the planar
  three-body problem via index theory.
\newblock {\em Arch. Ration. Mech. Anal.}, 213(3), 993--1045, 2014.

\bibitem{HOT}
X. Hu, Y. Ou, and X. Tang. Linear stability of elliptic relative equilibrium in spacial $n$-body problem via index theory, private communication.

\bibitem{HWY20}
X.~Hu, L.~Wu, and R.~Yang.
\newblock Morse index theorem of {L}agrangian systems and stability of brake
  orbit.
\newblock {\em J. Dynam. Differential Equations}, 32(1), 61--84, 2020.

\bibitem{GNR18}
J. Gal\'an, D. Nu\~nez, A. Rivera.
\newblock Quantitative stability of certain families of periodic solutions in the Sitnikov problem.
\newblock{\em SIAM J. Appl. Dyn. Syst.}, 17, 52--77, 2018.


\bibitem{kang18}
J.~Kang.
\newblock On reversible maps and symmetric periodic points.
\newblock {\em Ergodic Theory Dynam. Systems}, 38(4), 1479--1498, 2018.


%\bibitem{Kim1} S. Kim. \newblock Dynamical convexity of the Euler problem of two fixed centers.
%\newblock{\em Mathematical Proceedings of the Cambridge Philosophical Society}, 165(2), 359--384, 2018.

\bibitem{Kim2} S. Kim. \newblock On a convex embedding of the Euler problem of two fixed centers. \newblock{\em Regular and Chaotic Dynamics}, 23, 304--324, 2018.

\bibitem{Kim3} S. Kim. \newblock Symmetric periodic orbits and invariant disk-like global surfaces of section on the three-sphere. \newblock{\em Transactions of the American Mathematical Society}, v. 375, n. 6, p. 4107--4151, 2022.

\bibitem{Kum1979}
M.~Kummer.
\newblock{On the stability of Hill's solutions of plane restricted three body problem.}
\newblock{\em Amer. J. Math.}, 101(6), 1333--1354, 1979.

\bibitem{Calvez06}
P.~Le~Calvez.
\newblock Periodic orbits of {H}amiltonian homeomorphisms of surfaces.
\newblock {\em Duke Math. J.}, 133(1), 125--184, 2006.



%\bibitem{lemos}C. Lemos de Oliveira.
%\newblock  $3-2-1$ foliations for Reeb flows on the tight 3-sphere,
% \newblock{\em arXiv:2104.10295}.

\bibitem{LO2008} J. Llibre,  R. Ortega.
\newblock On the families of periodic orbits of the Sitnikov problem.
\newblock{\em SIAM J.
Appl. Dyn. Syst.}, 7, 561--576, 2008.

\bibitem{LS} J. Llibre, C. Sim\'o.
\newblock Some homoclinic phenomena in the three-body problem.
\newblock{\em J. Diff. Equ.}, 37(3), 444--465, 1980.

\bibitem{LLZ21}
C.~Liu, Y.~Long, and D.~Zhang.
\newblock Index iteration theory for brake orbit type solutions and applications.
\newblock {\em Anal. Theory Appl.}, 37(2), 129--156, 2021.

\bibitem{LWY}
H.~Liu, J.~Wang, and J.~Yan.
\newblock Refinements of Franks' theorem and applications in Reeb dynamics.
\newblock {\em J. Diff. Equ.}, 338, 372--387, 2022.

\bibitem{Lon02}
Y.~Long.
\newblock Index theory for symplectic paths with applications, volume 207,
\newblock {\em Progress in Mathematics},
Birkh\"{a}user Verlag, Basel, 2002.

\bibitem{MW66}
W.~Magnus and S.~Winkler.
\newblock Hill's equation,
\newblock{\em Interscience Tracts in Pure and Applied Mathematics}, No. 20.
Interscience Publishers John Wiley \& Sons\, New York-London-Sydney, 1966.

\bibitem{McG69}
R.~P. McGehee.
\newblock Some homoclinic orbits for the restricted three-body problem.
\newblock ProQuest LLC, Ann Arbor, MI, 1969.
\newblock Thesis (Ph.D.)--The University of Wisconsin - Madison.

\bibitem{McG73}
R.~P. McGehee.
\newblock A stable manifold theorem for degenerate fixed points with applications to Celestial Mechanics.
\newblock {\em J. Diff. Equ.}, 14, 70--88, 1973.
\newblock Thesis (Ph.D.)--The University of Wisconsin - Madison.

\bibitem{MeyerWang1995}
K.~R. Meyer, Q. Wang.
\newblock The global phase structure  of the three-dimensional isosceles three-body problem with zero energy,
\newblock{in Hamiltonian Dynamical Systems (Cincinnati, OH, 1992)},
\newblock{\em IMA Vol. Math. Appl. 63, Sringer-Verlag, New York},
265--282, 1995.


%\bibitem{Mk81}
%R.~Moeckel.
%\newblock Orbits of the three-body problem which pass infinitely close to triple collision.
%\newblock {\em Amer. J. Math.}, 103(6), 1323--1341, 1981.

\bibitem{Mk84}
R.~Moeckel.
\newblock Heteroclinic phenomena in the isosceles three-body problem.
\newblock {\em SIAM J. Math. Anal.}, 15(5), 857--876, 1984.

\bibitem{MMV12}
R.~Moeckel, R.~Montgomery, and A.~Venturelli.
\newblock From brake to syzygy.
\newblock {\em Arch. Ration. Mech. Anal.}, 204(3), 1009--1060, 2012.

\bibitem{MvK}
A.~Moreno and O.~van Koert.
\newblock Global hypersurfaces of section in the spatial restricted three-body problem.
\newblock {\em   Nonlinearity}, 35(6), 2920--2970, 2022.


\bibitem{Ms73}
J.~Moser.
\newblock Stable and random motions in dynamical systems.
\newblock Annals of Mathematics Studies, No. 77. Princeton University Press,
Princeton, N. J.; University of Tokyo Press, Tokyo, 1973.
\newblock With special emphasis on celestial mechanics, Hermann Weyl Lectures, the Institute for Advanced Study, Princeton, N. J.

%\bibitem{Nm1977}
%W.~D.~Neumann.
%\newblock{Generalization of Poincar\'e Birkhoff fixed point theorem}.
%\newblock{\em Bull. Austral. Math. Soc.}, 17, 375-389, 1977.

\bibitem{Pc12}
Poincar\'{e}.
\newblock Sur un th\'{e}or\`{e}me de g\'{e}om\`{e}trie.
\newblock {\em Rend. Circ. Mat. Palermo}, 33(1), 375--407, 1912.

\bibitem{Ortega2016}
R. Ortega.
\newblock Symmetric periodic solutions in the Sitnikov problem, \newblock{\em Arch. Math.},
107, 405--412, 2016.



\bibitem{Ra} P. H. Rabinowitz.  Periodic solutions of a Hamiltonian system on a prescribed energy surface. {\it J. Diff. Equ.}, 33(3), 336--352, 1979.

\bibitem{Sa04}
Pedro A.~S. Salom\~{a}o.
\newblock Convex energy levels of {H}amiltonian systems.
\newblock {\em Qual. Theory Dyn. Syst.}, 4(2), 439--457, 2003.

\bibitem{SH18}
Pedro A.~S. Salom\~{a}o and U.~L. Hryniewicz.
\newblock Global surfaces of section for {R}eeb flows in dimension three and
  beyond.
\newblock In {\em Proceedings of the {I}nternational {C}ongress of
  {M}athematicians---{R}io de {J}aneiro 2018. {V}ol. {II}. {I}nvited lectures},
  pages 941--967. World Sci. Publ., Hackensack, NJ, 2018.

%\bibitem{Sft48}
%H.~Seifert.
%\newblock Periodische {B}ewegungen mechanischer {S}ysteme.
%\newblock {\em Math. Z.}, 51, 197--216, 1948.

\bibitem{Shi09}
M.~Shibayama.
\newblock Existence and stability of periodic solutions in the isosceles
  three-body problem.
\newblock In {\em Expansion of integrable systems}, RIMS K\^{o}ky\^{u}roku
  Bessatsu, B13, pages 141--155. Res. Inst. Math. Sci. (RIMS), Kyoto, 2009.


\bibitem{Sch} A. Schneider. \newblock Global surfaces of section for dynamically convex Reeb flows on lens spaces,
\newblock{\em Transactions of the American Mathematical Society}, 373(4), 2775--2803, 2020.

\bibitem{SM87}
C.~Sim\'{o} and R.~Mart\'{\i}nez.
\newblock Qualitative study of the planar isosceles three-body problem.
\newblock {\em Celestial Mech.}, 41(1-4), 179--251, 1987/88.

\bibitem{Sit60}
K.~Sitnikov.
\newblock The existence of oscillatory motions in the three-body problems.
\newblock {\em Soviet Physics. Dokl.}, 5, 647--650, 1960.

\bibitem{Taubes} C. H. Taubes. The Seiberg-Witten equations and the Weinstein conjecture.{\it Geometry and Topology,}
11, 2117--2202, 2007.


\bibitem{WHL07}
W.~Wang, X.~Hu, and Y.~Long.
\newblock Resonance identity, stability, and multiplicity of closed characteristics on compact convex hypersurfaces.
\newblock {\em Duke Math. J.}, 139(3), 411--462, 2007.

\bibitem{We} A. Weinstein. Periodic orbits for convex Hamiltonian systems. {\it Ann. of Math.}, 108(3), 507--518, 1978.

\bibitem{Math} {\it Wolfram Mathematica 13.3,} \href{https://www.wolfram.com/mathematica/}{https://www.wolfram.com/mathematica/}.

\bibitem{Xia92}
Z.~Xia.
\newblock The existence of noncollision singularities in {N}ewtonian systems.
\newblock {\em Ann. of Math.}, 135(3), 411--468, 1992.

\bibitem{Yan15b}
D.~Yan, R.~Liu, X.~Hu, W.~Mao, and T.~Ouyang.
\newblock New phenomena in the spatial isosceles three-body problem with
  unequal masses.
\newblock {\em Internat. J. Bifur. Chaos Appl. Sci. Engrg.}, 25(12), 1550169,
  16, 2015.

\bibitem{Yu21}
G.~Yu.
\newblock Connecting planar linear chains in the spatial {$N$}-body problem.
\newblock {\em Ann. Inst. H. Poincar\'{e} Anal. Non Lin\'{e}aire},
38(4), 1115--1144, 2021.

\bibitem{Zhou17a}
Q.~Zhou and Y.~Long.
\newblock Maslov-type indices and linear stability of elliptic Euler solutions of the three-body problem.
\newblock {\em Arch. Ration. Mech. Anal.}, 226(3), 1249--1301, 2017.

\end{thebibliography}

\end{document}